\documentclass[10pt,a4paper,oneside]{article}
\usepackage[letterpaper,left=1in,right=1in,top=1.5in,bottom=1.5in]{geometry}
\usepackage{tabularx}
\usepackage{booktabs} 
\usepackage{url}
\usepackage{hyperref}
\usepackage{xcolor}
\definecolor{lightgreen}{rgb}{.20,.60,.22}
\hypersetup{
	colorlinks=true, 
	linkcolor=purple,  
	citecolor=lightgreen, 
	urlcolor=cyan,   
}
\usepackage{pdfpages}

\usepackage{setspace}

\usepackage{microtype}
\usepackage{amssymb,amsmath,amsopn,amsxtra,amsthm,amsfonts}
\usepackage[mathcal]{euscript}
\let\mathscr\mathcal

\usepackage[bb=px]{mathalfa}

\usepackage[capitalise,nameinlink]{cleveref}
\usepackage[backend=biber,style=alphabetic,maxnames=99,giveninits=true]{biblatex}  
\DeclareFieldFormat[article,unpublished,misc]{title}{#1} 
\usepackage[T1]{fontenc}

\DeclareFontFamily{U}{min}{}
\DeclareFontShape{U}{min}{m}{n}{<-> udmj30}{}

\usepackage{comment}
\usepackage{todonotes}

\usepackage{enumitem}
\setlist[enumerate,1]{label={(\arabic*)},itemsep=\parskip} 
\setlist[itemize,1]{itemsep=\parskip} 
\newlist{thmlist}{enumerate}{2}
\setlist[thmlist,1]{label={\em(\roman*)},ref={(\roman*)},%
	itemsep=\parskip,leftmargin=*,align=left}
\setlist[thmlist,2]{label={\em(\alph*)},ref={(\alph*)},%
	itemsep=\parskip,leftmargin=*,align=left,topsep=0.1cm}
\newlist{defnlist}{enumerate}{2}
\setlist[defnlist,1]{label={(\roman*)},ref={(\roman*)},itemsep=\parskip,%
	leftmargin=*,align=left}
\setlist[defnlist,2]{label={(\alph*)},ref={(\alph*)},itemsep=\parskip,%
	leftmargin=*,align=left,topsep=0.1cm}

\newtheorem{thm}{Theorem}
\newtheorem{cor}[thm]{Corollary}
\newtheorem{lem}[thm]{Lemma}
\newtheorem{prop}[thm]{Proposition}

\theoremstyle{definition}

\newtheorem{de}[thm]{Definition}
\newtheorem*{de*}{Definition}

\newtheorem{rem}[thm]{Remark}

\newtheorem{ex}[thm]{Example}

\newtheorem{nota}[thm]{Notation}

\renewcommand{\eqref}[1]{(\ref{#1})}

\numberwithin{equation}{subsection}

\usepackage{tikz}
\usetikzlibrary{matrix}
\usepackage{tikz-cd}

\usepackage{xspace}

\usepackage{xifthen}

\usepackage{xparse}

\usepackage{mathtools}

\newcommand{\nc}{\newcommand}
\nc{\renc}{\renewcommand}
\nc{\ssec}{\subsection}
\nc{\sssec}{\subsubsection}
\nc{\on}{\operatorname}
\nc{\term}[1]{#1\xspace}

\nc{\sA}{\ensuremath{\mathcal{A}}\xspace}
\nc{\sB}{\ensuremath{\mathcal{B}}\xspace}
\nc{\sC}{\ensuremath{\mathcal{C}}\xspace}
\nc{\sD}{\ensuremath{\mathcal{D}}\xspace}
\nc{\sE}{\ensuremath{\mathcal{E}}\xspace}
\nc{\sF}{\ensuremath{\mathcal{F}}\xspace}
\nc{\sG}{\ensuremath{\mathcal{G}}\xspace}
\nc{\sH}{\ensuremath{\mathcal{H}}\xspace}
\nc{\sI}{\ensuremath{\mathcal{I}}\xspace}
\nc{\sJ}{\ensuremath{\mathcal{J}}\xspace}
\nc{\sK}{\ensuremath{\mathcal{K}}\xspace}
\nc{\sL}{\ensuremath{\mathcal{L}}\xspace}
\nc{\sM}{\ensuremath{\mathcal{M}}\xspace}
\nc{\sN}{\ensuremath{\mathcal{N}}\xspace}
\nc{\sO}{\ensuremath{\mathcal{O}}\xspace}
\nc{\sP}{\ensuremath{\mathcal{P}}\xspace}
\nc{\sQ}{\ensuremath{\mathcal{Q}}\xspace}
\nc{\sR}{\ensuremath{\mathcal{R}}\xspace}
\nc{\sS}{\ensuremath{\mathcal{S}}\xspace}
\nc{\sT}{\ensuremath{\mathcal{T}}\xspace}
\nc{\sU}{\ensuremath{\mathcal{U}}\xspace}
\nc{\sV}{\ensuremath{\mathcal{V}}\xspace}
\nc{\sW}{\ensuremath{\mathcal{W}}\xspace}
\nc{\sX}{\ensuremath{\mathcal{X}}\xspace}
\nc{\sY}{\ensuremath{\mathcal{Y}}\xspace}
\nc{\sZ}{\ensuremath{\mathcal{Z}}\xspace}

\nc{\bA}{\ensuremath{\mathbf{A}}\xspace}
\nc{\bB}{\ensuremath{\mathbf{B}}\xspace}
\nc{\bC}{\ensuremath{\mathbf{C}}\xspace}
\nc{\bD}{\ensuremath{\mathbf{D}}\xspace}
\nc{\bE}{\ensuremath{\mathbf{E}}\xspace}
\nc{\bF}{\ensuremath{\mathbf{F}}\xspace}
\nc{\bG}{\ensuremath{\mathbf{G}}\xspace}
\nc{\bH}{\ensuremath{\mathbf{H}}\xspace}
\nc{\bI}{\ensuremath{\mathbf{I}}\xspace}
\nc{\bJ}{\ensuremath{\mathbf{J}}\xspace}
\nc{\bK}{\ensuremath{\mathbf{K}}\xspace}
\nc{\bL}{\ensuremath{\mathbf{L}}\xspace}
\nc{\bM}{\ensuremath{\mathbf{M}}\xspace}
\nc{\bN}{\ensuremath{\mathbf{N}}\xspace}
\nc{\bO}{\ensuremath{\mathbf{O}}\xspace}
\nc{\bP}{\ensuremath{\mathbf{P}}\xspace}
\nc{\bQ}{\ensuremath{\mathbf{Q}}\xspace}
\nc{\bR}{\ensuremath{\mathbf{R}}\xspace}
\nc{\bS}{\ensuremath{\mathbf{S}}\xspace}
\nc{\bT}{\ensuremath{\mathbf{T}}\xspace}
\nc{\bU}{\ensuremath{\mathbf{U}}\xspace}
\nc{\bV}{\ensuremath{\mathbf{V}}\xspace}
\nc{\bW}{\ensuremath{\mathbf{W}}\xspace}
\nc{\bX}{\ensuremath{\mathbf{X}}\xspace}
\nc{\bY}{\ensuremath{\mathbf{Y}}\xspace}
\nc{\bZ}{\ensuremath{\mathbf{Z}}\xspace}

\nc{\dA}{\ensuremath{\mathds{A}}\xspace}
\nc{\dB}{\ensuremath{\mathds{B}}\xspace}
\nc{\dC}{\ensuremath{\mathds{C}}\xspace}
\nc{\dD}{\ensuremath{\mathds{D}}\xspace}
\nc{\dE}{\ensuremath{\mathds{E}}\xspace}
\nc{\dF}{\ensuremath{\mathds{F}}\xspace}
\nc{\dG}{\ensuremath{\mathds{G}}\xspace}
\nc{\dH}{\ensuremath{\mathds{H}}\xspace}
\nc{\dI}{\ensuremath{\mathds{I}}\xspace}
\nc{\dJ}{\ensuremath{\mathds{J}}\xspace}
\nc{\dK}{\ensuremath{\mathds{K}}\xspace}
\nc{\dL}{\ensuremath{\mathds{L}}\xspace}
\nc{\dM}{\ensuremath{\mathds{M}}\xspace}
\nc{\dN}{\ensuremath{\mathds{N}}\xspace}
\nc{\dO}{\ensuremath{\mathds{O}}\xspace}
\nc{\dP}{\ensuremath{\mathds{P}}\xspace}
\nc{\dQ}{\ensuremath{\mathds{Q}}\xspace}
\nc{\dR}{\ensuremath{\mathds{R}}\xspace}
\nc{\dS}{\ensuremath{\mathds{S}}\xspace}
\nc{\dT}{\ensuremath{\mathds{T}}\xspace}
\nc{\dU}{\ensuremath{\mathds{U}}\xspace}
\nc{\dV}{\ensuremath{\mathds{V}}\xspace}
\nc{\dW}{\ensuremath{\mathds{W}}\xspace}
\nc{\dX}{\ensuremath{\mathds{X}}\xspace}
\nc{\dY}{\ensuremath{\mathds{Y}}\xspace}
\nc{\dZ}{\ensuremath{\mathds{Z}}\xspace}

\nc{\bbA}{\ensuremath{\mathbb{A}}\xspace}
\nc{\bbB}{\ensuremath{\mathbb{B}}\xspace}
\nc{\bbC}{\ensuremath{\mathbb{C}}\xspace}
\nc{\bbD}{\ensuremath{\mathbb{D}}\xspace}
\nc{\bbE}{\ensuremath{\mathbb{E}}\xspace}
\nc{\bbF}{\ensuremath{\mathbb{F}}\xspace}
\nc{\bbG}{\ensuremath{\mathbb{G}}\xspace}
\nc{\bbH}{\ensuremath{\mathbb{H}}\xspace}
\nc{\bbI}{\ensuremath{\mathbb{I}}\xspace}
\nc{\bbJ}{\ensuremath{\mathbb{J}}\xspace}
\nc{\bbK}{\ensuremath{\mathbb{K}}\xspace}
\nc{\bbL}{\ensuremath{\mathbb{L}}\xspace}
\nc{\bbM}{\ensuremath{\mathbb{M}}\xspace}
\nc{\bbN}{\ensuremath{\mathbb{N}}\xspace}
\nc{\bbO}{\ensuremath{\mathbb{O}}\xspace}
\nc{\bbP}{\ensuremath{\mathbb{P}}\xspace}
\nc{\bbQ}{\ensuremath{\mathbb{Q}}\xspace}
\nc{\bbR}{\ensuremath{\mathbb{R}}\xspace}
\nc{\bbS}{\ensuremath{\mathbb{S}}\xspace}
\nc{\bbT}{\ensuremath{\mathbb{T}}\xspace}
\nc{\bbU}{\ensuremath{\mathbb{U}}\xspace}
\nc{\bbV}{\ensuremath{\mathbb{V}}\xspace}
\nc{\bbW}{\ensuremath{\mathbb{W}}\xspace}
\nc{\bbX}{\ensuremath{\mathbb{X}}\xspace}
\nc{\bbY}{\ensuremath{\mathbb{Y}}\xspace}
\nc{\bbZ}{\ensuremath{\mathbb{Z}}\xspace}

\nc{\mrm}[1]{\ensuremath{\mathrm{#1}}\xspace}

\nc{\mbf}[1]{\ensuremath{\mathbf{#1}}\xspace}
\nc{\mcal}[1]{\ensuremath{\mathcal{#1}}\xspace}
\nc{\msc}[1]{\ensuremath{\mathscr{#1}}\xspace}
\nc{\mfr}[1]{\ensuremath{\mathfrak{#1}}\xspace}

\renc{\bar}[1]{\overline{#1}}
\nc{\sub}{\subset}
\nc{\too}{\longrightarrow}
\nc{\hook}{\hookrightarrow}
\nc*{\hooklongrightarrow}{\ensuremath{\lhook\joinrel\relbar\joinrel\rightarrow}}
\nc{\hooklong}{\hooklongrightarrow}
\nc{\twoheadlongrightarrow}{\relbar\joinrel\twoheadrightarrow}
\nc{\shiso}{\approx}
\nc{\isoto}{\xrightarrow{\sim}}
\nc{\isofrom}{\xleftarrow{\sim}}
\renc{\ge}{\geqslant}
\renc{\le}{\leqslant}

\nc{\id}{\mathrm{id}}

\DeclareMathOperator{\Hom}{\on{Hom}}
\nc{\uHom}{\underline{\smash{\Hom}}}

\DeclareMathOperator{\End}{\on{End}}

\nc{\uEnd}{\underline{\smash{\End}}}

\renc{\lim}{\varprojlim}
\makeatletter
\newcommand{\colim@}[2]{%
	\vtop{\m@th\ialign{##\cr
			\hfil$#1\operator@font colim$\hfil\cr
			\noalign{\nointerlineskip\kern1.5\ex@}#2\cr
			\noalign{\nointerlineskip\kern-\ex@}\cr}}%
}
\newcommand{\colim}{%
	\mathop{\mathpalette\colim@{\rightarrowfill@\textstyle}}\nmlimits@
}
\makeatother

\nc{\Cofib}{\on{Cofib}}
\nc{\Fib}{\on{Fib}}
\nc{\initial}{\varnothing}

\renewbibmacro{in:}{}

\def\mc{\mathcal}
\def\mb{\mathbf}

\def\op{\operatorname}
\def\ein{$\mathbb{E}_{\infty}$}

\newcommand{\enu}[1]{ \begin{enumerate}[label=(\arabic*),font=\normalfont]
		#1
	\end{enumerate}
}

\newcommand{\alg}{\operatorname{Alg}}
\newcommand{\calg}{\operatorname{CAlg}}
\newcommand{\ccalg}{\operatorname{cCAlg}}
\newcommand{\cmon}{\operatorname{CMon}}
\newcommand{\modu}{\operatorname{Mod}}

\newcommand{\mapp}{\operatorname{Map}}

\newcommand{\funct}{\operatorname{Fun}}

\newcommand{\prl}{\ensuremath{\mc{P}\mathrm{r}^L}\xspace}

\newcommand{\pr}{\ensuremath{\mc{P}\mathrm{r}}\xspace}
\newcommand{\prad}{\ensuremath{\mc{P}\mathrm{r}_{\op{ad}}}\xspace}

\newcommand{\praddbl}{\ensuremath{\mc{P}\mathrm{r}_{\op{ad}}^{\op{dbl}}}\xspace}

\newcommand{\prlst}{\ensuremath{\mc{P}\mathrm{r}_{\op{st}}^L}\xspace}

\newcommand{\prst}{\ensuremath{\mc{P}\mathrm{r}_{\op{st}}}\xspace}
\newcommand{\prstdbl}{\ensuremath{\mc{P}\mathrm{r}_{\op{st}}^{\op{dbl}}}\xspace}
\newcommand{\prvdbl}{\ensuremath{\mc{P}\mathrm{r}_{\mc{V}}^{\op{dbl}}}\xspace}

\newcommand{\spgeq}{\ensuremath{\operatorname{Sp}_{\geq0}}\xspace}

\newcommand{\cotimes}{\ensuremath{\mc{C}^\otimes}\xspace}
\newcommand{\totimes}{\ensuremath{\mc{T}^\otimes}\xspace}
\newcommand{\votimes}{\ensuremath{\mc{V}^\otimes}\xspace}

\newcommand{\einf}{\term{\mathbb{E}_\infty}}

\newcommand{\infcat}{\term{$\infty$-category}}
\newcommand{\infcats}{\term{$\infty$-categories}}
\newcommand{\syminfcat}{\term{symmetric monoidal $\infty$-category}}

\newcommand{\shv}{\operatorname{Shv}}

\newcommand{\cidl}{(\mc{C}_{/\mb{1}})_{\leq-1}}

\newcommand{\idl}{\op{Idl}}

\newcommand{\idlc}{\op{Idl}(\mc{C})}
\newcommand{\idlcotimes}{\op{Idl}(\mc{C})^{\otimes}}
\newcommand{\im}{\op{Im}}

\newcommand{\mca}{\ensuremath{\mathcal{A}}\xspace}
\newcommand{\mcb}{\ensuremath{\mathcal{B}}\xspace}
\newcommand{\mcc}{\ensuremath{\mathcal{C}}\xspace}
\newcommand{\mcd}{\ensuremath{\mathcal{D}}\xspace}
\newcommand{\mce}{\ensuremath{\mathcal{E}}\xspace}

\newcommand{\mci}{\ensuremath{\mathcal{I}}\xspace}

\newcommand{\mck}{\ensuremath{\mathcal{K}}\xspace}
\newcommand{\mcl}{\ensuremath{\mathcal{L}}\xspace}
\newcommand{\mcm}{\ensuremath{\mathcal{M}}\xspace}
\newcommand{\mcn}{\ensuremath{\mathcal{N}}\xspace}

\newcommand{\mcs}{\ensuremath{\mathcal{S}}\xspace}
\newcommand{\mct}{\ensuremath{\mathcal{T}}\xspace}
\newcommand{\mcu}{\ensuremath{\mathcal{U}}\xspace}
\newcommand{\mcv}{\ensuremath{\mathcal{V}}\xspace}
\newcommand{\mcw}{\ensuremath{\mathcal{W}}\xspace}
\newcommand{\mcx}{\ensuremath{\mathcal{X}}\xspace}

\newcommand{\Cat}{\op{Cat}}
\newcommand{\spec}{\op{Spec}}
\newcommand{\dlat}{\mathsf{DLat}}
\newcommand{\Top}{\mathsf{Top}^{\op{op}}}
\newcommand{\topsob}{\Top_{\op{sob}}}
\newcommand{\topcoh}{\mathsf{Top}_{\mathsf{coh}}^{\op{op}}}

\newcommand{\frmsp}{\mathsf{Frm}_{\mathsf{spa}}}
\newcommand{\frmcoh}{\mathsf{Frm}_{\mathsf{coh}}}
\newcommand{\frm}{\mathsf{Frm}}
\newcommand{\pfrm}{\mathsf{PFrm}}

\newcommand{\zar}{\op{Zar}}

\newcommand{\coker}{\op{coker}}
\newcommand{\catper}{\op{Cat}^{\op{perf}}}

\newcommand{\lint}{\ensuremath{\prl_{\mc{T}}}}

\newcommand{\lintdbl}{\ensuremath{\pr_{\mc{T}}^{\op{dbl}}}}
\newcommand{\linvdbl}{\ensuremath{\pr_{\mc{V}}^{\op{dbl}}}}
\newcommand{\opsp}{\ensuremath{\op{Sp}}}
\newcommand{\cidem}{\ensuremath{\op{cIdem}}\xspace}

\newcommand{\cidemt}{\ensuremath{\op{cIdem}(\mathcal{T})}\xspace}
\newcommand{\cidemv}{\ensuremath{\op{cIdem}(\mathcal{V})}\xspace}

\usepackage{mathtools} 

\nc{\rnc}{\renewcommand}
\nc{\dmo}{\DeclareMathOperator}

\rnc{\emptyset}{\varnothing}
\nc{\Mid}{\,\big|\,}
\nc{\SET}[2]{\big\{\,#1\Mid#2\,\big\}} 
\nc{\xra}{\xrightarrow}

\nc{\downset}[1]{\downarrow #1}
\nc{\ie}{{i.e.}, }
\nc{\cf}{{cf.~}}

\title{Smashing, Balmer, Zariski spectra: an ideal approach}

\author{Jiacheng Liang and Changhan Zou}   
\date{}
\begin{document}

\let\mathbb=\mathbf
\maketitle
\begin{abstract}
We introduce the Zariski frame of any presentably symmetric monoidal $\infty$-category. This allows us to unify several spectral theories arising in higher algebra. The Zariski frame is coherent whenever the category is compactly generated, and the associated spectral space recovers both the classical Zariski spectrum of a commutative ring and the Hochster dual of the Balmer spectrum of a commutative $2$-ring. Moreover, the smashing frame of any stable presentably symmetric monoidal $\infty$-category can be identified with the Zariski frame of its category of dualizable modules. This construction is based on the principle that ideals in a symmetric monoidal $\infty$-category should be understood as monomorphisms into the unit object. In suitable contexts, this notion recovers the kinds of ideals appearing in the preceding examples, including thick ideals and smashing ideals, and it also accommodates the smashing ideals of non-stable $\infty$-categories. We also study the problem of forming quotients by ideals, which is subtle in the setting of higher algebra. To address this, we introduce two properties of pointed $\infty$-categories, called $\Sigma$-triviality and $\Sigma$-exactness. These conditions ensure that quotienting by ideals behaves well. As an application, we construct quotients of $\mathbb{E}_\infty$-semirings.
\end{abstract}

\tableofcontents
\section{Introduction}
The geometric classification of ideals is a fundamental tool across modern algebraic geometry, modular representation theory, and stable homotopy theory. This perspective builds on the classical Zariski spectrum, which organizes the prime ideals of a commutative ring into a topological space. Balmer’s tensor triangular geometry \cite{Balmer05a} extends this logic to the categorical setting. By classifying the thick ideals of an essentially small tensor triangulated category, the Balmer spectrum recovers classical schemes in algebraic geometry, as well as support varieties in modular representation theory and chromatic structures in stable homotopy theory.

The smashing ideals of a big tt-$\infty$-category $\mct$---that is, a presentably symmetric monoidal stable $\infty$-category---give another spectral construction. By a smashing ideal we mean the kernel of a smashing localization of $\mct$. In \cite{BalmerKrauseStevenson20}, Balmer--Krause--Stevenson showed that, when $\mct$ is rigidly-compactly generated, the poset of smashing ideals is a frame. The smashing frame is closely related to the notion of a categorified locale introduced by Clausen and Scholze in their condensed approach to analytic geometry \cite{clausen2026condensed}.

Although Balmer spectra and smashing frames both extract geometric information from tensor-triangulated categories, they have largely been studied in separate frameworks. In this work, we unify these constructions by introducing a Zariski frame for any presentably symmetric monoidal $\infty$-category, of which both the Balmer spectrum and the smashing frame arise as special cases. More precisely, we define a functor
\[
\op{Zar}: \calg(\prl) \to \frm
\]
from presentably symmetric monoidal $\infty$-categories to frames, which we call the \textbf{Zariski frame functor}. By post-composing with the functor
\[
\op{pt} \colon \frm \to \Top
\]
that takes frame-theoretic points, we obtain the \textbf{Zariski spectrum functor}
\[
\spec: \calg(\prl) \to \Top.
\]

Our construction starts from a notion of \textbf{categorical ideal} for presentably symmetric monoidal $\infty$-categories. For $\mcc^\otimes \in \calg(\prl)$, a categorical ideal of $\mcc^\otimes$ is a subobject $I\hookrightarrow\mb1$ of the tensor unit. We show that the lattice $\idlc$ of categorical ideals of $\mcc^\otimes$ naturally has the structure of a preframe---a suplattice with a compatible monoidal structure such that the monoidal unit is the top element. Unlike a frame, a preframe may not satisfy the distributive law of finite meets over arbitrary joins. Nevertheless, the radical elements in any preframe form a frame. Applying this construction to $\idlc$, we obtain our Zariski frame $\zar(\mcc)$. In other words, the Zariski frame $\zar(\mcc)$ is the frame of radical ideals of $\mcc$. In the compactly generated case, $\zar(\mcc)$ has the expected finiteness property:
\begin{thm}[\cref{thm:zar-coh}]\label{thm:a}
Let $\cotimes\in\calg(\prl)$ be compactly generated. The Zariski frame $\op{Zar}(\mcc)$ is coherent. Consequently, the Zariski spectrum $\spec(\mcc)$ is a coherent (or spectral) space.
\end{thm}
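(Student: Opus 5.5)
Here is the plan. I first show that the preframe $\idlc$ is generated under joins by its compact elements, and that these are closed under the monoidal product and contain the unit. Then I pass to radicals. Write the preframe structure on $\idlc$ as follows: the join of a family $I_\alpha\hookrightarrow\mb1$ is the image of $\bigoplus_\alpha I_\alpha\to\mb1$ (the coproduct followed by the $(-1)$-truncation of the map to $\mb1$), and the product is $I\cdot J=\im(I\otimes J\to\mb1)$.

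\textbf{Compact generation of $\idlc$.} For each compact object $x\in\mcc^\omega$ and map $f\colon x\to\mb1$, consider the principal ideal $\im(f)$. Any ideal $I$ is the image of the colimit of $x\to I$ over compact $x$, since $\mcc$ is compactly generated. Hence $I=\bigvee\im(f)$ over compact $f$ factoring through $I$.

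Each $\im(f)$ is compact in $\idlc$. Suppose $\im(f)\le\bigvee_\alpha I_\alpha=\im(\bigoplus I_\alpha\to\mb1)$. The effective epimorphism $\bigoplus I_\alpha\to\bigvee I_\alpha$ lifts on $x$, because $x$ is compact and images are computed by geometric realization of Čech nerves, i.e.\ filtered and sifted colimits. More precisely, $\pi_0\mapp(x,-)$ sends effective epis to surjections. This is the delicate point, since in a general presentable category compact objects need not be projective. So I would argue instead with the following claim.

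\emph{Claim.} $\im(f)\le J$ iff $f$ factors through $J$. This holds because $J\to\mb1$ is mono and the image is the initial mono factorization.

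Next, $\bigvee I_\alpha$ is a filtered join of the finite joins, and a filtered union of subobjects is computed as a filtered colimit in $\mcc$, since filtered colimits are left exact in presentable $\infty$-categories. Therefore $f\colon x\to\mb1$ factoring through $\colim_{\text{finite}}$ factors through some stage by compactness of $x$. So compact elements are exactly the finite joins of principal $\im(f)$.

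\textbf{Closure under the product.} $\im(f)\cdot\im(g)=\im(f\otimes g)$. To see this, note that $\otimes$ preserves effective epis in each variable: it preserves colimits, and effective epis are detected by $\pi_0$ of the Čech nerve. Moreover $x\otimes y$ is compact provided $\mb1$ is compact or $\otimes$ preserves compacts. Here I expect the hypothesis "compactly generated" in $\calg(\prl)$ to include that compacts are closed under $\otimes$ and that $\mb1$ is compact. Under that hypothesis, the top element $\mb1=\im(\id)$ is compact and products of compacts are compact.

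\textbf{Passing to radicals.} Using the construction of $\zar(\mcc)$ as radical elements of a preframe, the radicalization map $r\colon\idlc\to\zar(\mcc)$ is a left adjoint preserving joins, and it sends products to meets. Hence $\zar(\mcc)$ is generated by $r(c)$ for compact $c$. Each $r(c)$ is compact: if $r(c)\le\bigvee r(I_\alpha)=r(\bigvee I_\alpha)$, then $c^n\le\bigvee I_\alpha$ for some $n$, using that the radical is $\sqrt{I}=\bigvee\{c: c^n\le I\}$ with compact $c$ in the filtered description. Since $c^n$ is compact, $c^n$ lies below a finite subjoin. Finally, $r(c)\wedge r(d)=r(cd)$ with $cd$ compact, and the top element is $r(\mb1)$. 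So $\zar(\mcc)$ is coherent, and $\spec(\mcc)=\op{pt}(\zar(\mcc))$ is spectral by Stone duality for coherent frames.

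The main obstacle will be making the radical description precise: I need $\sqrt{I}=\bigvee_{c\text{ compact},\,c^n\le I}c$, and I need radical joins to be computed as radicals of preframe joins. I would verify this directly from the definition of radical elements used in the paper.
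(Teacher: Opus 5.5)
Your proposal is correct and is essentially the paper's proof carried out elementwise. Compact ideals are the images of maps $x\to\mb1$ with $x$ compact; this uses that filtered colimits of monomorphisms into $\mb1$ remain monomorphisms, which holds because $\mcc$ is compactly generated, not merely presentable. They contain $\mb1$ and are closed under products by the $\calg(\prl_\omega)$ hypothesis. Your formula $\sqrt{I}=\bigvee\{c \text{ compact} : c^n\le I\}$ is the elementwise form of the paper's observation that radical ideals are closed under directed joins (\cref{lem:rad-criterion}, \cref{radidprkl}). The one justification to replace is in the product step. In a general presentable $\infty$-category, $X\to\im(X)$ need not be an effective epimorphism. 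Derive $\im(f)\cdot\im(g)=\im(f\otimes g)$ instead from $\im\colon\mcc_{/\mb1}\to\idlc$ being a symmetric monoidal localization: ideals are closed under the internal hom of $\mcc_{/\mb1}$, as the paper records after \cref{def:ideal}.
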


As a result, we obtain the following commutative diagram
\[
\begin{tikzcd}
\calg(\prl_\omega) \arrow[d, hook] \arrow[r, "\op{Zar}"] \arrow[rr, "\spec", bend left] & \frmcoh \arrow[d, hook] \arrow[r, "\op{pt}", "\sim"'] & \topcoh \arrow[d, hook] \\
\calg(\prl) \arrow[r, "\op{Zar}"] \arrow[rr, "\spec"', bend right]                      & \frm \arrow[r, "\op{pt}"]                  & \Top           
\end{tikzcd}
\]
where the equivalence is Stone duality.

The construction recovers its principal motivating examples from the following module categories.

First, let $R$ be an ordinary commutative ring and take $\mcc^\otimes = \modu_{R}(\op{Ab})^\otimes$, the ordinary category of $R$-modules. Its categorical ideals are precisely the ideals of $R$. Hence, $\zar(\mcc)$ is the frame of radical ideals of $R$, and $\spec(\mcc)$ is the classical Zariski spectrum of $R$. See \cref{ex:Zariski}.

Next, let $\mck^\otimes \in \calg(\op{Cat}^{\op{perf}})$ be a commutative $2$-ring and take $\mcc^\otimes = \modu_{\mck}(\op{Cat}^{\op{perf}})^\otimes$, the $\infty$-category of $\mck$-modules. Its categorical ideals correspond to the thick ideals of $\mck$. Thus, $\zar(\mcc)$ is the frame of radical thick ideals of $\mck$, while $\spec(\mcc)$ is the Hochster dual of the Balmer spectrum of $\mck$. See \cref{ex:Balmer}.

Finally, for a big tt-$\infty$-category $\totimes\in\calg(\prlst)$. Consider $\mcc^\otimes = \modu_\mct(\prlst)^{\op{dbl},\otimes}$, the $\infty$-category of dualizable $\mct$-modules and internal left adjoints. The categorical ideals of $\mcc$ are the smashing ideals of $\mct$, and $\zar(\mcc)$ recovers the smashing frame $\op{Sm}(\mct)$. See \cref{dblideal}.

The results above are summarized in the table below:
\begin{center}
\begin{tabularx}{\textwidth}{p{2.5cm} p{3cm} X X}
\toprule
$\mcc^\otimes$ 
& Ideals of $\mcc$
& $\zar(\mcc)$
& $\spec(\mcc)$\\
\midrule

$\modu_{R}(\op{Ab})^\otimes$ 
& ideals of $R$ 
& frame of radical ideals of $R$
& Zariski spectrum of $R$\\

\addlinespace

$\modu_{\mck}(\op{Cat}^{\op{perf}})^\otimes$ 
& thick ideals of $\mck$ 
& frame of radical thick ideals of $\mck$
& Hochster dual of Balmer spectrum of $\mck$ \\

\addlinespace

$\modu_\mct(\prlst)^{\op{dbl},\otimes}$ 
& smashing ideals of $\mct$
& smashing frame of $\mct$
& smashing spectrum of $\mct$\\

\bottomrule
\end{tabularx}
\end{center}

As an application, \cref{thm:a} allows us to apply point-free topological results simultaneously to commutative algebra and tensor triangular geometry. We prove the following theorem, which characterizes when the spectral space of a coherent frame is Noetherian.
\begin{thm}[\cref{thm:cohen}]\label{thm:intro-cohen}
Let $F$ be a coherent frame. The following are equivalent:
\enu{
\item The spectral space $\op{pt}(F)$ is Noetherian;
\item Every element of $F$ is finite;
\item Every prime element of $F$ is finite.
}
\end{thm}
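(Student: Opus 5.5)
We work entirely in point-free terms, using the Stone duality between coherent frames and spectral spaces recalled above. Write $K(F)$ for the distributive lattice of compact (finite) elements of $F$, so that $F \simeq \idl(K(F))$. The opens of $\op{pt}(F)$ are the $U_a=\{p : a\not\le p\}$ for $a\in F$, where points are identified with prime elements $p\neq 1$. The map $a\mapsto U_a$ is an isomorphism of frames onto the open-set lattice, since coherent frames are spatial. Under this dictionary, an element $a$ is finite exactly when $U_a$ is a compact open. Recall that a topological space is Noetherian if and only if every open subset is compact, equivalently if opens satisfy the ascending chain condition.

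\emph{(1)$\Leftrightarrow$(2).} This is now essentially a translation. If $\op{pt}(F)$ is Noetherian, then every open $U_a$ is compact, so every $a$ is finite. Conversely, if every element is finite, then every open is compact, hence the space is Noetherian.

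\emph{(2)$\Rightarrow$(3).} This is trivial.

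\emph{(3)$\Rightarrow$(2).} This is the main step, and I would prove it by a Cohen-type argument in the style of the classical theorem that a ring whose primes are finitely generated is Noetherian. Suppose some element of $F$ is not finite. Let $\Sigma$ be the set of non-finite elements, partially ordered by $\le$.

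First, $\Sigma$ has a maximal element, by Zorn's lemma. Let $(a_i)$ be a nonempty chain in $\Sigma$ and put $a=\bigvee a_i$. If $a$ were finite, then $a\le\bigvee a_i$ would force $a\le a_{i}$ for some $i$, because a chain is directed. Then $a=a_i$, contradicting $a_i\in\Sigma$. Hence $a\in\Sigma$ and is an upper bound for the chain.

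Let $p$ be a maximal element of $\Sigma$. Note $p\neq 1$, since $1$ is compact in a coherent frame. We show $p$ is prime, which contradicts (3). Suppose $x\wedge y\le p$ with $x,y\not\le p$. Then $p\vee x$ and $p\vee y$ are strictly above $p$, so by maximality they are finite.

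Write $p=\bigvee D$ with $D$ a directed set of compact elements below $p$. Since $p\vee x$ is finite, we can choose $c\in D$ and a compact $x'\le x$ with $p\vee x=c\vee x'$; similarly $p\vee y=c\vee y'$ with compact $y'\le y$, enlarging $c$ within $D$ to handle both. By distributivity,
\[
(p\vee x)\wedge(p\vee y)=p\vee(x\wedge y)=p,
\]
so
\[
p=(c\vee x')\wedge(c\vee y')=c\vee(x'\wedge y').
\]
Here $x'\wedge y'$ is compact, because $K(F)$ is closed under finite meets by coherence. Therefore $p$ is a finite join of compact elements and hence compact, contradicting $p\in\Sigma$.

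The delicate point is this final step: both the existence of the compact approximations $c$, $x'$, $y'$ and the closure of compact elements under binary meets use coherence essentially. Since (3) implies every prime is finite, $\Sigma$ must be empty, which gives (2).
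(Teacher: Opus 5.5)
Your proof is correct, and for the substantive implication it takes a genuinely different route from the paper.

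\textbf{The equivalence (1)$\Leftrightarrow$(2).} The paper goes through constructible sets. A finite element corresponds to a quasi-compact open, hence to a constructible open. A spectral space is Noetherian iff every open is constructible, by \cite[Theorem~8.1.11]{DickmannSchwartzTressl19}. You instead use the elementary fact that a space is Noetherian iff every open is quasi-compact. This avoids the constructible topology altogether.

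\textbf{The direction (3)$\Rightarrow$(1) versus your (3)$\Rightarrow$(2).} The paper argues topologically. The open set attached to a prime $x$ is the complement of the closure of the point $x$, so (3) says every point closure is constructible. Another clause of the same theorem of Dickmann--Schwartz--Tressl then gives Noetherianity. You prove (3)$\Rightarrow$(2) inside the frame by the classical Cohen/Oka argument:
\begin{itemize}
\item A maximal non-finite element $p$ exists by Zorn, since a nonempty chain of non-finite elements cannot have a finite join.
\item Such a $p$ must be prime. If $x\wedge y\le p$ with $x,y\not\le p$, then $p=(c\vee x')\wedge(c\vee y')=c\vee(x'\wedge y')$ is a finite join of finite elements.
\end{itemize}
The steps check out, including each use of coherence: the top is finite, and finite elements are closed under finite joins and binary meets.

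\textbf{What each route buys.} The paper's proof is shorter because it outsources the work to the literature on spectral spaces. It also records the topological reformulation that Noetherianity is detected on point closures. Your argument is self-contained and purely lattice-theoretic for (3)$\Rightarrow$(2); spatiality enters only in translating (2) into (1). It also shows that the theorem is literally Cohen's argument run in a coherent frame. In topological language, your maximal non-finite element is a maximal non-quasi-compact open, whose closed complement you show is irreducible and hence a point closure.
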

Combined with \cref{thm:a}, this gives a Cohen-type criterion for the Zariski spectrum of any compactly generated presentably symmetric monoidal $\infty$-category $\mcc$. Applied to $\mcc^\otimes = \modu_{\mck}(\op{Cat}^{\op{perf}})^\otimes$, \cref{thm:intro-cohen} recovers the criterion of \cite{Barthel26}: the Hochster dual of the Balmer spectrum of $\mck$ is Noetherian if and only if every prime thick ideal---or equivalently, every radical thick ideal---is the radical of a finitely generated thick ideal. Appied to $\mcc^\otimes = \modu_{R}(\op{Ab})^\otimes$, it gives the analogous criterion for the topological Noetherianity of $R$.

In addition to unifying familiar spectral theories, our Zariski frame functor also yields other interesting constructions, thanks to the flexible input it takes. For instance:
\begin{thm}[\cref{thm:cidem-zar-ccalg}]\label{thm:c}
Let $\votimes \in \calg(\prl)$.	The Zariski frame $\zar(\ccalg(\mcv))$ of the cocommutative coalgebras in $\mcv$ recovers the frame $\cidem(\mcv)$ of coidempotent objects introduced in \cite{aoki2023sheaves}.
\end{thm}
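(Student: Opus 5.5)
Here is the plan. I would compute $\idl(\ccalg(\mcv))$ directly and then compare radicals.

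First, the monoidal structure. Recall the facts about $\ccalg(\mcv)$: it is presentable, and its tensor product is the one inherited from $\mcv$. Its unit is $\mb1$ with its canonical coalgebra structure, and $\mb1$ is terminal in $\ccalg(\mcv)$, since the unit is terminal among cocommutative coalgebras. Hence a categorical ideal $I\hookrightarrow\mb1$ is just an object $C\in\ccalg(\mcv)$ whose unique map to the terminal object is a monomorphism. Equivalently, $C$ is a $(-1)$-truncated object of $\ccalg(\mcv)$, meaning that $\mapp(D,C)$ is empty or contractible for every coalgebra $D$.

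Second, the key step: identify $(-1)$-truncated cocommutative coalgebras with coidempotent objects in the sense of \cite{aoki2023sheaves}. A coidempotent is an object $E$ with a map $\epsilon\colon E\to\mb1$ such that $E\otimes\epsilon\colon E\otimes E\to E$ is an equivalence.
\begin{itemize}
\item Coidempotent to $(-1)$-truncated. A coidempotent carries a unique cocommutative coalgebra structure, namely the dual of the standard result that idempotent algebras are commutative algebras with contractible choice. Moreover, $\mapp_{\ccalg}(D,E)$ is either empty or contractible. Indeed, a coalgebra map $D\to E$ forces $D\otimes E\simeq D$ through $\epsilon$, and then maps into $E$ are equivalent to the condition "$D\otimes\epsilon$ is an equivalence".
\item $(-1)$-truncated to coidempotent. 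The diagonal $C\to C\times C$ is an equivalence when $C$ is $(-1)$-truncated. The product in $\ccalg(\mcv)$ is $C\otimes C$, because the tensor product is the cartesian product in cocommutative coalgebras. So the comultiplication $C\to C\otimes C$ is an equivalence, with inverse $C\otimes\epsilon$, and $C$ is coidempotent.
\end{itemize}
This gives an equivalence of posets $\idl(\ccalg(\mcv))\simeq\cidem(\mcv)$, under which tensor product of ideals corresponds to tensor product of coidempotents.

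Third, radicals. Since $\otimes$ is the cartesian product in $\ccalg(\mcv)$, the preframe $\idl(\ccalg(\mcv))$ has multiplication equal to the meet. Then every element is idempotent ($I\otimes I=I$), so every ideal is radical and $\zar(\ccalg(\mcv))=\idl(\ccalg(\mcv))$. The remaining point is that the poset structures and the joins agree with those of $\cidem(\mcv)$. Joins in $\idl$ are images, computed as the $(-1)$-truncation of the colimit, and colimits in $\ccalg(\mcv)$ are computed in $\mcv$. I would check that this agrees with the join used in \cite{aoki2023sheaves}. Since both sides are frames and the bijection is an order isomorphism, agreement of the orders alone suffices. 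The order on both sides is "existence of a map over $\mb1$", so the identification is automatic once the posets match.

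The main obstacle is the second step, specifically showing that the forgetful functor identifies mapping spaces so that truncatedness matches coidempotence. I would handle it by citing the universal property of coidempotents as colocalizations, dual to Lurie's idempotent algebras.
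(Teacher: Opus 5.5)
Your proposal is correct and follows essentially the same route as the paper. Cartesian monoidality of $\ccalg(\mcv)$ makes every ideal idempotent, hence radical, so $\zar(\ccalg(\mcv))=\idl(\ccalg(\mcv))$. It also means a monomorphism $C\to\mb1$ of coalgebras is the same as a coidempotent coalgebra, and the dual of \cite[Proposition~4.8.2.9]{ha} then identifies these with $\cidem(\mcv)$. The only cosmetic difference is that the paper passes explicitly through $\cidem(\ccalg(\mcv))$ before forgetting to $\mcv$, whereas you go directly from $(-1)$-truncated coalgebras to coidempotents in $\mcv$.
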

When $\mcv=\mct\in\calg(\prlst)$ is stable, the frame $\cidem(\mct)$ recovers the usual smashing frame $\op{Sm}(\mct)$ of \cite{BalmerKrauseStevenson20}. For a general $\mcv$, it can be regarded as a non-stable version of the smashing frame; see \cite{aoki2023sheaves}. It is an open question whether the frame $\op{Sm}(\mct)$ is coherent---and hence has an associated spectral space by Stone duality---even for rigidly-compactly generated $\mct$. Nevertheless, we can prove that $\op{Sm}(\mct)$ is $\omega_1$-coherent:
\begin{thm}[\cref{cor:omega1-coherent}]\label{thm:d}
Let $\totimes\in\calg^{\op{rig}}(\prlst)$ be a rigid $\opsp$-algebra. The smashing frame $\op{Sm}(\mct)$ is $\omega_1$-coherent.
\end{thm}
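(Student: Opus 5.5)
The strategy is to realize $\op{Sm}(\mct)$ as the Zariski frame of a category that is $\omega_1$-compactly generated, and then to run the $\kappa$-version of the argument behind \cref{thm:a}. By the discussion preceding the statement, $\op{Sm}(\mct)\simeq\zar(\mcc)$ for $\mcc^\otimes=\modu_\mct(\prlst)^{\op{dbl},\otimes}$, the dualizable $\mct$-modules with internal left adjoints. Equivalently, via \cref{thm:c}, $\op{Sm}(\mct)\simeq\cidem(\mct)\simeq\zar(\ccalg(\mct))$. For rigid $\mct$, Efimov's results give that $\modu_\mct(\prlst)^{\op{dbl}}$ is presentable and $\omega_1$-compactly generated: dualizable categories are retracts of compactly generated ones, and the $\omega_1$-compact objects are those built from countable data. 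This is therefore the model I would use.

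Next I would prove a general statement: if $\mcc^\otimes\in\calg(\prl)$ is $\kappa$-compactly generated and the unit $\mb1$ is $\kappa$-compact, then $\zar(\mcc)$ is $\kappa$-coherent. This means every element is a join of $\kappa$-compact elements, and the $\kappa$-compact elements are closed under finite meets, including the top. The argument mirrors \cref{thm:zar-coh}.

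\begin{enumerate}
\item Every ideal $I\hookrightarrow\mb1$ is a $\kappa$-filtered colimit of maps $c\to\mb1$ with $c$ $\kappa$-compact. Hence $I$ is the join in $\idlc$ of the images $\im(c\to\mb1)$; these are the $\kappa$-compactly generated ideals.
\item Such ideals are $\kappa$-compact in $\idlc$, and their radicals are $\kappa$-compact in $\zar(\mcc)$. This uses that joins of ideals are computed as images of coproducts, and that the $\kappa$-compactness of $c$ lets a map $c\to\colim$ factor through a small stage.
\item Closure under finite meets. In the radical frame we have $\sqrt{I}\wedge\sqrt{J}=\sqrt{IJ}$, and $\im(c\to\mb1)\cdot\im(d\to\mb1)$ has the same radical as $\im(c\otimes d\to\mb1)$. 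Here $c\otimes d$ is again $\kappa$-compact, because $\mcc$ is a $\kappa$-compactly generated symmetric monoidal category whose tensor product preserves $\kappa$-compacts (generators tensor to $\kappa$-compacts). The top element is the radical of $\mb1$ itself, which is $\kappa$-compact.
\end{enumerate}

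The main obstacle is the input from $\mcc=\modu_\mct(\prlst)^{\op{dbl}}$. Two facts are needed there: that $\mb1=\mct$ is $\omega_1$-compact in $\mcc$, and that $\otimes$ preserves $\omega_1$-compact objects. Rigidity of $\mct$ is exactly what makes $\mct$ dualizable (indeed self-dual) as a $\mct$-module. I would deduce $\omega_1$-compactness from Efimov's characterization, which says that a dualizable category is $\omega_1$-compact iff it is generated by a countable diagram of compactly assembled data, or equivalently that $\omega_1$-compact objects are exactly the "compactly assembled with countable presentation" ones. Preservation under $\otimes$ then follows because $\otimes_\mct$ commutes with the countable colimits presenting each factor.

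A second subtlety is that in $\mcc$ the images in step (1) must be computed in the non-stable category $\modu_\mct(\prlst)^{\op{dbl}}$, and monomorphisms there need not behave like subobjects in an ordinary category. To avoid these issues I would work on the coalgebra side: use \cref{thm:c} and show directly that every coidempotent coalgebra is an $\omega_1$-filtered colimit of coidempotents generated by $\omega_1$-compact data. Alternatively, I would reduce step (1) to the formal "image" construction of $\idlc$ as $(\mcc_{/\mb1})_{\le -1}$, which only requires the localization $\mcc_{/\mb1}\to(\mcc_{/\mb1})_{\le-1}$ to preserve $\kappa$-compact generation. This holds because truncation is an accessible left adjoint whose right adjoint preserves $\kappa$-filtered colimits for $\kappa\ge\omega_1$.
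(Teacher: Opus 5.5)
Your skeleton matches the paper's. You identify $\op{Sm}(\mct)\simeq\zar(\lintdbl)$ (\cref{dblideal}). You use the general fact that $\zar(\mcc)$ is $\kappa$-coherent whenever $\cotimes\in\calg(\prl_{\kappa})$ (\cref{kcohfrm}, via \cref{idlccompgene} and \cref{radidprkl}), and you apply it with $\kappa=\omega_1$.

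The gap is in the one input that actually depends on rigidity: $\lintdbl\in\calg(\prl_{\omega_1})$. This means presentable, $\omega_1$-compactly generated, unit $\omega_1$-compact, and $\otimes_\mct$ preserving $\omega_1$-compacts. Your justification of it does not work.
\begin{itemize}
\item Rigidity is not what makes $\mct$ dualizable as a $\mct$-module. $\mct$ is the unit of $\lintdbl$ and is self-dual there for every $\mct$. In any case, dualizability of the unit says nothing about its $\omega_1$-compactness.
\item ``Dualizable categories are retracts of compactly generated ones'' is a statement about the objects. It says nothing about the presentability or the $\omega_1$-compact objects of the $\infty$-category $\lintdbl$ itself.
\item ``$\otimes_\mct$ commutes with the countable colimits presenting each factor'' is not an argument in $\lintdbl$.
\end{itemize}
What rigidity actually provides is that $\mct$ is dualizable over $\opsp$ with strongly continuous unit and multiplication (\cref{charaidl}(3)). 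Hence $\totimes\in\calg(\prl_{\op{st},\omega_1})$ by \cite[Corollary 2.36]{ramzi2024dualizable}. Then Ramzi's theorem quoted as \cref{dualprl} applies: for $\kappa$ uncountable and $\votimes\in\calg(\prl_{\kappa})$, one has $\pr^{\op{dbl},\otimes}_{\mcv}\in\calg(\prl_{\kappa})$. This gives exactly the three properties you need, and it is how the paper argues (\cref{prop:omega1} and the corollary following it). Efimov's generator $\shv_{\geq0}(\mathbb{R};\mct)$ (\cref{cor:omega1-coherent}) only gives generation. Its $\omega_1$-compactness, and the unit and tensor conditions, still come from \cref{dualprl}.

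A few smaller points.
\begin{itemize}
\item In step (2), $\sqrt{J}$ being $\kappa$-compact in $\zar(\mcc)$ for $\kappa$-compact $J$ requires $\idlc_{\op{rad}}\subseteq\idlc$ to be closed under $\kappa$-directed joins. This is not about images of coproducts. It follows from \cref{lem:rad-criterion}: radicality can be tested on $J=\im(X\to\mb1)$ with $X\in\mcc^{\kappa}$. Then $J^n$ is $\kappa$-compact, so $J^n\subseteq\bigvee_\alpha I_\alpha$ forces $J^n\subseteq I_\alpha$ for some $\alpha$.
\item Your general lemma must assume, not derive, that $\otimes$ preserves $\kappa$-compacts.
\item The detour through $\zar(\ccalg(\mct))$ is unnecessary and would not obviously yield $\omega_1$, since nothing you say controls the presentability rank of $\ccalg(\mct)$.
\item Your worry about images in $\lintdbl$ is already settled by working with $(\mcc_{/\mb1})_{\le-1}$ and \cite[Lemma 5.5.6.15]{htt}, as the paper does.
\end{itemize}
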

This theorem is obtained by identifying $\op{Sm}(\mct)$ with the Zariski frame of dualizable modules over $\mct$:
\begin{thm}[\cref{dblideal}]\label{thm:intro-sm-zar}
Let $\totimes\in\calg(\prlst)$. We have
\[
\op{Sm}(\mct) \simeq \zar(\modu_\mct(\prlst)^{\op{dbl}}).
\]
\end{thm}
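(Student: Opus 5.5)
Write $\mcc=\modu_\mct(\prlst)^{\op{dbl}}$, whose unit is $\mct$ itself. My plan is to identify the categorical ideals of $\mcc$, namely the subobjects of the unit $\mct$, with the smashing ideals of $\mct$. I would then check that this identification respects the preframe structure, and finally show that every categorical ideal is radical. Together these give $\idl(\mcc)\simeq\op{Sm}(\mct)$ as frames, and radicality then forces $\zar(\mcc)=\idl(\mcc)$.

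\emph{Step 1: monomorphisms into $\mct$ in $\mcc$.} A morphism in $\mcc$ is a $\mct$-linear internal left adjoint, i.e.\ a $\mct$-linear functor $f\colon\mcm\to\mct$ between dualizable $\mct$-modules whose right adjoint is again colimit preserving and $\mct$-linear.

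Suppose $f$ is a monomorphism in $\mcc$. The diagonal $\mcm\to\mcm\times_{\mct}\mcm$ is then an equivalence, where the pullback is taken in $\mcc$. To use this, I need to know which limits $\mcc$ has. The key input I would establish is that if $f$ is a mono, then testing against the dualizable objects $\mcm\otimes_\mct\mcn$ shows that $\operatorname{Fun}^L_\mct(\mcn,\mcm)\to\operatorname{Fun}^L_\mct(\mcn,\mct)$ is fully faithful onto a sieve for every dualizable $\mcn$. Taking $\mcn=\mcm^\vee$ and using duality, $f$ itself should be fully faithful. Its right adjoint $f^R$ is $\mct$-linear and colimit preserving, so $L=ff^R\colon\mct\to\mct$ is a $\mct$-linear colimit-preserving colocalization. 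Such an $L$ is given by tensoring with $e=L\mb1$, with counit $e\to\mb1$. Idempotence of $L$ means $e\otimes e\simeq e$, so $\mcm\simeq e\otimes\mct$ is the kernel of the smashing localization $-\otimes\operatorname{cofib}(e\to\mb1)$, i.e.\ a smashing ideal.

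Conversely, for a smashing ideal $e\otimes\mct$ I would show three things:
\begin{itemize}
\item it is dualizable, being a retract of $\mct$ in $\modu_\mct(\prlst)$ via the idempotent $e\otimes-$, and retracts of dualizables are dualizable;
\item the inclusion is an internal left adjoint, with right adjoint $e\otimes-$, which is linear and colimit preserving;
\item the inclusion is a monomorphism in $\mcc$, because postcomposition with a fully faithful functor whose essential image is closed under the $\mct$-action gives a $(-1)$-truncated map on mapping spaces of $\mcc$.
\end{itemize}

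\emph{Step 2: preframe structure.} The tensor product of two such subobjects is $(e\otimes\mct)\otimes_\mct(e'\otimes\mct)\simeq (e\otimes e')\otimes\mct$. This matches the meet in $\op{Sm}(\mct)$. Joins of categorical ideals are images of $\bigoplus_i I_i\to\mb1$, which correspond to the smashing ideal generated by the $e_i\otimes\mct$, again matching the join in $\op{Sm}(\mct)$. Both identifications use the explicit description of images in $\mcc$ obtained from Step 1.

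\emph{Step 3: radicality.} Since $e\otimes e\simeq e$, every categorical ideal $I$ satisfies $I\otimes I\simeq I$. Hence all elements of the preframe are idempotent and thus radical, and the preframe is already the frame $\op{Sm}(\mct)$.

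The main obstacle is the first half of Step 1: deducing full faithfulness of $f$ from $f$ being a monomorphism in the non-complete category $\mcc$ of dualizable modules with internal left adjoints. Here I would lean on duality to convert the mono condition into a statement about functor categories. I would also use that the relevant pullback $\mcm\times_\mct\mcm$ can be computed in $\prlst$ and is itself dualizable in the case at hand.
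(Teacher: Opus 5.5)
\paragraph{Verdict.} Your outline matches the paper's: identify $\idl(\mcc)$ with the smashing ideals, then observe that every ideal is idempotent and hence radical. Several parts of your argument are fine: the converse direction (a smashing ideal $e\otimes\mct\hookrightarrow\mct$ is a monomorphism in $\mcc$), the step from a fully faithful $f$ to $\mcm\simeq e\otimes\mct$, and Steps~2--3. Step~2 is in fact superfluous, since an order isomorphism automatically matches joins and meets. The gap is exactly the step you flag: showing that a monomorphism $f\colon\mcm\to\mct$ in $\mcc$ is fully faithful. Your sketch for it does not work, for three reasons.

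\begin{enumerate}
\item Mapping spaces in $\mcc$ are groupoids of \emph{internal left adjoints}. So the hypothesis only says that these groupoids map by $(-1)$-truncated maps, and it gives nothing directly about $\funct^L_\mct(\mcn,-)$. Testing against $\mct$ (or $\funct(\Delta^1,\mct)$) only sees objects $a\in\mcm$ for which $-\otimes a$ is an internal left adjoint; for $\mct=\opsp$ these are the compact objects. Such objects need not generate $\mcm$: a dualizable category can have no nonzero compact objects at all, e.g.\ sheaves of spectra on $\mathbb{R}$.
\item The claim ``fully faithful onto a sieve'' is false. The essential image of an exact functor contains $0$, which receives a map from every object. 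A sieve containing $0$ is therefore everything, so for $\mcn=\mct$ the map $f$ would be an equivalence.
\item The claim that $\mcm\times_\mct\mcm$ can be computed in $\prlst$ and is dualizable is circular. You only know $\mcm\times_\mct\mcm\simeq\mcm$ after knowing $f$ is fully faithful, and pullbacks in $\lintdbl$ are not computed in $\prlst$ in general.
\end{enumerate}

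\paragraph{The missing input.} What you need is Efimov's description of fibre products in $\lintdbl$ (\cref{efi187}). It says the pullback in $\lintdbl$ is the largest dualizable full subcategory of the $\prlst$-pullback whose projections are internal left adjoints, and that its inclusion into the $\prlst$-pullback is fully faithful. The argument then runs as follows.
\begin{itemize}
\item Since $f$ is a monomorphism, the diagonal in $\lintdbl$ is an equivalence. Hence the diagonal $\mcm\to\mcm\times_\mct\mcm$ into the $\prlst$-pullback is fully faithful.
\item Computing mapping spaces in that pullback gives $\mapp_\mcm(a,b)\simeq\mapp_\mcm(a,b)\times_{\mapp_\mct(fa,fb)}\mapp_\mcm(a,b)$. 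That is, $\mapp_\mcm(a,b)\to\mapp_\mct(fa,fb)$ is $(-1)$-truncated.
\item Applying this to all shifts $\Sigma^n a$ and using stability upgrades the $(-1)$-truncated map to an equivalence, so $f$ is fully faithful.
\end{itemize}
This is \cref{cdblmono}(1). From there the paper concludes as you do: closed ideals are smashing ideals, these are the coidempotents of $\lintdbl$ by \cref{smideal}, and idempotence gives radicality.
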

\Cref{thm:intro-sm-zar} also suggests an unstable generalization of smashing frames. For $\votimes \in \calg(\prl)$, we define a smashing ideals of $\mcv$ to be a fully faithful categorical ideal $i:\mci\to \mcv$ in $\linvdbl\coloneqq\modu_\mcv(\prl)^{\op{dbl},\otimes}$. We show in \cref{smideal} that the smashing ideals of $\mcv$ are precisely the coidempotent objects of $\linvdbl$, and therefore they form a frame
\[
\op{Sm}(\mcv) \coloneqq \cidem(\linvdbl).
\]
The following theorem compares this ideal-theoretic definition with the intrinsic $\cidemv$.
\begin{thm}[\cref{smidsmcoloc}]
Let $\votimes\in \calg(\prl)$. There is an equivalence of frames
$$\cidem(\mcv)\xrightarrow{\sim}\cidem(\linvdbl)=\op{Sm}(\mcv).$$
\end{thm}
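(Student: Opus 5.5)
We will construct the comparison map explicitly and then show it is a bijection of posets; since both sides are frames, an order isomorphism is automatically a frame isomorphism. Recall the two sides.

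An element of $\cidem(\mcv)$ is a coidempotent object: a map $c\colon C\to\mb1$ in $\mcv$ such that $c\otimes C\colon C\otimes C\to C$ is an equivalence.

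An element of $\op{Sm}(\mcv)=\cidem(\linvdbl)$ is a coidempotent object of $\linvdbl$. By \cref{smideal}, this is the same thing as a fully faithful categorical ideal $i\colon\mci\hookrightarrow\mcv$ whose right adjoint is $\mcv$-linear and colimit-preserving, and such that $\mci$ is a dualizable $\mcv$-module.

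The map $\Phi\colon \cidem(\mcv)\to\op{Sm}(\mcv)$ sends a coidempotent $C\to\mb1$ to the colocalizing subcategory
\[
\mci_C\coloneqq C\otimes\mcv\simeq \modu^{\op{co}}_C(\mcv)\hookrightarrow \mcv .
\]
Here $\mci_C$ is the essential image of the colocalization $C\otimes-$, and $i$ is the inclusion.

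\textbf{Step 1: $\Phi(C)$ lies in $\op{Sm}(\mcv)$.}
\begin{itemize}
\item The functor $C\otimes-$ is an idempotent $\mcv$-linear comonad, so $i$ is fully faithful.
\item The right adjoint of $i$ is $C\otimes-$ itself, which is $\mcv$-linear and preserves colimits.
\item $\mci_C$ is a retract of $\mcv$ in $\modu_\mcv(\prl)$ via $i$ and $C\otimes-$, hence dualizable (indeed self-dual).
\item The map $\mci_C\to\mcv$ is a mono onto the unit $\mcv$: the tensor product $\mci_C\otimes_\mcv\mci_C$ is $C\otimes C\otimes\mcv\simeq\mci_C$. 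This uses $C\otimes C\simeq C$, and is exactly the coidempotence condition in $\linvdbl$.
\end{itemize}
So $\Phi(C)$ is a smashing ideal.

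\textbf{Step 2: the inverse $\Psi$.}
Let $i\colon\mci\to\mcv$ be a smashing ideal with right adjoint $r$. Since $r$ is $\mcv$-linear, so is $ir$, and hence
\[
ir(X)\simeq ir(\mb1)\otimes X .
\]
Set $C\coloneqq ir(\mb1)$, with map $C\to\mb1$ given by the counit.

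Because $i$ is fully faithful, $ir$ is an idempotent comonad. Therefore $C\otimes C\simeq ir(ir(\mb1))\simeq C$ via the counit, so $C$ is coidempotent. Define $\Psi(\mci)\coloneqq C$.

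\textbf{Step 3: $\Phi$ and $\Psi$ are mutually inverse and monotone.}
\begin{itemize}
\item $\Psi\Phi(C)=C$, since the right adjoint of $\mci_C\hookrightarrow\mcv$ is $C\otimes-$, which sends $\mb1$ to $C$.
\item $\Phi\Psi(\mci)=\mci$, since the image of $i$ equals the image of $ir=C\otimes-$, which is $\mci_C$.
\item Monotonicity in both directions: $C\le C'$ holds iff $C\to\mb1$ factors through $C'$, iff $C'\otimes C\simeq C$, iff $C\otimes\mcv\subseteq C'\otimes\mcv$.
\end{itemize}

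\textbf{Expected main obstacle.}
The delicate point is the identification in \cref{smideal} of the subobject order and of mono-ness in the $(\infty,2)$-flavoured category $\linvdbl$ with inclusions of full subcategories. Concretely, we must show that an arbitrary mono $\mci\to\mcv$ in $\linvdbl$ that is fully faithful has a \emph{linear, colimit-preserving} right adjoint.

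We plan to handle this with dualizability. A map of dualizable $\mcv$-modules that is an internal left adjoint has a right adjoint that is again $\mcv$-linear and cocontinuous. This is built into the morphisms of $\linvdbl$, so the remaining verifications above are formal. If $\linvdbl$ is instead defined using all $\mcv$-linear left adjoints, we will first check this property directly from dualizability of $\mci$.
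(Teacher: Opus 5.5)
Your proof is correct and follows the paper's route. Your $C\otimes\mcv$ is exactly the localizing ideal $\langle C\rangle$ used in the paper, and your inverse via $ir(X)\simeq ir(\mb{1})\otimes X$ is the paper's essential-surjectivity argument; you also make explicit that $C\otimes\mcv$ is dualizable because it is a retract of $\mcv$. One check should be added: for $C\le C'$, the inclusion $C\otimes\mcv\hookrightarrow C'\otimes\mcv$ must itself be a morphism of $\linvdbl$. This holds because its right adjoint is the restriction of $C\otimes-$, which is $\mcv$-linear and colimit-preserving.
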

Together with \cref{thm:c}, this gives natural equivalences
\[
\op{Sm}(\mcv) \simeq \cidemv \simeq \zar(\ccalg(\mcv)).
\]

The construction of the Zariski frame provides a geometric classification of ideals in a very general context, yet the utility of these ideals in higher algebra is fundamentally tied to our ability to form quotients. In classical commutative algebra, there is a satisfactory correspondence between the ideals of a ring $R$ and the surjective ring homomorphisms out of $R$. In \cref{sec:sigma} we will establish a robust higher categorical analog of this correspondence within our framework.

Consider a commutative ring object $R \in \calg(\mcc)$ for $\cotimes\in\calg(\prl_*)$ a pointed presentably symmetric monoidal $\infty$-category. Let $I\subseteq R$ be a categorical ideal of $\modu_{R}(\mcc)^\otimes$. Naively, we form the quotient $R/I$ in $\mcc$ as the pushout
\[
\begin{tikzcd}
I \arrow[d] \arrow[r] & R \arrow[d] \\
0 \arrow[r]           & R/I.
\end{tikzcd}
\]
However, this construction does not guarantee that the quotient $R/I$ inherits a unique commutative algebra structure that satisfies the expected universal property of a quotient. To rectify this, we introduce:
\begin{de}
Let $\mcc$ be a pointed $\infty$-category. We say that $\mcc$ is \textbf{$\Sigma$-trivial} if for any object $X\in\mcc$, the map $X\to0$ is an epimorphism.
\end{de}
The name $\Sigma$-trivial comes from the fact that if $\mcc$ admits cofibers, then $\mcc$ is $\Sigma$-trivial if and only if the suspension functor $\Sigma:\mcc\to \mcc$ is zero. The $\Sigma$-trivial condition ensures that the naive pushout $R/I$ always makes sense as an algebraic quotient:
\begin{thm}[\cref{quotidl}]\label{thm:intro-quotidl}
Let $\cotimes\in\calg(\prl_*)$ be $\Sigma$-trivial. Let $R\in\calg(\mcc)$ and $I\subseteq R$ be a categorical ideal of $\modu_{R}(\mcc)$. There exists a unique commutative $R$-algebra structure on $R/I$. Moreover, the map $R\to R/I$ is an epimorphism in $\calg(\mc{C})$ and satisfies the following universal property: For any $B\in \calg(\mc{C})$, the induced map $$\mapp_{\calg(\mc{C})}(R/I,B)\to \mapp_{\calg(\mc{C})}(R,B)$$ is a $(-1)$-truncated map whose image on $\pi_0$ consists of the maps $R\to B$ such that the composite $I\to R\to B$ is homotopic to $0$ in $\modu_R(\mcc)$.
\end{thm}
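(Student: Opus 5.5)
We reduce to the case $R=\mb 1$ by replacing $\mcc$ with $\modu_R(\mcc)$. This category is again pointed, presentably symmetric monoidal and $\Sigma$-trivial: the forgetful functor is conservative and preserves colimits, so $M\to 0$ is an epimorphism, i.e.\ $M\sqcup_M 0\simeq 0$, in $\modu_R(\mcc)$ as soon as it is one in $\mcc$. Moreover $\calg(\modu_R(\mcc))\simeq\calg(\mcc)_{R/}$. So it suffices to treat a categorical ideal $I\hookrightarrow\mb 1$ of a $\Sigma$-trivial $\cotimes$ and the object $\mb1/I=\Cofib(I\to\mb1)$.

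The first step is to show that $\mb1\to\mb1/I$ is an idempotent algebra, i.e.\ that $\mb1/I\to\mb1/I\otimes\mb1/I$ is an equivalence. Since $\otimes$ preserves colimits in each variable, $\mb1/I\otimes\mb1/I$ is the cofiber of $I\otimes\mb1/I\to\mb1/I$. I would show $I\otimes \mb1/I\simeq 0$. Tensoring the defining pushout with $I$ exhibits $I\otimes\mb1/I$ as the pushout of $I\otimes I\to I$ along $I\otimes I\to 0$. Because $I\to\mb1$ is a monomorphism, the idempotence argument for subterminal-type objects gives $I\otimes I\to I$ an equivalence. This is where I would use earlier results on categorical ideals: $I\otimes I\to I\otimes\mb1$ is mono, and the two projections $I\otimes I\rightrightarrows I$ agree. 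If this fails in general, I would instead argue directly with $\Sigma$-triviality, which is where that hypothesis must enter. Given the equivalence, the pushout is $I\sqcup_I 0\simeq 0$. Then $\mb1/I\otimes\mb1/I\simeq\Cofib(0\to\mb1/I)\simeq\mb1/I$, and the map from $\mb1/I$ is the unit inclusion.

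\textbf{The main obstacle} is verifying carefully that the map $\mb1/I\to \mb1/I\otimes\mb1/I$ induced by the unit is the equivalence just computed, not merely that the objects agree. By Lurie's theory of idempotent algebras (HA 4.8.2), an idempotent object $\mb1\to E$ carries a unique $\mathbb E_\infty$-structure. Furthermore $\mapp_{\calg}(E,B)$ is $(-1)$-truncated, nonempty exactly when $B\otimes(\mb1\to E)$ is an equivalence, equivalently when $B$ lies in the localization $E\otimes\mcc$. This yields uniqueness of the algebra structure and the epimorphism claim, since the mapping spaces are $(-1)$-truncated.

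Finally we identify the image. For $f:\mb1\to B$ an algebra map, the condition is that $B\otimes I\to B$ vanishes, i.e.\ that $B\to B\otimes\mb1/I$ is an equivalence. Suppose the composite $I\to B$ is null as a module map. Then $B\otimes I\to B\otimes B\to B$ factors through $0$, and since $B\otimes I\to B$ is $B$-linear and determined by its restriction along $I\to B\otimes I$, it is null. Hence $B\otimes\mb1/I\simeq B\oplus\Sigma(B\otimes I)$, which is $B$ by $\Sigma$-triviality. Conversely, if $B\simeq B\otimes\mb1/I$, then $I\to B$ factors through $I\otimes\mb1/I\simeq0$. Translating back along $\modu_R$ gives the stated universal property.
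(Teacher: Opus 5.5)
There is a genuine gap at the central step. You aim to prove $I\otimes\mb1/I\simeq 0$, and you propose to deduce it from $I\otimes I\to I$ being an equivalence. Both claims are false for categorical ideals in general. That argument only works for subterminal objects, i.e.\ when $\mb1$ is terminal.

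Take $\mcc=\op{Ab}$, which is pointed, presentably symmetric monoidal and $\Sigma$-trivial, and $I=2\mathbb{Z}\subseteq\mathbb{Z}$. Then $I\otimes I\to I$ has image $4\mathbb{Z}$; the paper itself notes $(2)^2=(4)\neq(2)$. Moreover $I\otimes\mb1/I\cong 2\mathbb{Z}\otimes\mathbb{Z}/2\cong\mathbb{Z}/2\neq 0$. So no appeal to $\Sigma$-triviality can rescue your fallback: the target statement is simply wrong.

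The converse in your last paragraph also relies on $I\otimes\mb1/I\simeq0$. It is easy to repair: if an algebra map $\mb1/I\to B$ exists, then $I\to\mb1\to\mb1/I\to B$ is null because $I\to\mb1\to\mb1/I$ already is. (Also a small slip: $M\to0$ is an epimorphism iff $0\sqcup_M0\simeq0$, not $M\sqcup_M0\simeq0$.)

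What is true, and suffices, is that the map $\mb1/I\otimes I\to\mb1/I$ is zero; its source need not vanish. $\Sigma$-triviality enters twice here, and you never use the first instance.
\begin{itemize}
\item Since $I\to0$ is an epimorphism, so is its pushout $\mb1\to\mb1/I$. Tensoring with $I$ preserves pushouts, so $I\simeq\mb1\otimes I\to\mb1/I\otimes I$ is again an epimorphism.
\item Precomposing $\mb1/I\otimes I\to\mb1/I$ with this epimorphism gives $I\to\mb1\to\mb1/I$, which is null. Hence the map itself is null.
\item Now $\mb1/I\otimes\mb1/I$ is the pushout of $\mb1/I\leftarrow\mb1/I\otimes I\to0$. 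This is a zero map out of an object whose map to $0$ is an epimorphism, which is the second use of $\Sigma$-triviality. By \cref{cokerof0}, the leg $\mb1/I\to\mb1/I\otimes\mb1/I$ is an equivalence.
\end{itemize}
This also disposes of your ``main obstacle'': the equivalence is the pushout leg itself, i.e.\ the unit-induced map, not merely an abstract identification of objects.

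This is exactly the paper's argument, carried out over $R$ with $\otimes_R$ and then fed into \cref{calgepi2}. With this repair, the rest of your outline goes through: the reduction to $R=\mb1$, Lurie's theory of idempotent algebras, and the characterization of local $B$ by the vanishing of $B\otimes I\to B$.
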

As an application, we construct quotients of $\einf$-semirings by ideals whose underlying $\einf$-spaces have trivial $\mathbb{E}_1$-group completion (\cref{thm:quotient-einf-semiring}). After passing to unital $\einf$-semirings, quotients exist for arbitrary ideals (\cref{thm:quotient-unital-einf-semiring}).


\cref{thm:intro-quotidl} shows that, under the $\Sigma$-triviality hypothesis, every ideal admits a quotient with the expected universal property. To formulate the resulting ideal-epimorphism correspondence, we must take account of a distinction that does not arise in ordinary abelian categories: not every monomorphism need be a kernel, and not every epimorphism need be a cokernel.

Recall that a monomorphism is called normal if it is a kernel, and an epimorphism is called normal if it is a cokernel. Let $\mcc$ be a $\Sigma$-trivial $\infty$-category. We call $\mcc$ \textbf{left $\Sigma$-exact} if every monomorphism is normal, \textbf{right $\Sigma$-exact} if every epimorphism is normal, and \textbf{$\Sigma$-exact} if both conditions hold. Our general correspondence is first stated for normal ideals and normal epimorphisms:

\begin{thm}[\cref{thm:ideal-epi-iden}]\label{thm:intro-ideal-epi}
Let $\mcc^\otimes$ be a presentably symmetric monoidal $\Sigma$-trivial $\infty$-category. We have a natural equivalence $$\idl(\mcc)^n\xrightarrow{\sim}\calg(\mcc)^n$$
between the normal ideals of $\mcc$ (\ie normal monomorphisms $I \to \mb1$) and the commutative algebras $S\in\calg(\mcc)$ such that $\mb1 \to S$ is a normal epimorphism in $\mcc$. 
\end{thm}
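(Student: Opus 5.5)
The plan is to write down the two functors and check that they are mutually inverse, using \cref{thm:intro-quotidl} for the construction from ideals to algebras and the definition of normality for the reverse direction. Since $\idl(\mcc)^n$ is a poset, I first check that $\calg(\mcc)^n$ is a poset as well. If $S$ and $S'$ are in $\calg(\mcc)^n$, then $\mb1\to S$ is an epimorphism in $\mcc$. Because the forgetful functor $\calg(\mcc)\to\mcc$ is conservative and preserves the relevant limits, $\mb1\to S$ is also an epimorphism in $\calg(\mcc)$. Hence $\mapp_{\calg(\mcc)}(S,S')\to\mapp_{\calg(\mcc)}(\mb1,S')\simeq *$ is $(-1)$-truncated, so mapping spaces are empty or contractible. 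I will regard $\calg(\mcc)^n$ as a full subcategory of $\calg(\mcc)_{\mb1/}$ and order it opposite to ideals, so that larger ideals correspond to smaller quotients.

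Forward functor. Given a normal ideal $I\hookrightarrow\mb1$, apply \cref{thm:intro-quotidl} with $R=\mb1$. This gives a unique commutative algebra structure on $\mb1/I=\Cofib(I\to\mb1)$. The map $\mb1\to\mb1/I$ is a cofiber, hence a normal epimorphism. Monotonicity comes from the universal property: if $I\subseteq J$, then $J\to\mb1\to\mb1/J$ is null, so $I\to\mb1\to\mb1/J$ is null, and we get a map $\mb1/I\to\mb1/J$. Naturality is automatic in posets.

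Backward functor. Given $S$ with $\mb1\to S$ a normal epimorphism, set $I_S=\Fib(\mb1\to S)$. I must show $I_S\to\mb1$ is a (normal) monomorphism. Normality of the epimorphism gives some $f\colon X\to\mb1$ with $S\simeq\Cofib(f)$. I then use $\Sigma$-triviality: in a $\Sigma$-trivial pointed category, the fiber of any map into $\mb1$ should be a monomorphism. The reason is that the diagonal of $\Fib(g)\to\mb1$ is identified with a pullback of $\Omega$-type objects, and these are terminal because $X\to0$ is an epimorphism (the dual statement to $\Sigma=0$). If this fails in general, I will instead show directly that $\Fib(\mb1\to S)\to\mb1$ is the kernel of an epimorphism and verify the monomorphism property through the diagonal. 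Either way, $I_S$ is a normal ideal by construction.

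Mutual inverses. For $\mb1/I_S\simeq S$, I use that $\mb1\to S$ is a normal epimorphism: an epimorphism that is a cokernel is the cokernel of its own kernel, so $\Cofib(\Fib(\mb1\to S)\to\mb1)\simeq S$. Uniqueness of the algebra structure in \cref{thm:intro-quotidl} then identifies the algebras. Dually, a normal monomorphism is the kernel of its cokernel, giving $\Fib(\mb1\to\mb1/I)\simeq I$.

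The main obstacle is these two ``normal implies kernel of its cokernel'' facts in the $\infty$-categorical setting. For a normal monomorphism $I=\Fib(\mb1\to Y)$, the map $\mb1\to Y$ factors through $\mb1/I$, giving $I\to\Fib(\mb1\to\mb1/I)\to\mb1$. I will show this first map is an equivalence by checking that both are subobjects of $\mb1$ containing each other. Since they are monomorphisms, it suffices to see that the composite $\Fib(\mb1\to\mb1/I)\to\mb1\to Y$ is null, which follows by factoring through $\mb1/I$. The dual argument for epimorphisms uses that epimorphisms out of $\mb1$ form a poset by the same truncation reasoning.
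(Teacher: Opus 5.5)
Your proof is correct and follows essentially the paper's route. You use the fiber/cofiber bijection between normal ideals and normal epimorphisms out of $\mb1$; your ``both are subobjects containing each other'' step is exactly the argument of \cref{kerofcoker}. You combine it with uniqueness of the commutative algebra structure on an epimorphic quotient of $\mb1$. The paper gets that uniqueness from idempotence (\cref{epiidem}) and \cite[Proposition 4.8.2.9]{ha}; you get it from the uniqueness clause of \cref{quotidl}, whose proof is the same idempotence argument.

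Two justifications need repair.
\begin{itemize}
\item The forgetful functor $\calg(\mcc)\to\mcc$ being conservative and limit-preserving does not make $\mb1\to S$ an epimorphism in $\calg(\mcc)$. That functor sends the pushout $S\otimes S$ of $S\leftarrow\mb1\to S$ in $\calg(\mcc)$ to $S\otimes S$, not to $S\sqcup_{\mb1}S$. Use \cref{calgepi} instead, or simply the universal property in \cref{quotidl} once you know $S\simeq\mb1/I_S$ as algebras.
\item For the fact that $\Fib(\mb1\to S)\to\mb1$ is a monomorphism, the clean argument is the adjunction $\Sigma\dashv\Omega$. Since $\Sigma\simeq0$, it forces $\Omega\simeq0$, so $0\to S$ is a monomorphism. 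Its pullback $\Fib(\mb1\to S)\to\mb1$ is therefore a monomorphism as well, and no hedged fallback is needed.
\end{itemize}
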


Thus, $\Sigma$-triviality is sufficient for the correspondence between normal ideals and normal epimorphisms. The stronger $\Sigma$-exactness conditions allow the normality restrictions to be removed: left $\Sigma$-exactness ensures that every ideal is normal, while right $\Sigma$-exactness ensures that every epimorphism is normal.

For the ordinary category $\modu_{R}(\op{Ab})^\otimes$, every monomorphism and epimorphism is normal, and \cref{thm:intro-ideal-epi} recovers the classcial correspondence between the ideals of $R$ and quotient maps out of $R$.

For a commutative $2$-ring $\mck$, the category $\modu_{\mck}(\op{Cat}^{\op{perf}})^\otimes$ is left $\Sigma$-exact, but not right $\Sigma$-exact (\cref{ex:Sigma-exact}): every thick ideal is normal, whereas the normal epimorphisms are precisely the monoidal Karoubi projections .

Finally, if $\mct$ is a big tt-$\infty$-category, then $\modu_\mct(\prlst)^{\op{dbl},\otimes}$ is $\Sigma$-exact (\cref{sigmaexact}), and \cref{thm:intro-ideal-epi} gives the usual correspondence between smashing ideals and smashing localizations. These examples are summarized in the following table.

\begin{center}
\begin{tabularx}{\textwidth}{p{3cm} p{4cm} X}
\toprule
$\mcc^\otimes \in \calg(\prl)$
& (normal) ideals of $\mcc^\otimes$
& (normal) epimorphisms \\  
\midrule

$\modu_{R}(\op{Ab})^\otimes$ 
& ideals $I \subseteq R$
& surjective ring homomorphisms $R \to R/I$ \\

\addlinespace

$\modu_{\mck}(\op{Cat}^{\op{perf}})^\otimes$ 
& thick ideals $\mci \subseteq \mck$ 
& monoidal Karoubi projections $\mck \to \mck/\mci$ \\

\addlinespace

$\modu_\mct(\prlst)^{\op{dbl},\otimes}$ 
& smashing ideals $\mcl \subseteq \mct$
& smashing localizations $\mct \to \mct/\mcl$ \\
\bottomrule
\end{tabularx}
\end{center}

Stability is essential in the last example. For a general presentably symmetric monoidal $\infty$-category $\mcv$, the category $\linvdbl=\modu_\mcv(\prl)^{\op{dbl},\otimes}$ need not be $\Sigma$-exact. Consequently, passing from a smashing ideal to the associated smashing localization and then taking its dualizable kernel need not recover the original smashing ideal; see \cref{rem:prdbl-non-sigma-exact}.

\subsection*{Outline}
The paper is organized as follows.

In \cref{sec:zar}, we lay the foundations for (categorical) ideals and Zariski frames. Starting with the lattice-theoretic mechanics of preframes, we define the Zariski frame as the frame of radical ideals (\cref{de:zariski-frame}). We then restrict our attention to compactly generated symmetric monoidal $\infty$-categories, showing that they yield coherent Zariski frames (\cref{thm:zar-coh}). This allows us to recover both the classical Zariski spectrum of a commutative ring and the Balmer spectrum of a commutative $2$-ring (\cref{ex:Zariski}, \cref{ex:Balmer}). We then prove \cref{thm:cohen}. We conclude this section by clarifying the relations between our Zariski frames and the coidempotent frame in \cite{aoki2023sheaves}.

\cref{sec:dualizable} is devoted to the study of smashing ideals and smashing spectra. We define a smashing ideal as a fully faithful categorical ideal within the category of dualizable modules, which allows us to define the smashing frame for non-stable presentably symmetric monoidal $\infty$-categories. We then establish that for a big tt-$\infty$-category $\mct$, the smashing frame of $\mct$ naturally coincides with the Zariski frame of the category of dualizable modules over $\mct$ (\cref{dblideal}). We conclude \cref{sec:dualizable} by showing that the smashing frame of a rigid big tt-$\infty$-category $\mct$ is inherently $\omega_1$-coherent (\cref{cor:omega1-coherent}).

In \cref{sec:sigma}, we turn to the study of $\Sigma$-trivial and $\Sigma$-exact $\infty$-categories, with the goal of making sense of quotient by ideals in our context. We introduce the $\Sigma$-trivialization of any pointed presentable $\infty$-category and show that it gives rise to a mode (\cref{thm:mode}). After establishing the ideal-epimorphism correspondence (\cref{thm:ideal-epi-iden}), we apply it to construct quotients of $\mathbb{E}_\infty$-semirings when the underlying ideal possesses a trivial $\mathbb{E}_1$-group completion. We also give a parametrized version of the ideal-epimorphism correspondence in \cref{thm:parametrized-ideal-epi}.
\subsection*{Notation}

The following notation will be used throughout.

\begin{enumerate}[label=(\alph*)]
\item We denote the \(\infty\)-category of spaces (i.e.\ \(\infty\)-groupoids) by \(\mathcal{S}\), and denote the \(\infty\)-category of spectra by \(\mathrm{Sp}\).

\item We write \(\mathcal{S}_{\leq n}\) for the full subcategory of \(\mathcal{S}\) spanned by the \(n\)-truncated spaces. In particular, \(\mathcal{S}_{\leq -1}\) is the full subcategory spanned by \(\varnothing\) and \(*\), and \(\mathcal{S}_{\leq -2}\) is the full subcategory spanned by \(*\).

\item We denote by \(\mathrm{Cat}_{\infty}\) the \(\infty\)-category of small $\infty$-categories, functors and natural transformations. We write $\Cat_{\infty}^*$ for the \infcat  of small pointed $\infty$-categories and point-preserving functors.

\item For functor categories \(\mathrm{Fun}(\mathcal{C},\mathcal{D})\) we use superscripts \text{lex}, \text{rex}, \text{ex}, \text{lim} and \text{colim} to indicate the full subcategories of functors preserving finite limits, finite colimits, finite limits and finite colimits (exact functors), small limits, and small colimits, respectively.

Superscripts \(L\) and \(R\) single out functors admitting left and right adjoints, respectively. When \(\mathcal{C}\) and \(\mathcal{D}\) carry symmetric monoidal structures, the superscript \(\otimes\) denotes the \(\infty\)-category of functors equipped with a strong symmetric monoidal structure with morphisms given by strongly symmetric monoidal natural transformations.

\item We write \(\prl\) for the \(\infty\)-category of presentable \(\infty\)-categories with left adjoint functors. Its default symmetric monoidal structure is the Lurie tensor product, which corepresents functors out of the Cartesian product that are colimit-preserving in each variable. 

For a regular cardinal \(\kappa\), we use \(\prl_{\kappa}\subseteq\prl\) to denote the (non-full) subcategory of \(\kappa\)-accessible presentable \(\infty\)-categories with functors that preserve \(\kappa\)-compact objects.

\item  We use the superscript \(\cotimes\) to denote a \syminfcat. If $\cotimes$ is Cartesian monoidal, we will alternatively denote it by $\mcc^\times$. The $\infty$-category of commutative monoids in a symmetric monoidal $\infty$-category $\cotimes$ will be denoted by $\mathrm{CAlg}(\mathcal{C})$ and we refer to its objects as commutative algebras or \ein-algebras in $\mathcal{C}$. For $\mathcal{C}=\mathrm{Sp}$ equipped with its natural symmetric monoidal structure, we usually say $\mathbb{E}_{\infty}$-ring spectrum or $\mathbb{E}_{\infty}$-ring instead of commutative algebra.
\item  A symmetric monoidal presentable $\infty$-category $\mathcal{C}^\otimes$ is called presentably symmetric monoidal if the monoidal structure $\otimes$ preserves colimits separately in each variable.
\end{enumerate}

\subsection*{Acknowledgments}

We would like to thank Tobias Barthel, Martin Gallauer, David Gepner, Piotr Pstr\k{a}gowski, Maxime Ramzi, Gabriel Ribeiro, Beren Sanders, Vova Sosnilo, Germ\'an Stefanich, and Greg Stevenson for many helpful conversations at various stages of this project. We are especially grateful to Beren Sanders for his detailed feedback on an earlier draft of this manuscript.

\numberwithin{thm}{subsection}
\section{Categorical Ideals and Zariski Frames}\label{sec:zar}

\subsection{Spectra of preframes}
In this section, we introduce the spectrum $\spec(Q)$ of a preframe $Q$. As a set, $\spec(Q)$ consists of the prime elements of $Q$; see \cref{def:prime} below. We show that these prime elements are in bijection with the frame-theoretic points of $Q_{\op{rad}}$, the frame obtained by taking radical elements of $Q$ (\cref{thm:specQ}). The main reference for this section is \cite[Appendix A]{anel2023left}. We also refer the reader to \cref{app:stone-duality} for basic concepts in pointless topology.

\begin{de}
A (small) poset is said to be a \textbf{suplattice} if it admits all joins. By the adjoint functor theorem, a suplattice also has all meets and is thus a complete lattice. A morphism of suplattices is a map preserving joins.
\end{de}

\begin{de}
A \textbf{tensor-suplattice} (or \textbf{$\otimes$-suplattices}) is a suplattice $Q$ equipped with a commutative monoidal structure $(Q, \star, 1)$ such that the product $\star \colon Q \times Q \to Q$ preserves joins in each variable. A map $\phi \colon Q \to Q'$ of $\otimes$-suplattices is a morphism if it is both a morphism of suplattices and a morphism of monoids.
\end{de}

\begin{de}\label{de:preframe}
A \textbf{preframe}\footnote{Preframes are called $\otimes$-frames in \cite{anel2023left}. Note that our preframe is different from the ``preframe'' that appeared in \cite[Definition 3.3]{aoki2023sheaves}. There a ``preframe'' refers to a $\otimes$-poset in which the tensor distributes with directed suprema.} is a $\otimes$-suplattice $(Q, \star, 1)$ where the unit $1$ is the top element of $Q$. A map between preframes is a morphism if it is a morphism of $\otimes$-suplattices. We denote by $\pfrm$ the category of preframes.
\end{de}

\begin{rem}
Recall that a frame is a complete lattice for which finite meets distribute over arbitrary joins. A morphism of frames is a map preserving arbitrary joins and finite meets. We denote by $\frm$ the category of frames. A frame has a natural structure of a $\otimes$-suplattice with $x \star y \coloneq x \wedge y$, and with the top element serving as the unit. Therefore, any frame is naturally a preframe. We thus have a fully faithful embedding
\[
\frm \hookrightarrow \pfrm.
\]
\end{rem}
\begin{rem}[Categorification]\label{rem:categorification}
Let $\prl_{0}$ denote the full subcategory spanned by the presentable $0$-categories. Recall that a $0$-category is an $\infty$-category whose mapping spaces are either empty or contractible; equivalently, it is a poset. Under this identification, presentable $0$-categories correspond to suplattices, and colimit-preserving functors correspond to maps preserving arbitrary joins. Hence there are natural equivalences
\[
\begin{aligned}
\prl_{0} &\simeq \mathsf{SupLat} \\
\calg(\prl_{0}) &\simeq \otimes\text{-}\mathsf{SupLat} \\
\calg(\prl_0)_{\op{ter}} &\simeq \pfrm \\
\calg(\prl_0)_{\op{Car}} &\simeq \frm.
\end{aligned}
\]
Here the subscripts $\op{ter}$ and $\op{Car}$ indicate, respectively, that the monoidal unit is terminal and that the monoidal structure is Cartesian. Under the final equivalence, compactly generated Cartesian monoidal 0-categories correspond to coherent frames:
\begin{equation}\label{idencohfrm}
\calg(\prl_{\omega,0})_{\op{Car}}\simeq \frmcoh.
\end{equation}
\end{rem}

\begin{rem}
A preframe might not be a frame. For example, the poset of ideals of a commutative ring $R$ is a preframe but might not be a frame: Take $R=\mathbb{Z}$. In the ideal preframe, the monoidal product is ideal multiplication. If it were a frame with this monoidal structure then multiplication would have to coincide with meet, and in particular every ideal would be idempotent. However, $(2)^2=(4)\neq(2)$. Nevertheless, the radical elements of any preframe form a frame, as shown below.
\end{rem}

\begin{de}\label{def:rad}
Let $Q$ be a preframe. We call an element $r \in Q$ \textbf{radical} if $x^2 \le r$ implies $x \le r$ for every $x \in Q$. We write $Q_{\op{rad}}$ for the poset of radical elements of $Q$. 
\end{de}

\begin{prop}\label{prop:sqrtQ-reflection}
Let $Q$ be a preframe. The radicalization map $\sqrt{-}\colon Q \to Q_{\op{rad}}$ exhibits $Q_{\op{rad}}$ as the localization of $Q$ in $\pfrm$ at the family of morphisms $\SET{x^2\le x}{x\in Q}$. Equivalently, precomposition with $\sqrt{-}$ induces a bijection
\[
\Hom_{\pfrm}(Q_{\op{rad}},Q') \xrightarrow{\sim} \SET{f\in\Hom_{\pfrm}(Q,Q')}{f(x^2)=f(x)\text{ for every }x\in Q}
\]
for every preframe $Q'$. The monoidal product on $Q_{\op{rad}}$ is given by $\sqrt{x_1} \cdot \sqrt{x_2} \coloneq \sqrt{x_1 \star x_2}$.
\end{prop}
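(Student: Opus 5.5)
The plan is to build $\sqrt{-}$ as a closure operator, show the proposed product on $Q_{\op{rad}}$ is well defined, and then check the universal property directly.

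\textbf{Step 1: radicals and joins.} Radical elements are closed under arbitrary meets in $Q$. Indeed, if $x^2\le \bigwedge_i r_i$, then $x^2\le r_i$ for each $i$, so $x\le r_i$ for each $i$. Hence $Q_{\op{rad}}$ is a complete lattice, and the inclusion $Q_{\op{rad}}\hookrightarrow Q$ has a left adjoint
\[
\sqrt{x}\coloneq\bigwedge\SET{r\in Q_{\op{rad}}}{x\le r}.
\]
So joins in $Q_{\op{rad}}$ are $\bigvee^{\op{rad}} r_i=\sqrt{\bigvee r_i}$, and $\sqrt{-}$ preserves joins. The top element $1$ is radical.

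\textbf{Step 2: well-definedness of the product (the main obstacle).} I will prove $\sqrt{x\star y}=\sqrt{x\star\sqrt y}$. Fix a radical $r$ and $x\in Q$. Since $\star$ preserves joins, there is a largest element $(r:x)$ with $x\star(r:x)\le r$. The key claim is that $(r:x)$ is radical. Suppose $y^2\le(r:x)$, so $x\star y^2\le r$. Because $1$ is the top element, $x\le 1$ gives $x^2\le x$. Therefore
\[
(x\star y)^2=x^2\star y^2\le x\star y^2\le r.
\]
Since $r$ is radical, $x\star y\le r$, that is, $y\le (r:x)$.

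It follows that for every radical $r$:
\[
x\star y\le r \iff y\le (r:x) \iff \sqrt y\le (r:x) \iff x\star\sqrt y\le r.
\]
This gives $\sqrt{x\star y}=\sqrt{x\star\sqrt y}$. By symmetry, $\sqrt{x_1}\cdot\sqrt{x_2}\coloneq\sqrt{x_1\star x_2}$ depends only on $\sqrt{x_1}$ and $\sqrt{x_2}$.

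Associativity and unitality (with unit $1$) are inherited from $Q$. Preservation of joins in each variable follows from Step 1: $\sqrt x\cdot\sqrt{\bigvee y_i}=\sqrt{\bigvee x\star y_i}=\bigvee^{\op{rad}}\sqrt{x\star y_i}$. So $Q_{\op{rad}}$ is a preframe and $\sqrt{-}\colon Q\to Q_{\op{rad}}$ is a preframe morphism. Moreover $\sqrt{x^2}=\sqrt x$, since $x^2\le r$ iff $x\le r$ for radical $r$.

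\textbf{Step 3: universal property.} Let $f\colon Q\to Q'$ be a preframe map with $f(x^2)=f(x)$ for all $x$. I first show $f(\sqrt x)=f(x)$. Let $t$ be the join of all $y$ with $f(y)\le f(x)$; then $f(t)\le f(x)$ because $f$ preserves joins. The element $t$ is radical: if $y^2\le t$, then $f(y)=f(y^2)\le f(t)\le f(x)$, so $y\le t$. Since $x\le t$, we get $\sqrt x\le t$, hence $f(\sqrt x)\le f(x)$; the reverse inequality comes from $x\le\sqrt x$.

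Define $\bar f\coloneq f|_{Q_{\op{rad}}}$. Then $\bar f(\sqrt x)=f(x)$, so $\bar f\circ\sqrt{-}=f$. The map $\bar f$ is a preframe morphism:
\[
\bar f\bigl(\sqrt{\textstyle\bigvee x_i}\bigr)=\textstyle\bigvee f(x_i),\qquad \bar f\bigl(\sqrt{x\star y}\bigr)=f(x)\star f(y).
\]
It is unique because $\sqrt{-}$ is surjective. Conversely, every composite $g\circ\sqrt{-}$ satisfies the condition $f(x^2)=f(x)$, because $\sqrt{x^2}=\sqrt x$. This gives the claimed bijection. It is exactly the statement that $\sqrt{-}$ is the localization of $Q$ in $\pfrm$ at the morphisms $x^2\le x$: a preframe map inverts $x^2\le x$ precisely when $f(x^2)=f(x)$.
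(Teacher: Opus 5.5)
Your proof is correct and complete, but it takes a different route from the paper. The paper gives no argument of its own: it cites \cite[Propositions~A.4.3 and~A.2.1]{anel2023left}, appealing to the general theory of localizations of $\otimes$-suplattices. There, radical elements are the local elements for the relations $x^2\le x$, and $\sqrt{x}$ comes from the small-object iteration recorded in \cref{operationsqrt}.

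You instead build the reflector directly, from the closure of radical elements under arbitrary meets. You also isolate the single non-formal input: $x\backslash r$ (your $(r:x)$) is radical whenever $r$ is. This is exactly where the preframe hypothesis enters, through $x\le 1$ and hence $x^2\le x$. It is also what makes the elements local for $\SET{x^2\le x}{x\in Q}$ coincide with those local for the $\star$-stable family $\SET{a\star x^2\le a\star x}{a,x\in Q}$. That coincidence is why the localization is automatically monoidal. The paper proves this same lemma only later, as \cref{homradical}, and uses it just as you do in the ``direct argument'' for $\sqrt{I\cdot J}=\sqrt{I\cdot\sqrt{J}}$ in \cref{prop:idlcrad-localization}.

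Your Step~3 makes explicit something the citation leaves implicit: the factorization of $f$ is simply the restriction $f|_{Q_{\op{rad}}}$. This follows from your observation that $\bigvee\SET{y}{f(y)\le f(x)}$ is radical, and it avoids any transfinite argument. What the cited framework buys in exchange is generality: it treats localization of $\otimes$-suplattices at arbitrary families of relations, and the radical case is one instance.
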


\begin{proof}
See \cite[Proposition~A.4.3 and Proposition~A.2.1]{anel2023left}.
\end{proof}

\begin{rem}\label{operationsqrt}
The radical of any element $x \in Q$ can be computed as
\[
\sqrt x = \bigwedge\SET{y\in Q}{y\text{ is radical and }y\ge x}.
\]
Since the radical elements are the local objects for the relations $y^2 \leq y$, we can also construct $\sqrt{x}$ by a small object argument. Let us put
$$
\sqrt[1]{x}=\sup \left\{y \mid \exists k\colon y^k \leq x\right\}
$$
and $\sqrt[n+1]{x}=\sqrt[1]{\sqrt[n]{x}}$. Then the transfinite colimit of $x \leq \sqrt[1]{x} \leq \sqrt[2]{x} \leq \ldots$ is $\sqrt{x}$. When $Q$ is $\omega$-coherent in the sense of \cref{defkcoh} below, we have $\sqrt[1]{x}=\sqrt{x}$.
\end{rem}

\begin{thm}\label{thm:sqrtQ-frame}
Let $Q$ be a preframe. The monoidal product on $Q_{\op{rad}}$ coincides with the meet. In other words, we have $y_1 \cdot y_2 = y_1 \wedge y_2$ for all $y_1, y_2 \in Q_{\op{rad}}$. Consequently, $Q_{\op{rad}}$ is a frame.
\end{thm}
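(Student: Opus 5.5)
We need to show that in $Q_{\op{rad}}$ the product $y_1\cdot y_2=\sqrt{y_1\star y_2}$ equals $y_1\wedge y_2$ for radical $y_1,y_2$. Here the meet in $Q_{\op{rad}}$ is the meet in $Q$, since radical elements are closed under arbitrary meets in $Q$; this is clear from the definition, because $x^2\le \bigwedge r_i$ forces $x^2\le r_i$ and hence $x\le r_i$ for each $i$. The proof has two inequalities, both using only the preframe axioms: $\star$ is monotone in each variable because it preserves joins, and $1$ is the top element.

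First, $y_1\star y_2\le y_1\wedge y_2$. Indeed $y_1\star y_2\le y_1\star 1=y_1$ by monotonicity and $y_2\le 1$, and symmetrically $y_1\star y_2\le y_2$. Since $y_1\wedge y_2$ is radical, we have $\sqrt{y_1\star y_2}\le y_1\wedge y_2$, using the description of $\sqrt{x}$ as the least radical element above $x$ in \cref{operationsqrt}.

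Conversely, put $m=y_1\wedge y_2$. Then $m^2=m\star m\le y_1\star y_2\le\sqrt{y_1\star y_2}$. Since $\sqrt{y_1\star y_2}$ is radical, the definition gives $m\le\sqrt{y_1\star y_2}$. Hence $y_1\cdot y_2=y_1\wedge y_2$.

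For the frame conclusion: $Q_{\op{rad}}$ is a preframe by \cref{prop:sqrtQ-reflection}, so its product distributes over arbitrary joins in $Q_{\op{rad}}$ (joins there are $\sqrt{\bigvee y_i}$). Since the product is the meet, binary meets distribute over arbitrary joins. As a suplattice it is complete, so $Q_{\op{rad}}$ is a frame.

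The only point needing care is confirming that the preframe structure on $Q_{\op{rad}}$ from \cref{prop:sqrtQ-reflection} really has $\star$ preserving the joins computed in $Q_{\op{rad}}$, not those of $Q$. Directly, $\sqrt{y\star\sqrt{\bigvee z_i}}=\sqrt{y\star\bigvee z_i}$ follows from $\sqrt{a\star\sqrt b}=\sqrt{a\star b}$. This identity is part of the cited result: it is exactly why the product $\sqrt{x_1}\cdot\sqrt{x_2}\coloneq\sqrt{x_1\star x_2}$ is well defined.
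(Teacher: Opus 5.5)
Your proof is correct and follows essentially the same route as the paper. In both, $y_1\cdot y_2\le y_1\wedge y_2$ comes from the unit being the top element, and the reverse inequality comes from bounding the square of a common lower bound by $y_1\star y_2$. The only difference is cosmetic: the paper argues inside $Q_{\op{rad}}$ using idempotence $y=y\cdot y$, whereas you unfold this into the definition of a radical element in $Q$, which is why you also check that meets of radical elements are computed in $Q$; the frame conclusion is obtained the same way.
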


\begin{proof}
It suffices to show that
\[
y \le y_1\cdot y_2 \iff y\le y_1 \text{ and } y\le y_2
\]
for all $y, y_1, y_2 \in Q_{\op{rad}}$.
Since $y_1\cdot y_2 \le y_1\cdot 1 = y_1$ and $y_1\cdot y_2 \le 1\cdot y_2=y_2$, we have
\[
y \le y_1\cdot y_2 \implies y\le y_1 \text{ and } y\le y_2.
\]
For the other direction, suppose that $y\le y_1 \text{ and } y\le y_2$. Then $y\cdot y \le y_1\cdot y \le y_1\cdot y_2$. By \cref{prop:sqrtQ-reflection}, every element in $Q_{\op{rad}}$ is idempotent. Hence, $y = y\cdot y \le y_1\cdot y_2$. Finally, to see that $Q_{\op{rad}}$ is a frame, note that now $y\wedge-=y\cdot-$ preserves arbitrary joins for any $y\in Q_{\op{rad}}$, since $Q_{\op{rad}}$ is a $\otimes$-suplattice.
\end{proof}

\begin{cor}\label{cor:preframe}
The assignment $Q \mapsto Q_{\op{rad}}$ is left adjoint to the inclusion $\frm \hookrightarrow \pfrm$.
\end{cor}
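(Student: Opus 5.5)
The plan is to obtain the adjunction by combining the universal property of \cref{prop:sqrtQ-reflection} with \cref{thm:sqrtQ-frame}. Concretely, for a preframe $Q$ and a frame $F$ (regarded as a preframe via the embedding $\frm\hookrightarrow\pfrm$) I will exhibit a natural bijection
\[
\Hom_{\frm}(Q_{\op{rad}},F)\cong \Hom_{\pfrm}(Q,F),
\]
given by precomposition with $\sqrt{-}\colon Q\to Q_{\op{rad}}$.

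First I identify the left-hand side with preframe maps. By \cref{thm:sqrtQ-frame}, $Q_{\op{rad}}$ is a frame whose monoidal product is the meet, and its unit is the top element. A map between frames is therefore a preframe morphism exactly when it preserves joins, binary meets and the top, which is exactly a frame morphism. Hence $\Hom_{\frm}(Q_{\op{rad}},F)=\Hom_{\pfrm}(Q_{\op{rad}},F)$, which is just the full faithfulness of $\frm\hookrightarrow\pfrm$ noted in the remark.

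Second, by \cref{prop:sqrtQ-reflection}, precomposition with $\sqrt{-}$ identifies $\Hom_{\pfrm}(Q_{\op{rad}},F)$ with the set of preframe maps $f\colon Q\to F$ satisfying $f(x^2)=f(x)$ for all $x$. The only real step is to check that this condition is automatic when the target is a frame. Since $f$ is monoidal and the product in $F$ is the meet, we get $f(x^2)=f(x)\star f(x)=f(x)\wedge f(x)=f(x)$. So the constraint is vacuous, and the bijection covers all of $\Hom_{\pfrm}(Q,F)$.

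Finally I check naturality. In $F$ this is clear, since the map is precomposition. In $Q$, a preframe map $g\colon Q\to Q'$ induces $Q_{\op{rad}}\to Q'_{\op{rad}}$ by the universal property applied to $\sqrt{-}\circ g$, so $Q\mapsto Q_{\op{rad}}$ is a functor and $\sqrt{-}$ is natural. This gives a unit of the adjunction with the required universal property, which is equivalent to the adjunction. I expect no real obstacle: the idempotence check $f(x^2)=f(x)$ is the only nontrivial point, and it is immediate once we recall that the preframe structure on a frame uses the meet as its product.
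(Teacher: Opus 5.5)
Your proposal is correct and follows the paper's proof, which simply combines \cref{prop:sqrtQ-reflection} with \cref{thm:sqrtQ-frame}. You spell out the two points the paper leaves implicit: preframe maps between frames are exactly frame maps, and the condition $f(x^2)=f(x)$ holds automatically when the target is a frame, because there $f(x)\star f(x)=f(x)\wedge f(x)=f(x)$.
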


\begin{proof}
This follows from \cref{prop:sqrtQ-reflection} and \cref{thm:sqrtQ-frame}.
\end{proof}

\begin{de}\label{def:prime}
Let $Q$ be a preframe. An element $p \in Q$ is called a \textbf{prime} if $p\neq1$ and $x_1 \star x_2 \le p$ implies either $x_1 \le p$ or $x_2 \le p$, for all $x_1, x_2 \in Q$. We write by $\spec(Q)$ the set of prime elements of $Q$. It will be given a topology in \cref{def:spec-q}.
\end{de}

\begin{rem}
Recall that a frame-theoretic point of a frame $F$ is a frame morphism $f\colon F \to \{0,1\}$. The set $\op{pt}(F)$ of points of $F$ admits a topology with open subsets given by $$U_y \coloneqq \SET{f\in \op{pt}(F)}{f(y)=1}$$
ranging over the elements $y$ of $F$. The assignment $F\mapsto\op{pt}(F)$ induces a contravariant functor from frames to topological spaces. See \cref{app:stone-duality} for more detail.
\end{rem}

\begin{thm}\label{thm:specQ}
Let $Q$ be a preframe. We have a bijection
\[
\op{pt}(Q_{\op{rad}}) \xra{\sim} \spec(Q)
\]
sending a frame-theoretic point $f\colon Q_{\op{rad}} \to \{0,1\}$ to the element $p_f \coloneq \bigvee f^{-1}(0) \in \spec(Q)$. The inverse sends a prime element $p$ to the frame theoretic-point $f_p\colon Q_{\op{rad}} \to \{0,1\}$ defined by
\[
f_p(y) =
\begin{cases}
0 & \text{if } y \le p \\
1 & \text{if } y \not\le p.
\end{cases}
\]
\end{thm}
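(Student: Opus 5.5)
We plan to route everything through the frame $Q_{\op{rad}}$, using two standard facts. First, a point of a frame $F$ is the same as a prime element of $F$, via $f\mapsto\bigvee f^{-1}(0)$; write $P(F)$ for the set of these. Second, every prime element of $Q$ is radical. So we will show that the primes of $Q$ are exactly the primes of the frame $Q_{\op{rad}}$, and then invoke the frame bijection.

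\textbf{Step 1: points versus primes of a frame.} For a frame $F$ and a point $f\colon F\to\{0,1\}$, preservation of joins gives $f\bigl(\bigvee f^{-1}(0)\bigr)=0$. Hence $f^{-1}(0)={\downarrow}p_f$ for $p_f=\bigvee f^{-1}(0)$. Preservation of the top gives $p_f\ne 1$. Preservation of binary meets gives the prime property: if $a\wedge b\le p_f$, then $f(a)\wedge f(b)=0$, so $a\le p_f$ or $b\le p_f$. Conversely, for a prime $p$, the map $f_p$ (value $0$ exactly on ${\downarrow}p$) does the following.
\begin{itemize}
\item It preserves arbitrary joins, since $\bigvee y_i\le p$ iff each $y_i\le p$.
\item It preserves the top, since $1\not\le p$.
\item It preserves binary meets, by primality.
\end{itemize}
The two assignments are clearly mutually inverse.

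\textbf{Step 2: primes of $Q$ are radical.} Let $p$ be prime in $Q$. If $x^2=x\star x\le p$, primality with $x_1=x_2=x$ gives $x\le p$. So $p\in Q_{\op{rad}}$.

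\textbf{Step 3: comparing primes.} We show $\spec(Q)=P(Q_{\op{rad}})$ as subsets of $Q$. By \cref{thm:sqrtQ-frame}, the meet in $Q_{\op{rad}}$ is $y_1\cdot y_2=\sqrt{y_1\star y_2}$. The key observation is that for radical $r$, we have $x\le r$ iff $\sqrt{x}\le r$. This follows from the description of $\sqrt{x}$ in \cref{operationsqrt} as the smallest radical element above $x$.
\begin{itemize}
\item Let $p$ be prime in $Q$, and suppose $y_1\wedge y_2=\sqrt{y_1\star y_2}\le p$ with $y_i$ radical. Then $y_1\star y_2\le p$, so some $y_i\le p$. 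Also $p\ne 1$. Hence $p\in P(Q_{\op{rad}})$.
\item Let $p$ be prime in $Q_{\op{rad}}$, and suppose $x_1\star x_2\le p$ in $Q$. Then $\sqrt{x_1}\wedge\sqrt{x_2}=\sqrt{x_1\star x_2}\le p$, using the product formula of \cref{prop:sqrtQ-reflection}. So some $\sqrt{x_i}\le p$, and therefore $x_i\le p$. Since the top of $Q_{\op{rad}}$ is $1$, we have $p\ne 1$. Hence $p\in\spec(Q)$.
\end{itemize}

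\textbf{Conclusion.} Combining Steps 1–3 gives the bijection. Its formula is $p_f=\bigvee f^{-1}(0)$, where the join may be taken in $Q$ or in $Q_{\op{rad}}$. These agree, because a join in $Q$ of elements lying below a radical $p_f$ has radical closure $\le p_f$, and $p_f$ is attained in $f^{-1}(0)$. The inverse is $f_p$, as stated.

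The only mildly delicate points are two. One is checking that joins in $Q_{\op{rad}}$ are the radicals of joins in $Q$, which Step 1 needs within $Q_{\op{rad}}$; Step 1 only uses the frame structure of $Q_{\op{rad}}$. The other is the identity $\sqrt{x_1}\cdot\sqrt{x_2}=\sqrt{x_1\star x_2}$, which we take from \cref{prop:sqrtQ-reflection}. We expect Step 3 to be the main place requiring care.
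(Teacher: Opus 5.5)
Your proof is correct and uses the same ingredients as the paper's: primes of $Q$ are radical, $x\le r\iff\sqrt{x}\le r$ for radical $r$, and the identity $\sqrt{x_1}\wedge\sqrt{x_2}=\sqrt{x_1\star x_2}$ in $Q_{\op{rad}}$. The only difference is packaging: you isolate the equality $\spec(Q)=\spec(Q_{\op{rad}})$ of prime sets and then invoke the frame case (the points-versus-primes correspondence, which the paper recovers as a remark), whereas the paper checks directly that $f_p$ is a frame map and that $p_f$ is prime in $Q$, so the frame-level argument is built into those checks rather than cited.
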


\begin{proof}
Let $p$ be a prime element of $Q$. We first check that the map $f_p$ is a morphism of frames \ie preserves arbitrary joins and finite meets. Let $\{y_i\}_i$ be a set of elements in $Q_{\op{rad}}$. If $\bigvee_{i}y_i \le p$ then $y_i \le p$ for all $i$ and hence $\bigvee_if_p(y_i) = \bigvee_i0 = 0 = f_p(\bigvee_iy_i)$. If $\bigvee_{i}y_i \not\le p$ then $y_j \not\le p$ for some $j$ and hence $1 = f_p(y_j) \le \bigvee_if_p(y_i)$, so $\bigvee_if_p(y_i) = 1 = f_p(\bigvee_iy_i)$. This proves that $f_p$ preserves joins. To see that it preserves finite meets, it suffices to show
\[
\sqrt{x_1} \wedge \sqrt{x_2} \le p \iff \sqrt{x_1} \le p \text{ or } \sqrt{x_2} \le p
\]
for all $\sqrt{x_1}, \sqrt{x_2} \in Q_{\op{rad}}$. The direction ($\impliedby$) is immediate. For the ($\implies$) direction, note that
\[
\sqrt{x_1} \wedge \sqrt{x_2} = \sqrt{x_1}\cdot\sqrt{x_2} = \sqrt{x_1\star x_2}
\]
by \cref{prop:sqrtQ-reflection} and \cref{thm:sqrtQ-frame}.
Thus, if $\sqrt{x_1} \wedge \sqrt{x_2} \le p$ then $x_1\star x_2 \leq \sqrt{x_1\star x_2} \le p$. Hence, $x_1\le p$ or $x_2\le p$ since $p$ is a prime element. As prime elements are radical, we obtain $\sqrt{x_1} \le p$ or $\sqrt{x_2} \le p$. This verifies that $f_p$ preserves finite meets. Therefore, $f_p \in \op{pt}(Q_{\op{rad}})$.

Let $f$ be an arbitrary frame-theoretic point of $Q_{\op{rad}}$. We will show that $p_f$ is a prime element of $Q$. We first claim that $p_f\neq1$. Indeed, if $p_f=1$ then $f(1)=f(p_f)=f(\bigvee f^{-1}(0))=0$, which contradicts the fact that frame maps preserve top elements. Hence, the claim follows. Now assume that $x_1 \star x_2 \le p_f$ for $x_1, x_2 \in Q$. By definition, $p_f$ is radical, so $\sqrt{x_1} \wedge \sqrt{x_2} = \sqrt{x_1 \star x_2} \le p_f$. Since $f$ preserves arbitrary joins and finite meets, we have $f(p_f)=0$ and thus $f(\sqrt{x_1})\wedge f(\sqrt{x_2}) = f(\sqrt{x_1}\wedge \sqrt{x_2}) = 0$. Hence, either $f(\sqrt{x_1})=0$ or $f(\sqrt{x_2})=0$. We therefore have either $x_1 \le \sqrt{x_1} \le p_f$ or $x_2 \le \sqrt{x_2} \le p_f$. This establishes $p_f\in \spec(Q)$.

It remains to verify that the maps $p_{(-)}$ and $f_{(-)}$ are inverse to each other. Let $f \in \op{pt}(Q_{\op{rad}})$. Since $f(p_f)=0$, we have
\[
y \le p_f \iff f(y)=0
\]
for all $y\in Q_{\op{rad}}$. That is, $f_{p_{f}}=f$. Now, let $p\in\spec(Q)$. We have $p_{f_p} \le p$, since $f_p(p_{f_p})=0$. We also have $p\le p_{f_p}$, since $f_p(p)=0$. Therefore, $p_{f_p} = p$.
\end{proof}

\begin{de}\label{def:spec-q}
We equip $\spec(Q)$ with the topology whose open subsets are the sets
\[
U_r \coloneqq \SET{p\in \spec(Q)}{r\not\le p}
\]
ranging over all $r \in Q_{\op{rad}}$. We call $\spec(Q)$ the \textbf{spectrum} of $Q$.
\end{de}

\begin{rem}
In the case where $Q$ is already a frame (that is, $Q_{\op{rad}}=Q$ by \cref{cor:preframe}), \cref{thm:specQ} states that $\op{pt}(Q)\cong\spec(Q)$. This recovers \cite[Proposition~II.1.3]{Johnstone82}.
\end{rem}
\begin{de}
Let $Q$ be a $\otimes$-suplattice. By definition, for each $a\in Q$, the map $a \star(-): Q \rightarrow Q$ preserves joins:
$$
a \star\left(\bigvee_{x \in S} x\right)=\bigvee_{x \in S} (a \star x)
$$
for every subset $S \subseteq Q$. It follows that the map $a \star(-): Q \rightarrow Q$ has a right adjoint, which is denoted by $a \backslash(-): Q \rightarrow Q$, and called the \textbf{division} by $a$. For every $a, b, c \in Q$ we have
$$
a \star b \leq c \iff b \leq a \backslash c .
$$
\end{de}
\begin{prop}\label{homradical}
If  $Q$ is a preframe and $y\in Q$ is radical, then so is $x\backslash y$ for any $x\in Q$.
\end{prop}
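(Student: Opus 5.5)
We verify the definition of radical directly for $x\backslash y$. Take any $z\in Q$ with $z^2\le x\backslash y$; we must show $z\le x\backslash y$, which by the adjunction $a\star b\le c\iff b\le a\backslash c$ amounts to showing $x\star z\le y$. Since $y$ is radical, it is enough to show $(x\star z)^2\le y$.

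By commutativity and associativity, $(x\star z)^2 = x\star x\star z\star z = x\star(x\star z^2)$. The hypothesis $z^2\le x\backslash y$ gives $x\star z^2\le y$ by the adjunction. Because $\star$ is monotone in each variable (it preserves joins), we get $x\star(x\star z^2)\le x\star y$.

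It remains to see that $x\star y\le y$. This is where the preframe hypothesis enters: the unit $1$ is the top element, so $x\le 1$ and hence $x\star y\le 1\star y=y$. Chaining the inequalities gives $(x\star z)^2\le y$, so radicality of $y$ yields $x\star z\le y$, that is, $z\le x\backslash y$.

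No step is a real obstacle. The one point to flag is that the last inequality uses that $1$ is the top element; for a general $\otimes$-suplattice the step $x\star y\le y$ can fail.
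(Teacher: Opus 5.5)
Your proof is correct and follows the same route as the paper: transpose via the adjunction, absorb the extra factor of $x$ using $x\le 1$, and then apply radicality of $y$ to $x\star z$. The paper phrases the argument with $n$-th powers, passing from $a^n\star x\le y$ to $a^n\star x^n\le y$, and leaves implicit the use of $1$ being the top element, which you make explicit.
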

\begin{proof}
Let $a\in Q$ be an element such that $a^n \leq x\backslash y$ for some $n\geq1$. We need to show that $a \in x\backslash y$. By adjunction, we have $a^n\star x\leq y$ and hence $(a\star x)^n=a^n\star x^n\leq y$. Since $y$ is radical, we get $a \in x\backslash y$.
\end{proof}
\begin{de}
Let $Q$ be a preframe. We define $Q^{\op{cidem}} \subseteq Q$ to be the subposet spanned by (co)idempotent elements, i.e. the elements $x$ such that $x^2=x$.
\end{de}
\begin{prop}
The poset $Q^{\op{cidem}}$ is a sub-preframe of $Q$. Moreover, $Q^{\op{cidem}}$ is a frame.
\end{prop}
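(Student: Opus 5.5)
The plan is to verify the sub-preframe axioms directly and then reuse the argument of \cref{thm:sqrtQ-frame} to see that the product on $Q^{\op{cidem}}$ is the meet.

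\textbf{Sub-preframe.} We must show that $Q^{\op{cidem}}$ contains the unit, is closed under $\star$, and is closed under arbitrary joins computed in $Q$. Then the inclusion preserves joins and the monoidal structure, and $Q^{\op{cidem}}$ is itself a preframe whose top element is $1$.

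\begin{itemize}
\item The unit is idempotent, since $1\star1=1$.
\item Products of idempotents are idempotent: commutativity and associativity give $(x\star y)^2=x^2\star y^2=x\star y$.
\item For joins, let $\{x_i\}_{i\in I}$ be idempotent and put $x=\bigvee_i x_i$. Since $\star$ preserves joins in each variable, $x^2=\bigvee_{i,j}x_i\star x_j$.
\begin{itemize}
\item Each $x_i\star x_j\le x_i\star 1=x_i\le x$, using that $1$ is the top element. Hence $x^2\le x$.
\item Conversely, the diagonal terms give $x_i=x_i\star x_i\le x^2$ for each $i$. Hence $x\le x^2$.
\item The empty join needs a separate check. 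The bottom element $0$ satisfies $0\star0=0$, because $0\star(-)$ preserves the empty join.
\end{itemize}
\end{itemize}

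\textbf{Frame.} We cannot simply cite \cref{thm:sqrtQ-frame}, because an idempotent element need not be radical. Instead we repeat its argument inside $Q^{\op{cidem}}$. For $y,y_1,y_2\in Q^{\op{cidem}}$, we show that $y\le y_1\star y_2$ holds if and only if both $y\le y_1$ and $y\le y_2$.

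\begin{itemize}
\item The forward direction follows from $y_1\star y_2\le y_1\star1=y_1$, and symmetrically $y_1\star y_2\le y_2$.
\item For the reverse direction, idempotency and monotonicity of $\star$ give $y=y\star y\le y_1\star y\le y_1\star y_2$.
\end{itemize}

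Thus $y_1\star y_2$ is the binary meet in $Q^{\op{cidem}}$, and $1$ is the empty meet. Since joins in $Q^{\op{cidem}}$ are computed in $Q$ and $\star$ distributes over them, finite meets distribute over arbitrary joins, so $Q^{\op{cidem}}$ is a frame.

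The only step needing care is the join closure. There the key inequality $x_i\star x_j\le x_i$ uses that the unit is the top element, which is exactly the preframe hypothesis.
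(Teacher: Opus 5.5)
Your proof is correct and follows essentially the same route as the paper: the same closure computations for $\star$ and for joins, and then the argument that on idempotents $\star$ is the binary meet, so distributivity follows from $\star$ preserving joins. One quibble concerns your remark that \cref{thm:sqrtQ-frame} cannot be cited. It does not apply through $Q_{\op{rad}}$, since $Q^{\op{cidem}}\not\subseteq Q_{\op{rad}}$ in general. It does, however, apply verbatim to the preframe $Q^{\op{cidem}}$ itself, where every element is trivially radical (if $x=x^2\le r$ then $x\le r$), and this is effectively what the paper does when it invokes \cref{prop:sqrtQ-reflection}.
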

\begin{proof}
The poset $Q^{\op{cidem}}$ is closed under the monoidal product since we have $(a \star b)^2=a^2 \star b^2=a \star b$ for all $a, b \in Q$. For any family of elements $a_i \in Q$, we have
$$
\bigvee a_i \geq\left(\bigvee a_i\right)^2=\bigvee a_i \star a_j \geq \bigvee a_i^2=\bigvee a_i
$$
and thus $\bigvee a_i=\left(\bigvee a_i\right)^2$. This shows that $Q^{\op{cidem}}$ is closed under small joins. Therefore, $Q^{\op{cidem}}$ is a sub-preframe of $Q$. It then follows from \cref{prop:sqrtQ-reflection} that $Q^{\op{cidem}}$ is a frame, since every element of $Q^{\op{cidem}}$ is coidempotent by definition.
\end{proof}
\begin{prop}[{\cite[Proposition A.4.8]{anel2023left}}]
The inclusion $Q^{\op{cidem}} \rightarrow Q$ is the universal coreflection of $Q$ into frames. 
\end{prop}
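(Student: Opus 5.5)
We must show that for every frame $F$ and every preframe $Q$, composing with the inclusion $\iota\colon Q^{\op{cidem}}\hookrightarrow Q$ gives a bijection
\[
\Hom_{\frm}(F,Q^{\op{cidem}})\xrightarrow{\sim}\Hom_{\pfrm}(F,Q).
\]
Since $Q^{\op{cidem}}$ is a sub-preframe that is a frame (previous proposition), and $\frm\hookrightarrow\pfrm$ is fully faithful, the map is well defined and injective, because $\iota$ is injective. So the only real content is surjectivity: every preframe morphism $g\colon F\to Q$ factors through $Q^{\op{cidem}}$.

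The key step is an idempotence argument. In $F$ the monoidal product is the meet, so every $a\in F$ satisfies $a\wedge a=a$. A preframe morphism $g$ preserves the monoidal product, so
\[
g(a)\star g(a)=g(a\wedge a)=g(a).
\]
Hence $g(a)\in Q^{\op{cidem}}$, and $g$ factors as $\iota\circ\bar g$ for a unique map of sets $\bar g\colon F\to Q^{\op{cidem}}$.

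We then check that $\bar g$ is a frame morphism. It preserves joins because joins in $Q^{\op{cidem}}$ are computed in $Q$: this was shown in the proof that $Q^{\op{cidem}}$ is closed under small joins. It preserves the top element, since the top of $Q^{\op{cidem}}$ is $1$ and $g(1)=1$. It preserves binary meets because in the frame $Q^{\op{cidem}}$ the meet coincides with the monoidal product. This follows from \cref{thm:sqrtQ-frame} applied to the preframe $Q^{\op{cidem}}$, all of whose elements are radical, being idempotent. Concretely, $\bar g(a\wedge b)=g(a)\star g(b)=\bar g(a)\wedge\bar g(b)$.

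I do not expect a serious obstacle. The one point needing care is the identification of meets in $Q^{\op{cidem}}$ with $\star$. These meets are generally not the meets computed in $Q$, so I would explicitly invoke \cref{thm:sqrtQ-frame} rather than argue in $Q$. Naturality in $F$ and $Q$ is immediate, because the bijection is given by postcomposition with $\iota$. This exhibits $\iota$ as the counit of a right adjoint to $\frm\hookrightarrow\pfrm$, which is the universal coreflection claimed.
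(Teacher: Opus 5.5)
Your argument is correct: a preframe map out of a frame sends every element to an idempotent, since $a\wedge a=a$, and meets in $Q^{\op{cidem}}$ agree with $\star$, which you correctly refuse to replace by meets in $Q$. Together with the preceding proposition (closure of $Q^{\op{cidem}}$ under joins and $\star$), this is the standard argument behind the cited \cite[Proposition A.4.8]{anel2023left}, which the paper invokes without giving a proof of its own.
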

\begin{cor}\label{prefrmadjoints}
From the discussion above, we obtain the following adjunctions:
$$\begin{tikzcd}
\pfrm \arrow[r, "(-)_{\op{rad}}", shift left=2ex] \arrow[r, "(-)^{\op{cidem}}"', shift right=2ex]  & \arrow[l,"\perp","\perp"', hook'] \frm
\end{tikzcd}$$
or alternatively
$$\begin{tikzcd}
\calg(\prl_0)_{\op{ter}} \arrow[r, "(-)_{\op{rad}}", shift left=2ex] \arrow[r, "\cidem(-)"', shift right=2ex]  & \arrow[l,"\perp","\perp"', hook'] \calg(\prl_0)_{\op{Car}}
\end{tikzcd} .$$
\end{cor}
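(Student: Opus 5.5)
The statement to prove is \cref{prefrmadjoints}: the triple $(-)_{\op{rad}} \dashv \iota \dashv (-)^{\op{cidem}}$, where $\iota\colon \frm \hookrightarrow \pfrm$ is the inclusion. Both adjunctions are essentially already in place, so the plan is to assemble them and then transport along the dictionary of \cref{rem:categorification}.

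For the left adjoint I would invoke \cref{cor:preframe} directly. Concretely, a frame $F$ satisfies $f(x^2)=f(x)^2=f(x)\wedge f(x)=f(x)$ for any preframe map $f\colon Q\to F$. So the condition in \cref{prop:sqrtQ-reflection} holds automatically, and $\Hom_{\pfrm}(Q_{\op{rad}},F)\cong\Hom_{\pfrm}(Q,F)$. By \cref{thm:sqrtQ-frame}, $Q_{\op{rad}}$ is a frame. By full faithfulness of $\frm\hookrightarrow\pfrm$, the left-hand side equals $\Hom_{\frm}(Q_{\op{rad}},F)$, and this gives the adjunction.

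For the right adjoint I would use the cited \cite[Proposition A.4.8]{anel2023left}. If one wants a self-contained check, take a frame $F$ and a preframe map $g\colon F\to Q$. Each $y\in F$ is idempotent, since $y\wedge y=y$, so $g(y)^2=g(y^2)=g(y)$ and $g$ lands in $Q^{\op{cidem}}$. The preceding proposition says $Q^{\op{cidem}}$ is a sub-preframe and a frame. Hence $g$ factors uniquely through the inclusion as a preframe map $F\to Q^{\op{cidem}}$. This is a frame map by full faithfulness, because in $Q^{\op{cidem}}$ the product agrees with the meet (\cref{thm:sqrtQ-frame} applied to $Q^{\op{cidem}}$, whose radicalization is itself). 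Preservation of joins holds because joins in $Q^{\op{cidem}}$ are computed in $Q$.

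The second diagram is then obtained by transporting across the equivalences $\calg(\prl_0)_{\op{ter}}\simeq\pfrm$ and $\calg(\prl_0)_{\op{Car}}\simeq\frm$ of \cref{rem:categorification}. Under these equivalences $\iota$ corresponds to the inclusion of Cartesian objects, and $(-)^{\op{cidem}}$ is written $\cidem(-)$.

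The only step I expect to need care is checking that these equivalences are compatible with the inclusions, i.e.\ that the fully faithful embedding $\frm\hookrightarrow\pfrm$ matches $\calg(\prl_0)_{\op{Car}}\subseteq\calg(\prl_0)_{\op{ter}}$. This should follow from the fact that a Cartesian monoidal structure on a poset is the meet, and its unit, the top element, is terminal.
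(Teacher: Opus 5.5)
Your proposal is correct and follows the paper's route. The paper assembles this corollary from three ingredients: \cref{cor:preframe} for the left adjoint, the cited coreflection \cite[Proposition A.4.8]{anel2023left} for the right adjoint, and the dictionary of \cref{rem:categorification} for the second diagram. Your self-contained check is a valid supplement to that citation rather than a different argument: you show that preframe maps out of a frame land in $Q^{\op{cidem}}$, and that the product there is the meet.
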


\begin{rem}
Since any frame is canonically a preframe whose monoidal product is given by the meet, the following generalizes the definition of $\kappa$-coherence of frames introduced in {\cite{krause2023spectrum}}.
\end{rem}

\begin{de}\label{defkcoh}
Let $\kappa$ be a regular cardinal. A preframe $Q$ is called \textbf{$\kappa$-coherent}, if the following hold:
\enu{
\item Every element is the join of $\kappa$-compact elements;
\item The greatest element is $\kappa$-compact;
\item The product of two $\kappa$-compact elements is $\kappa$-compact.
}
In the case when $\kappa = \omega$, we simply say that the preframe $Q$ is \textbf{coherent}.
\end{de}

\begin{rem}\label{rem:kcoh}
By definition, a preframe $Q$ is $\kappa$-coherent if and only if $Q$ regarded as a presentably symmetric monoidal $0$-category is $\kappa$-presentable. Similarly, a frame $F$ is $\kappa$-coherent if and only if the cartesian symmetric monoidal $\infty$-category $F^\times $ lies in $ \calg(\prl_{\kappa,0})$.
\end{rem}

\begin{rem}
When a preframe $Q$ is coherent, the spectrum $\spec(Q)$ is defined and studied in \cite{BuanKrauseSolberg07}. We remind the reader that a coherent preframe is called an ideal lattice in \cite{BuanKrauseSolberg07}.
\end{rem}

\subsection{Zariski frames}\label{sec2.1}
In this section, we construct the Zariski frame functor. Throughout, we fix a presentably symmetric monoidal \infcat $\cotimes\in\calg(\prl)$. We also fix a regular cardinal $\kappa$ such that $\cotimes\in\calg(\prl_\kappa)$. There always exists such a $\kappa$ by \cite[Lemma 5.3.2.12]{ha}.

\begin{de}\label{def:ideal}
We say that a morphism $i: I\to\mb{1}$ is a \textbf{categorical ideal} of $\mcc$ if $i$ is a monomorphism, \ie a $(-1)$-truncated morphism in $\mc{C}$. We often denote a categorical ideal $i: I\to\mb{1}$ by $I$ and simply call it an ideal. We write $\op{Idl}(\mcc)$ for the \infcat of ideals of $\mcc$.
\end{de}

\begin{rem}
By \cite[Remark~5.5.6.10]{htt}, a morphism $i: I\to\mb{1}$ is a $(-1)$-truncated morphism in $\mcc$ if and only if it is $(-1)$-truncated as an object in the overcatgory $\mcc_{/\mb{1}}$. Hence, $\op{Idl}(\mc{C})$ can be identified with the truncated overcategory $(\mc{C}_{/\mb{1}})_{\leq-1}$. By \cite[Remark~5.5.6.18]{htt}, the inclusion $\idlc\hookrightarrow \mc{C}_{/\mb{1}}$ admits a left adjoint which we denote by $\im$. Moreover, $\op{Idl}(\mc{C})$ inherits a symmetric monoidal structure from $\mc C^\otimes$, and the monoidal product by is given by $(I,J)\mapsto I\cdot J \coloneqq \im(I\otimes J)$; see \cite[\textsection2.2.2 and 4.8.2.15]{ha}. In particular, $\idlc^\otimes$ is a presentably symmetric monoidal $0$-category, \ie a $\otimes$-suplattice.
\end{rem}


\begin{rem}
Our next goal is to prove that $\idlc^\otimes$ is a $\kappa$-coherent preframe. We start by showing that $\mc{C}_{/\mb{1}}$ is $\kappa$-presentable as a special case of the following lemma.
\end{rem}

\begin{lem}\label{overcataccessible}
Let $\mcd\in\prl_{\kappa}$ be $\kappa$-presentable \infcat and $X\in\mcd$. Then $\mc{D}_{/X}\in\prl_{\kappa}$, and the forgetful functor $F:\mc{D}_{/X}\to \mc{D}$ preserves and reflects $\kappa$-compact objects. Moreover, if $S=\{X_\alpha\}$ is a generating set 
for \mcd, then the collection $S_{/X}=\{X_\alpha\to X\}$ of all maps $A\to X$, with $A\in S$, generates~$\mc{D}_{/X}$.
\end{lem}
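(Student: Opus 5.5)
The plan is to use the standard description of slice categories of presentable $\infty$-categories. First, $\mcd_{/X}$ is presentable by \cite[Proposition~5.5.3.10]{htt}. The forgetful functor $F\colon\mcd_{/X}\to\mcd$ preserves all small colimits, and in fact creates them: colimits in $\mcd_{/X}$ are computed in $\mcd$, since the cone to $X$ is given by the colimit. The remaining work is to show that $F$ detects $\kappa$-compactness in both directions, and then to check that the slices $S_{/X}$ of generators generate.

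For preservation and reflection of $\kappa$-compact objects, fix $(A\xrightarrow{a}X)\in\mcd_{/X}$. For any $(B\xrightarrow{b}X)$ there is a fiber sequence of mapping spaces
\[
\mapp_{\mcd_{/X}}(a,b)\to\mapp_{\mcd}(A,B)\to\mapp_{\mcd}(A,X),
\]
taken over the point $a$.

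Suppose $A$ is $\kappa$-compact in $\mcd$, and let $b=\colim_i b_i$ be a $\kappa$-filtered colimit in $\mcd_{/X}$. Then $\mapp(A,\colim B_i)\simeq\colim\mapp(A,B_i)$, and this colimit is compatible with the map to $\mapp(A,X)$. Since $\kappa$-filtered colimits commute with fibers in $\mcs$ (they are left exact; see \cite[Proposition~5.3.3.3]{htt}), we get $\mapp(a,b)\simeq\colim\mapp(a,b_i)$. Hence $a$ is $\kappa$-compact.

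Conversely, suppose $a$ is $\kappa$-compact in $\mcd_{/X}$. Write $A$ as a $\kappa$-filtered colimit of $\kappa$-compact objects $A_i$ of $\mcd$. Equipping each $A_i$ with the composite $A_i\to A\to X$ exhibits $a\simeq\colim a_i$ in $\mcd_{/X}$, again a $\kappa$-filtered colimit. By compactness, $\id_a$ factors through some $a_i$, so $A$ is a retract of $A_i$ in $\mcd$. Since $\kappa$-compact objects are closed under retracts, $A$ is $\kappa$-compact.

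This same argument also proves $\kappa$-accessibility. Every object $a$ of $\mcd_{/X}$ is a $\kappa$-filtered colimit of the objects $a_i$, and each $a_i$ is $\kappa$-compact by the preservation direction. Moreover, the full subcategory of $\kappa$-compact objects of $\mcd_{/X}$ is essentially small, because its objects are pairs consisting of a $\kappa$-compact object of $\mcd$ and a map to $X$. Thus $\mcd_{/X}\in\prl_\kappa$.

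For the generation statement, suppose $S$ generates $\mcd$ under colimits. Let $\mcd'\subseteq\mcd_{/X}$ be the full subcategory generated by $S_{/X}$ under colimits. Given $a\colon A\to X$, write $A\simeq\colim_j A_j$ as an iterated colimit of objects of $S$. The plan is to argue by transfinite induction along the construction of the colimit closure, using that colimits in $\mcd_{/X}$ are created by $F$. Concretely, the full subcategory
\[
\{A\in\mcd : \text{every } A\to X \text{ lies in } \mcd'\}
\]
contains $S$ and is closed under colimits. Indeed, given a diagram $A_j$ with colimit $A$ and a map $A\to X$, the induced maps $A_j\to X$ form a diagram in $\mcd_{/X}$ with colimit $a$; each $a_j$ lies in $\mcd'$, so $a$ does too. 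This subcategory is therefore all of $\mcd$, and so $\mcd'=\mcd_{/X}$.

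The main obstacle I expect is the reflection of compactness, specifically making the retract argument rigorous in the $\infty$-setting. One needs the lift of a $\kappa$-filtered colimit diagram to the slice to be a colimit there. This holds because the projection $\mcd_{/X}\to\mcd$ is a right fibration that creates colimits \cite[Proposition~1.2.13.8]{htt}. The step "$\id_a$ factors through some stage" then follows from compactness.
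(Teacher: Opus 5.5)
Your proof is correct. It matches the paper in two places: the reflection direction, via the fiber sequence of mapping spaces over $\operatorname{Map}_{\mathcal{D}}(A,X)$, and the $\kappa$-accessibility argument, via lifting a $\kappa$-filtered presentation of $A$ to $\mathcal{D}_{/X}$. It takes a different route in two other places.

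\emph{Preservation.} To show that $a$ $\kappa$-compact in $\mathcal{D}_{/X}$ implies $A$ $\kappa$-compact in $\mathcal{D}$, the paper observes that the right adjoint $Y\mapsto (Y\times X\to X)$ of $F$ preserves $\kappa$-filtered colimits. This holds because $\kappa$-filtered colimits commute with finite products in $\mathcal{D}\simeq\operatorname{Ind}_\kappa(\mathcal{D}^\kappa)$, and a left adjoint whose right adjoint preserves $\kappa$-filtered colimits preserves $\kappa$-compact objects. You instead test compactness of $a$ against its own presentation $a\simeq\operatorname{colim}_i a_i$, which exhibits $A$ as a retract of some $\kappa$-compact $A_i$. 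The paper's route is shorter but uses the commutation of $\kappa$-filtered colimits with finite products inside $\mathcal{D}$. Yours needs only closure of $\kappa$-compact objects under retracts, together with the colimit-creation by $F$ that you use for accessibility anyway.

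\emph{Generation.} The paper reads ``generating set'' as ``the corepresentables $\operatorname{Map}(X_\alpha,-)$ are jointly conservative.'' It deduces this for $S_{/X}$ from the same pullback square of mapping spaces, plus conservativity of $F$. You read it as generation under colimits and run a closure argument on $\{A : \text{every } A\to X \text{ lies in } \mathcal{D}'\}$. For presentable $\infty$-categories the two notions agree, so both are fine; your version makes the role of colimit-creation by $F$ explicit.

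One small labeling slip: in your accessibility paragraph, the fact that each $a_i$ is $\kappa$-compact (because $A_i$ is) is the reflection direction for $F$, not the preservation direction.
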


\begin{proof}
Since the right adjoint $G$ to $F:\mc{D}_{/X}\to \mc{D}$ is given by $Y\to Y\times X$ and $\kappa$-filtered colimits commute with $\kappa$-small limits in \mcd, it follows that $G$ preserves $\kappa$-filtered colimits and hence $F$ preserves $\kappa$-compact objects. Now given a map $Z\to X$ in \mcd, consider the following pullback diagram of spaces:
$$
\begin{tikzcd}
{\mapp_{\mcd_{/X}}(Z,-)} \arrow[d] \arrow[r] & {\mapp_{\mcd}(Z,-)} \arrow[d] \\
* \arrow[r]                                  & {\mapp_{\mcd}(Z,X)}.       
\end{tikzcd}$$
Then $F$ reflects $\kappa$-compact objects because filtered colimits commute with finite limits in \mcs. However, for any object $Z\in \mcd_{/X}$, $Z$ can be written as a $\kappa$-filtered colimit of $\kappa$-compact objects in \mcd. It is therefore a $\kappa$-filtered colimit of $\kappa$-compact objects in $\mcd_{/X}$. We conclude that $\mc{D}_{/X}$ is $\kappa$-presentable. For the last statement, let $S=\{X_\alpha\}$ be a generating set 
for \mcd. It suffices to show that $$\{h_f=\mapp_{\mcd_{/X}}(X_\alpha,-): \mcd_{/X}\to\mcs \,|\,f\in S_{/X} \}$$ is jointly conservative. This follows from the pullback diagram
$$
\begin{tikzcd}
{\mapp_{\mcd_{/X}}(X_\alpha,-)} \arrow[d] \arrow[r] & {\mapp_{\mcd}(X_\alpha,-)} \arrow[d] \\
* \arrow[r,"f"]                                  & {\mapp_{\mcd}(X_\alpha,X)}   
\end{tikzcd}$$
for all $f\in S_{/X}$.
\end{proof}

\begin{prop}\label{idlccompgene}
The category $\op{Idl}(\mc{C})^\otimes$ is a commutative algebra object of $\prl_{\kappa,0}$, the $\infty$-category of $\kappa$-presentable $0$-categories. Moreover, an ideal $I$ is $\kappa$-compact in $\idlc$ if and only if there exists $\kappa$-compact object $X\in\mcc^\kappa$ and a map $X\to \mb{1}$ such that $\op{Im}(X)=I$.
\end{prop}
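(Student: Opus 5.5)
The plan is to deduce both claims from \cref{overcataccessible} applied to $\mcd=\mcc$ and $X=\mb1$, together with general facts about truncation in presentable $\infty$-categories. Since $\mcc\in\prl_\kappa$, \cref{overcataccessible} gives $\mcc_{/\mb1}\in\prl_\kappa$. Moreover, its $\kappa$-compact objects are exactly the maps $X\to\mb1$ with $X\in\mcc^\kappa$, because the forgetful functor preserves and reflects $\kappa$-compact objects. The poset $\idlc\simeq(\mcc_{/\mb1})_{\le -1}$ is the reflective subcategory of $(-1)$-truncated objects, with reflector $\im$.

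The first step is to show that $\idlc$ is closed under $\kappa$-filtered colimits in $\mcc_{/\mb1}$, taking $\kappa$ uncountable if needed; we may enlarge $\kappa$ since we only need some $\kappa$ as fixed in the text. One route is the standard fact that in an $\infty$-category with $\kappa$-filtered colimits commuting with finite limits, truncated objects are stable under $\kappa$-filtered colimits. Indeed, $Y$ is $(-1)$-truncated if and only if the diagonal $Y\to Y\times_{\mb1}Y$ is an equivalence, and in $\mcc$ the $\kappa$-filtered colimits commute with $\kappa$-small limits. The inclusion therefore preserves $\kappa$-filtered colimits, so $\im$ preserves $\kappa$-compact objects. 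It follows that $\im(X\to\mb1)$ is $\kappa$-compact in $\idlc$ whenever $X\in\mcc^\kappa$.

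The second step is to show that $\idlc$ is $\kappa$-accessible. Write an ideal $I$ as a $\kappa$-filtered colimit $I\simeq\colim X_j$ of $\kappa$-compact objects of $\mcc_{/\mb1}$. Since $I$ is truncated, $I\simeq\im(I)\simeq\colim\im(X_j)$, because $\im$ is a left adjoint and colimits in $\idlc$ are computed by $\im$ of the colimit. This exhibits $I$ as a $\kappa$-filtered colimit, hence a join, of $\kappa$-compact ideals of the stated form. For the converse, suppose $I$ is $\kappa$-compact. Then $I$ is a retract of, and in a poset equal to, some $\im(X_j)$ in this $\kappa$-filtered join. This proves the characterization of $\kappa$-compact ideals.

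The remaining step is monoidal compatibility, so that $\idlc^\otimes\in\calg(\prl_{\kappa,0})$. The unit $\mb1=\im(\mb1\to\mb1)$ is $\kappa$-compact since $\mb1\in\mcc^\kappa$, which holds because $\cotimes\in\calg(\prl_\kappa)$. For products, use $\im(X)\cdot\im(Y)=\im(X\otimes Y)$, which holds because $\im$ is symmetric monoidal as a localization compatible with $\otimes$. Since $X\otimes Y\in\mcc^\kappa$, the product of $\kappa$-compact ideals is $\kappa$-compact. The main obstacle is the first step, checking that monomorphisms into $\mb1$ are closed under $\kappa$-filtered colimits in $\mcc_{/\mb1}$. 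I would handle it via the diagonal criterion and the commutation of $\kappa$-filtered colimits with pullbacks in $\mcc$, using \cite[5.5.6]{htt} if needed.
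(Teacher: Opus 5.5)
Your proposal is correct and follows essentially the paper's proof. The paper gets closure of $\idlc$ under $\kappa$-filtered colimits in $\mcc_{/\mb1}$ from \cite[Lemma 5.5.6.15]{htt}, whose content is your diagonal argument, and concludes that $\im$ is a $\kappa$-accessible monoidal localization. It then characterizes $\kappa$-compact ideals exactly as you do, via $I\simeq\varinjlim\im(X_\alpha)$ from \cref{overcataccessible} and the fact that retracts in a poset are equalities. Your explicit checks of the unit and of products of $\kappa$-compact ideals fill in what the paper leaves implicit.

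One thing to fix is your remark about enlarging $\kappa$. The statement concerns the fixed $\kappa$ with $\cotimes\in\calg(\prl_\kappa)$, and it is later applied with $\kappa=\omega$ in \cref{thm:zar-coh}, so enlarging $\kappa$ would lose exactly the case that is needed. It is also unnecessary: in any $\kappa$-presentable $\mcc$, for every regular $\kappa$ including $\omega$, $\kappa$-filtered colimits commute with pullbacks, and that is all your diagonal criterion uses.
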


\begin{proof}
The inclusion $\cidl\to \mcc_{/\mb{1}}$ is closed under $\kappa$-filtered colimits by \cite[Lemma 5.5.6.15]{htt}. Thus, $\im \colon \mcc_{/\mb{1}}\to \cidl$ is a $\kappa$-accessible (monoidal) localization so $\idlcotimes\in\calg(\prl_{\kappa,0})$. Now given a $\kappa$-compact object $I$ in $\idlc$, by \cref{overcataccessible} we can write $I$ as a $\kappa$-filtered colimit of $\kappa$-compact objects $X_\alpha\to \mb{1}$ in $\mcc_{/\mb{1}}$. Thus $I \simeq \im(I) \simeq \im(\varinjlim X_\alpha) \simeq\varinjlim \im(X_\alpha)$. By $\kappa$-compactness, $I$ is a retract of some $\im(X_\alpha)$. Since retractions are equivalences in any $0$-category, $I=\im(X_\alpha)$.
\end{proof}

\begin{rem}\label{rem:generatingset}
In fact, if $S\subseteq \mcc$ is a generating set for \mcc, then $\im(S_{/\mb{1}})$ is a generating set of $\idlc$. Moreover, every ideal $I\in \idlc$ can be written as $I=\bigvee_\alpha K_\alpha$ where each $K_\alpha\in \im(S_{/\mb{1}})$, since one can check that such $I$ are closed under arbitrary joins. 
\end{rem}

\begin{cor}\label{cor:idlc-preframe}
The category $\op{Idl}(\mc{C})^\otimes$, regarded as a $\otimes$-suplattice, is a preframe. Furthermore, it is $\kappa$-coherent in the sense of \cref{defkcoh}.
\end{cor}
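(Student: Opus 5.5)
We must show two things: that $\idlc^\otimes$ is a preframe, and that it is $\kappa$-coherent in the sense of \cref{defkcoh}.

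For the preframe property, \cref{idlccompgene} and the preceding remark already give that $\idlc^\otimes$ is a $\otimes$-suplattice: the product $I\cdot J=\im(I\otimes J)$ preserves joins in each variable. It remains to check that the unit is the top element. The monoidal unit of $\idlc$ is $\im(\mb1\xrightarrow{\id}\mb1)=(\id\colon\mb1\to\mb1)$, since $\im$ is a symmetric monoidal localization and $\id$ is already $(-1)$-truncated. This is the terminal object of $\mcc_{/\mb1}$, hence terminal in the full subcategory $\idlc$, so it is the greatest element. This settles the first statement.

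For $\kappa$-coherence we verify the three conditions of \cref{defkcoh}. Condition (1) holds because $\idlc\in\prl_{\kappa,0}$ by \cref{idlccompgene}, so $\idlc$ is $\kappa$-accessible and every element is a $\kappa$-filtered colimit, i.e.\ a join, of $\kappa$-compact elements. Condition (2) uses the characterization in \cref{idlccompgene}: the top element is $\im(\id_{\mb1})$, and $\mb1$ is $\kappa$-compact because $\cotimes\in\calg(\prl_\kappa)$ (the symmetric monoidal structure preserves $\kappa$-compact objects, and in particular the unit is $\kappa$-compact). Hence $\mb1$ is $\kappa$-compact in $\idlc$.

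For condition (3), let $I=\im(X\to\mb1)$ and $J=\im(Y\to\mb1)$ with $X,Y\in\mcc^\kappa$. Since $\im$ is symmetric monoidal,
\[
I\cdot J=\im(\im(X)\otimes\im(Y))=\im(X\otimes Y\to\mb1).
\]
Because $\otimes$ preserves $\kappa$-compact objects, $X\otimes Y\in\mcc^\kappa$, and \cref{idlccompgene} then shows $I\cdot J$ is $\kappa$-compact.

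The one delicate step is the identity $\im(\im X\otimes\im Y)=\im(X\otimes Y)$. This holds because $\idlc$ is a monoidal localization of $\mcc_{/\mb1}$, where the monoidal structure on $\mcc_{/\mb1}$ is induced from $\otimes$ and the multiplication $\mb1\otimes\mb1\simeq\mb1$. Concretely, $\im$-equivalences are stable under tensoring, as guaranteed by \cite[4.8.2.15]{ha} cited in the preceding remark. We also rely on the convention that $\calg(\prl_\kappa)$ requires both the unit and $\otimes$ to preserve $\kappa$-compactness; if only the latter were built in, we would enlarge $\kappa$ at the outset.
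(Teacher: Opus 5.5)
Your proof is correct and follows the same approach as the paper. The preframe property comes from the unit $\id_{\mb 1}$ being terminal. $\kappa$-coherence is read off from \cref{idlccompgene}, namely $\op{Idl}(\mc{C})^\otimes\in\calg(\prl_{\kappa,0})$ together with the description of $\kappa$-compact ideals as images $\im(X\to\mb 1)$ of $\kappa$-compact objects. The only difference is that you verify the three conditions of \cref{defkcoh} by hand, whereas the paper cites \cref{rem:categorification} for this translation.
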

\begin{proof}
The first statement follows immediately from the clear fact that the unit in $\op{Idl}(\mc{C})^\otimes$ is terminal. The second statement is due to \cref{idlccompgene}, keeping in mind \cref{rem:categorification}.
\end{proof}

\begin{de}\label{def:radprimeideal}
Let $I\subseteq\mb{1}$ be an ideal of $\mc{C}$.
\enu{
\item We say $I$ is a \textbf{radical ideal} if $I$ is a radical element in $\idlc$ in the sense of \cref{def:rad}, that is, for any ideal $J\subseteq\mb{1}$ such that $J^n\subseteq I$ for some $n\geq 1$, we have $J\subseteq I$. 
\item We say $I$ is a \textbf{prime ideal} if $I$ is a prime element in $\idlc$ in the sense of \cref{def:prime}, that is, for any ideals $J_1, J_2 \subseteq\mb{1}$ with $J_1 J_2\subseteq I$, we have either $J_1 \subseteq I$ or $J_2 \subseteq I$.
\item For any set of ideals $\{I_\alpha\}$, we denote their meet in $\idlc$ by $\bigcap_\alpha I_\alpha$. The \textbf{nilpotent radical} $\sqrt{I}$ of an ideal $I \in \idlc$ is defined to be the meet of all radical ideals containing $I$ (see \cref{operationsqrt}):
\[
\sqrt I \coloneqq \bigcap\SET{J\in \idlc}{J\text{ is radical and }I \subseteq J}.
\]
}

\end{de}

\begin{prop}\label{prop:idlcrad-localization}
The full subcategory $\idlc_{\op{rad}}$ spanned by the radical ideals is a reflective subcategory of $\idlc$ and the corresponding localization is given by $I\mapsto \sqrt{I}$. Furthermore, the localization is symmetric monoidal, \ie we have $\sqrt{I\cdot J}=\sqrt{\sqrt{I}\cdot \sqrt{J}}$ for any ideals $I$ and $J$. It thereby induces a symmetric monoidal adjunction 
$$\begin{tikzcd}
\idlc^\otimes \arrow[r, "\sqrt{-}", shift left=1ex]   & \arrow[l,"\perp"', hook', shift left=1ex] \idlc_{\op{rad}}^\otimes
\end{tikzcd} .$$
\end{prop}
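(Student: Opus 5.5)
This is essentially the preframe statement \cref{prop:sqrtQ-reflection} specialized to $Q=\idlc$. By \cref{cor:idlc-preframe}, $\idlc^\otimes$ is a preframe, so \cref{def:radprimeideal} is exactly \cref{def:rad} applied to $Q=\idlc$. Hence $\idlc_{\op{rad}}=Q_{\op{rad}}$, and the plan is to transfer the abstract results to this setting.

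First I show that $\idlc_{\op{rad}}$ is reflective with reflector $\sqrt{-}$. In a poset, reflectivity means that for every $I$ there is a least radical element above $I$. Radical elements are closed under arbitrary meets: if each $J_\alpha$ is radical and $K^n\subseteq\bigcap J_\alpha$, then $K^n\subseteq J_\alpha$ for every $\alpha$, so $K\subseteq J_\alpha$ for every $\alpha$. Therefore the meet $\sqrt I$ of \cref{def:radprimeideal} is radical, contains $I$, and is the least such element. This gives $\Hom(\sqrt I,J)\simeq\Hom(I,J)$ for radical $J$. Since $\idlc$ is a $0$-category, this is all that is needed for an adjunction $\sqrt{-}\dashv$ inclusion.

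Next comes monoidality. I use the standard criterion for compatibility of a localization with a symmetric monoidal structure (\cite[Definition~2.2.1.6, Proposition~2.2.1.9]{ha}): for every $\sqrt{-}$-equivalence $I\to I'$ (i.e.\ $\sqrt I=\sqrt{I'}$) and every $K$, the map $I\cdot K\to I'\cdot K$ must also be a $\sqrt{-}$-equivalence. Equivalently, I need $\sqrt{I\cdot J}=\sqrt{\sqrt I\cdot\sqrt J}$. The inequality $\le$ is clear from monotonicity. For $\ge$, let $r=\sqrt{I\cdot J}$. Since $I\cdot J\le r$, we get $I\le J\backslash r$, and $J\backslash r$ is radical by \cref{homradical}. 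Hence $\sqrt I\le J\backslash r$, i.e.\ $\sqrt I\cdot J\le r$. Repeating the argument with the roles of the two factors exchanged, using $\sqrt I\backslash r$, gives $\sqrt I\cdot\sqrt J\le r$, and applying $\sqrt{-}$ yields the claim. Alternatively, one can quote the product formula $\sqrt{x_1}\cdot\sqrt{x_2}=\sqrt{x_1\star x_2}$ from \cref{prop:sqrtQ-reflection}.

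Given compatibility, \cite[Proposition~2.2.1.9]{ha} endows $\idlc_{\op{rad}}$ with a symmetric monoidal structure, namely $(I,J)\mapsto\sqrt{I\cdot J}$, such that $\sqrt{-}$ is symmetric monoidal and the inclusion is lax symmetric monoidal. This yields the claimed symmetric monoidal adjunction. In the posetal setting all coherence data are automatic, so the only genuine content is the inequality $\sqrt I\cdot\sqrt J\le\sqrt{I\cdot J}$. That inequality is the main point, and it is handled by the division trick with \cref{homradical} above.
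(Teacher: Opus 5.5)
Your proposal is correct and follows essentially the same route as the paper. The paper also derives the statement from \cref{prop:sqrtQ-reflection} via \cref{cor:idlc-preframe}, and its direct argument reduces monoidality to showing that every radical ideal containing $I\cdot J$ contains $I\cdot\sqrt{J}$ — this is exactly your division step using \cref{homradical}, and your check that radical ideals are closed under meets just spells out the reflectivity the paper takes ``by definition''.
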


\begin{proof}
This follows from \cref{prop:sqrtQ-reflection} and \cref{operationsqrt} by \cref{cor:idlc-preframe}. A direct argument can be given as follows: The map $I\mapsto \sqrt{I}$ gives a reflection by definition. It then suffices to prove that for any ideals $I,J$ in $\mcc$ the natural map $\sqrt{I\cdot J}\to \sqrt{I\cdot \sqrt{J}}$ is an equivalence in $\idlc_{\op{rad}}$. It remains to show that any radical ideal $K$ containing $I\cdot J$ also contains $I\cdot \sqrt{J}$. This is a consequence of \cref{homradical}.
\end{proof}


\begin{thm}
The category $\idlc_{\op{rad}}^\otimes$ is cartesian monoidal and hence a frame, \ie we have $\sqrt{I\cdot J}=\sqrt{I}\cap\sqrt{J}$ for all radical ideals $I,J$.
\end{thm}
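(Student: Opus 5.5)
This is the specialization of \cref{thm:sqrtQ-frame} to the preframe $Q=\idlc$. By \cref{cor:idlc-preframe}, $\idlc^\otimes$ is a preframe whose product is $I\cdot J=\im(I\otimes J)$ and whose unit $\mb1$ is the top element. By \cref{prop:idlcrad-localization}, the radical ideals form the reflective subcategory $\idlc_{\op{rad}}$, and the induced monoidal product there is $(I,J)\mapsto\sqrt{I\cdot J}$. The claim to prove is that, for radical $I$ and $J$, this product is the meet $I\cap J$ in $\idlc_{\op{rad}}$. Note that $I\cap J$ is radical: if $K^n\subseteq I\cap J$, then $K^n\subseteq I$ and $K^n\subseteq J$, hence $K\subseteq I$ and $K\subseteq J$. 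So this meet agrees with the meet in $\idlc$.

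I would argue directly by showing that for radical $K,I,J$ we have $K\subseteq\sqrt{I\cdot J}$ if and only if $K\subseteq I$ and $K\subseteq J$.

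First, the forward direction. Since $\mb1$ is terminal and the product is monotone, $I\cdot J\subseteq I\cdot\mb1=I$ and likewise $I\cdot J\subseteq J$. Because $I$ and $J$ are radical, it follows that $\sqrt{I\cdot J}\subseteq I\cap J$.

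Second, the converse. If $K\subseteq I$ and $K\subseteq J$, then monotonicity of the product gives $K^2=K\cdot K\subseteq I\cdot J\subseteq\sqrt{I\cdot J}$. Since $\sqrt{I\cdot J}$ is radical, the definition of radical ideal (with $n=2$) yields $K\subseteq\sqrt{I\cdot J}$.

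Taking $K=I\cap J$, the two directions give $\sqrt{I\cdot J}=I\cap J$. Hence the monoidal structure on $\idlc_{\op{rad}}$ is cartesian. To conclude that $\idlc_{\op{rad}}$ is a frame, I would observe that it is closed under arbitrary meets in $\idlc$, so it is a complete lattice with joins $\sqrt{\bigvee I_\alpha}$. The functor $I\cap-=\sqrt{I\cdot -}$ preserves these joins, because $\idlc_{\op{rad}}^\otimes$ is a $\otimes$-suplattice, its product preserving joins in each variable via the localization of \cref{prop:idlcrad-localization}. Thus finite meets distribute over arbitrary joins.

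The only step requiring care is the join-preservation in the last paragraph, namely that $\sqrt{I\cdot\sqrt{\bigvee J_\alpha}}=\sqrt{\bigvee\sqrt{I\cdot J_\alpha}}$. This follows from the symmetric monoidal compatibility $\sqrt{I\cdot J}=\sqrt{\sqrt I\cdot\sqrt J}$ in \cref{prop:idlcrad-localization}, together with the fact that $I\cdot-$ preserves joins in $\idlc$. Alternatively, the whole statement can simply be cited from \cref{thm:sqrtQ-frame}.
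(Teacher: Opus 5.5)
Your proposal is correct and follows essentially the paper's proof: the paper gets $\sqrt{I\cdot J}\subseteq\sqrt{I}\cap\sqrt{J}$ from $I\cdot J\subseteq I$ and $I\cdot J\subseteq J$, and the reverse inclusion from $(\sqrt{I}\cap\sqrt{J})^2\subseteq\sqrt{I\cdot J}$ together with radicality. Your frame step also matches \cref{thm:sqrtQ-frame}: once the product on the $\otimes$-suplattice $\idlc_{\op{rad}}$ coincides with the meet, the meet preserves arbitrary joins.
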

\begin{proof}
This can be deduced directly from \cref{thm:sqrtQ-frame}. We also give a straightforward argument: The inclusion $\sqrt{I\cdot J}\subseteq\sqrt{I}\cap\sqrt{J}$ is obvious. The reverse inclusion follows from the observation
\[
(\sqrt{I}\cap\sqrt{J})^2\subseteq \sqrt{\sqrt{I}\cdot \sqrt{J}}= \sqrt{I\cdot J}.\qedhere
\]
\end{proof}

\begin{de}\label{de:zariski-frame}
We call the frame $\zar(\mc C) \coloneqq \idlc_{\op{rad}}$ of radical ideals the \textbf{Zariski frame of $\mc{C}^\otimes$}. We call the topological space $\spec(\mc C) \coloneqq \spec(\op{Idl}(\mc{C}))$ the \textbf{Zariski spectrum of $\mc C^\otimes$}.
\end{de}

\begin{rem}\label{rem:spec-pt-zar}
By \cref{thm:specQ} we have
\[
\spec(\mc C) \cong \op{pt}(\op{Zar}(\mc{C})).
\]
\end{rem}

\begin{rem}
The Zariski frame construction $\mcc^\otimes\mapsto \zar(\mc C)$ induces a functor
\[
\op{Zar}\colon \calg(\prl)\to \frm
\]
by the following result.
\end{rem}

\begin{prop}\label{functorial}
Let $F^\otimes: \mcc^\otimes\to\mcd^\otimes$ be a morphism in $\calg(\prl)$. Then there exist unique  colimit-preserving symmetric monoidal functors (the dashed arrows below) making the following diagram commute:
$$
\begin{tikzcd}
\mcc_{/\mb{1}_\mcc}^\otimes \arrow[r, "F/_{\mb1}^\otimes"] \arrow[d, "\im"']                                 & \mcd_{/\mb{1}_\mcd}^\otimes \arrow[d, "\im"] \\
\idl(\mc{C})^\otimes \arrow[r, "\idl(F)^\otimes", dashed] \arrow[d, "\sqrt{-}"']             & \idl(\mc{D})^\otimes \arrow[d, "\sqrt{-}"]        \\
\zar(\mcc) \arrow[r, "\zar(F)", dashed] & \zar(\mcd)
\end{tikzcd}$$
where $F/_{\mb1}^\otimes$ is the colimit-preserving symmetric monoidal functor induced by $F^\otimes$.
\end{prop}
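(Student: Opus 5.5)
The plan is to obtain each dashed arrow as a descent of a symmetric monoidal left adjoint along a symmetric monoidal localization, applying the universal property of localizations twice.

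\emph{Top square.} First I construct $F/_{\mb1}^\otimes$. The functor $F$ sends $X\to\mb1_\mcc$ to $F(X)\to F(\mb1_\mcc)\simeq\mb1_\mcd$. This functor preserves colimits, because colimits in overcategories are computed in the underlying category and $F$ preserves them. It is symmetric monoidal because $\mcc_{/\mb1}^\otimes$ carries the monoidal structure $(X\to\mb1,Y\to\mb1)\mapsto X\otimes Y\to\mb1\otimes\mb1\simeq\mb1$, and $F$ is symmetric monoidal. A clean way to phrase this is that $\mcc_{/\mb1}\simeq\calg(\mcc)_{/\mb1}$-style slicing over the unit is functorial in $\calg(\prl)$. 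Alternatively one can cite the functoriality of overcategories of the unit from \cite{ha}.

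Next I use that $\im\colon\mcc_{/\mb1}\to\idlc$ is a symmetric monoidal localization, as discussed in the remark after \cref{def:ideal} and in \cref{idlccompgene}. By the universal property of symmetric monoidal localizations \cite[Prop.~4.1.7.4]{ha}, colimit-preserving symmetric monoidal functors $\idlc\to\idl(\mcd)$ are the same as colimit-preserving symmetric monoidal functors $\mcc_{/\mb1}\to\idl(\mcd)$ that invert the $\im$-equivalences. Since the target is a $0$-category, uniqueness is automatic once existence is established.

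So the key step is to check that $\im\circ F/_{\mb1}$ inverts every map $X\to\im(X)$ over $\mb1$. I would argue through the right adjoint. The functor $F/_{\mb1}$ has a right adjoint $G$, given by $(Y\to\mb1_\mcd)\mapsto G_F(Y)\times_{G_F(\mb1_\mcd)}\mb1_\mcc$, where the map $\mb1_\mcc\to G_F(\mb1_\mcd)$ is the unit. This $G$ preserves $(-1)$-truncated objects, because right adjoints preserve limits and hence preserve monomorphisms. Consequently $F/_{\mb1}$ sends $\im$-local equivalences to $\im$-local equivalences. Concretely, for an ideal $J$ of $\mcd$,
$\mapp(F\im X,J)\simeq\mapp(\im X,GJ)\simeq\mapp(X,GJ)\simeq\mapp(FX,J)$.
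This gives $\idl(F)(\im X)=\im(FX)$, and it proves the top square.

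\emph{Bottom square.} Here I apply \cref{prop:sqrtQ-reflection}: the map $\sqrt{-}$ is the localization in $\pfrm$ at the relations $x^2\le x$. The composite $\sqrt{-}\circ\idl(F)$ is a map of preframes, since it preserves joins, is monoidal, and sends top to top because $\im(F\mb1)=\mb1$. For any ideal $I$ we have $\idl(F)(I\cdot I)=\idl(F)(I)^2$, and every element of $\zar(\mcd)$ is idempotent, so
$\sqrt{\idl(F)(I)^2}=\sqrt{\idl(F)(I)}$.
Hence the composite factors uniquely through $\zar(\mcc)$. By \cref{cor:preframe}, the induced map $\zar(F)$ is a frame map.

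The main obstacle is the first square. It requires the careful setup of $F/_{\mb1}^\otimes$ as a symmetric monoidal functor of overcategories, together with the verification, via the right adjoint preserving monomorphisms, that $F$ is compatible with images. The remaining steps are formal.
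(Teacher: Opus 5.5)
Your proposal is correct and takes essentially the paper's route. Both arguments hinge on the right adjoint $(F/_{\mb1})^R$ preserving $(-1)$-truncated objects: the paper uses this to restrict it to $\idl(\mcd)\to\idl(\mcc)$ and pass to the left adjoint $I\mapsto\im(F(I))$, while you use it to descend $\im\circ F/_{\mb1}$ along the symmetric monoidal localization $\im$, which is an equivalent packaging; in both cases the bottom square comes from \cref{prop:sqrtQ-reflection} via $\sqrt{x^2}=\sqrt{x}$.

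One harmless slip: your aside identifying $\mcc_{/\mb1}$ with $\calg(\mcc)_{/\mb1}$ is false, since the latter consists of augmented commutative algebras. Nothing depends on it, because $F/_{\mb1}^\otimes$ is given in the statement.
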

\begin{proof}
Let $F^R$ be the right adjoint of $F$. The right adjoint $(F/_{\mb1})^R$ of $F/_{\mb1}$ sends $Y\to\mb1_{\mcd}$ to
\[
F^R(-)\times_{F^R(\mb{1}_\mcd)}\mb{1}_\mcc.
\]
Since right adjoints preserve limits and monomorphisms are stable under pullback, $(F/_{\mb1})^R$ carries ideals of $\mcd$ to ideals of $\mcc$. It therefore restricts to a functor
\[
\idl(\mc{D})^\otimes \to \idl(\mc{C})^\otimes.
\]
Taking its left adjoint gives the required functor
\[
\idl(F)^\otimes\colon \idl(\mc{C})^\otimes \to \idl(\mc{D})^\otimes.
\]
Explicitly,
\[
\idl(F)^\otimes(I\to\mb1_{\mcc}) = \im(F(I)\to\mb1_{\mcd}).
\]
By construction, this functor makes the upper square commute. It is colimit-preserving and symmetric monoidal since $F/_{\mb1}$ and $\im$ are so. Finally, the universal property of the radicalization (\cref{prop:sqrtQ-reflection}) induces a unique frame morphism
\[
\zar(F) \colon \zar(\mcc) \to \zar(\mcd)
\]
satisfying
\[
\zar(F)(\sqrt{I})=\sqrt{\idl(F)(I)}.
\]
This gives the lower dashed arrow and proves uniqueness.
\end{proof}


\begin{lem}\label{lem:rad-criterion}
Let $S\subseteq \mcc$ be a generating set of \mcc. An ideal $I\subseteq\mb{1}$ is radical if and only if the following condition holds: whenever
\[
J = \im(X\to\mb1)
\]
for some $X\in S$ and $J^n\subseteq I$ for some $n\geq 1$, we have $J\subseteq I$.
\end{lem}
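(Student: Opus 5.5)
The forward direction is immediate, since the condition only tests radicality against a restricted class of ideals $J$. For the converse, suppose $I$ satisfies the stated condition and let $J$ be an arbitrary ideal with $J^n\subseteq I$ for some $n\ge1$. The goal is to show $J\subseteq I$. By \cref{rem:generatingset}, we can write $J=\bigvee_\alpha K_\alpha$ with each $K_\alpha=\im(X_\alpha\to\mb1)$ for some $X_\alpha\in S$. Since joins are colimits in $\idlc$, it suffices to show $K_\alpha\subseteq I$ for every $\alpha$.

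Fix $\alpha$. Because $K_\alpha\subseteq J$ and the monoidal product on $\idlc$ is monotone in each variable (it preserves joins, hence order), we get $K_\alpha^n\subseteq J^n\subseteq I$. The hypothesis applied to $K_\alpha=\im(X_\alpha\to\mb1)$ with $X_\alpha\in S$ then gives $K_\alpha\subseteq I$. Taking the join over $\alpha$ yields $J\subseteq I$, so $I$ is radical.

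The only step needing care is the decomposition $J=\bigvee_\alpha K_\alpha$ with $K_\alpha\in\im(S_{/\mb1})$, which \cref{rem:generatingset} asserts. If I had to justify it, I would take the canonical cover $\coprod_{(X,f)} X\to J$ over all $X\in S$ and maps $f\colon X\to J$; this map is an effective epimorphism onto $J$ because $S$ generates. Then, since $\im$ is a left adjoint and therefore preserves colimits, $J=\im(J)$ is the join of the images $\im(X\to J\to\mb1)$. It may be that the covering map is not literally an effective epimorphism in a general presentable $\infty$-category. In that case I would instead use the adjunction characterization: an ideal $I'$ contains all the $\im(X\to J\to \mb1)$ exactly when every map $X\to J$ with $X\in S$ factors through $I'\times_{\mb1}J\to J$. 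By joint conservativity of $\mapp(X,-)$ for $X\in S$, this monomorphism is then an equivalence, which gives the join identity. Everything else is formal.
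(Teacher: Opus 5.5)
Your argument is correct and is the same as the paper's: decompose $J$ via \cref{rem:generatingset} as a join of ideals $K_\alpha=\im(X_\alpha\to\mb{1})$ with $X_\alpha\in S$, use monotonicity to get $K_\alpha^n\subseteq J^n\subseteq I$, apply the hypothesis to each $K_\alpha$, and take the join. Of your two justifications of the decomposition, the effective-epimorphism claim fails in general (for instance, in the $1$-category of small categories with generators $[0],[1]$ the canonical cover does not recover composites), but your fallback is correct: pull $I'$ back to $I'\times_{\mb{1}}J\to J$ and use joint conservativity of $\mapp(X,-)$ for $X\in S$. That fallback supplies the detail the paper leaves implicit.
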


\begin{proof}
The forward implication is immediate. Conversely, let $K$ be an ideal such that $K^n\subseteq I$ for some $n\geq1$. By \cref{rem:generatingset} we may write
\[
K = \bigvee_\alpha K_\alpha
\]
where each $K_\alpha = \im(X_\alpha\to\mb1)$ for some $X_\alpha\in S$, and $K_\alpha\subseteq K$. It follows that
\[
K_\alpha^n\subseteq K^n\subseteq I.
\]
By hypothesis, $K_\alpha\subseteq I$ for every $\alpha$. Taking their join gives $K\subseteq I$, so $I$ is radical.
\end{proof}

\begin{prop}\label{radidprkl}
The category $\zar(\mc C)=\idlc_{\op{rad}}^\times$ a commutative algebra object of $\prl_{\kappa,0}$ and the functor $\sqrt{-} \colon \idlc \to \zar(\mcc)$ is a morphism in $ \calg(\prl_{\kappa,0})$. Moreover, a radical ideal $I$ is $\kappa$-compact in $\idlc_{\op{rad}}$ if and only if there exists a $\kappa$-compact ideal $J\in \idlc^\kappa$ such that $\sqrt{J}=I$.
\end{prop}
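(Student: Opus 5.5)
The plan is to mirror the proof of \cref{idlccompgene}: exhibit $\sqrt{-}\colon\idlc\to\idlc_{\op{rad}}$ as a $\kappa$-accessible symmetric monoidal localization. By \cref{prop:idlcrad-localization} it is already a symmetric monoidal reflection, so $\idlc_{\op{rad}}$ is presentable and $\sqrt{-}$ is a colimit-preserving symmetric monoidal functor. It remains to show that the inclusion $\idlc_{\op{rad}}\hookrightarrow\idlc$ preserves $\kappa$-filtered joins. Granting this, the right adjoint preserves $\kappa$-filtered colimits, so $\sqrt{-}$ preserves $\kappa$-compact objects. Moreover, $\idlc_{\op{rad}}$ is generated under $\kappa$-filtered joins by the images of the $\kappa$-compact ideals, since every $I=\sqrt{I}$ is the $\kappa$-filtered join of $\kappa$-compact $J\subseteq I$ and $\sqrt{-}$ preserves joins. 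Together with \cref{idlccompgene}, this gives $\zar(\mcc)\in\calg(\prl_{\kappa,0})$ and shows that $\sqrt{-}$ is a morphism there. By \cref{thm:sqrtQ-frame} the monoidal structure is cartesian.

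The key step, and the main obstacle, is closure of radical ideals under $\kappa$-filtered joins. Let $I=\bigvee_\alpha I_\alpha$ be a $\kappa$-filtered join of radical ideals. By \cref{lem:rad-criterion}, applied with $S=\mcc^\kappa$, it suffices to test $J=\im(X\to\mb1)$ with $X$ $\kappa$-compact and $J^n\subseteq I$. By \cref{idlccompgene} such a $J$ is $\kappa$-compact in $\idlc$. Since products of $\kappa$-compact elements are $\kappa$-compact by \cref{cor:idlc-preframe}, $J^n$ is $\kappa$-compact as well. Hence $J^n\subseteq I_\alpha$ for some $\alpha$, so $J\subseteq I_\alpha\subseteq I$ because $I_\alpha$ is radical. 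This is exactly where $\kappa$-coherence of $\idlc$ enters.

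For the compactness characterization, first suppose $J\in\idlc^\kappa$. Then $\sqrt{J}$ is $\kappa$-compact, because $\sqrt{-}$ has a right adjoint preserving $\kappa$-filtered colimits. Conversely, let $I$ be a $\kappa$-compact radical ideal. Write $I=\bigvee J_\beta$ as a $\kappa$-filtered join of $\kappa$-compact ideals $J_\beta\subseteq I$. Then $I=\sqrt{I}=\bigvee_\beta\sqrt{J_\beta}$, and this join is $\kappa$-filtered in $\idlc_{\op{rad}}$. By compactness $I\subseteq\sqrt{J_\beta}$ for some $\beta$, and $\sqrt{J_\beta}\subseteq I$ since $I$ is radical. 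Therefore $I=\sqrt{J_\beta}$.
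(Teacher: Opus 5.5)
Your proposal is correct and follows the paper's proof essentially step for step: $\sqrt{-}$ is a symmetric monoidal localization, the inclusion of radical ideals is closed under $\kappa$-filtered joins by \cref{lem:rad-criterion} with $S=\mcc^\kappa$, and a $\kappa$-compact radical $I$ equals $\sqrt{J_\beta}$ for one of the $\kappa$-compact ideals $J_\beta$ whose $\kappa$-filtered join is $I$. You also write out a detail the paper leaves implicit: the closure step works because $J^n$ is $\kappa$-compact, which comes from the $\kappa$-coherence of $\idlc$.
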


\begin{proof}
For the first statement, note that $\idlcotimes\to \idlc_{\op{rad}}^\times$ is a symmetric monoidal localization by \cref{prop:idlcrad-localization} and we have $\idlcotimes \in \calg(\prl_{\kappa})$ by \cref{idlccompgene}. It then suffices to show that the inclusion $\idlc_{\op{rad}}\subseteq \idlc$ is closed under $\kappa$-filtered colimits. This follows from \cref{lem:rad-criterion} by taking $S=\mc C^\kappa$. For the ``only if'' part of last statement, observe that any $\kappa$-compact object $I$ in $\idlc_{\op{rad}}$ can be written as a $\kappa$-filtered colimit of $\kappa$-compact ideals $I_\alpha$ in $\idlc$. Thus $I \simeq \sqrt I \simeq \sqrt{\varinjlim I_\alpha} \simeq\varinjlim \sqrt{I_\alpha}$. It then follows from $\kappa$-compactness that $I$ is a retract of some $\sqrt{I_\alpha}$. However, in a $0$-category, any retraction is an equivalence, therefore $I=\sqrt{I_\alpha}$. The ``if'' part is clear since we have established that $\sqrt{-}$ is a morphism in $\prl_{\kappa,0}$.
\end{proof}

\begin{cor}\label{kcohfrm}
For $\cotimes\in\calg(\prl_\kappa)$, the Zariski frame $\zar(\mc C)$ is $\kappa$-coherent.
\end{cor}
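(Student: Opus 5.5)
The plan is to deduce the statement directly from \cref{radidprkl} together with \cref{rem:kcoh}. Recall that a frame $F$ is $\kappa$-coherent exactly when the cartesian monoidal $0$-category $F^\times$ lies in $\calg(\prl_{\kappa,0})$. This means three things: $F$ is $\kappa$-presentable as a poset, the top element (the unit) is $\kappa$-compact, and binary meets of $\kappa$-compact elements are $\kappa$-compact. \Cref{radidprkl} asserts precisely that $\zar(\mcc)=\idlc_{\op{rad}}^\times$ is a commutative algebra object of $\prl_{\kappa,0}$, so the corollary follows at once. To be safe, I will nevertheless verify the three conditions of \cref{defkcoh} directly, using the explicit description of $\kappa$-compact radical ideals.

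For condition (1), \cref{idlccompgene} shows that every ideal $I$ is a join of $\kappa$-compact ideals $I_\alpha$. Since $\sqrt{-}$ is a left adjoint, it preserves joins, so $\sqrt{I}=\bigvee\sqrt{I_\alpha}$ in $\zar(\mcc)$. Each $\sqrt{I_\alpha}$ is $\kappa$-compact by the ``if'' direction of \cref{radidprkl}, and this covers every radical ideal because $I=\sqrt I$ when $I$ is radical.

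For condition (2), the top element of $\zar(\mcc)$ is $\sqrt{\mb1}=\mb1$. The unit $\mb1$ is $\kappa$-compact in $\mcc$ because $\cotimes\in\calg(\prl_\kappa)$, and $\im(\id_{\mb1})=\mb1$, so $\mb1$ is $\kappa$-compact in $\idlc$ by \cref{idlccompgene}. Hence $\sqrt{\mb1}$ is $\kappa$-compact in $\zar(\mcc)$.

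For condition (3), take $\kappa$-compact radical ideals $\sqrt{J_1}$ and $\sqrt{J_2}$ with $J_i=\im(X_i\to\mb1)$ and $X_i\in\mcc^\kappa$. Then
\[
\sqrt{J_1}\wedge\sqrt{J_2}=\sqrt{J_1\cdot J_2}=\sqrt{\im(X_1\otimes X_2\to\mb1)},
\]
where $J_1\cdot J_2=\im(X_1\otimes X_2\to\mb1)$ because $\im$ is symmetric monoidal. Since $X_1\otimes X_2\in\mcc^\kappa$, this meet is again $\kappa$-compact.

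The only point needing care is the identification of $\kappa$-compact radical ideals, together with the fact that $\kappa$-compact objects of $\mcc$ are closed under $\otimes$ and contain the unit. Both are already supplied by \cref{radidprkl} and by the standing hypothesis $\cotimes\in\calg(\prl_\kappa)$, so I expect no genuine obstacle.
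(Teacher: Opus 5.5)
Your proposal is correct and follows the paper's proof, which is exactly the one-line deduction from \cref{radidprkl} via \cref{rem:kcoh}. Your direct check of the three conditions of \cref{defkcoh} is also sound. It simply unpacks membership in $\calg(\prl_{\kappa,0})$, using that $\mb1$ and $X_1\otimes X_2$ are $\kappa$-compact and that $\sqrt{J_1}\wedge\sqrt{J_2}=\sqrt{J_1\cdot J_2}$.
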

\begin{proof}
This follows from \cref{radidprkl} by \cref{rem:kcoh}.
\end{proof}

\subsection{The compactly generated case}\label{sec2.3}
We now restrict our attention to compactly generated categories. Throughout this section, we fix an $\omega$-presentably symmetric monoidal \infcat $\cotimes\in\calg(\prl_{\omega})$.

\begin{de}\label{def:fg-ideal}
Let $I\subseteq\mb{1}$ be an ideal of $\mc{C}$. We say $I$ is \textbf{finitely generated} if it is compact in $\idlc$, or equivalently, if there exists a compact object $X\in\mc{C}^\omega$ and a map $X\to \mb{1}$ such that $I=\op{Im}(X)$.
\end{de}

\begin{rem}
Let $I\subseteq\mb{1}$ be an ideal of $\mc{C}$. By \cref{operationsqrt}, we have $\sqrt{I}=\sqrt[1]{I}= \bigvee_{\alpha\in S}I_\alpha$ where $S\subseteq \idlc$ consists of the finitely generated ideals $I_\alpha$ such that $I^n_\alpha\subseteq I$ for some $n\geq 1$.
\end{rem}

\begin{thm}\label{thm:zar-coh}
For $\cotimes\in\calg(\prl_{\omega})$, the Zariski frame $\op{Zar}(\mc{C})$ is coherent, and hence the Zariski spectrum $\spec(\mc{C})$ is a coherent space.
\end{thm}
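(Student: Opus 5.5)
This is the case $\kappa=\omega$ of \cref{kcohfrm}, so the plan is to specialize that argument and then invoke Stone duality.

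First, \cref{idlccompgene} with $\kappa=\omega$ places $\idlc^\otimes$ in $\calg(\prl_{\omega,0})$. Next, \cref{radidprkl} shows that the inclusion $\idlc_{\op{rad}}\subseteq\idlc$ is closed under filtered colimits. This uses the criterion of \cref{lem:rad-criterion} with $S=\mcc^\omega$: if $J=\im(X\to\mb1)$ with $X$ compact, then $J^n$ is compact, because products of compact ideals are compact. Hence $J^n\subseteq\varinjlim I_\alpha$ factors through some $I_\alpha$, and each $I_\alpha$ is radical. Consequently $\sqrt{-}$ is a symmetric monoidal $\omega$-accessible localization, and $\zar(\mcc)^\times\in\calg(\prl_{\omega,0})_{\op{Car}}$.

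Concretely, we verify the three conditions of \cref{defkcoh} for the frame $\zar(\mcc)$, where the compact elements are exactly the $\sqrt{J}$ with $J$ finitely generated (\cref{radidprkl}).
\begin{enumerate}
\item Every radical ideal $I$ satisfies $I=\sqrt{I}=\sqrt{\bigvee J_\alpha}=\bigvee^{\op{rad}}\sqrt{J_\alpha}$, where the $J_\alpha$ are the finitely generated subideals of $I$ (\cref{rem:generatingset}). So every element is a join of compact elements.
\item The top element $\mb1=\sqrt{\im(\mb1\to\mb1)}$ is compact, since $\mb1\in\mcc^\omega$ by the definition of $\calg(\prl_\omega)$.
\item For finitely generated $J,J'$ we have $\sqrt J\wedge\sqrt{J'}=\sqrt{J\cdot J'}$ by \cref{thm:sqrtQ-frame} and \cref{prop:idlcrad-localization}. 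Here $J\cdot J'=\im(X\otimes X'\to\mb1)$, and $X\otimes X'$ is compact because $\otimes$ preserves compact objects in $\calg(\prl_\omega)$.
\end{enumerate}
This proves that $\zar(\mcc)$ is coherent.

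The main point needing care is the closure of radical ideals under filtered joins in $\idlc$. This is exactly where compactness of the generators, together with compactness being preserved by finite powers, is used; it has already been packaged in \cref{lem:rad-criterion} and \cref{radidprkl}.

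For the second statement, \cref{rem:spec-pt-zar} gives $\spec(\mcc)\cong\op{pt}(\zar(\mcc))$. The point space of a coherent frame is a coherent (spectral) space by Stone duality for coherent frames (\cref{app:stone-duality}, \cref{idencohfrm}).
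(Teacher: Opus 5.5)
Your proposal is correct and follows the paper's proof: the paper obtains the theorem as the $\kappa=\omega$ case of \cref{kcohfrm} (which rests on \cref{radidprkl} and \cref{lem:rad-criterion} with $S=\mcc^\omega$), combined with the identification \eqref{idencohfrm} and Stone duality. Your explicit check of the three coherence conditions, together with your spelled-out argument that radical ideals are closed under filtered joins, simply unpacks \cref{rem:kcoh} and the proof of \cref{radidprkl}.
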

\begin{proof}
This follows from \cref{kcohfrm} and \eqref{idencohfrm}.
\end{proof}

\begin{rem}
By \cref{thm:zar-coh}, the functor $\op{Zar}\colon \calg(\prl)\to \frm$ restricts to a functor $$\op{Zar}\colon \calg(\prl_\omega) \to \frmcoh.$$
We thus obtain the following commutative diagram:
\[
\begin{tikzcd}
\calg(\prl_\omega) \arrow[d, hook] \arrow[r, "\op{Zar}"] \arrow[rr, "\spec", bend left] & \frmcoh \arrow[d, hook] \arrow[r, "\op{pt}", "\sim"'] & \topcoh \arrow[d, hook] \\
\calg(\prl) \arrow[r, "\op{Zar}"] \arrow[rr, "\spec"', bend right]                      & \frm \arrow[r, "\op{pt}"]                  & \Top.             
\end{tikzcd}
\]
\end{rem}

\begin{ex}\label{ex:Zariski}
Let $R$ be a commutative ring and consider the ordinary category of $R$-modules $\modu_{R}(\op{Ab})$. An ideal of $\modu_{R}(\op{Ab})$ in the sense of \cref{def:ideal} is exactly the same thing as an ideal of $R$; see \cref{modumono}. The Zariski frame $\op{Zar}(\modu_{R}(\op{Ab}))$ hence recovers the frame of radical ideals of $R$. Therefore, the Zariski spectrum $\spec(\modu_{R}(\op{Ab}))$ recovers the classical Zariski spectrum of $R$; see \cite[Lemma~1.3.1]{KockPitsch17}, for example.
\end{ex}

\begin{ex}\label{ex:Balmer}
Let $\mck$ be a commutative $2$-ring and consider the category of $\mc K$-modules $\modu_{\mc K}(\op{Cat}^{\op{perf}})$. An ideal of $\modu_{\mc K}(\op{Cat}^{\op{perf}})$ amounts to a thick ideal of $\mc K$ in the sense of \cite{Balmer05a}. One can see this by combining \cref{monoepicatperf} and \cref{modumono}. Therefore, the Zariski spectrum $\spec(\modu_{\mc K}(\op{Cat}^{\op{perf}}))$ recovers the Hochster dual of the Balmer spectrum of $\mc K$, by \cite[Corollary~3.4.2]{KockPitsch17}.
\end{ex}


\begin{prop}
The following diagram commutes:
$$	\begin{tikzcd}
\calg(\op{Ab}) \arrow[d] \arrow[rrd, "\spec_{\op{Zar}}"]               &  &                                  \\
\calg(\prl_\omega) \arrow[rr, "\op{Spec}"]                     &  & \topcoh \\
\calg(\op{Cat}^{\op{perf}}) \arrow[u] \arrow[rru, "\spec_{\op{Bal}}"'] &  &                                 
\end{tikzcd}$$
where $\spec_{\op{Zar}}$ denotes the classical Zariski spectrum, $\spec_{\op{Bal}}$ denotes the Hochster dual of the Balmer spectrum, and the two vertical maps send a commutative ring $R$ to $\modu_{R}(\op{Ab})^{\otimes}$ and a commutative $2$-ring $\mc K$ to $\modu_{\mc K}(\op{Cat}^{\op{perf}})^{\otimes}$, respectively.
\end{prop}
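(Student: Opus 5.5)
Commutativity of the diagram is an equality (up to natural isomorphism) of two pairs of functors into $\topcoh$. The upper triangle compares $\spec_{\op{Zar}}$ with $R\mapsto\spec(\modu_R(\op{Ab}))$; the lower triangle compares $\spec_{\op{Bal}}$ with $\mck\mapsto\spec(\modu_{\mck}(\op{Cat}^{\op{perf}}))$. For each triangle I will first match objects using \cref{ex:Zariski} and \cref{ex:Balmer}, and then check naturality in morphisms.

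For the objects, \cref{modumono} identifies the ideal preframe $\idl(\modu_R(\op{Ab}))$ with the preframe of ideals of $R$ under ideal multiplication. Hence $\zar(\modu_R(\op{Ab}))$ is the frame of radical ideals of $R$. By \cref{thm:specQ} and \cref{def:spec-q}, the space $\spec(\modu_R(\op{Ab}))$ is the set of prime ideals, with basic opens $U_r=\{p : r\not\subseteq p\}$, and this is exactly the Zariski topology. In the Balmer case, \cref{monoepicatperf} together with \cref{modumono} identifies categorical ideals of $\modu_{\mck}(\op{Cat}^{\op{perf}})$ with thick tensor ideals of $\mck$, with product $\im(I\otimes J)$ equal to the thick ideal generated by the objects $a\otimes b$. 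Our primes are then Balmer's primes. Opens $U_r$ with $r$ a radical thick ideal are the complements of the closed sets of the Hochster dual topology. This is the content of \cite[Corollary~3.4.2]{KockPitsch17}, which I cite.

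For morphisms, take a ring map $f\colon R\to S$. The induced functor is base change $S\otimes_R-$. By \cref{functorial}, $\idl(F)$ sends $I\subseteq R$ to $\im(S\otimes_R I\to S)=IS$, the extended ideal. On points, \cref{thm:specQ} sends a prime $q\subseteq S$ to the largest radical ideal $J$ of $R$ with $\sqrt{JS}\subseteq q$. Since $q$ is radical, the condition is $JS\subseteq q$, i.e.\ $J\subseteq f^{-1}(q)$. So the point is $f^{-1}(q)$, which is the classical map. The Balmer case runs the same way. For an exact symmetric monoidal functor $\varphi\colon\mck\to\mcl$, $\idl(F)$ sends a thick ideal to the thick ideal generated by its image. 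The induced map on primes sends $\mathcal Q$ to the largest radical ideal whose image lies in $\mathcal Q$, which is $\varphi^{-1}(\mathcal Q)$, i.e.\ $\spec_{\op{Bal}}(\varphi)$.

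I expect the main obstacle to be the object-level identification of $\im(X\to\mb1)$ in $\modu_{\mck}(\op{Cat}^{\op{perf}})$ with the thick ideal generated by the image of $X$, together with the fact that the categorical product matches the Balmer product. I will take this from the cited appendix results \cref{monoepicatperf} and \cref{modumono} rather than reprove it. After that, the remaining work is checking that the bijections above are natural, which is formal from \cref{functorial}.
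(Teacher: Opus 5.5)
Your proposal is correct and takes the same route as the paper. The paper's proof just invokes \cref{ex:Zariski} and \cref{ex:Balmer}, which rest on \cref{modumono}, \cref{monoepicatperf}, and \cite[Corollary~3.4.2]{KockPitsch17}; your additional check via \cref{functorial} and \cref{thm:specQ} that the induced maps on primes are $q\mapsto f^{-1}(q)$ and $\mathcal{Q}\mapsto\varphi^{-1}(\mathcal{Q})$ correctly supplies the naturality that the paper leaves implicit.
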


\begin{proof}
This follows from \cref{ex:Zariski} and \cref{ex:Balmer}.
\end{proof}


\subsection{Cohen's theorem for coherent frames}
We now prove a frame-theoretic version of Cohen’s theorem for arbitrary coherent frames. Applying it to the coherent Zariski frames of \cref{thm:zar-coh} gives criteria for the Noetherianity of Zariski spectra. We refer the reader to \cref{app:stone-duality} and \cite{DickmannSchwartzTressl19} for any unfamiliar terminology in this section.

\begin{lem}\label{lem:cohen}
Let $F$ be a coherent frame. An element of $F$ is finite if and only if its corresponding open subset of $\op{pt}(F)$ is constructible.
\end{lem}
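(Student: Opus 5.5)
Under Stone duality (see \cref{app:stone-duality}), the coherent frame $F$ is identified with the frame of open subsets of the spectral space $X=\op{pt}(F)$, via $y\mapsto U_y$. We work throughout inside $X$. An element $y\in F$ is \emph{finite} (compact) exactly when $U_y$ is quasi-compact. The statement then becomes a purely topological fact about spectral spaces: an open subset $U\subseteq X$ is quasi-compact if and only if it is constructible. Recall that the constructible subsets are the Boolean combinations of quasi-compact opens.

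The forward direction is immediate. If $y$ is finite, then $U_y$ is a quasi-compact open, and quasi-compact opens are constructible by definition.

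For the converse, suppose $U$ is open and constructible. The plan is to pass to the patch (constructible) topology on $X$.

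\begin{enumerate}
\item Since $X$ is spectral, the patch topology is compact Hausdorff. Its basic clopen sets are precisely the constructible subsets.
\item Being constructible, $U$ is clopen in the patch topology, hence patch-compact.
\item Write $U=\bigcup_i U_i$ with each $U_i$ a quasi-compact open. This is possible because $F$ is coherent, so every element is a join of finite elements.
\item Each $U_i$ is constructible, hence patch-open. The $U_i$ therefore form a patch-open cover of the patch-compact set $U$.
\item Patch-compactness gives a finite subcover. So $U$ is a finite union of quasi-compact opens, and is itself quasi-compact.
\end{enumerate}

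Translating back to $F$, the element $y$ is a finite join of finite elements. Coherence of $F$ guarantees such joins are finite, so $y$ is finite. (One can also argue directly in $F$: the top is compact and compact elements are closed under finite meets, so finite joins of compact elements are compact.)

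The main obstacle is the input that the patch topology of a spectral space is compact. If it cannot be cited from \cite{DickmannSchwartzTressl19} or \cref{app:stone-duality}, the fallback is the standard argument. Embed $X$ as a closed subspace of $\prod_{K}\{0,1\}$, with one factor for each finite element $K$ of $F$. The image is cut out by the conditions that a point is a frame morphism on the finite elements. Each such condition involves finitely many coordinates and is therefore closed in the product. Compactness then follows from Tychonoff.
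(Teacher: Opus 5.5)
Your argument is correct and follows the same route as the paper: Stone duality identifies finite elements with quasi-compact opens of $\op{pt}(F)$, and the remaining topological fact (an open subset of a spectral space is quasi-compact if and only if it is constructible) is exactly what the paper cites from \cite[Theorem~1.5.4(iii)]{DickmannSchwartzTressl19}; you prove it directly from compactness of the patch topology instead of citing it. One small correction: finite joins of finite elements are finite in any suplattice, so your parenthetical appeal to compactness of the top and closure under finite meets is beside the point, though it does no harm.
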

\begin{proof}
By \cref{rem:stone-duality}, an element of $F$ is finite if and only if its corresponding open subset is quasi-compact. The result then follows from the general fact that a subset of a spectral space is quasi-compact open if and only if it is constructible and closed under generalization (\cite[Theorem~1.5.4(iii)]{DickmannSchwartzTressl19}).
\end{proof}

\begin{thm}\label{thm:cohen}
Let $F$ be a coherent frame. The following are equivalent:
\enu{
\item The spectral space $\op{pt}(F)$ is Noetherian;
\item Every element of $F$ is finite;
\item Every prime element of $F$ is finite.
}
\end{thm}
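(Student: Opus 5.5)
Set $X=\op{pt}(F)$ and use Stone duality (\cref{rem:stone-duality}) to identify $F$ with the frame of open subsets of the spectral space $X$. Under this identification the finite (compact) elements of $F$ are exactly the quasi-compact open subsets of $X$. A prime element $p$ of $F$ corresponds, via \cref{thm:specQ} applied to the frame $F=F_{\op{rad}}$, to the open set $X\setminus\overline{\{x_p\}}$, the complement of the closure of a point. I will prove (1)$\Rightarrow$(2)$\Rightarrow$(3)$\Rightarrow$(1).

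(1)$\Rightarrow$(2): a space is Noetherian if and only if every open subset is quasi-compact. So if $X$ is Noetherian, every open set is quasi-compact, and hence every element of $F$ is finite. (2)$\Rightarrow$(3) is trivial.

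(3)$\Rightarrow$(1) is the main step; it is the frame-theoretic analogue of Cohen's argument. Suppose $X$ is not Noetherian. Then the set of non-finite elements of $F$ is nonempty, and I claim it is closed under joins of chains. Let $\{a_i\}$ be a chain of non-finite elements and suppose $a=\bigvee a_i$ were finite. Then $a\le a_i$ for some $i$, so $a=a_i$ is finite, a contradiction. (If the chain is empty its join is $0$, which is finite; so I run Zorn's lemma only above a fixed non-finite element.) By Zorn's lemma there is a maximal non-finite element $p$. Since $1$ is finite by coherence, $p\neq 1$.

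It remains to show $p$ is prime. Because $F$ is a frame, meets are the product, so I must show: $a\wedge b\le p$ implies $a\le p$ or $b\le p$. Suppose neither holds. Then $p\vee a$ and $p\vee b$ are strictly larger than $p$, hence finite by maximality. Since finite elements in a coherent frame are closed under finite meets,
\[
(p\vee a)\wedge(p\vee b)=p\vee(a\wedge b)=p
\]
is finite, using distributivity. This contradicts the non-finiteness of $p$. So $p$ is a non-finite prime, contradicting (3).

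The subtle point, and the step I expect to be the main obstacle, is that the Zorn step must take place among elements of $F$ rather than among arbitrary subsets of $X$, and that finite meets of finite elements are finite. Both follow from coherence of $F$ (\cref{defkcoh}). Alternatively, \cref{lem:cohen} could be used to phrase finiteness as constructibility, but the direct argument above seems cleaner.
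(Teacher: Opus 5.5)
Your proof is correct, but for the key implication it takes a different route from the paper.

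\textbf{The paper's route.} The paper argues entirely through point-set topology of spectral spaces. It invokes \cite[Theorem~8.1.11]{DickmannSchwartzTressl19}: a spectral space is Noetherian if and only if every open (equivalently closed) subset is constructible, if and only if the closure of every point is constructible. It combines this with \cref{lem:cohen} (finite elements correspond to constructible opens) and with the observation that a prime element corresponds to the complement of a point closure. So both (1)$\iff$(2) and (3)$\Rightarrow$(1) are reduced to the cited characterization.

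\textbf{Your route.} You prove (3)$\Rightarrow$(2) inside the frame itself, by the classical Oka/Cohen maximal-counterexample argument. You apply Zorn's lemma to the non-finite elements lying above a fixed one, which correctly handles the empty chain. You then show the maximal element $p$ is prime using distributivity, $(p\vee a)\wedge(p\vee b)=p\vee(a\wedge b)$, together with closure of finite elements under binary meets. Topology enters only through the elementary fact that a space is Noetherian if and only if every open subset is quasi-compact, transported along $F\cong\mathcal{O}(\op{pt}(F))$.

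\textbf{Comparison.} Your approach is self-contained and more general. It shows that (2)$\iff$(3) is purely lattice-theoretic: it needs only that $1$ is finite and that finite elements are closed under finite meets, not spatiality or the constructible topology. The paper's approach is shorter and matches its framing of finiteness via constructible sets, but it depends on the external characterization of Noetherian spectral spaces. The remark in your first paragraph identifying primes with complements of point closures is correct but not used in your argument.
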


\begin{proof}
By \cite[Theorem~8.1.11]{DickmannSchwartzTressl19}, a spectral space is Noetherian if and only if every closed subset is constructible, or equivalently, every open subset is constructible. Therefore, the equivalence (1)$\iff$(2) follows from \cref{lem:cohen}. It remains to show (3)$\implies$(1). Note that the open subset corresponding to a prime element $x$ of $F$ is the complement of the closure of $x$ in $\op{pt}(F)$. Hence, (3) implies that the closure of every point in $\op{pt}(F)$ is constructible, by \cref{lem:cohen}. The latter is equivalent to (1) by \cite[Theorem~8.1.11]{DickmannSchwartzTressl19}.
\end{proof}

\begin{cor}\label{cor:cohen}
For $\cotimes\in\calg(\prl_{\omega})$, the following are equivalent:
\enu{
\item The spectral space $\spec(\mc C)$ is Noetherian;
\item Every radical ideal of $\mc C$ is the radical of some finitely generated ideal;
\item Every prime ideal of $\mc C$ is the radical of some finitely generated ideal.
}
\end{cor}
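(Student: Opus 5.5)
The plan is to specialize \cref{thm:cohen} to the coherent frame $F=\zar(\mcc)$. By \cref{thm:zar-coh}, $\zar(\mcc)$ is coherent. By \cref{rem:spec-pt-zar}, $\spec(\mcc)\cong\op{pt}(\zar(\mcc))$. It then remains to translate conditions (2) and (3) of \cref{thm:cohen} into the language of ideals.

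\textbf{Finite elements.} In the frame $\zar(\mcc)$, "finite" means compact. By \cref{radidprkl} with $\kappa=\omega$, a radical ideal $I$ is compact in $\idlc_{\op{rad}}$ if and only if $I=\sqrt{J}$ for some compact $J\in\idlc^\omega$. By \cref{def:fg-ideal}, such $J$ are exactly the finitely generated ideals. Hence condition (2) of \cref{thm:cohen} is precisely condition (2) of the corollary.

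\textbf{Prime elements.} We need that the prime elements of the frame $\zar(\mcc)$ are exactly the prime ideals of $\mcc$, i.e.\ the prime elements of the preframe $\idlc$. Prime elements of $\idlc$ are radical: if $x^2\le p$ then primality gives $x\le p$. Moreover, a radical ideal $p\neq\mb1$ is prime in $\zar(\mcc)$ if and only if it is prime in $\idlc$.

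For the forward direction, suppose $p$ is prime in the frame and $x_1x_2\le p$ in $\idlc$. Then $\sqrt{x_1}\wedge\sqrt{x_2}=\sqrt{x_1x_2}\le p$, using \cref{prop:sqrtQ-reflection} and \cref{thm:sqrtQ-frame}. So $\sqrt{x_1}\le p$ or $\sqrt{x_2}\le p$, and hence $x_1\le p$ or $x_2\le p$.

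For the converse, suppose $p$ is prime in $\idlc$ and $y_1\wedge y_2\le p$ with $y_i$ radical. Then $y_1y_2\le y_1\wedge y_2\le p$ in $\idlc$, so $y_1\le p$ or $y_2\le p$.

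Alternatively, one can read this off from the bijection in \cref{thm:specQ} together with the frame case \cite[Proposition~II.1.3]{Johnstone82}. With the identification of primes in hand, condition (3) of \cref{thm:cohen} matches condition (3) of the corollary, using the description of finite elements above.

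The only genuinely non-formal step is the comparison of primes. I expect it to be routine given the identity $\sqrt{x_1}\wedge\sqrt{x_2}=\sqrt{x_1x_2}$ already established, so the proof should be short.
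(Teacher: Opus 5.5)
Your proposal is correct and matches the paper's proof, which applies \cref{thm:cohen} to the coherent frame $\zar(\mcc)$ together with $\spec(\mcc)\cong\op{pt}(\zar(\mcc))$. Beyond that, you spell out the two translations the paper leaves implicit: finite elements of $\zar(\mcc)$ are exactly radicals of finitely generated ideals (\cref{radidprkl} with $\kappa=\omega$), and prime elements of the frame $\zar(\mcc)$ are exactly the prime ideals of $\mcc$, via $\sqrt{x_1}\wedge\sqrt{x_2}=\sqrt{x_1x_2}$.
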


\begin{proof}
Apply \cref{thm:cohen} to the coherent frame $\op{Zar}(\mc{C})$. Keep in mind that $\spec(\mc C) \cong \op{pt}(\op{Zar}(\mc{C}))$ by \cref{rem:spec-pt-zar}.
\end{proof}



\begin{ex}
Specializing to $\mc{C}^\otimes=\modu_{\mc K}(\op{Cat}^{\op{perf}})^\otimes$, \cref{cor:cohen} recovers Cohen's theorem in tensor triangular geometry \cite[Theorem~B]{Barthel26}. Keep in mind that in this case $\op{Spec}(\mc C)$ is the \textit{Hochster dual} of the Balmer spectrum of $\mc K$. We caution the reader that in \cite{Barthel26} a radical or prime thick ideal is said to be finitely generated if it is the radical of some finitely generated thick ideal. This is weaker than saying that a radical thick ideal is finitely generated in the sense of \cref{def:fg-ideal}.
\end{ex}

\begin{ex}
For $\mc{C}^\otimes=\modu_{R}(\op{Ab})^\otimes$, \cref{cor:cohen} asserts that the ring $R$ is topologically Noetherian if and only if every radical ideal is the radical of some finitely generated ideal. This is also equivalent to the ascending chain condition on radical ideals. Note that this is different from the classical Cohen's theorem which states that the ring $R$ is Noetherian if and only if every radical ideal is finitely generated.
\end{ex}

\begin{rem}
With \cref{cor:cohen} we also obtain a generalization of \cite[Theorem~7]{Barthel26}:
\end{rem}

\begin{cor}
For $\cotimes\in\calg(\prl_{\omega})$, the following are equivalent:
\enu{
\item The spectral space $\op{Spec}(\mc C)$ is finite;
\item Every point in $\op{Spec}(\mc C)$ is locally closed and every radical ideal of $\mc C$ is the radical of some finitely generated ideal;
\item Every point in $\op{Spec}(\mc C)$ is locally closed and every prime ideal of $\mc C$ is the radical of some finitely generated ideal.
} 
\end{cor}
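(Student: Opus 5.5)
The plan is to reduce to \cref{cor:cohen} together with a frame-theoretic characterization of finite spectral spaces. By \cref{thm:zar-coh} and \cref{rem:spec-pt-zar}, $X\coloneqq\spec(\mcc)\cong\op{pt}(\zar(\mcc))$ is a spectral space. By \cref{cor:cohen}, the second halves of conditions (2) and (3) are each equivalent to $X$ being Noetherian. So it suffices to prove the purely topological statement that \emph{a spectral space $X$ is finite if and only if it is Noetherian and every point is locally closed}.

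For the forward direction, a finite space is Noetherian. In a finite $T_0$ space every point is locally closed: $\{x\}=\overline{\{x\}}\cap U_x$, where $U_x$ is the intersection of the finitely many open sets containing $x$, which is open. Thus (1) implies (2), and (2) and (3) are equivalent by \cref{cor:cohen}.

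For the converse, assume $X$ is Noetherian and every point is locally closed. In a Noetherian spectral space every subset of the form $\{x\}$ with $x$ locally closed is constructible, since open sets are quasi-compact and closed sets are constructible (\cite[Theorem~8.1.11]{DickmannSchwartzTressl19}). Hence every singleton is constructible, so every singleton is open in the constructible (patch) topology. The patch topology on a spectral space is compact (\cite[Theorem~1.3.14]{DickmannSchwartzTressl19} or the appendix). A compact discrete space is finite, so $X$ is finite.

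The main obstacle is making sure the cited facts are available in the appendix/references: compactness of the patch topology, and the statement that in a Noetherian spectral space all opens and closeds are constructible (already used in the proof of \cref{thm:cohen}). If needed, I would instead argue directly. Noetherianity gives that every open set is quasi-compact, hence constructible. Then $\{x\}=\overline{\{x\}}\cap U$ is a finite Boolean combination of quasi-compact opens, and patch compactness finishes the argument.
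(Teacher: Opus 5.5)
Your proposal is correct and follows the same route as the paper: you use \cref{cor:cohen} to reduce to the topological fact that a spectral space is finite if and only if it is Noetherian and every point is locally closed. The paper simply cites \cite[Lemma~3]{Barthel26} for that fact, whereas you prove it yourself, using $T_0$-ness for the forward direction and discreteness plus compactness of the patch topology for the converse; this is a valid argument, but the patch-compactness citation should go to the literature, since the paper's appendix does not treat the patch topology.
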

\begin{proof}
This is a direct consequence of \cref{cor:cohen} since \cite[Lemma~3]{Barthel26} establishes that a spectral space is finite if and only if it is Noetherian and every point in the space is locally closed.
\end{proof}

\subsection{Coidempotent frames and cocommutative coalgebras}
The purpose of this subsection is to compare the radicalization construction of §2.1 with the coidempotent frame of a presentably symmetric monoidal $\infty$-category. The link is provided by cocommutative coalgebra objects: their category is Cartesian monoidal, and its categorical ideals are precisely the coidempotent objects of the original category.

\begin{de}
Let $\votimes$ be a symmetric monoidal $\infty$-category.
A coidempotent object in $\mcv$ is an object $c: C \rightarrow \mb{1}$ of the overcategory $\mcv_{/ \mathbf{1}}$ such that \[
C \otimes c: C \otimes C \rightarrow C \otimes \mb{1} \simeq C
\]
is an equivalence. We denote by $\cidemv$ the full subcategory of $\mcv_{/\mb{1}}$ spanned by the coidempotent objects. We call $\cidemv$ the \textbf{coidempotent frame} of \mcv when $\votimes\in \calg(\prl)$. This terminology makes sense due to the following theorem.
\end{de}

\begin{thm}[{\cite[Theorem~3.29]{aoki2023sheaves}}]\label{shvsmadj}
If $\votimes\in \calg(\prl)$ then $\cidemv$ is a frame.
\end{thm}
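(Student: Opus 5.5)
The idea is to realize $\cidemv$ as the preframe of idempotent ideals of a Cartesian monoidal presentable category and then apply \cref{prefrmadjoints}. The route runs through cocommutative coalgebras. First we check that $\ccalg(\mcv)$ is presentable and that the tensor product makes it Cartesian monoidal, with unit $\mb 1$ as terminal object. Since $\ccalg(\mcv)=\calg(\mcv^{\op{op}})^{\op{op}}$, the Cartesian claim is the dual of the coCartesian structure on commutative algebras; presentability is standard for coalgebras in a presentably symmetric monoidal category. It follows that $\ccalg(\mcv)^\otimes\in\calg(\prl)$. Because the monoidal structure is Cartesian, $\idl(\ccalg(\mcv))$ is the poset of subterminal objects, and its product $I\cdot J=\im(I\times J)=I\times J$ is the meet. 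So $\idl(\ccalg(\mcv))$ is already a frame, and $\zar(\ccalg(\mcv))=\idl(\ccalg(\mcv))$: every element $x$ satisfies $x^2=x$ and is therefore radical.

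The main step is to identify the subterminal cocommutative coalgebras with the coidempotent objects. Given $c\colon C\to\mb 1$ coidempotent, $C$ carries a unique cocommutative coalgebra structure, namely comultiplication the inverse of $C\otimes c$ and counit $c$. This is the standard fact that coidempotent objects are the same as ``idempotent coalgebras'', the dual of idempotent algebras in \cite[\S4.8.2]{ha}, and I would cite that dualization. The key point is that for such a $C$ and any coalgebra $D$, the space $\mapp_{\ccalg}(D,C)$ is either empty or contractible, so $C\to\mb 1$ is a monomorphism in $\ccalg(\mcv)$.

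Conversely, let $D\to\mb 1$ be a monomorphism in $\ccalg(\mcv)$. Then the diagonal $D\to D\times D=D\otimes D$ is an equivalence. In a Cartesian category the diagonal is the comultiplication, and composing with the projection $D\otimes D\to D\otimes\mb 1\simeq D$ gives the identity. Hence $D\otimes c$ is an equivalence, so the underlying object is coidempotent.

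Putting the two directions together, the forgetful functor $\ccalg(\mcv)\to\mcv$, restricted to subterminal objects, gives an order isomorphism $\idl(\ccalg(\mcv))\simeq\cidemv$. Fully faithfulness on these posets follows from the idempotent-coalgebra mapping space computation: maps in $\mcv_{/\mb1}$ between coidempotents lift uniquely to coalgebra maps. Finally, we check that joins and meets match. Both sides are posets, so an order isomorphism suffices to give an isomorphism of frames.

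The step I expect to be the main obstacle is the converse direction and the unique-coalgebra-structure statement at the $\infty$-categorical level. The cleanest way I see is to invoke the dual of Lurie's theorem that idempotent algebras form a full subcategory of $\calg$ equivalent to idempotent objects under $\mb1$, applied to $\mcv^{\op{op}}$. This is legitimate because that theorem only uses the symmetric monoidal structure, not presentability.
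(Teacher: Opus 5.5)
Your argument is correct. It is exactly the proof the paper gives for \cref{thm:cidem-zar-ccalg}: $\ccalg(\mcv)^\otimes$ is Cartesian and presentably symmetric monoidal, so $\zar(\ccalg(\mcv))=\idl(\ccalg(\mcv))$. A coalgebra is subterminal in $\ccalg(\mcv)$ if and only if its counit is coidempotent, and the dual of Lurie's Proposition~4.8.2.9 identifies these with $\cidemv$.

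For the statement itself, the paper gives no argument and simply cites Aoki's Theorem~3.29. The proof of \cref{thm:cidem-zar-ccalg} never uses that $\cidemv$ is a frame, so your route is a non-circular, self-contained alternative. It deduces the frame property from the general fact that the ideals of a Cartesian presentably symmetric monoidal category form a frame. As a bonus it identifies $\cidemv$ with $\zar(\ccalg(\mcv))$, which is what the paper later uses to factor the sheaves--spectrum adjunction through $\calg(\prl)_{\op{Car}}$.

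One step deserves a sentence more than you gave it. The claim $\ccalg(\mcv)^\otimes\in\calg(\prl)$ does not follow from presentability and Cartesianness alone. You also need the product of coalgebras to preserve colimits in each variable, and this is precisely where the distributivity of $\idl(\ccalg(\mcv))$ comes from. It holds because the forgetful functor $\ccalg(\mcv)\to\mcv$ is symmetric monoidal, conservative and colimit-preserving, as the paper notes.
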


\begin{de}
Let $\votimes$ be a symmetric monoidal \infcat. The \infcat of cocommutative coalgebras in $\mcv$ is defined to be 
$$\op{cCAlg}(\mcv) \coloneqq \calg(\mcv^{\op{op}})^{\op{op}}.$$
\end{de}

\begin{rem}
The \infcat $\op{cCAlg}(\mcv)$ inherits a symmetric monoidal structure $\op{cCAlg}(\mcv)^\otimes$ such that the forgetful functor $\op{cCAlg}(\mcv)^\otimes\to \votimes$ is symmetric monoidal by \cite[Proposition~3.2.4.3 and Example~3.2.4.4]{ha}. It preserves all existing colimits by \cite[Corollary 3.1.2]{ec1}. Moreover, $\op{cCAlg}(\mcv)^\otimes$ is Cartesian monoidal; see \cite[Theorem 2.4.3.18]{ha}.
\end{rem}

\begin{prop}
If $\votimes\in \calg(\prl)$ then $\op{cCAlg}(\mcv)^\otimes\in \calg(\prl)_{\op{Car}}$. In particular, the forgetful functor $$\op{cCAlg}(\mcv)^\otimes\to \votimes$$ is a morphism in $\calg(\prl)$.
\end{prop}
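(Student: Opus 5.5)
To prove the proposition, I will establish three things about $\ccalg(\mcv)$: it is presentable, its symmetric monoidal structure is Cartesian, and the tensor product preserves colimits separately in each variable. The statement about the forgetful functor then follows from facts already recorded in the preceding remark. The forgetful functor $U\colon \ccalg(\mcv)\to\mcv$ is symmetric monoidal and preserves all colimits that exist in $\ccalg(\mcv)$ \cite[Corollary 3.1.2]{ec1}. So once presentability is known, $U$ is a colimit-preserving symmetric monoidal functor between presentably symmetric monoidal categories, i.e.\ a morphism in $\calg(\prl)$.

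The main step, and the expected obstacle, is presentability of $\ccalg(\mcv)$. I would cite Lurie's result \cite[Corollary 3.1.4]{ec1}: for $\votimes\in\calg(\prl)$ the $\infty$-category of cocommutative coalgebras is presentable. If I wanted to argue it directly, I would proceed in three steps.
\begin{enumerate}
\item Choose $\kappa$ large enough that $\mcv\in\prl_\kappa$ and $\otimes$ preserves $\kappa$-compact objects.
\item Express $\ccalg(\mcv)$ as a limit in $\widehat{\Cat}_\infty$ of accessible categories along accessible functors. Equivalently, view it as coalgebras for the comonad associated to the symmetric monoidal structure via the $\infty$-operadic description.
\item Invoke \cite[Proposition 5.4.7.3]{htt} to conclude accessibility. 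Existence of colimits comes from $U$ creating them: the colimit computed in $\mcv$ acquires a coalgebra structure, since $\otimes$ commutes with colimits in each variable.
\end{enumerate}
Since accessibility of categories of coalgebras is genuinely subtle (limits of accessible categories along non-accessible functors can fail to be accessible), I would rely on the citation rather than reprove it.

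Cartesianness is \cite[Theorem 2.4.3.18]{ha}, applied to $\mcv^{\op{op}}$: the category $\calg(\mcv^{\op{op}})$ is coCartesian, so its opposite $\ccalg(\mcv)$ is Cartesian, and the product is the underlying tensor product $C\otimes D$. It remains to check that $C\times(-)$ preserves colimits in $\ccalg(\mcv)$. Since $U$ is conservative and preserves colimits, it suffices to check after applying $U$. There we have
\[
U\bigl(C\times \colim_i D_i\bigr)\simeq U(C)\otimes \colim_i U(D_i)\simeq \colim_i U(C)\otimes U(D_i)\simeq U\bigl(\colim_i (C\times D_i)\bigr),
\]
using that $U$ is symmetric monoidal and that $\otimes$ on $\mcv$ preserves colimits in each variable. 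This shows $\ccalg(\mcv)^\otimes\in\calg(\prl)_{\op{Car}}$.

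The conservativity of $U$ also needs justification. It holds because equivalences of coalgebras are detected on underlying objects: $\calg(\mcv^{\op{op}})\to\mcv^{\op{op}}$ is conservative.
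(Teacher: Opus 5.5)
Your proposal is correct and follows the paper's proof: presentability via \cite[Corollary 3.1.4]{ec1}, and Cartesianness plus symmetric monoidality and colimit-preservation of $U$ from the preceding remark. The colimit-compatibility of the product is then deduced from conservativity of $U$ and the fact that $\otimes$ on $\mathcal{V}$ preserves colimits in each variable; your displayed computation simply spells out the paper's one-line argument. (In your optional direct sketch, the claim that $U$ creates colimits ``since $\otimes$ commutes with colimits'' needs no such hypothesis, as colimits of coalgebras are created by $U$ unconditionally, but you do not rely on it.)
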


\begin{proof}
By \cite[Corollary 3.1.4]{ec1}, $\op{cCAlg}(\mcv)$ is presentable since $\mcv$ is. In view of the previous remark, it remains to show that $\op{cCAlg}(\mcv)^\otimes$ is presentably symmetric monoidal. This follows from the fact that the forgetful functor $\op{cCAlg}(\mcv)\to \mcv$ is conservative and our assumption that $\votimes$ is presentably symmetric monoidal. 
\end{proof}

\begin{thm}\label{thm:cidem-zar-ccalg}
Let $\votimes \in \calg(\prl)$.	We have a natural equivalence
$$\zar(\ccalg(\mcv)) \simeq \cidemv $$
which is functorial in $\mcv$.
\end{thm}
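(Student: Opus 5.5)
The plan is to identify the categorical ideals of $\ccalg(\mcv)$ with the coidempotent objects of $\mcv$. Once this is done, the radicalization step becomes trivial, because in a Cartesian monoidal category the ideal preframe is already a frame.

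First, since $\ccalg(\mcv)^\otimes$ is Cartesian monoidal, its unit is the terminal object $*$, which is the coalgebra $\mb1$. So $\idl(\ccalg(\mcv))$ is the poset of subterminal objects $C$, i.e.\ coalgebras with $C\to *$ a monomorphism. For such $C$, the diagonal $C\to C\times C$ is an equivalence. The monoidal product on ideals is $\im(C\times D)=C\times D$, since a product of monomorphisms into $*$ is again a monomorphism. Hence $\idl(\ccalg(\mcv))$ is a Cartesian preframe, every element is idempotent, every ideal is radical, and
\[
\zar(\ccalg(\mcv))=\idl(\ccalg(\mcv))
\]
as frames. It remains to produce a natural equivalence of posets
\[
\idl(\ccalg(\mcv))\simeq\cidemv.
\]

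For the forward direction, I send a subterminal coalgebra $C$ to its underlying counit $c\colon C\to\mb1$. The product in $\ccalg(\mcv)$ is computed by $\otimes$ in $\mcv$, so the projection $C\otimes C\simeq C\times C\to C$ is inverse to the diagonal, which is an equivalence. That projection is $C\otimes c$, so $c$ is coidempotent. For the reverse direction, given a coidempotent $c\colon C\to\mb1$, I put the coalgebra structure on $C$ with comultiplication $(C\otimes c)^{-1}$. Coidempotent objects are the "dual" of idempotent algebras, so I will use the dual of the standard fact that an idempotent algebra structure is unique and the forgetful functor from idempotent algebras to $\mb1_{/\mcv}$ is fully faithful (Lurie, HA 4.8.2.9). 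Applying this in $\mcv^{\op{op}}$ shows that the coidempotent objects $C\to\mb1$ admit a unique, essentially, commutative coalgebra structure. It also gives
\[
\mapp_{\ccalg(\mcv)}(D,C)\simeq\mapp_{\mcv_{/\mb1}}(D,C)\simeq *
\]
or $\emptyset$, for any coalgebra $D$. In particular $C$ is subterminal in $\ccalg(\mcv)$.

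Conversely, a subterminal coalgebra has a coidempotent counit by the previous paragraph, so the two assignments are mutually inverse. Order is preserved in both directions because both sides are posets over $\mcv_{/\mb1}$ and the forgetful map is fully faithful on these objects.

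Naturality in $\mcv$ follows because a morphism $\mcv\to\mcw$ in $\calg(\prl)$ induces $\ccalg(\mcv)\to\ccalg(\mcw)$, which preserves products and the terminal object and commutes with the forgetful functors. I would then check that $\zar(F)$ from \cref{functorial} agrees with the map $\cidemv\to\cidem(\mcw)$ sending $c\mapsto F(c)$. This holds because $F$ of a subterminal coalgebra is again coidempotent, hence subterminal, so no image needs to be taken.

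The main obstacle is the claim that coidempotent objects, as objects of $\mcv_{/\mb1}$, lift uniquely to coalgebras and that mapping spaces into them in $\ccalg(\mcv)$ agree with those in $\mcv_{/\mb1}$. I would obtain this by dualizing HA §4.8.2 to $\mcv^{\op{op}}$. HA 4.8.2.9 only requires a symmetric monoidal structure, not presentability, so the dualization is legitimate.
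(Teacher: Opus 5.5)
Your proposal is correct and follows essentially the same route as the paper. The paper also uses Cartesian monoidality to get $\zar(\ccalg(\mcv))=\idl(\ccalg(\mcv))$, identifies subterminal coalgebras with coidempotent objects of $\ccalg(\mcv)$, and then applies the dual of HA 4.8.2.9 to pass to $\cidemv$; you spell out those identifications and the naturality check in more detail.
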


\begin{proof}
Since $\ccalg(\mcv)^\otimes$ is Cartesian monoidal, every ideal of $\ccalg(\mcv)^\otimes$ is idempotent. We thus have
$$
\zar(\ccalg(\mcv)) = \idl(\ccalg(\mcv)).
$$
Moreover, a morphism of cocommutative coalgebras $C\to \mb{1}$ is coidempotent if and only if it is a monomorphism in $\ccalg(\mcv)$. Hence,
$$ \idl(\ccalg(\mcv))=\cidem(\ccalg(\mcv)). $$
By the dual of \cite[Proposition 4.8.2.9]{ha}, we obtain an equivalence
$$
\cidem(\ccalg(\mcv))\xrightarrow{\sim}\cidem(\mcv).
$$
Therefore, we have $\zar(\ccalg(\mcv)) = \idl(\ccalg(\mcv)) \simeq \cidem(\mcv)$.
\end{proof}

\begin{rem}
In \cite[Theorem~3.19]{aoki2023sheaves}, there is an adjunction
\[
\begin{tikzcd}
\frm  \arrow[r, "\op{Shv}(-)^\otimes",hook, shift right=-1ex]  & \arrow[l,"\perp"', "\cidem(-)", shift right=-1ex] \calg(\prl) \end{tikzcd}
\]
where $\op{Shv}(-)^\otimes$ is the sheaves functor. Since every $\infty$-topos is Cartesian monoidal, the functor $\op{Shv}(-)^\otimes$ takes values in $\calg(\prl)_{\op{Car}}$. In the following we shall see that the inclusion $\calg(\prl)_{\op{Car}} \hookrightarrow \calg(\prl)$ admits a right adjoint given by taking cocommutative coalgebras.
\end{rem}

\begin{prop}\label{prop:ccalg-adj}
There is an adjunction
$$\begin{tikzcd}
\calg(\prl)_{\op{Car}}  \arrow[r, hook, shift right=-1ex]  & \arrow[l,"\perp"', "\op{cCAlg}(-)^\otimes", shift right=-1ex] \calg(\prl) \end{tikzcd}.$$
\end{prop}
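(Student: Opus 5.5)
The plan is to verify directly that, for $\mcc^\otimes\in\calg(\prl)_{\op{Car}}$ and $\votimes\in\calg(\prl)$, precomposition with the forgetful functor $U\colon\ccalg(\mcv)^\otimes\to\votimes$ induces an equivalence
\[
\mapp_{\calg(\prl)}(\mcc^\otimes,\ccalg(\mcv)^\otimes)\xrightarrow{\sim}\mapp_{\calg(\prl)}(\mcc^\otimes,\votimes).
\]
By the preceding proposition, $\ccalg(\mcv)^\otimes$ lies in $\calg(\prl)_{\op{Car}}$ and $U$ is a morphism in $\calg(\prl)$, so $U$ is a candidate counit. Since $\calg(\prl)_{\op{Car}}\subseteq\calg(\prl)$ is full, it suffices to prove this equivalence.

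The key input is the universal property of cartesian monoidal structures. In a cartesian monoidal $\infty$-category every object carries a unique cocommutative coalgebra structure via the diagonal; precisely, the forgetful functor $\ccalg(\mcc^\times)\to\mcc$ is an equivalence (the dual of \cite[Proposition 2.4.3.9 / Corollary 2.4.3.10]{ha}). More generally, I would use the dual of \cite[Theorem 2.4.3.18]{ha}: for a cartesian symmetric monoidal $\mcc^\times$ and any symmetric monoidal $\votimes$, the functor $\ccalg(\mcv)\to\mcv$ induces an equivalence between symmetric monoidal (strong) functors $\mcc^\times\to\ccalg(\mcv)^\otimes$ and symmetric monoidal functors $\mcc^\times\to\votimes$. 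Equivalently, via $\ccalg(\mcv)=\calg(\mcv^{\op{op}})^{\op{op}}$, this is the statement that $\calg(\mcv^{\op{op}})^\otimes$ is the cofree cocartesian-structured approximation: lax monoidal functors from a cocartesian $\mcc^{\op{op}}$ into $\mcv^{\op{op}}$ factor uniquely through $\calg(\mcv^{\op{op}})$ \cite[Theorem 2.4.3.18]{ha}, and strongness can be checked after forgetting, because the monoidal structure on $\calg$ is computed underlying.

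It remains to match the colimit-preservation conditions. A symmetric monoidal functor $G\colon\mcc\to\ccalg(\mcv)$ preserves colimits if and only if $UG$ does, because $U$ is conservative and preserves all small colimits (\cite[Corollary 3.1.2]{ec1}), hence creates them. Thus the equivalence of symmetric monoidal functor spaces restricts to the subspaces of colimit-preserving functors, which are the mapping spaces in $\calg(\prl)$. Naturality in both variables comes from the naturality of the equivalence in \cite[Theorem 2.4.3.18]{ha}, which yields the adjunction.

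I expect the main obstacle to be the careful dualization of \cite[Theorem 2.4.3.18]{ha}. Lurie phrases the statement for lax monoidal functors out of a cocartesian category into $\calg$. Passing to opposites turns these into oplax functors from the cartesian $\mcc$ into $\ccalg(\mcv)$. I need to confirm that strong monoidal functors correspond exactly to strong monoidal functors on both sides. My first attempt is to observe that the structure maps of the lifted functor are sent by $U$ to those of the original functor, and then to invoke conservativity of $U$.
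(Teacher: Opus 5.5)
Your proposal is correct and is essentially the paper's proof. The paper simply asserts that the forgetful functor $\ccalg(\mcv)^\otimes\to\votimes$ has the universal property of a counit by \cite[Corollary 2.4.1.8 and Theorem 2.4.3.18]{ha}; you spell out what it leaves implicit, namely the dualization of Theorem 2.4.3.18 (strong symmetric monoidal functors correspond to strong ones under $(-)^{\op{op}}$, so the oplax worry is harmless) and the observation that $U$ creates colimits, so the equivalence restricts to mapping spaces in $\calg(\prl)$. One terminological slip: $G\mapsto U\circ G$ is postcomposition with $U$, not precomposition.
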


\begin{proof}
The forgetful functor $\op{cCAlg}(\mcv)^\otimes\to \votimes$  satisfies the universal property of a counit by \cite[Corollary 2.4.1.8 and Theorem 2.4.3.18]{ha}.
\end{proof}

\begin{ex}\label{ex:cartesianidl}
For $\votimes \in \calg(\prl)_{\op{Car}}$, we have $\ccalg(\mcv) \simeq \mcv$ by \cref{prop:ccalg-adj} and thus
\[
\zar(\mcv)=\idl(\mcv)\simeq\op{cIdem}(\mcv).
\]
This recovers \cite[Example~2.12]{aoki2023sheaves}. For example, if $\mcx$ is an $\infty$-topos with Cartesian monoidal structure, then $\zar(\mcx)\simeq\mcx_{\leq -1}$ can be identified with its underlying locale.
\end{ex}

\begin{cor}
The adjunction
$$\begin{tikzcd}
\frm \arrow[r, "\op{Shv}(-)^\otimes", hook, shift right=-1ex]  & \arrow[l,"\perp"', "\cidem(-)", shift right=-1ex] \calg(\prl) \end{tikzcd}$$
decomposes into two adjunctions:
$$\begin{tikzcd}
\frm \arrow[r, "\op{Shv}(-)^\otimes", hook, shift right=-1ex]  & \arrow[l,"\perp"', "\zar(-)", shift right=-1ex] \calg(\prl)_{\op{Car}} \arrow[r, hook, shift right=-1ex]  & \arrow[l,"\perp"', "\op{cCAlg}(-)^\otimes", shift right=-1ex] \calg(\prl) \end{tikzcd}.$$
\end{cor}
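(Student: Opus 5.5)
The plan is to compose the two adjunctions we already have. First, the right-hand adjunction $\calg(\prl)_{\op{Car}}\rightleftarrows\calg(\prl)$ with right adjoint $\op{cCAlg}(-)^\otimes$ is \cref{prop:ccalg-adj}. Second, for the left-hand adjunction, I will show that for $\votimes\in\calg(\prl)_{\op{Car}}$ and a frame $F$ there is a natural bijection
\[
\Hom_{\calg(\prl)}(\op{Shv}(F)^\otimes,\mcv^\otimes)\simeq\Hom_{\frm}(F,\zar(\mcv)).
\]
Note that $\op{Shv}(F)^\otimes$ is Cartesian, so $\op{Shv}(-)^\otimes$ does land in $\calg(\prl)_{\op{Car}}$. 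By \cref{ex:cartesianidl}, for Cartesian $\mcv$ we have $\zar(\mcv)=\idl(\mcv)\simeq\cidem(\mcv)$. Hence the restriction of the Aoki adjunction \cite[Theorem~3.19]{aoki2023sheaves} to the full subcategory $\calg(\prl)_{\op{Car}}$ exhibits $\zar(-)$ as a right adjoint to $\op{Shv}(-)^\otimes\colon\frm\to\calg(\prl)_{\op{Car}}$. This works because restricting a right adjoint along a full subcategory that contains the image of the left adjoint preserves the adjunction.

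Composing, the right adjoint of $\frm\to\calg(\prl)_{\op{Car}}\hookrightarrow\calg(\prl)$ is $\zar(\op{cCAlg}(-))$. The left adjoint of this composite is just $\op{Shv}(-)^\otimes$ regarded in $\calg(\prl)$. By uniqueness of adjoints, the Aoki right adjoint $\cidem(-)$ must then agree with $\zar(\op{cCAlg}(-))$. This is consistent with \cref{thm:cidem-zar-ccalg}, which I would cite to confirm that the identification of right adjoints is the natural equivalence already constructed.

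The main point to check is naturality. The equivalence $\zar(\mcv)\simeq\cidem(\mcv)$ for Cartesian $\mcv$ must be natural in morphisms of $\calg(\prl)_{\op{Car}}$, compatibly with the functoriality of \cref{functorial}. I would verify this by noting that both functors send a morphism $G$ to $I\mapsto\im(G(I))$. For Cartesian $\mcv$, a coidempotent $C\to\mb1$ is automatically a monomorphism, since $C\times C\to C$ is an equivalence. The image $\im(G(I))$ of a coidempotent is then $G(I)$ itself, because symmetric monoidal functors preserve coidempotents. So both functors act by $G$ on objects, and the two functorialities agree.
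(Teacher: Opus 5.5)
Your proposal is correct and follows essentially the paper's proof. The second adjunction is \cref{prop:ccalg-adj}, the first is the adjunction of \cite[Theorem~3.19]{aoki2023sheaves} restricted to $\calg(\prl)_{\op{Car}}$ via $\zar(\mcv)=\idl(\mcv)\simeq\cidem(\mcv)$ from \cref{ex:cartesianidl}, and the decomposition amounts to $\cidem(-)\simeq\zar(\op{cCAlg}(-))$. The only differences are that the paper takes this last identification directly from \cref{thm:cidem-zar-ccalg}, whereas you deduce it from uniqueness of adjoints, and that you spell out the naturality check on Cartesian objects, which the paper leaves implicit.
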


\begin{proof}
The decomposition is due to \cref{thm:cidem-zar-ccalg}. The first ajunction follows from \cref{ex:cartesianidl}. The second adjunction is \cref{prop:ccalg-adj}.
\end{proof}

\begin{rem}
In \cite{aoki2023sheaves}, the frame $\cidemv\simeq \zar(\ccalg(\mcv))$ is called the unstable smashing spectrum of $\mcv$. In \cref{sec:dualizable} we will give an ideal-theoretic interpretation of this construction.
\end{rem}

\begin{rem}
Note that we always have $\ccalg(\mcv)\simeq\ccalg(\mcv_{/\mb{1}})$ for any $\votimes\in \calg(\prl)$. This observation allows us to factor the adjunction above through the category $\calg(\prl)_{\op{ter}}$ of terminally presentably symmetric monoidal $\infty$-categories. Recall that $\votimes\in \calg(\prl)$ is called terminally monoidal if the monoidal unit of $\mcv$ is a terminal object. We clearly have $$
\calg(\prl)_{\op{Car}}\subseteq\calg(\prl)_{\op{ter}}\subseteq\calg(\prl).
$$
\end{rem}

\begin{prop}
There exist adjunctions:
$$\begin{tikzcd}
\calg(\prl)_{\op{Car}}  \arrow[r, hook, shift right=-1ex]  & \arrow[l,"\perp"', "\op{cCAlg}(-)^\otimes", shift right=-1ex]  \calg(\prl)_{\op{ter}} 
\arrow[r,"\perp"', hook, shift right=-1ex] &
\arrow[l, "(-)^\otimes_{/\mb{1}}", shift right=-1ex]  \calg(\prl)
\end{tikzcd}.  $$
\end{prop}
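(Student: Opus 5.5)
The second adjunction, $\calg(\prl)_{\op{Car}} \rightleftarrows \calg(\prl)_{\op{ter}}$, is the restriction of \cref{prop:ccalg-adj}. That adjunction restricts to full subcategories as soon as the right adjoint, applied to an object of $\calg(\prl)_{\op{ter}}$, lands in $\calg(\prl)_{\op{Car}}$, and this always holds since $\ccalg(-)^\otimes$ is Cartesian monoidal. The left adjoint is the inclusion $\calg(\prl)_{\op{Car}}\subseteq\calg(\prl)_{\op{ter}}$, and the counit is still the forgetful functor $\ccalg(\mcv)^\otimes\to\votimes$, which is a morphism in $\calg(\prl)$. So this part is formal.

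For the first adjunction I will show that $\mcv\mapsto\mcv^\otimes_{/\mb1}$ is right adjoint to the inclusion $\calg(\prl)_{\op{ter}}\hookrightarrow\calg(\prl)$. First, $\mcv_{/\mb1}$ is presentable by \cref{overcataccessible}. It carries the symmetric monoidal structure induced from $\votimes$, since $\mb1$ is the unit and hence a commutative algebra; this is the structure on $\mcc_{/\mb1}^\otimes$ already used for $\idlc$. Its unit $\mb1\xrightarrow{\id}\mb1$ is terminal, and the tensor product commutes with colimits in each variable because colimits in the slice are computed in $\mcv$. So $\mcv^\otimes_{/\mb1}\in\calg(\prl)_{\op{ter}}$. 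The forgetful functor $\mcv_{/\mb1}\to\mcv$ is symmetric monoidal and colimit-preserving, and it will serve as the counit.

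The universal property I need is the following: for $\mcw^\otimes\in\calg(\prl)_{\op{ter}}$, composition with the forgetful functor should induce an equivalence
\[
\funct^{\otimes,L}(\mcw,\mcv_{/\mb1})\to\funct^{\otimes,L}(\mcw,\mcv).
\]
Given a symmetric monoidal left adjoint $F\colon\mcw\to\mcv$, each object $W$ has the canonical map $W\to *\simeq\mb1_\mcw$, and applying $F$ gives $F(W)\to F(\mb1_\mcw)\simeq\mb1_\mcv$. This defines the lift $\tilde F\colon\mcw\simeq\mcw_{/\mb1}\to\mcv_{/\mb1}$, i.e. the map $F/_{\mb1}$ of \cref{functorial}. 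The symmetric monoidal structure on $\tilde F$ comes from that of $F$, and colimit preservation is checked after forgetting to $\mcv$. Fully faithfulness of the comparison map comes from the fact that the lift is forced, since the structure map must be the image of the unique map to the terminal unit.

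The main obstacle is making this coherent at the $\infty$-operadic level rather than objectwise. My approach is to use the general statement that for a symmetric monoidal $\infty$-category $\mcw^\otimes$ whose unit is terminal, and any commutative algebra $A$ in a target $\mcv$, symmetric monoidal functors $\mcw\to\mcv_{/A}$ correspond to pairs consisting of a symmetric monoidal functor $F\colon\mcw\to\mcv$ and a map of commutative algebras $F(\mb1)\to A$; this should follow from \cite[\S2.2.2]{ha}. For $A=\mb1$ the space of such algebra maps $\mb1\to\mb1$ is contractible because $\mb1$ is initial in $\calg(\mcv)$, so the correspondence is exactly the equivalence above. Composing the two adjunctions recovers \cref{prop:ccalg-adj}, since $\ccalg(\mcv)\simeq\ccalg(\mcv_{/\mb1})$, and this serves as a consistency check.
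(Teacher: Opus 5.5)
Your treatment of $\calg(\prl)_{\op{Car}}\rightleftarrows\calg(\prl)_{\op{ter}}$ is essentially the paper's argument: restrict \cref{prop:ccalg-adj} to the full subcategory $\calg(\prl)_{\op{ter}}$, using that $\ccalg(-)^\otimes$ is always Cartesian. (This is the first adjunction in the displayed diagram, although you call it the second.) For $\calg(\prl)_{\op{ter}}\rightleftarrows\calg(\prl)$ the paper simply cites \cite[Lemma~3.27]{aoki2023sheaves}. You instead prove it directly via slice $\infty$-operads. That is a valid self-contained alternative: it shows exactly where terminality of the unit of $\mcw$ enters, at the cost of operadic bookkeeping that the citation avoids.

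The one step to make explicit is your ``general statement''. What \cite[\S2.2.2]{ha} gives is $\alg_{\mcw}(\mcv_{/A})\simeq\alg_{\mcw}(\mcv)_{/\pi^*A}$, where $\pi\colon\mcw^\otimes\to\op{Fin}_*$. So a lift of $F$ is a lax transformation $F\Rightarrow\pi^*A$ to the constant functor, not yet an algebra map $F(\mb{1}_{\mcw})\to A$. To bridge this, use that $\mb{1}_{\mcw}$ is terminal. Then $\pi$ is left adjoint, relative to $\op{Fin}_*$, to the unit section $u\colon\op{Fin}_*\to\mcw^\otimes$, with unit $(W_1,\dots,W_n)\to(\mb{1},\dots,\mb{1})$. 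Hence $u^*\dashv\pi^*$ on algebras, and $\mapp(F,\pi^*A)\simeq\mapp_{\calg(\mcv)}(F(\mb{1}_{\mcw}),A)$, which is contractible when $F$ is strong and $A=\mb{1}$. Add the observation that a lax functor into $\mcv_{/\mb{1}}^\otimes$ is strong (resp.\ colimit-preserving) iff its composite to $\mcv^\otimes$ is. Together these give the equivalence $\funct^{\otimes,L}(\mcw,\mcv_{/\mb{1}})\simeq\funct^{\otimes,L}(\mcw,\mcv)$ that you need.
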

\begin{proof}
The first adjunction follows from \cref{prop:ccalg-adj} and the previous remark. The second adjunction follows from \cite[Lemma~3.27]{aoki2023sheaves}.
\end{proof}

\begin{rem}
The Zariski frame functor can be identified with the composite $$\zar: \calg(\prl) \xrightarrow{(-)^\otimes_{/\mb{1}}}\calg(\prl)_{\op{ter}} \xrightarrow{\idl(-)}\calg(\prl_0)_{\op{ter}}\xrightarrow{(-)_{\op{rad}}}\calg(\prl_0)_{\op{Car}}\simeq \frm$$
where the first arrow is a right adjoint and the other two are left adjoints.
\end{rem}


\begin{prop}\label{prop:comparisoncidem}
The following commutative diagram is vertically left adjointable in the sense of \cite[Definition 4.7.4.13]{ha}:
$$
\begin{tikzcd}
\calg(\prl)_{\op{Car}} \arrow[d, "\idl(-)"', dashed, shift right] \arrow[r, hook] & \calg(\prl)_{\op{ter}} \arrow[d, "\idl(-)"', dashed, shift right] \\
\frm=\calg(\prl_0)_{\op{Car}} \arrow[r, hook] \arrow[u, hook, shift right]                   & \calg(\prl_0)_{\op{ter}}=\pfrm. \arrow[u, hook, shift right]                  
\end{tikzcd}$$

\end{prop}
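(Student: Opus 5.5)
We have to show that the left adjoints $\idl(-)$ exist on both columns and that the Beck–Chevalley (mate) transformation is invertible. Concretely, the left square should commute with the left adjoints: for $\mcc^\otimes\in\calg(\prl)_{\op{Car}}$, the frame $\idl(\mcc)$ should agree with the preframe $\idl(\mcc)$ computed in $\calg(\prl)_{\op{ter}}$, compatibly with units.

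The first step is existence of the vertical adjunctions.

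For a preframe $Q$, regarded as a presentably symmetric monoidal $0$-category with terminal unit, the inclusion $\pfrm=\calg(\prl_0)_{\op{ter}}\hookrightarrow\calg(\prl)_{\op{ter}}$ is full. Its left adjoint should be $\mcc\mapsto\idl(\mcc)$ for $\mcc\in\calg(\prl)_{\op{ter}}$. Since the unit is terminal, $\mcc_{/\mb1}\simeq\mcc$, so $\idl(\mcc)=\mcc_{\le-1}$. The truncation $\tau_{\le -1}=\im\colon\mcc\to\mcc_{\le -1}$ is a symmetric monoidal localization (as in the remark after \cref{def:ideal}).

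To get the adjunction, let $F\colon\mcc\to Q$ be a map in $\calg(\prl)$ with $Q$ a $0$-category. Then $F$ factors uniquely through $\tau_{\le-1}$. The reason is that $F$ inverts every map $X\to\tau_{\le-1}X$, because that map is an effective epimorphism onto its image. I will argue this directly: $\tau_{\le -1}X$ is the colimit of the Čech nerve of $X\to \mb1$, and a colimit-preserving functor into a poset sends this geometric realization to the image of $X$. This gives the unit and shows $\Hom(\idl(\mcc),Q)\simeq\Hom(\mcc,Q)$.

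The same argument, with "Cartesian" in place of "terminal", gives the left column. Here the point is that if $\mcc$ is Cartesian, then $\mcc_{\le-1}$ with the induced product $\im(I\times J)=I\times J$ is again Cartesian, hence a frame (cf.\ \cref{ex:cartesianidl}).

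Left adjointability then says the following. For $\mcc\in\calg(\prl)_{\op{Car}}$, the canonical map of preframes from $\idl(\mcc)$ as computed in $\calg(\prl)_{\op{ter}}$ to the frame $\idl(\mcc)$ computed in the Cartesian world is an equivalence. Both are literally $\mcc_{\le -1}$ with the product induced by $\otimes=\times$. So the whole content is that the mate is the identity.

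The second step is this mate check. I would unwind the mate as the composite
\[
\idl^{\op{ter}}(\mcc)\to\idl^{\op{ter}}(\iota\,\iota^R\text{-unit})\to\cdots,
\]
and observe that both units are the truncation map $\mcc\to\mcc_{\le-1}$. Hence the mate is induced by the identity, and the square is horizontally compatible.

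The main obstacle I expect is the existence step for the non-Cartesian left adjoint. Specifically, I must verify that every colimit-preserving symmetric monoidal functor into a poset factors through $(-1)$-truncation compatibly with monoidal structures. The monoidal compatibility uses that $\tau_{\le-1}$ is compatible with $\otimes$, i.e.\ that $\im(X\otimes Y)=\im(\im X\otimes \im Y)$. This holds because $\otimes$ preserves effective epimorphisms, being colimit-preserving in each variable.
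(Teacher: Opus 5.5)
Your outline is the paper's argument with the details filled in. The paper's proof is only a pointer to \cref{ex:cartesianidl}: for Cartesian $\mcc$, the preframe $\idl(\mcc)=\mcc_{\le -1}$ already has meet as its product, so it is a frame. Hence both vertical left adjoints are restrictions of the single reflection $\mcc\mapsto\mcc_{\le-1}$ with the same unit, and the Beck--Chevalley map is the identity by a triangle identity. That is exactly your second step, and it is correct. Your displayed formula for the mate is not meaningful as written, but the conclusion is right.

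The step that does not hold as you justify it is the existence argument. Identifying $\tau_{\le -1}X$ with the realization of the \v{C}ech nerve of $X\to\mb1$ is an $\infty$-topos fact: it uses universality of colimits, which is also what makes $X\to\tau_{\le-1}X$ an effective epimorphism. In a general presentable $\infty$-category the realization need not even be subterminal. For example, in $(\Cat_\infty)_{/[2]}$ the \v{C}ech nerve of the three edges $[1]\sqcup[1]\sqcup[1]\to[2]$ realizes to the free category on the graph $0\to1\to2$, $0\to2$. That category has two maps $0\to2$, so it is not a subobject of $[2]$, whereas $\tau_{\le-1}$ of this object is $[2]$. Nothing in the hypothesis $\mcc\in\calg(\prl)_{\op{ter}}$ supplies topos-like exactness, so this step needs a different argument.

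The clean fix is as follows. If $F\colon\mcc\to Q$ preserves colimits and $Q$ is a poset, its right adjoint $G$ lands in $\mcc_{\le-1}$, because $\mapp(Z,Gq)\simeq\mapp(FZ,q)$ is empty or contractible. Hence
\[
\mapp(FX,q)\simeq\mapp(X,Gq)\simeq\mapp(\tau_{\le-1}X,Gq)\simeq\mapp(F\tau_{\le-1}X,q),
\]
so $F$ inverts $X\to\tau_{\le-1}X$. This is the standard fact that $\mcc\mapsto\mcc\otimes\mcs_{\le-1}\simeq\mcc_{\le-1}$ is left adjoint to $\prl_0\subseteq\prl$. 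Compatibility of $\tau_{\le-1}$ with $\otimes$ should likewise be argued via internal homs rather than effective epimorphisms. Since $\uHom(Y,-)$ is a right adjoint, it preserves $(-1)$-truncated objects, so $X\otimes Y\to\tau_{\le-1}X\otimes Y$ is a $\tau_{\le-1}$-equivalence. With these replacements, the universal property of symmetric monoidal localizations gives the adjunctions in $\calg$, and your proof is complete.
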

\begin{proof}
This follows from \cref{ex:cartesianidl}.
\end{proof}

\begin{rem}
However, the following induced diagram is not horizontally right adjointable in the sense of \cite[Definition 4.7.4.13]{ha}: 
$$\begin{tikzcd}
\calg(\prl)_{\op{Car}} \arrow[d, "\idl(-)"'] \arrow[r, hook, shift left] & \calg(\prl)_{\op{ter}} \arrow[d, "\idl(-)"'] \arrow[l, "\op{cCAlg}(-)^\otimes", dashed, shift left] \\
\frm \arrow[r, hook, shift left]                     & \pfrm. \arrow[l, "\cidem(-)", dashed, shift left]                             
\end{tikzcd}$$
The Beck--Chevalley condition induces a functorial comparison
\[
\theta:\idl(\ccalg(-))\to \cidem(\idl(-)).
\]
By \cref{thm:cidem-zar-ccalg}, the map $\theta$ can be identified with $$\theta_\mcv:\idl(\ccalg(\mcv)) \simeq \cidemv \xrightarrow{\cidem(\im_\mcv)}\cidem(\idl(\mcv))$$
for $\mcv\in \calg(\prl)$.
\end{rem}

\section{Dualizable Categories and Smashing Frames}\label{sec:dualizable}
We now turn to the study of smashing ideals and smashing frames.

\subsection{Dualizable modules}
Throughout this subsection, let $\votimes \in \calg(\prl)$ be a presentably symmetric monoidal $\infty$-category. We collect the notation and results concerning dualizable $\mcv$-modules and internal left adjoints that will be used below. For a more complete treatment, we refer the reader to \cite{ramzi2024dualizable,ramzi2024locally}.

\begin{de}
Recall that
\[
\pr_{\mcv}^{L,\otimes}\coloneqq\modu_{\mcv}(\prl)^\otimes
\]
is the symmetric monoidal $\infty$-category of presentable
$\mcv$-modules and $\mcv$-linear colimit-preserving functors. A morphism $F\colon \mathcal M\to \mathcal N$ in $\prl_{\mcv}$ is called an \textbf{internal left adjoint} if its right adjoint $F^R$ is again $\mcv$-linear and colimit-preserving.
\end{de}

\begin{nota}\label{dualdef}
We denote by
\[
\pr_{\mcv}^{\op{dbl}} \coloneqq \modu_{\mcv}(\prl)^{\op{dbl}}
\]
the non-full subcategory of $\prl_{\mcv}$ with objects the dualizable $\mcv$-modules and morphisms the internal left adjoints. The category $\pr_{\mcv}^{\op{dbl}}$ inherits a symmetric monoidal structure from $\pr_{\mathcal{V}}^{L,\otimes}$, since tensoring with a fixed $\mcv$-module preserves internal left adjoints.
\end{nota}

\begin{nota}
We use the following notation for the stable and additive special cases:
\[
\prst^\otimes \coloneqq \pr_{\opsp}^{L,\otimes} = \modu_{\opsp}(\prl)^\otimes,
\qquad
\prad^\otimes \coloneqq \pr_{\opsp_{\ge0}}^{L,\otimes} = \modu_{\opsp_{\ge0}}(\prl)^\otimes,
\]
where $\opsp_{\ge0}$ denotes the symmetric monoidal $\infty$-category of connective spectra. Thus, $\prst^\otimes$ is the $\infty$-category of presentable stable $\infty$-categories and colimit-preserving functors, while $\prad^\otimes$ is the $\infty$-category of presentable additive $\infty$-categories and additive colimit-preserving functors. We write
\[
\prst^{\op{dbl},\otimes} \coloneqq \pr_{\opsp}^{\op{dbl}} = \modu_{\opsp}(\prl)^{\op{dbl},\otimes},
\qquad
\pr_{\op{ad}}^{\op{dbl},\otimes} \coloneqq \pr_{\opsp_{\ge0}}^{\op{dbl}} = \modu_{\opsp_{\ge0}}(\prl)^{\op{dbl},\otimes}.
\]
\end{nota}

\begin{thm}[{\cite[Theorem 3.1 and Corollary 3.14]{ramzi2024dualizable}}]\label{dualprl}
Let $\kappa$ be any uncountable regular cardinal and $\mcv^\otimes\in\calg(\prl_\kappa)$. Then we have $\pr^{\op{dbl},\otimes}_\mcv\in \calg(\prl_\kappa)$, and the natural (non-full) inclusion $\prvdbl\to \modu_{\mc{V}}(\prl_\kappa)$ preserves and reflects $\kappa$-compact objects. In particular, $\prst^{\op{dbl},\otimes}$ and $\prad^{\op{dbl},\otimes}$ are $\omega_1$-presentably symmetric monoidal, and the symmetric monoidal (non-full) inclusion $\prst^{\op{dbl},\otimes}\hookrightarrow \pr_{\op{st},\omega_1}^{L,\otimes}$ preserves and reflects $\omega_1$-compact objects.
\end{thm}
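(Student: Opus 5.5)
The statement is cited from Ramzi's work, so the plan is to reconstruct the standard argument rather than invent a new one. The key input is a characterization of dualizable $\mcv$-modules. A presentable $\mcv$-module $\mathcal M$ is dualizable if and only if it is a retract in $\prvdbl$ of a compactly generated module $\modu$-category. Equivalently, the colimit functor $\hat y\colon \operatorname{Ind}(\mathcal M)\to\mathcal M$ (suitably $\mcv$-enriched) admits a $\mcv$-linear left adjoint. Internal left adjoints between such modules are detected on these presentations.

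First I would show that $\prvdbl$ admits small colimits and that the non-full inclusion $\prvdbl\to\prl_\mcv$ preserves them. Colimits in $\prl_\mcv$ are computed as limits along right adjoints. Since the transition maps are internal left adjoints, those right adjoints are again colimit preserving and $\mcv$-linear. Dualizability of the colimit then follows from the retract characterization: the left adjoint of $\hat y$ is assembled levelwise. Because $\kappa$ is uncountable, sequential colimits of $\kappa$-compact objects remain $\kappa$-compact. This is what lets us control the $\operatorname{Ind}$ and $\operatorname{Ind}$-retract presentations.

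Second, I would identify the $\kappa$-compact objects and produce a generating set. Every dualizable $\mathcal M$ is a $\kappa$-filtered colimit in $\prvdbl$ of dualizable modules that are retracts of $\modu$-categories on $\kappa$-small $\mcv^\kappa$-enriched categories. Concretely, I would take small full subcategories of $\mathcal M^\kappa$ closed under the idempotent $\hat y^L$-type operations; each such subcategory is countably closed, which is where $\kappa>\omega$ is used. These pieces are $\kappa$-compact both in $\prvdbl$ and in $\modu_\mcv(\prl_\kappa)$. This gives preservation of $\kappa$-compactness and presentability. Reflection follows by writing a $\kappa$-compact object of $\modu_\mcv(\prl_\kappa)$ as a $\kappa$-filtered colimit of these pieces and taking a retract.

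Third, I would check the monoidal compatibility, which is routine. Tensor products of dualizable modules are dualizable, the tensor preserves internal left adjoints and colimits in each variable, and tensors of $\kappa$-compact pieces are $\kappa$-compact because $\mcv^\kappa$ is closed under $\otimes$. Specializing to $\mcv=\opsp,\opsp_{\ge0}$ with $\kappa=\omega_1$ gives the final assertions.

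The main obstacle is the second step. We must show that $\kappa$-filtered colimits in $\prvdbl$ agree with those in $\modu_\mcv(\prl_\kappa)$ and that mapping spaces of internal left adjoints commute with them. I would attack this via the Efimov-style description of morphisms from a compact piece as compatible data on $\kappa$-compact objects.
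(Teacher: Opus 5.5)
The paper does not prove this statement; it quotes it from Ramzi (Theorem 3.1 and Corollary 3.14). Judged on its own, your sketch has a genuine gap, and the concrete construction in your second step is actually wrong.

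Localizing $\mcv$-submodules $\langle A\rangle\subseteq\mcm$ generated by $\kappa$-small full subcategories $A\subseteq\mcm^\kappa$, even ones closed under the $\hat y^L$-operation, need not be $\kappa$-compact in $\prvdbl$. The reason is that enriched homs between $\kappa$-compact objects generally do not lie in $\mcv^\kappa$, so these are not the ``$\mcv^\kappa$-enriched'' pieces you announce.

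Take $\mcv=\opsp$, $\kappa=\omega_1$ and $\mcm=\modu_{H\mathbb{C}}(\opsp)$. Its only localizing subcategories are $0$ and $\mcm$, so your construction can only produce $0$ or $\mcm$. Now $\mcm\simeq\op{colim}_k\modu_{Hk}(\opsp)$, where $k$ ranges over the $\omega_1$-directed poset of countable subfields of $\mathbb{C}$ and the transition maps are strongly continuous base changes. This colimit is computed in $\prlst$, and the inclusion $\prstdbl\to\prlst$ preserves colimits, so it is also a colimit in $\prstdbl$.

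Suppose $\mcm$ were $\omega_1$-compact in $\prstdbl$. Then the identity would factor through a strongly continuous $s\colon\mcm\to\modu_{Hk}(\opsp)$. Since $s$ preserves compact objects, $s(H\mathbb{C})$ is perfect over $Hk$. Composing $s$ with base change would then embed $\mathbb{C}=\pi_0\op{End}(H\mathbb{C})$ into the countable ring $\pi_0\op{End}_{Hk}(s(H\mathbb{C}))$, which is absurd.

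So any presentation of a dualizable module as a $\kappa$-filtered colimit of $\kappa$-compact objects must use non-fully-faithful internal left adjoints out of $\kappa$-presented pieces, such as $\modu_{Hk}(\opsp)\to\modu_{H\mathbb{C}}(\opsp)$. You give no way to produce these in general compatibly with the $\hat y^L$-data.

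Even with good pieces, their $\kappa$-compactness in $\prvdbl$ is exactly what you defer as the ``main obstacle'', and it is the content of the theorem. Both kinds of $\kappa$-filtered colimits are computed in $\modu_\mcv(\prl)$. Moreover, $\mapp_{\prvdbl}(\mcm_0,-)$ is a union of components of $\mapp_{\modu_\mcv(\prl_\kappa)}(\mcm_0,-)$. So what you need is the following: if $F\colon\mcm_0\to\mcn_i$ becomes an internal left adjoint after composing to $\op{colim}_j\mcn_j$, then it already does so after composing to some $\mcn_j$. That requires a criterion expressing internal-left-adjointness through $\kappa$-small data (e.g.\ compatibility with $\hat y^L$ on a $\kappa$-small set of $\kappa$-compact generators). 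This is the missing idea.

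The same point is needed for reflection. Compactness in $\modu_\mcv(\prl_\kappa)$ only yields a section that preserves $\kappa$-compact objects, not an internal left adjoint, so ``taking a retract'' produces a retract in the wrong category.

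Your opening characterization is also misstated. A retract in $\prvdbl$ of a compactly generated module is itself compactly generated, since internal left adjoints preserve compact objects; this would exclude $\shv(\mathbb{R};\opsp)$. The correct version is a retract in $\modu_\mcv(\prl)$, where $\hat y^L$ is an internal left adjoint but $\hat y$ is not; your second formulation via $\hat y$ is the right one.

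Your first step (colimits in $\prvdbl$ are computed in $\modu_\mcv(\prl)$) and the monoidal step are fine. Note that neither of them uses $\kappa>\omega$.
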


\begin{prop}[{\cite[Proposition 4.17 and Corollary 4.49]{ramzi2024locally}}]\label{appenb1}
Let $\mcv\to\mcw$ be a morphism in $\calg(\prl)$ such that $\mcw$ is locally rigid as a $\mcv$-algebra and let $\mcm\in \modu_\mcw(\prl)$ be a $\mcw$-module. Then $\mcm$ is dualizable over $\mcw$ if and only if it is dualizable over $\mcv$. If furthermore $\mcw$ is a rigid $\mcv$-algebra, then $\mcw^\otimes\in\calg(\pr_\mcv^{\op{dbl}})$ and there is a canonical symmetric monoidal equivalence $$\modu_\mcw(\prl)^{\op{dbl},\otimes}\simeq \modu_\mcw(\pr_\mcv^{\op{dbl}})^\otimes.$$
\end{prop}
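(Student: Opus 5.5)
The plan is to reduce both assertions to formal facts about (locally) rigid algebras and dualizability in module categories over $\prl$. For the first assertion, fix $\mcm\in\modu_\mcw(\prl)$. One direction is formal. If $\mcm$ is dualizable over $\mcw$, then it is dualizable over $\mcv$ provided $\mcw$ itself is dualizable as a $\mcv$-module. Local rigidity gives this, since a locally rigid $\mcv$-algebra is in particular dualizable in $\prl_\mcv$. In that case the base-change/restriction adjunction $\modu_\mcw\rightleftarrows\modu_\mcv$ has restriction preserving dualizable objects. The $\mcv$-dual of $\mcm$ is $\uHom_\mcv(\mcm,\mcv)$ computed through $\mcw^\vee\otimes_\mcw\mcm^\vee$, together with the self-duality $\mcw^\vee\simeq\mcw$ coming from the pairing $\mcw\otimes_\mcv\mcw\xrightarrow{\otimes}\mcw\xrightarrow{\text{``}\Hom(\mb1,-)\text{''}}\mcv$.

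The converse is the heart of the matter. Suppose $\mcm$ is $\mcv$-dualizable. I would show that $\mcm$ is a retract, in $\modu_\mcw(\prl)$, of $\mcw\otimes_\mcv\mcm$. The action map $a\colon\mcw\otimes_\mcv\mcm\to\mcm$ is $\mcw$-linear. For locally rigid $\mcw$, the multiplication $\mu\colon\mcw\otimes_\mcv\mcw\to\mcw$ has a right adjoint $\mu^R$ that is $\mcw\otimes_\mcv\mcw$-linear and colimit-preserving; this is essentially the definition of local rigidity. Tensoring $\mu^R$ over $\mcw$ with $\mcm$ yields a $\mcw$-linear right adjoint to $a$. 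The counit $a\circ a^R\Rightarrow\id$ is an equivalence, because $\mu\mu^R\simeq\id$ holds for the relevant idempotent ``diagonal'' structure. It follows that $\mcm$ is a retract of the free $\mcw$-module $\mcw\otimes_\mcv\mcm$. That free module is $\mcw$-dualizable, since base change is symmetric monoidal and therefore preserves dualizables. Finally, retracts of dualizables in presentable module categories are dualizable: the idempotent splits, and dualizability is closed under retracts because duals of retracts are retracts of duals.

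For the second assertion, assume $\mcw$ is rigid. Then $\mu^R$ is moreover $\mcw$-linear in the strong sense and the unit $\mcv\to\mcw$ has a $\mcv$-linear colimit-preserving right adjoint. Hence $\mu$ and the unit are internal left adjoints over $\mcv$, giving $\mcw\in\calg(\prvdbl)$. To get the equivalence, I would compare $\modu_\mcw(\prl)^{\op{dbl}}$ with $\modu_\mcw(\prvdbl)$ on objects and on morphisms. On objects, the first part identifies $\mcw$-dualizable with $\mcv$-dualizable. On morphisms, I must show that a $\mcw$-linear functor is a $\mcw$-internal left adjoint iff it is a $\mcv$-internal left adjoint. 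The nontrivial direction is that a right adjoint which is $\mcv$-linear is automatically $\mcw$-linear. This is the standard fact that, for rigid $\mcw$, lax $\mcw$-linear structures on $\mcv$-linear functors are strong, proved by dualizing the action using that every compact-type generator is dualizable, or more formally via $\mu^R$ being bilinear. Symmetric monoidality of the equivalence then follows since both tensor products are the relative tensor $\otimes_\mcw$.

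I expect the main obstacle to be the retract argument in the converse of the first assertion. It requires carefully constructing $a^R$ as a $\mcw$-linear colimit-preserving functor from the $\mcw$-bimodule structure on $\mu^R$, and checking the required triangle identity.
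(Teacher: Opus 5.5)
Your first direction is fine. Once $\mcw$ is $\mcv$-dualizable, restriction preserves dualizables, with $\mcv$-dual $\mcm^\vee\otimes_{\mcw}\funct^L_{\mcv}(\mcw,\mcv)$, and no self-duality of $\mcw$ is needed. In passing: for merely locally rigid $\mcw$ the self-duality pairing is not $\Hom(\mb{1},-)\circ\mu$, because $\Hom(\mb{1},-)$ need not preserve colimits (think of global sections on $\op{Shv}(\mathbb{R};\opsp)$).

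\textbf{The gap is in the converse.} The claim $\mu\mu^R\simeq\id$ is false. Local rigidity says that $\mu^R$ preserves colimits and is $\mcw\otimes_{\mcv}\mcw$-linear; it does not say that $\mu^R$ is fully faithful. Already for $\mcv=\opsp$ and $\mcw=\modu_R$ with $R$ an $\mathbb{E}_\infty$-ring (a rigid algebra), one has $\mu\mu^R(R)\simeq R\otimes_{R\otimes R}R=\op{THH}(R)\neq R$.

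Nor can the retract strategy be rescued by choosing a different section. In general $\mcm$ is not a retract of $\mcw\otimes_{\mcv}\mcm$ in $\modu_{\mcw}(\prl)$ at all; that would be a categorified separability property, which rigidity does not give. Take $\mcv=\modu_k$, $\mcw=\modu_{k[t]}$, and $\mcm=\modu_k$ with $t$ acting by $0$. Then $\mcw\otimes_{\mcv}\mcm\simeq\modu_{k[t]}$.
\begin{itemize}
\item Every $\mcw$-linear colimit-preserving $s\colon\modu_k\to\modu_{k[t]}$ has the form $M\mapsto M\otimes_kY$, restricted along $k[t]\to k$, for some $Y\in\modu_k$.
\item Every $\mcw$-linear colimit-preserving $r\colon\modu_{k[t]}\to\modu_k$ has the form $N\mapsto N\otimes_{k[t]}Z$ for some $Z\in\modu_k$.
\item Hence $rs(k)\simeq X\otimes_k(k\otimes_{k[t]}k)\simeq X\oplus X[1]$ with $X=Y\otimes_kZ$, and this is never equivalent to $k$.
\end{itemize}
Yet $\modu_k$ is $\mcw$-dualizable. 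So the statement holds, but your mechanism cannot prove it.

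What you need instead is to show that the $\mcv$-dual $\mcm^\vee$, with its induced $\mcw$-action, is a $\mcw$-dual. That is, you must produce a natural equivalence $\funct^L_{\mcw}(\mcm,\mcn)\simeq\mcm^\vee\otimes_{\mcw}\mcn$. This is where local rigidity really enters:
\begin{itemize}
\item The face maps of the semi-simplicial bar construction for $\mcm^\vee\otimes_{\mcw}\mcn$ are built from $\mu$ and action maps, so they are internal left adjoints.
\item Therefore its colimit in $\prl$ is the limit of the right adjoints.
\item The self-duality of $\mcw$ identifies that diagram with the cobar diagram $\funct^L_{\mcv}(\mcw^{\otimes\bullet}\otimes_{\mcv}\mcm,\mcn)$, whose limit is $\funct^L_{\mcw}(\mcm,\mcn)$.
\end{itemize}
The paper itself gives no argument here; it cites \cite{ramzi2024locally}.

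Your outline of the rigid case is broadly right, but it leaves two points implicit.
\begin{itemize}
\item Every action map $\mcw\otimes_{\mcv}\mcm\to\mcm$ is automatically a $\mcv$-internal left adjoint, with right adjoint $\mu^R\otimes_{\mcw}\mcm$. This is what makes a $\mcv$-dualizable $\mcw$-module a $\mcw$-module in $\pr^{\op{dbl}}_{\mcv}$.
\item The inclusion $\pr^{\op{dbl}}_{\mcv}\to\modu_{\mcv}(\prl)$ preserves geometric realizations, so the two relative tensor products agree.
\end{itemize}
On morphisms, derive the strong $\mcw$-linearity of a $\mcv$-linear right adjoint from the bilinearity of $\mu^R$. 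The appeal to dualizable compact generators is not available in this generality.
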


\begin{de}
Let $\mcv\in\calg(\prl)$. We say that a map of \mcv-modules $\mcv\to\mcw$ is a \textbf{smashing localization} if it is both a \mcv-internal left adjoint and a localization.
\end{de}

\begin{prop}\label{rigidbarr}
Let $\mcv\in \calg(\prl)$. Let $F:\mcv\to \mcw$ be a smashing localization, with right adjoint~$G$. Then, $\mathcal W$ admits
a unique symmetric monoidal structure for which $F$ is a symmetric monoidal localization. Furthermore, there is a symmetric monoidal equivalence
$$\mcw\simeq \modu_{G(\mb{1}_{\mcw})}(\mcv).$$
\end{prop}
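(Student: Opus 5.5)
We work with $\mb{1}_\mcw$ and $A\coloneqq G(\mb{1}_\mcw)$. The plan is first to show that $G$ is fully faithful and that $A$ carries an idempotent algebra structure, and then to identify $\mcw$ with $A$-modules by monadicity.

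Since $F$ is a localization, $G$ is fully faithful and $FG\simeq\id$. Since $F$ is an internal left adjoint, $G$ is $\mcv$-linear and colimit-preserving. Hence for $V\in\mcv$ the projection formula gives
\[
GF(V)\simeq GF(V\otimes\mb{1}_\mcv)\simeq G(V\cdot F(\mb{1}_\mcv))\simeq V\otimes G(F\mb{1}_\mcv)=V\otimes A,
\]
where we identify $\mb{1}_\mcw$ with $F(\mb{1}_\mcv)$, $\mcw$ being a pointed $\mcv$-module via $F$. So the localization $L=GF$ is $-\otimes A$ with unit $\eta\colon\mb{1}\to A$. Because $L$ is idempotent, both $\eta\otimes A$ and $A\otimes\eta$ are equivalences $A\to A\otimes A$. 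Thus $(A,\eta)$ is an idempotent object in the sense of \cite[\S4.8.2]{ha}.

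The $L$-local equivalences are the maps $f$ with $f\otimes A$ an equivalence, and these are stable under tensoring with arbitrary objects. By \cite[Proposition~2.2.1.9]{ha}, the localization $L$ is therefore compatible with the symmetric monoidal structure. Consequently the essential image $L\mcv\simeq\mcw$ acquires a symmetric monoidal structure making $F$ symmetric monoidal. The structure is unique, since a symmetric monoidal structure on the target of a monoidal localization is determined by the localization, again by \cite[Proposition~2.2.1.9]{ha}.

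For the equivalence with modules, we invoke \cite[Proposition~4.8.2.9]{ha}. It says that an idempotent object $A$ has a unique commutative algebra structure with unit $\eta$. It further says that the forgetful functor $\modu_A(\mcv)\to\mcv$ is fully faithful with essential image the objects $X$ for which $X\otimes\eta$ is an equivalence, that is, the $L$-local objects. Moreover, the relative tensor product makes this a symmetric monoidal equivalence onto $L\mcv$ with the localized structure. Composing gives $\mcw\simeq L\mcv\simeq\modu_{A}(\mcv)$ symmetric monoidally.

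The main subtle point I expect is the projection formula, namely the identification of the $\mcv$-module structure on $G$ with the lax structure coming from $F$ being $\mcv$-linear. One must check that the linearity map $V\otimes G(W)\to G(V\cdot W)$ is invertible precisely because $G$ is $\mcv$-linear rather than merely lax $\mcv$-linear. This is exactly the internal-left-adjoint hypothesis.
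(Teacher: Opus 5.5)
Your proof is correct, but it takes a different route from the paper's. The paper argues in one step. It appeals to the Morita-theoretic recognition of module categories \cite[Corollary~4.8.5.21]{ha} and reduces everything to the projection formula $c\otimes G(d)\simeq G(F(c)\otimes d)$, which is exactly the $\mcv$-linearity of $G$. You use the same input, but only to obtain a natural equivalence $GF\simeq -\otimes A$. After that you work entirely inside the theory of idempotent objects:
\begin{enumerate}
\item $(A,\eta)$ is idempotent;
\item hence $L$-equivalences are stable under $\otimes$;
\item \cite[Proposition~2.2.1.9]{ha} then localizes the symmetric monoidal structure;
\item \cite[\S4.8.2]{ha} identifies $L\mcv$ with $\modu_A(\mcv)$.
\end{enumerate}

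What your route buys is explicitness on two points the paper's one-line citation leaves implicit. First, it gives existence and uniqueness of the symmetric monoidal structure on $\mcw$; the commutative Morita embedding only applies once $F$ is already symmetric monoidal. Second, it shows that $G(\mb{1}_\mcw)$ is an idempotent commutative algebra, which is precisely what is used later in \cref{idlfullyfai}. The paper's route, conversely, identifies $\mcw$ with a module category for any $\mcv$-linear $F$ with conservative, cocontinuous, $\mcv$-linear right adjoint, not only for localizations.

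Three small remarks.
\begin{itemize}
\item Uniqueness of the structure comes from the universal property of the symmetric monoidal localization: symmetric monoidal functors out of $L\mcv$ are those out of $\mcv$ that invert $L$-equivalences. It does not come from 2.2.1.9 itself.
\item The full faithfulness of $\modu_A(\mcv)\to\mcv$ onto the $A$-local objects is a separate result in \cite[\S4.8.2]{ha}, not part of 4.8.2.9.
\item The point you flag as subtle is milder than you suggest. Idempotence of $(A,\eta)$ needs only the naturality of $GF\simeq -\otimes A$, because under it $L(\eta)$ corresponds to $\eta\otimes A$. So no compatibility between the unit and the linear structure is required for that step.
\end{itemize}
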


\begin{proof}
By \cite[Cor. 4.8.5.21]{ha}, it suffices to show that $G$ satisfies the projection formula over $\mcv$, that is, for every object $c\in \mcv$ and $d \in\mcw$, the canonical map $c \otimes G(d)\to G(F(c) \otimes d)$ is an equivalence. This is equivalent to $G$ being \mcv-linear and hence follows by assumption.
\end{proof}

\begin{prop}
Let $\mcv\in\calg(\prl)$. Let $\mcw\in \calg_\mcv$ and let $F:\mcw\to \mcu$ be a \mcw-module map that is a localization. If $\mcw$ is locally rigid over $\mcv$ and $F$ is a \mcv-internal left adjoint, then $F$ is a smashing localization. 
\end{prop}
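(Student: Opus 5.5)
To show that $F$ is a smashing localization, I would check the two conditions in the definition: $F\colon \mcw\to\mcu$ must be a localization, which is assumed, and a $\mcw$-internal left adjoint. So the task is to show that the right adjoint $G$ of $F$ is $\mcw$-linear. We already know $G$ is $\mcv$-linear and colimit-preserving, because $F$ is a $\mcv$-internal left adjoint. The strategy is to use local rigidity of $\mcw$ over $\mcv$ to upgrade this $\mcv$-linearity to $\mcw$-linearity.

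The key input is the standard property of locally rigid algebras from \cite{ramzi2024locally}. If $\mcw$ is locally rigid over $\mcv$, then for $\mcw$-modules $\mcm,\mcn$, any $\mcv$-linear colimit-preserving functor $\mcm\to\mcn$ that is the right adjoint of a $\mcw$-linear functor is automatically $\mcw$-linear.

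Concretely, I would argue via the multiplication. Local rigidity means the multiplication $\mu\colon \mcw\otimes_\mcv\mcw\to\mcw$ has a right adjoint $\mu^R$ that is $\mcw\otimes_\mcv\mcw$-linear. The $\mcw$-action on any $\mcw$-module $\mcm$ is the composite
\[
\mcw\otimes_\mcv\mcm\to\mcm .
\]
This action is obtained from $\mu$ by base change along $\mcw\otimes_\mcv\mcw$, so it is an internal left adjoint in $\modu_{\mcw\otimes_\mcv \mcw}$-linear fashion. The lax $\mcw$-linearity structure on $G$, namely
\[
w\otimes G(u)\to G(w\otimes u),
\]
is the Beck–Chevalley (mate) transformation of the commuting square formed by the actions on $\mcw$ and $\mcu$ together with $\id_\mcw\otimes F$ and $F$. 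Since $G$ is $\mcv$-linear, $\id_\mcw\otimes_\mcv G$ is the right adjoint of $\id_\mcw\otimes_\mcv F$. Since the action maps are right adjointable compatibly with this, the mate is invertible, and this is exactly the $\mcw$-linearity of $G$.

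Alternatively, I could cite \cite[Proposition 4.17]{ramzi2024locally} (or the corresponding lemma there), which states this upgrade directly.

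The main obstacle is making the mate argument rigorous: I must verify that the square of right adjoints commutes. This comes down to $\mu^R$ being $\mcw\otimes_\mcv\mcw$-linear and the action on $\mcm$ being $\mu\otimes_{\mcw\otimes\mcw}\mcm$. If that bookkeeping gets heavy, I would fall back on the cited result from \cite{ramzi2024locally}.

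Once $G$ is $\mcw$-linear, $F$ is a $\mcw$-internal left adjoint and a localization, hence a smashing localization. By \cref{rigidbarr}, this also gives $\mcu\simeq\modu_{G(\mb1_\mcu)}(\mcw)$.
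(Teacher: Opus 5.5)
Your proposal is correct and is essentially the paper's proof. The paper's argument is exactly the observation that the right adjoint $G$ is $\mcw$-linear by local rigidity, citing \cite[Proposition 4.18]{ramzi2024locally}; note that 4.18, not 4.17, is the right reference, since 4.17 is the dualizability statement used in \cref{appenb1}.

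The mate argument you sketch is not needed, and as written its last step does not follow. Write $a_\mcw,a_\mcu$ for the action maps. Compatibility of $a_\mcw^R,a_\mcu^R$ with $F$ makes the vertical mate $(\id\otimes F)\circ a_\mcw^R\Rightarrow a_\mcu^R\circ F$ invertible. That does not by itself imply invertibility of the horizontal mate $a_\mcw\circ(\id\otimes G)\Rightarrow G\circ a_\mcu$, which is the one encoding the lax structure on $G$. So keep the citation as the actual justification.
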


\begin{proof}
This follows because the right adjoint $G$ is also \mcw-linear by \cite[Proposition 4.18]{ramzi2024locally}.
\end{proof}

\subsection{Smashing ideals and smashing frames}
In this section, we define smashing ideals and smashing frames for presentably symmetric monoidal $\infty$-categories. We also identify the smashing frame with the coidempotent frame (\cref{smidsmcoloc}).

\begin{de}
Let $\votimes\in \calg(\prl)$ and let $i:\mci\to \mcv$ be a morphism in $\linvdbl$. A \textbf{closed ideal} of $\mcv$ is a monomorphism $i:\mci\to \mcv$ in $\linvdbl$, that is, an object of $\idl(\linvdbl)$. If the closed ideal $i:\mci\to \mcv$ is moreover fully faithful, we call it a \textbf{smashing ideal} of $\mcv$.
\end{de}

\begin{rem}
By definition, a smashing ideal $\mci\to \mcv$ is always a closed ideal. We will see that the converse holds if either $\mcv$ is stable or $\mci$ is coidempotent.
\end{rem}

\begin{thm}\label{smideal}
Let $\votimes\in \calg(\prl)$ and let $i:\mci\to \mcv$ be a morphism in $\linvdbl$. Then $i:\mci\to \mcv$ is a smashing ideal of $\mcv$ if and only if it is a coidempotent object of $\linvdbl$.
\end{thm}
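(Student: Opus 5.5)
We prove both implications by analysing the two maps $i\otimes\mci\colon \mci\otimes_\mcv\mci\to\mci$ and $\mci\otimes i\colon\mci\otimes_\mcv\mci\to\mci$ in $\linvdbl$. Throughout we use that in any $\infty$-category a morphism $i$ is a monomorphism if and only if the diagonal $\mci\to\mci\times_\mcv\mci$ is an equivalence. We also use that a morphism $i$ in $\linvdbl$ with right adjoint $i^R$ is fully faithful if and only if the unit $\id\to i^R i$ is an equivalence.

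\emph{Smashing ideal $\Rightarrow$ coidempotent.} Suppose $i$ is fully faithful and a monomorphism in $\linvdbl$. Since $i^R$ is $\mcv$-linear and colimit preserving, the composite $L\coloneqq i i^R$ is a $\mcv$-linear colocalization of $\mcv$. Hence $L$ is given by tensoring with the object $C\coloneqq ii^R(\mb1)\to\mb1$ of $\mcv_{/\mb1}$, which is coidempotent in $\mcv$ because $L$ is idempotent. Then $\mci\simeq\modu$-like image $C\otimes\mcv$, and we compute $\mci\otimes_\mcv\mci$ via the projection formula. For dualizable modules, $\mci\otimes_\mcv\mci\simeq \mci\otimes_\mcv C\otimes\mcv$, so $i\otimes\mci$ is identified with the map induced by the idempotent $C\otimes C\simeq C$, and is therefore an equivalence. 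In this direction the monomorphism hypothesis is not even needed; full faithfulness together with internal-adjointness suffices.

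\emph{Coidempotent $\Rightarrow$ smashing ideal.} Suppose $\mci\otimes i$ is an equivalence. Then $\mci$ is a coidempotent object of $\linvdbl$. The symmetric monoidal category $\linvdbl$ has the unit $\mcv$, and a coidempotent $c\colon\mci\to\mcv$ has the property that $\mapp(X,\mci)\to\mapp(X,\mcv)$ is $(-1)$-truncated for every $X$. Indeed, any map $X\to\mci$ factors canonically through $X\otimes\mci\simeq X\otimes\mci\otimes\mci$, so $\mci$ is a comonoid that is ``subterminal in $\mcv_{/\mcv}$'' (the standard fact that coidempotents are monomorphisms, dual to \cite[Proposition~4.8.2.9]{ha}). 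Thus $i$ is a closed ideal. For full faithfulness, the unit $\id_\mci\to i^Ri$ is obtained by tensoring the unit $\mcv\to i^R i(\mcv)$ over $\mcv$ with $\mci$ via $\mci\simeq\mci\otimes_\mcv\mci$. Equivalently, $i^R i\simeq (\mci\otimes i)^{-1}$-conjugated identity, since $i^R$ is $\mcv$-linear and $\mci\otimes_\mcv -$ applied to the adjunction $i\dashv i^R$ gives the adjunction $\mci\otimes i\dashv\mci\otimes i^R$, whose left adjoint is an equivalence.

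The main obstacle is this last step. We must identify the unit of $i\dashv i^R$ with that of the tensored adjunction $\mci\otimes i\dashv \mci\otimes i^R$ under $\mci\otimes_\mcv\mci\simeq\mci$. This requires checking compatibility of the coidempotent structure map with the counit, that is, that $\mci\otimes i$ and $i\otimes\mci$ agree as equivalences (a standard fact for coidempotents). Once that is established, full faithfulness follows because the unit of an adjoint equivalence is invertible.
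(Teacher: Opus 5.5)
You have the right overall shape, but the converse direction (coidempotent $\Rightarrow$ smashing ideal) has two genuine gaps.

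\emph{The monomorphism step.} It rests on a claim that is false in general: a coidempotent $c\colon C\to\mb{1}$ need not be a monomorphism in the underlying category. In $\opsp$, the map $\mathrm{fib}(\mathbb{S}\to\mathbb{S}[1/p])\to\mathbb{S}$ is coidempotent, yet every monomorphism in a stable $\infty$-category is an equivalence. The dual of \cite[Proposition~4.8.2.9]{ha} only makes $C\to\mb{1}$ a monomorphism in $\ccalg$; this is how it enters \cref{thm:cidem-zar-ccalg}. Also, ``any map $X\to\mci$ factors through $X\otimes\mci$'' has no content, since there is no map $X\to X\otimes\mci$. In $\linvdbl$ the conclusion does hold, but it must be deduced \emph{from} full faithfulness. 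If $i$ is a fully faithful internal left adjoint, then postcomposition $i\circ-$ on $\mcv$-linear colimit-preserving functors $X\to\mci$ has right adjoint $i^R\circ-$ with invertible unit, so it is fully faithful. Since mapping spaces of $\linvdbl$ are unions of components of those of $\prl_{\mcv}$, $i$ is then a monomorphism.

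\emph{Full faithfulness.} This is the real content of the direction, and you leave it open. Moreover, the compatibility you say it needs (that $\mci\otimes i$ and $i\otimes\mci$ agree) is not required. It suffices that both maps are equivalences, which follows from the hypothesis by symmetry of $\otimes_{\mcv}$. Write $\eta$ for the unit of $i\dashv i^R$. The interchange law gives $(i\otimes\mci)\circ(\mci\otimes\eta)\simeq i\otimes\eta\simeq(\mcv\otimes\eta)\circ(i\otimes\mci)$. The left side is invertible: $\mci\otimes\eta$ is the unit of the adjunction $\mci\otimes i\dashv\mci\otimes i^R$, which exists because $i^R$ is $\mcv$-linear and colimit-preserving, and whose left adjoint is an equivalence. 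Since $i\otimes\mci$ is an equivalence, $\mcv\otimes\eta$ is invertible, and under $\mcv\otimes_{\mcv}\mci\simeq\mci$ this is $\eta$. This is exactly the paper's right-adjointable square.

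\emph{The forward direction.} Here you take a genuinely different route from the paper. The paper dualizes the retraction $i^R$, shows the evaluation $\mci\otimes_{\mcv}\mci^\vee\to\mcv$ is fully faithful, and gets essential surjectivity of its tensor with $\mci$ from a triangle identity. Your idea via $ii^R\simeq C\otimes-$ works and avoids $\mci^\vee$ altogether. However, ``$\mci\otimes_{\mcv}\mci\simeq\mci\otimes_{\mcv}C\otimes\mcv$'' is not yet an argument. Argue instead as follows. The adjunction $i\otimes\mci\dashv i^R\otimes\mci$ has unit $\eta\otimes\mci$, which is invertible since $i$ is fully faithful. Its counit is $\epsilon\otimes\mci$, where $\epsilon\colon ii^R\simeq C\otimes-\to\id$ is the counit of $i\dashv i^R$; on $\mcv\otimes_{\mcv}\mci\simeq\mci$ this is the action of $c$. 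At each $x\in\mci$ it is invertible, because $i(c\otimes x)\simeq c\otimes i(x)\simeq\epsilon_{i(x)}$ is invertible and $i$ is conservative. With unit and counit both invertible, $i\otimes\mci$ is an equivalence.
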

\begin{proof}
For the ``if'' direction, assume that $i\otimes 1:\mci\otimes_\mcv\mci \to\mcv\otimes_\mcv\mci=\mci$ is an equivalence. Then $i^R\otimes 1: \mci\to\mci\otimes_\mcv\mci$ is also an equivalence since it is a right adjoint to $i\otimes 1$. 
Then it follows from the following right adjointable diagram that the unit transformation $\op{id}_\mci\to i^Ri$ is an equivalence:
$$\begin{tikzcd}
\mci\otimes_\mcv\mci \arrow[d,  "1\otimes i","\rotatebox{90}{$\sim$}"'] \arrow[r,"i\otimes 1", "\sim"', shift left] & \mcv\otimes_\mcv\mci=\mci \arrow[d, "1\otimes i"] \arrow[l, "i^R\otimes 1", dashed, shift left=2] \\
\mci \arrow[r, "i", shift left]                                        & \mcv. \arrow[l, "i^R", dashed, shift left]                                               
\end{tikzcd}$$
Therefore $i$ is a fully faithful internal left adjoint.

For the ``only if'' direction, 
assume that $i:\mci\rightarrow\mcv$ is fully faithful.  
Note that the colimit-preserving right adjoint $i_R:\mcv\to\mci$ exhibits $\mci$ as a retract of $\mcv$ in $\lint$. Taking the dual, we get 
a \mcv-linear fully faithful functor $i_R^\vee: \mci^\vee \to \mcv$ and the retraction induces the following commutative diagrams in $\lint$
$$\begin{tikzcd}
\mci \arrow[r, "i"] \arrow[rd, Rightarrow, no head] & \mcv \arrow[d, "i^R"] \\
& \mci                 
\end{tikzcd}\quad
\begin{tikzcd}
\mci\otimes_\mcv\mci^\vee \arrow[d, "1\otimes i^\vee_R"'] \arrow[r, "e"] & \mcv \\
\mci\simeq\mci\otimes_\mcv\mcv.  \arrow[ru, "i"']                    &          
\end{tikzcd}$$
Hence, $e$ 
is fully faithful. 
Now the $\mcv$-linear dualizability provides a retraction in $\lint$:
$$\mci\xrightarrow{1\otimes c}\mci\otimes_{\mcv}\mci^\vee\otimes_\mcv\mci\xrightarrow{e\otimes 1}\mci.$$ 
Therefore, $e\otimes 1$ is essentially surjective and hence an equivalence. Moreover, $e\otimes 1$ factors as two inclusions
$$
\begin{tikzcd}
& \mci\otimes_{\mcv}\mcv\otimes_\mcv\mci \arrow[rd, "i\otimes 1",hook] &                                                  \\
\mci\otimes_{\mcv}\mci^\vee\otimes_\mcv\mci \arrow[rr, "e\otimes 1",hook] \arrow[ru, "1\otimes i_R^\vee \otimes 1",hook] &                                                                 & \mcv\otimes_\mcv\mci\simeq\mci.
\end{tikzcd}$$
So all arrows in this diagram are equivalences and hence $i\otimes 1:\mci\otimes_\mcv\mci \to\mcv\otimes_\mcv\mci=\mci$ is an equivalence.
\end{proof}

\begin{rem}
\cref{smideal} justifies the following definition.
\end{rem}

\begin{de}
Let $\votimes\in\calg(\prl)$. The \textbf{smashing frame} of $\mcv$ is defined to be the coidempotent frame of $\linvdbl$:
\[
\op{Sm}(\mcv)\coloneqq\cidem(\linvdbl).
\]
\end{de}

\begin{rem}
Let $\votimes\in \calg(\prl)$. Recall that a localizing ideal of $\mcv$ is a presentable full subcategory of $\mcv$ closed under small colimits and the action of $\mcv$. By definition, the smashing ideals of $\mcv$ can be identified with the localizing ideals $\mci \hookrightarrow \mcv$ whose right adjoint is $\mcv$-linear and colimit-preserving.
\end{rem}

\begin{de}
Let $\votimes\in \calg(\prl)$ and $X\subseteq\mcv$ a small collection of objects. The localizing ideal $\left<X \right>$ generated by $X$ is defined to be the smallest localizing ideal of $\mcv$ containing $X$.
\end{de}

\begin{thm}\label{smidsmcoloc}
Let $\votimes\in \calg(\prl)$. The assignment $x\mapsto \left<x \right>$ induces an equivalence
$$\left<- \right>: \cidemv\xrightarrow{\sim} \cidem(\linvdbl) = \op{Sm}(\mcv)$$
\end{thm}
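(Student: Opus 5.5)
Given a coidempotent object $c\colon C\to\mb1$ of $\mcv$, I will take $\left<C\right>$ to be the localizing ideal generated by $C$, and identify it with the essential image of $C\otimes-\colon\mcv\to\mcv$. The map $c$ gives a natural transformation $C\otimes-\Rightarrow\id$. Coidempotence says that $C\otimes c$ is an equivalence. Hence $C\otimes-$ is a colocalization (idempotent comonad) onto the full subcategory $\mci_C=\{X\mid C\otimes X\xrightarrow{\sim}X\}$. This subcategory is closed under colimits and under the $\mcv$-action, and it contains $C$. Since every object of $\mci_C$ has the form $C\otimes X$, it equals $\left<C\right>$.

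The inclusion $i\colon\mci_C\hookrightarrow\mcv$ has right adjoint $C\otimes-$. This right adjoint is $\mcv$-linear because $C\otimes(V\otimes X)\simeq V\otimes(C\otimes X)$, and it preserves colimits. So $i$ is an internal left adjoint and fully faithful. Dualizability of $\mci_C$ over $\mcv$ comes from its being a retract of $\mcv$ in $\prl_\mcv$ via $i$ and $C\otimes-$, and retracts of dualizable objects are dualizable. By \cref{smideal}, $\mci_C$ is then a smashing ideal, i.e.\ a coidempotent object of $\linvdbl$. Functoriality on the posets is clear, since $C\le D$ gives $\left<C\right>\subseteq\left<D\right>$.

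For the inverse, take a smashing ideal $i\colon\mci\hookrightarrow\mcv$ with $\mcv$-linear colimit-preserving right adjoint $i^R$, and set $C=ii^R(\mb1)\to\mb1$ to be the counit. Linearity gives $ii^R(X)\simeq C\otimes X$. Since $i$ is fully faithful, $ii^R$ is idempotent, so $C\otimes C\simeq C$ via $C\otimes c$, and $C$ is coidempotent. Its essential image is exactly $\mci$.

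The two constructions are mutually inverse. One direction follows because the image of $C\otimes-$ is $\left<C\right>$. The other follows because the counit of $\mci_C$ at $\mb1$ is $c$. Both maps are monotone, so they give an isomorphism of posets, hence of frames, and joins and meets are automatically preserved. The main obstacle is the dualizability of $\mci_C$ in $\linvdbl$ together with the internal-left-adjoint condition, since the morphism must live in $\linvdbl$ and not merely in $\prl_\mcv$. My plan is to handle this with the retract argument above, checking that the projection $\mcv\to\mci_C$ is $\mcv$-linear.
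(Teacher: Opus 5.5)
Your proposal is correct and follows essentially the same route as the paper. Like the paper, you identify $\left<C\right>$ with the essential image of the colocalization $C\otimes-$, so that $ii^R\simeq C\otimes-$ is $\mcv$-linear and colimit-preserving, and you recover $C$ from a smashing ideal as $ii^R(\mb1)$, which is coidempotent because $i$ is fully faithful; your explicit retract argument for the dualizability of $\left<C\right>$ fills a step the paper leaves implicit, while the one point you pass over (that for $C\le D$ the inclusion $\left<C\right>\hookrightarrow\left<D\right>$ is an internal left adjoint, hence a morphism in $\cidem(\linvdbl)$) is immediate, since its right adjoint is the restriction of $C\otimes-$ to $\left<D\right>$.
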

\begin{proof}
We first check that the assignment is well-defined. For any $x\in\cidemv$, in order to show that $i\colon \left<x \right>\hookrightarrow\mcv$ is a smashing ideal, we need to show that $i^R$ is \mcv-linear and colimit-preserving. This follows by observing that $i\circ i^R\colon\mcv\to \mcv$ can be identified with $x\otimes (-)$. Also, if $x\leq y \in\cidemv$, then $\left<x \right>\hookrightarrow\left<y \right>$ is an internal left adjoint by an argument similar to \cite[Lemma 2.61]{ramzi2024dualizable}. To prove full faithfulness, it suffices to observe that for any $x,y\in\cidemv$, we have $$\left<x \right>\subseteq\left<y \right> \implies x\leq y.$$
For the essential surjectivity, let $i:\mci\hookrightarrow\mcv$ be a smashing ideal. Then  $i\circ i^R:\mcv\to \mcv$ is a map of $\mcv$-modules and hence can be identified with $x\otimes (-):\mcv\to \mcv$ for some $x\in\mcv$.
Then $x$ is coidempotent because $i$ is a colocalization. Consequently $\mci=\left<x \right>$.
\end{proof}

\subsection{The smashing frame as a Zariski frame}
We now prove that for a presentably symmetric monoidal stable $\infty$-category $\mct$, the smashing frame of $\mct$ coincides with the Zariski frame of dualizable $\mct$-modules.

\begin{lem}\label{efi187}
Let $\totimes\in \calg(\prlst)$. Consider a pair of \mct-linear internal left adjoints between dualizable \mct-modules $F: \mathcal{A} \rightarrow \mathcal{D}$ and $G: \mathcal{B} \rightarrow \mathcal{D}$. Then the fiber product $(\mathcal{A} \times_{\mathcal{D}} \mathcal{B})^{\mct\text{-}\mathrm{dbl}}$ in $\lintdbl$ can be identified with the  largest dualizable full subcategory $\mathcal{E} \subseteq \mathcal{A} \times_{\mathcal{D}} \mathcal{B}$, such that the functors $\mathcal{E} \rightarrow \mathcal{A}$ and $\mathcal{E} \rightarrow \mathcal{B}$ are \mct-linear internal left adjoints. Moreover, the natural inclusion $(\mathcal{A} \times_{\mathcal{D}} \mathcal{B})^{\mct\text{-}\mathrm{dbl}} \rightarrow \mathcal{A} \times_{\mathcal{D}} \mathcal{B}$ is a \mct-linear internal left adjoint. 
\end{lem}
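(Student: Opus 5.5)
We plan to construct the fiber product in $\lintdbl$ directly and check its universal property. Form the pullback $\mcp\coloneqq\mathcal{A}\times_{\mathcal{D}}\mathcal{B}$ in $\lint=\modu_\mct(\prlst)$; this is again a presentable stable $\mct$-module, because limits in $\modu_\mct(\prl)$ are computed underlying and $\prlst$ is closed under limits in $\prl$. Now let $\mathcal{E}\subseteq\mcp$ range over the full subcategories that are localizing $\mct$-submodules, are dualizable, and whose projections to $\mathcal{A}$ and $\mathcal{B}$ are internal left adjoints.

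\textbf{Step 1: a largest such $\mathcal{E}$ exists.} The family of such $\mathcal{E}$ is closed under joins. Given $\mathcal{E}_j$, let $\mathcal{E}$ be the localizing subcategory they generate; it is the image of $\bigoplus_j\mathcal{E}_j\to\mcp$. The first point is that $\mathcal{E}$ is dualizable, being a quotient of a dualizable category, and that its inclusion into $\mcp$ is strongly continuous. Here we plan to use the characterization that, in the stable case, dualizable modules are exactly retracts of compactly generated ones, together with the fact that compactly assembled categories are closed under colimits in $\lintdbl$ (which is presentable by \cref{dualprl}). The second point is that internal left adjointness of the projections can be checked on each generating $\mathcal{E}_j$, since the right adjoints are computed termwise and colimits commute.

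\textbf{Step 2: the inclusion $\mathcal{E}\hookrightarrow\mcp$ is an internal left adjoint.} Here we will use the stable theory. By \cite{ramzi2024dualizable}, a dualizable $\mathcal{E}$ receives a colimit-preserving $\mct$-linear functor to $\mcp$ with colimit-preserving right adjoint exactly when it factors through compacts of $\on{Ind}$. Concretely, we plan to realize $\mathcal{E}$ as the image of the "continuous part". That is, take $\mathcal{E}$ to be generated by images of all internal left adjoints $\mathcal{X}\to\mcp$ with $\mathcal{X}$ dualizable whose composites with the projections are internal left adjoints. Linearity of the right adjoint then follows because $\mct$ acts through colimits and the $\mct$-action commutes with the right adjoints on each $\mathcal{X}$.

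\textbf{Step 3: universal property.} Given $\mathcal{X}\in\lintdbl$ with internal left adjoints to $\mathcal{A}$ and $\mathcal{B}$ agreeing in $\mathcal{D}$, we get $\mathcal{X}\to\mcp$. Its essential image lies in $\mathcal{E}$ by maximality from Step 1. The resulting $\mathcal{X}\to\mathcal{E}$ is an internal left adjoint: its right adjoint is the composite of the right adjoint of $\mathcal{X}\to\mcp$ with the inclusion, and both are continuous and linear. Uniqueness of the factorization is full faithfulness of the inclusion.

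We expect the main obstacle to be Step 1/2, namely showing that the subcategory generated by such continuous images is itself dualizable and sits as an internal left adjoint. If direct arguments fail, the fallback is the $\omega_1$-compact description of \cref{dualprl}. Under it, $\mathcal{E}$ corresponds to $\on{Ind}$ of the pullback of $\omega_1$-compact objects that are "compact-like" (admit $\omega_1$-presentations by compact morphisms), computed in $\mathcal{A}^{\omega_1}\times_{\mathcal{D}^{\omega_1}}\mathcal{B}^{\omega_1}$, following Efimov's treatment.
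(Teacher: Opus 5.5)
Your overall strategy is the right one: take the localizing submodule generated by all admissible $\mathcal{E}_j$, then verify the universal property. This is the same maximal-subobject argument the paper uses in \cref{dualker}, and the shape of the Efimov argument it defers to.

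However, the one place where stability genuinely enters is missing. You never show that a colimit-preserving $\mct$-linear functor $\Phi=(\Phi_{\mathcal{A}},\Phi_{\mathcal{B}})\colon\mathcal{X}\to\mathcal{P}\coloneqq\mathcal{A}\times_{\mathcal{D}}\mathcal{B}$ whose two components are internal left adjoints is itself an internal left adjoint. Steps 1--3 all depend on this silently:
\begin{itemize}
\item In Step 1 your family only constrains the projections. Yet to conclude anything about the subcategory generated by the $\mathcal{E}_j$ (its dualizability, and strong continuity of its inclusion), you need $\bigoplus_j\mathcal{E}_j\to\mathcal{P}$ to be an internal left adjoint.
\item In Step 3 you assert without argument that the right adjoint of the induced functor $\mathcal{X}\to\mathcal{P}$ is continuous and linear. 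You need the same fact to know that the image of $\mathcal{X}$ generates a member of the family.
\item The ``moreover'' clause is exactly this fact applied to $\mathcal{E}$ itself.
\end{itemize}
Step 2 does not supply it. The quoted characterization via compacts of $\op{Ind}$ is not a usable statement. Redefining the family as images of functors already assumed to be internal left adjoints into $\mathcal{P}$ only relocates the problem: you would then have to show that members of the original family, and the functor in Step 3, are of that form.

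The missing input is a direct computation. Put $\Phi_{\mathcal{D}}\coloneqq F\Phi_{\mathcal{A}}\simeq G\Phi_{\mathcal{B}}$, so that $\Phi_{\mathcal{D}}^R\simeq\Phi_{\mathcal{A}}^RF^R$. The description of mapping spaces in the pullback gives
\[
\Phi^R(a,b)\simeq\Phi_{\mathcal{A}}^R(a)\times_{\Phi_{\mathcal{D}}^R(Fa)}\Phi_{\mathcal{B}}^R(b).
\]
Since $\mathcal{X}$ is stable, this finite limit commutes with colimits and with $t\otimes(-)$, so $\Phi^R$ is colimit-preserving and $\mct$-linear.

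With this in hand, the argument goes through:
\begin{itemize}
\item Each $\mathcal{E}_j\hookrightarrow\mathcal{P}$ is an internal left adjoint. Hence $p\colon\bigoplus_j\mathcal{E}_j\to\mathcal{E}$ is an internal left adjoint with conservative right adjoint. The image argument of \cite[Lemma~2.61]{ramzi2024dualizable} (the one used in \cref{dualker}) then shows that $\mathcal{E}$ is dualizable and that its inclusion is an internal left adjoint.
\item Your justification ``being a quotient of a dualizable category'' does not do this as stated. The functor onto a generated image is not a localization in general (e.g.\ $\opsp\to\modu_R(\opsp)$). Neither the retract characterization nor closure of $\lintdbl$ under colimits says anything about images.
\item The projections $\pi$ of $\mathcal{E}$ are internal left adjoints because $p^R\pi^R\simeq(\pi p)^R$ is colimit-preserving and linear, while $p^R$ is colimit-preserving, linear and conservative. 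This makes your ``checked on generators'' precise.
\end{itemize}
You should also explain why the family is small. Fix an uncountable $\kappa$ with $\totimes\in\calg(\prl_\kappa)$. Each member is $\kappa$-compactly generated by \cref{dualprl}, and its inclusion, being an internal left adjoint, preserves $\kappa$-compact objects. So each member is generated by a subset of the essentially small $\mathcal{P}^{\kappa}$. The $\omega_1$-compact fallback is not needed.
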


\begin{proof}
This follows from the same argument as in the proof of \cite[Proposition 1.87]{efimov2024k}.
\end{proof}

\begin{prop}\label{cdblmono}
Let $\totimes\in \calg(\prlst)$ and let $F: \mathcal{M} \rightarrow \mathcal{N}$ be a morphism in $\lintdbl$. Then:
\enu{
\item $F$ is a monomorphism in $\lintdbl$ if and only if $F$ is fully faithful.
\item $F$ is an epimorphism in $\lintdbl$ if and only if $F$ is a localization functor.
}
\end{prop}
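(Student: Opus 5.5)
We prove the two statements separately; in both, the stability of $\mct$ is what lets us test mono/epi conditions through fibers and cofibers, and \cref{efi187} is what lets us compute limits inside $\lintdbl$.

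\emph{Part (1).} A morphism $F$ is a monomorphism exactly when its diagonal $\mcm\to\mcm\times_{\mcn}\mcm$ is an equivalence, where the fiber product is taken in $\lintdbl$. By \cref{efi187}, $(\mcm\times_\mcn\mcm)^{\mct\text{-dbl}}$ is the largest dualizable full subcategory $\mce$ of the ordinary pullback $\mcp=\mcm\times_\mcn\mcm$ for which both projections are internal left adjoints. The diagonal $\Delta\colon\mcm\to\mcp$ lands in $\mce$, since $\mcm$ is dualizable and the composites with the projections are the identity.

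If $F$ is fully faithful, then $\mcp\to\mcm$ is already an equivalence, because pullbacks in $\prl$ are computed in $\Cat_\infty$ and the base change of a fully faithful functor is fully faithful. Hence $\mce\simeq\mcm$ and $F$ is a monomorphism.

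Conversely, suppose $F$ is a monomorphism but not fully faithful. Since $F$ is an internal left adjoint between dualizable $\mct$-modules, the unit $\op{id}\to F^RF$ is a $\mct$-linear colimit-preserving natural transformation, so its fiber $K=\Fib(\op{id}\to F^RF)$ is a nonzero $\mct$-linear colimit-preserving endofunctor. Stability is used here. Our first attempt is to exhibit two distinct internal left adjoints $\mca\rightrightarrows\mcm$ from a dualizable $\mca$ that become equal after composing with $F$. A natural candidate is the pair $\op{id}$ and $\op{id}\oplus\Sigma K$, or else to use the cofiber sequence $K\to\op{id}\to F^RF$ to build an automorphism of the kind $X\mapsto X$ twisted by $K$, which $F$ trivializes. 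The hard part is ensuring the endofunctor is an internal left adjoint. We expect this to work because colimit-preserving $\mct$-linear endofunctors of a dualizable $\mcm$ are the objects of $\mcm^\vee\otimes\mcm$, and internal left adjoints out of $\mct$ correspond to compact-type ($\omega_1$-compact) objects. Alternatively, we would mimic Efimov's proof that monomorphisms in $\catper$/dualizable categories are fully faithful, passing to the endomorphism of the pair via \cref{efi187}.

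\emph{Part (2).} Epimorphisms are detected by the codiagonal $\mcn\sqcup_\mcm\mcn\to\mcn$ being an equivalence. Colimits in $\lintdbl$ agree with colimits in $\prl_\mct$: the dualizable modules are closed under colimits along internal left adjoints, and such colimits can be computed as limits of right adjoints. Stability lets us express the pushout via the Verdier-type sequence $\ker(F^R)\to\mcn\to\ldots$. If $F$ is a localization, then $F^R$ is fully faithful, so $\mcn\sqcup_\mcm\mcn$, computed as the limit of right adjoints, is $\mcn\times_{\mcm}\mcn\simeq\mcn$, and $F$ is an epimorphism. Conversely, for an epimorphism we use that the two coprojections $\mcn\rightrightarrows\mcn\sqcup_\mcm\mcn$ agree. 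Passing to right adjoints, the counit $FF^R\to\op{id}$ becomes an equivalence after the relevant identification, and we conclude that $F^R$ is fully faithful.

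The main obstacle is the converse in (1): turning the failure of full faithfulness into two distinct internal-left-adjoint maps equalized by $F$. This is where dualizability and stability are genuinely needed, and we expect it to follow Efimov's argument closely.
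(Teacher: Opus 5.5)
Your ``if'' direction of (1), the localization $\Rightarrow$ epimorphism direction of (2), and your reduction of colimits in $\lintdbl$ to $\lint$ are all fine. The gap is the converse of (1). This is the real content of the proposition, and you leave it unproved.

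The concrete candidate you propose fails. By the triangle identity, $F\eta\colon F\to FF^RF$ is split by $\epsilon F$, so $F\circ\Sigma K\simeq\Cofib(F\eta)$. This vanishes only when $F\eta$ is an equivalence, so in general $F\circ(\id\oplus\Sigma K)\not\simeq F$. Even with a working pair, you would still need to show that $\id\oplus\Sigma K$ is an internal left adjoint. More fundamentally, mapping spaces in $\lintdbl$ are cores of categories of internal left adjoints. So a failure of the monomorphism property for a fixed test object need not appear on $\pi_0$ as two non-equivalent maps. Testing against $\mct$ itself cannot work either. Internal left adjoints $\mct\to\mcm$ correspond to objects that are compact in the $\mct$-enriched sense, not merely $\omega_1$-compact. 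A dualizable $\mcm$ such as $\shv(\mathbb{R};\opsp)$ has no nonzero compact objects.

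The missing idea is already in your setup. \Cref{efi187} says the dualizable pullback $\mce$ is a \emph{full} subcategory of $\mathcal{P}=\mcm\times_\mcn\mcm$. If $F$ is a monomorphism, the diagonal $\mcm\to\mce$ is an equivalence, so $\Delta\colon\mcm\to\mathcal{P}$ is fully faithful. Since $\mapp_{\mathcal{P}}(\Delta x,\Delta y)\simeq\mapp_\mcm(x,y)\times_{\mapp_\mcn(Fx,Fy)}\mapp_\mcm(x,y)$, this says exactly that every map $\mapp_\mcm(x,y)\to\mapp_\mcn(Fx,Fy)$ is $(-1)$-truncated. Apply this to $(x,\Sigma y)$ and loop at the zero map, using stability and exactness of $F$. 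This upgrades the map to an equivalence, so $F$ is fully faithful. This is the paper's argument, following Efimov.

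The same device repairs your converse in (2). There, the claim that ``the counit becomes an equivalence after the relevant identification'' is currently only asserted. The right adjoint of the codiagonal $\mcn\sqcup_\mcm\mcn\to\mcn$ is the diagonal $\mcn\to\mcn\times_\mcm\mcn$ of the pullback along $F^R$. If $F$ is an epimorphism, this diagonal is an equivalence. Hence $F^R$ is $(-1)$-truncated on mapping spaces, and by the same looping argument it is fully faithful; i.e.\ $F$ is a localization. The Verdier-sequence detour is unnecessary. The paper simply invokes Efimov's Proposition A.1, after noting that the forgetful functor to $\prlst$ is conservative and preserves colimits.
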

\begin{proof}
We follow the same argument as the proof of \cite[Proposition A.3]{efimov2024k}. The ``if'' direction of part (1) is clear. For the ``only if'' direction, suppose that a functor $F: \mathcal{A} \rightarrow \mathcal{B}$ is a monomorphism in $\prstdbl$. By \cref{efi187} the functor $(\mathcal{A} \times_{\mathcal{B}} \mathcal{A})^{\mct\text{-}\mathrm{dbl}}\rightarrow \mathcal{A} \times_{\mathcal{B}} \mathcal{A}$ is fully faithful. Hence, the functor $\mathcal{A} \rightarrow \mathcal{A} \times_{\mathcal{B}} \mathcal{A}$ is fully faithful. Arguing as in the proof of \cite[Proposition A.1]{efimov2024k}, we conclude that $F$ is fully faithful. Part (2) follows from \cite[Proposition A.1]{efimov2024k} since the functor $\prstdbl \rightarrow \prlst$ is conservative and commutes with colimits.
\end{proof}

\begin{rem}\label{charaidl}
Let $\totimes\in \calg(\prlst)$.
\enu{
\item By \cref{cdblmono}, the closed ideals and smashing ideals of \mct coincide. They can be identified with localizing ideals $\mci \hookrightarrow \mct$ whose right adjoint is \mct-linear and colimit-preserving.
\item If \totimes is locally rigid over $\opsp$ then the closed ideals of $\mct$, by \cite[Proposition 4.18]{ramzi2024locally}, can be identified with the localizing ideals $\mci\hookrightarrow\mct$ whose right adjoint is colimit-preserving.
\item If $\totimes$ is rigid over $\opsp$, then $\totimes$ lies in $\calg(\prstdbl)$, \ie it is a presentably symmetric monoidal stable \infcat such that the underlying \infcat $\mct$ is dualizable and that the unit $\op{Sp}\to \mct$ and the multiplication $\mct\otimes\mct\to \mct$ are strongly continuous. Furthermore, we have an identification $$\pr_{\mct}^{\op{dbl},\otimes}:=\modu_\mct(\prlst)^{\op{dbl},\otimes}\simeq \modu_\mct(\prstdbl)^\otimes$$ by \cref{appenb1}. 
}
\end{rem}

\begin{thm}[Smashing frame identification]\label{dblideal}
Let $\totimes\in\calg(\prlst)$. We have
\[
\op{Sm}(\mct) = \cidem(\lintdbl)\simeq\idl(\lintdbl)\simeq\idl(\lintdbl)_{\op{rad}}=\zar(\lintdbl).
\]
\end{thm}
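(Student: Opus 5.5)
The chain consists of three identifications, and I would prove them one at a time.

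The first equality, $\op{Sm}(\mct)=\cidem(\lintdbl)$, is the definition of the smashing frame.

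For the second step, $\cidem(\lintdbl)\simeq\idl(\lintdbl)$, I would view both sides as full subposets of the overcategory of $\mct$ in $\lintdbl$.
\begin{itemize}
\item By \cref{smideal}, the coidempotent objects of $\lintdbl$ are exactly the smashing ideals, that is, the morphisms $i\colon\mci\to\mct$ in $\lintdbl$ that are fully faithful.
\item By \cref{cdblmono}(1), stability of $\mct$ implies that a morphism of $\lintdbl$ is a monomorphism if and only if it is fully faithful.
\item Hence the categorical ideals of $\lintdbl$, which are the monomorphisms into the unit, are exactly the fully faithful $i\colon\mci\to\mct$; this is \cref{charaidl}(1).
\end{itemize}
So the two subcategories of $(\lintdbl)_{/\mct}$ have the same objects. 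Both are full subcategories, so they agree as posets, and hence as frames or preframes.

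One should check that the monoidal structures match. On $\idl$ the product is $\im(\mci\otimes_\mct\mcj)$. On $\cidem$ the product is the tensor product itself, because $\cidem$ is closed under $\otimes$. These agree once we know that $\mci\otimes_\mct\mcj\to\mct$ is already a monomorphism. That holds here because a tensor product of coidempotents is coidempotent, hence fully faithful by \cref{smideal}, hence a monomorphism by \cref{cdblmono}.

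For the third step, $\idl(\lintdbl)\simeq\idl(\lintdbl)_{\op{rad}}$, I would show that every ideal is radical. By the second step, every ideal $I$ is coidempotent, so $I\cdot I=I$. If $J^n\subseteq I$, then $J$ is itself an ideal and hence idempotent, so $J=J^n\subseteq I$. Thus $I$ is radical, and $\sqrt{-}$ is the identity.

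The last equality, $\idl(\lintdbl)_{\op{rad}}=\zar(\lintdbl)$, is the definition of the Zariski frame (\cref{de:zariski-frame}). To apply \cref{de:zariski-frame} we need $\lintdbl$ to lie in $\calg(\prl)$. This holds by \cref{dualprl} when $\mct$ is $\kappa$-presentable for some uncountable $\kappa$, which we may always arrange by enlarging $\kappa$.

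I expect the main obstacle to be in the second step. \cref{cdblmono} is stated for $\lintdbl$ with $\mct$ stable but not necessarily rigid, so one must confirm that its input, \cref{efi187}, really applies to the dualizable $\mct$-modules appearing here. The monoidal-compatibility remark in the second step, that $\im(\mci\otimes\mcj)=\mci\otimes\mcj$, should then follow formally.
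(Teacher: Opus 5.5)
Your proposal is correct and follows the paper's route. The identification $\cidem(\lintdbl)\simeq\idl(\lintdbl)$ is exactly \cref{smideal} combined with \cref{cdblmono}, which is \cref{charaidl}(1). Your argument that every ideal is idempotent and hence radical is what the paper compresses into ``the second equivalence follows because $\op{Sm}(\mct)$ is a frame,'' and your extra checks (that $\im(\mci\otimes_{\mct}\mathcal{J})=\mci\otimes_{\mct}\mathcal{J}$, and that $\lintdbl\in\calg(\prl)$ by \cref{dualprl}) are details the paper leaves implicit.
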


\begin{proof}
The first equivalence follows from \cref{charaidl}(1). The second equivalence follows from the first equivalence and the fact that $\op{Sm}(\mct)$ is a frame.
\end{proof}

\begin{rem}[Telescope conjecture]
Let $\mck^\otimes \in \calg^{\op{rig}}(\catper)$ be a rigid small idempotent complete tt-\infcat. Ind-completion induces a fully faithful symmetric monoidal functor
\[
\modu_{\mck}(\catper)^\otimes\to \modu_{\op{Ind}(\mck)}(\prstdbl)^\otimes\simeq \pr^{\op{dbl},\otimes}_{\op{Ind}(\mck)}\]
where the last equivalence is explained in \cref{charaidl}(3). Applying the Zariski frame functor, we obtain an injective morphism of frames
\[
\zar(\modu_{\mck}(\catper))\hookrightarrow \zar(\pr^{\op{dbl}}_{\op{Ind(\mck)}})\simeq\op{Sm}(\op{Ind}(\mck))
\]
where the last equivalence is due to \cref{dblideal}. The telescope conjecture for $\mck$ holds precisely when this frame map is surjective, or equivalently, when every smashing ideal of $\op{Ind}(\mck)$ is generated by compact objects. This map need not be surjective; see \cite{burklund2023k} for the example $\mck=\opsp^\omega$.
\end{rem}

\subsection{$\omega_1$-coherence of smashing frames}
The smashing frame identification (\cref{dblideal}) allows us to apply the coherence results of \cref{sec2.3} to smashing frames. We first deduce that the smashing frame of any rigid $\opsp$-algebra is $\omega_1$-coherent. Then, we explicitly construct a set of $\omega_1$-compact generators, based on Efimov's Urysohn lemma for dualizable categories (\cite[Theorem D.1]{efimov2024k}).

\begin{prop}\label{prop:omega1}
For $\totimes\in\calg(\prl_{\op{st},\kappa})$, where $\kappa$ is an uncountable regular cardinal, the smashing frame $\op{Sm}(\mct)$ is $\kappa$-coherent. 
In particular, if $\totimes\in\calg(\prl_{\op{st},\omega_1})$ is 
$\omega_1$-presentably monoidal, then its smashing frame $\op{Sm}(\mct)$ is $\omega_1$-coherent.
\end{prop}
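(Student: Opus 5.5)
The plan is to combine the identification $\op{Sm}(\mct)\simeq\zar(\lintdbl)$ of \cref{dblideal} with the coherence result \cref{kcohfrm}. Concretely, \cref{kcohfrm} says that for any $\cotimes\in\calg(\prl_\kappa)$ the Zariski frame $\zar(\mcc)$ is $\kappa$-coherent. It therefore suffices to show that the symmetric monoidal $\infty$-category $\mcc^\otimes\coloneqq\pr^{\op{dbl},\otimes}_{\mct}=\modu_\mct(\prlst)^{\op{dbl},\otimes}$ lies in $\calg(\prl_\kappa)$. Once this is known, we get
\[
\op{Sm}(\mct)\simeq\zar(\lintdbl),
\]
and the right-hand side is $\kappa$-coherent.

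\textbf{Step 1: reduce to \cref{dualprl}.} Since $\mct$ is stable, $\modu_\mct(\prlst)\simeq\modu_\mct(\prl)$, using that $\prlst=\modu_{\opsp}(\prl)$ and that $\mct$ is an $\opsp$-algebra. So $\lintdbl$ is literally $\pr^{\op{dbl}}_{\mcv}$ with $\mcv=\mct$. The hypothesis $\totimes\in\calg(\prl_{\op{st},\kappa})$ means $\mct^\otimes\in\calg(\prl_\kappa)$, and $\kappa$ is uncountable. \cref{dualprl} then applies directly and yields $\pr^{\op{dbl},\otimes}_\mct\in\calg(\prl_\kappa)$.

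\textbf{Step 2: apply \cref{kcohfrm}.} This gives that $\zar(\pr^{\op{dbl}}_\mct)$ is $\kappa$-coherent. Transporting along the frame equivalence of \cref{dblideal} gives the first claim. Here we use that $\kappa$-coherence is invariant under equivalence of frames, which is clear from \cref{defkcoh} and \cref{rem:kcoh}. The second claim is the special case $\kappa=\omega_1$.

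\textbf{Main obstacle.} The only real point to check is Step 1: that the hypothesis "$\kappa$-presentably symmetric monoidal stable" matches the input of \cref{dualprl}. The subtlety is whether $\prl_{\op{st},\kappa}$ means $\mct$ is $\kappa$-accessible with the tensor product preserving $\kappa$-compact objects and the unit $\kappa$-compact. That is precisely the condition $\mct^\otimes\in\calg(\prl_\kappa)$, so I expect this to go through. I would also note that the symmetric monoidal structure on $\lintdbl$ used in \cref{dualprl} is the one from \cref{dualdef}, and this is the structure used to define ideals in \cref{dblideal}, so the two results are compatible.
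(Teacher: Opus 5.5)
Your proposal is correct and follows essentially the same route as the paper, whose proof is exactly the combination of \cref{dualprl} (to get $\pr^{\op{dbl},\otimes}_\mct\in\calg(\prl_\kappa)$), \cref{kcohfrm} (to get $\kappa$-coherence of $\zar(\lintdbl)$), and the identification $\op{Sm}(\mct)\simeq\zar(\lintdbl)$ of \cref{dblideal}. Your remark that $\modu_\mct(\prlst)\simeq\modu_\mct(\prl)$, so that \cref{dualprl} applies with $\mcv=\mct$, is a correct and harmless extra check.
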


\begin{proof}
Combine \cref{dblideal}, \cref{kcohfrm}, and \cref{dualprl}.
\end{proof}

\begin{cor}
Let $\totimes\in\calg(\prlst)$ be rigid. Then the smashing frame $\op{Sm}(\mct)$ is $\omega_1$-coherent.
\end{cor}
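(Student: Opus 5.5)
The plan is to reduce the corollary to \cref{prop:omega1} with $\kappa=\omega_1$. That proposition says that if $\totimes\in\calg(\prl_{\op{st},\omega_1})$, then $\op{Sm}(\mct)$ is $\omega_1$-coherent. So it suffices to show that a rigid $\totimes\in\calg(\prlst)$ is $\omega_1$-presentably symmetric monoidal. Concretely, this means three things:
\begin{itemize}
\item[(a)] $\mct$ is $\omega_1$-presentable;
\item[(b)] the unit $\mb1$ is $\omega_1$-compact;
\item[(c)] the tensor product of two $\omega_1$-compact objects is $\omega_1$-compact.
\end{itemize}

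For (a), I would use \cref{charaidl}(3): rigidity places $\totimes$ in $\calg(\prstdbl)$, so $\mct$ is dualizable in $\prlst$. By \cref{dualprl} (the case $\mcv=\opsp$), the inclusion $\prstdbl\hookrightarrow\pr^L_{\op{st},\omega_1}$ lands in $\omega_1$-presentable categories. Hence any dualizable presentable stable $\infty$-category is $\omega_1$-presentable. This is Lurie's theorem that dualizable categories are retracts of compactly generated ones, which gives $\omega_1$-presentability.

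For (b) and (c), I would use that the unit $\opsp\to\mct$ and the multiplication $\mct\otimes\mct\to\mct$ are strongly continuous, i.e.\ internal left adjoints in $\prstdbl$ (again \cref{charaidl}(3)). By \cref{dualprl}, morphisms of $\prstdbl$ become morphisms of $\pr^L_{\op{st},\omega_1}$; equivalently, a functor whose right adjoint preserves colimits has a right adjoint preserving $\omega_1$-filtered colimits, so it preserves $\omega_1$-compact objects.
\begin{itemize}
\item Applying this to the unit functor, the image $\mb1$ of the compact object $\mathbb{S}$ is $\omega_1$-compact.
\item Applying it to the multiplication, it suffices that $x\boxtimes y\in\mct\otimes\mct$ is $\omega_1$-compact whenever $x,y$ are. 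This holds because the Lurie tensor product $\mct\otimes\mct$ in $\pr^L_{\op{st},\omega_1}$ is generated by such external products, and they are $\omega_1$-compact there.
\end{itemize}
Thus $\totimes\in\calg(\prl_{\op{st},\omega_1})$, and \cref{prop:omega1} gives the claim.

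The main obstacle is the compatibility of the tensor product with $\omega_1$-compactness in (c), i.e.\ that $\prl_{\omega_1}$ is a symmetric monoidal subcategory with $\omega_1$-compacts of $\mcc\otimes\mcd$ generated by external products. I would take this as standard from \cite{ha} and \cite{ramzi2024dualizable}, since \cref{dualprl} already treats $\prstdbl\to\pr^L_{\op{st},\omega_1}$ as symmetric monoidal. Alternatively, one can bypass the issue: since $\prstdbl\hookrightarrow\pr^L_{\op{st},\omega_1}$ is a symmetric monoidal functor, it carries the commutative algebra $\totimes\in\calg(\prstdbl)$ to an object of $\calg(\pr^L_{\op{st},\omega_1})$, which is exactly what \cref{prop:omega1} requires.
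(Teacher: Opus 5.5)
Your proposal is correct and follows essentially the same route as the paper. You use \cref{charaidl}(3) to see that $\mct$ is dualizable with strongly continuous unit and multiplication, deduce $\totimes\in\calg(\prl_{\op{st},\omega_1})$, and then apply \cref{prop:omega1}; the paper simply cites \cite[Corollary 2.36]{ramzi2024dualizable} for the middle step, which you unpack via \cref{dualprl} and the compatibility of $\prl_{\omega_1}$ with the Lurie tensor product.
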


\begin{proof}
By \cref{charaidl}(3), $\mct$ is dualizable and the tensor product $\mct\otimes\mct\to\mct$ is strongly continuous. Hence by \cite[Corollary 2.36]{ramzi2024dualizable} we have $\totimes\in\calg(\prl_{\op{st},\omega_1})$. We then can apply \cref{prop:omega1}.
\end{proof}

\begin{de}
Let $\gamma=\mathbb{R}_{\leqslant 0} \subseteq \mathbb{R}$ be the set of non-positive real numbers. The $\gamma$-topology 
on $\mathbb{R}$ is defined as follows: a subset $U \subseteq \mathbb{R}$ is $\gamma$-open if $U$ is open and $U+\gamma=U$. In other words, $U$ is $\gamma$-open if either $U=\varnothing$ or $U=\mathbb{R}$ or $U=(-\infty, a)$ for some $a \in \mathbb{R}$. We denote by $\mathbb{R}_\gamma$ the set $\mathbb{R}$ equipped with $\gamma$-topology. For a presentable $\infty$-category $\mct$, we denote by $\operatorname{Shv}_{\geqslant 0}(\mathbb{R};\mct)$ the category $\operatorname{Shv}(\mathbb{R}_\gamma ; \mct)$ of $\mct$-valued sheaves on $\mathbb{R}_\gamma$. For $a\in \mathbb{R}\cup \{\pm\infty\}$, we denote by $\mathbb{S}_{<a}$ the sheafification of the representable presheaf represented by $(-\infty,a)$.
\end{de}

\begin{prop}[{\cite[Proposition C.4]{efimov2024k}}]\label{efigenerator1}
Let $\mct\in \prstdbl$ be a dualizable presentable stable $\infty$-category. Denote by $\op{Fun}^{L L}(\operatorname{Shv}_{\geqslant 0}(\mathbb{R} ; \mathrm{Sp}), \mct)$ the full subcategory of $\op{Fun}^L(\operatorname{Shv}_{\geqslant 0}(\mathbb{R} ; \mathrm{Sp}), \mct)$
spanned by the functors whose right adjoints preserve colimits. Let $\mathbb{Q}_{\leqslant}$ denote the poset of rational numbers with the ordinary order. Denote by $\funct^{\op{sct}}(\mathbb{Q}_{\leqslant},\mct)$ the full subcategory of $\op{Fun}(\mathbb{Q}_{\leqslant},\mct)$ spanned by the functors
$\Phi: \mathbb{Q}_{\leqslant} \rightarrow \mct$ satisfying the following conditions:
\enu{
\item for any $a \in \mathbb{Q}$, we have $\Phi(a) \cong \underset{b<a}{\varinjlim} \Phi(b)$ and
\item for any $a<b$ with $a, b \in \mathbb{Q}$, the morphism $\Phi(a) \rightarrow \Phi(b)$ is compact.
}
Then the assignment $F\mapsto (a\to F(\mathbb{S}_{<a}))$ induces an equivalence
\[
\op{Fun}^{L L}(\operatorname{Shv}_{\geqslant 0}(\mathbb{R} ; \mathrm{Sp}), \mct)\xrightarrow{\sim}\funct^{\op{sct}}(\mathbb{Q}_{\leqslant},\mct).
\]
\end{prop}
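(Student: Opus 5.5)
This is Efimov's result, cited from \cite[Proposition C.4]{efimov2024k}. We would reproduce his argument in three steps: identify $\operatorname{Shv}_{\geqslant 0}(\mathbb{R};\mathrm{Sp})$ with functors out of a poset, translate ``the right adjoint preserves colimits'' into a condition on the values, and reduce from $\mathbb{R}$ to the dense subset $\mathbb{Q}$.

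\textbf{Step 1: presenting the source category.} The $\gamma$-opens of $\mathbb{R}$ are $\varnothing$, $\mathbb{R}$ and the sets $(-\infty,a)$. Covers are therefore detected by suprema: $(-\infty,a)=\bigcup_i(-\infty,a_i)$ exactly when $\sup a_i=a$. Hence a sheaf is a functor $a\mapsto\mathcal{F}((-\infty,a))$ on $(\mathbb{R}\cup\{\pm\infty\})^{\op{op}}$ that sends increasing sequences converging from below to limits. Dually, colimit-preserving functors $F$ out of $\operatorname{Shv}_{\geqslant 0}(\mathbb{R};\mathrm{Sp})$ correspond to $\Phi(a)=F(\mathbb{S}_{<a})$, where $\Phi$ is a functor $\mathbb{R}_{\leqslant}\to\mct$ with:
\begin{itemize}
\item $\Phi(-\infty)=0$, using the sheaf condition for the empty cover;
\item $\Phi(a)\simeq\varinjlim_{b<a}\Phi(b)$.
\end{itemize}
Left continuity lets us restrict to $\mathbb{Q}$: a $\Phi$ on $\mathbb{R}$ is determined by its restriction to $\mathbb{Q}$ via left Kan extension, and any $\Phi$ on $\mathbb{Q}$ satisfying condition (1) extends uniquely.

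\textbf{Step 2: compact transition maps.} The category $\operatorname{Shv}_{\geqslant0}(\mathbb{R};\mathrm{Sp})$ is dualizable, and for $b<a$ the map $\mathbb{S}_{<b}\to\mathbb{S}_{<a}$ is compact in the sense of dualizable categories (``$(-\infty,b)$ is way below $(-\infty,a)$''). A functor $F$ with colimit-preserving right adjoint is strongly continuous, and strongly continuous functors preserve compact morphisms. So the image of any $F\in\op{Fun}^{LL}$ satisfies condition (2).

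\textbf{Step 3: the converse, which is the main obstacle.} Given $\Phi$ satisfying (1) and (2), we must show that the induced $F$ has a colimit-preserving right adjoint $F^R(X)=(a\mapsto\mathcal{H}om(\Phi(a),X))$. For this we use the characterization of compact maps in a dualizable $\mct$: $f\colon x\to y$ is compact iff it factors through $\hat{y}\to y$ via $\hat{\mathcal{Y}}$, equivalently iff $\mathcal{H}om(y,-)\to\mathcal{H}om(x,-)$ factors through a colimit-preserving functor.

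Interleave rationals $b<c<a$. The sheaf value $\varprojlim_{b<a}\mathcal{H}om(\Phi(b),X)$ can then be rewritten as a limit of the colimit-preserving intermediate functors obtained from these factorizations. Since $\mathbb{Q}$ is dense, this limit is really a sequential, Mittag-Leffler-type limit, so commuting it with filtered colimits in $X$ is the delicate point. Efimov handles it by checking on $\mathbb{S}_{<a}$ and using that sheafification on $\mathbb{R}_\gamma$ involves only countable cofinal limits with compact transition maps; we would follow the same route.

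Finally, full faithfulness and essential surjectivity of the assignment follow from Step 1 together with the fact that $\op{Fun}^{LL}$ is a full subcategory of $\op{Fun}^L$.
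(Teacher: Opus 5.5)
The paper does not prove this statement; it only cites \cite[Proposition C.4]{efimov2024k}, so your outline has to stand on its own. Steps 1 and 2 are sound. So is the overall strategy of computing $F^R(X)=\bigl(a\mapsto\mathcal{H}om(\Phi(a),X)\bigr)$ and checking that it preserves colimits. The gap is in Step 3, which you both defer and misframe. You describe the crux as commuting $\varprojlim_{b<a}\mathcal{H}om(\Phi(b),X)$, rewritten as $\varprojlim_n\phi_n(X)$ with $\phi_n$ colimit-preserving, with filtered colimits in $X$. As stated this is false in general. That limit is $\mathcal{H}om(\Phi(a),X)$, which commutes with filtered colimits in $X$ only if $\Phi(a)$ is compact in $\mathcal{T}$, and no Mittag-Leffler-type property of the tower changes that. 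What actually has to be shown is that $F^R$ preserves filtered colimits as a functor into $\operatorname{Shv}_{\geqslant0}(\mathbb{R};\mathrm{Sp})$, where colimits are not computed sectionwise.

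The missing input is the explicit sheafification in the $\gamma$-topology. For a presheaf $\mathcal{G}$ one has $\mathcal{G}^\sharp((-\infty,a))\simeq\varprojlim_{b<a}\mathcal{G}((-\infty,b))$: this is continuous by cofinality, and $\mathcal{G}\to\mathcal{G}^\sharp$ is an idempotent localization. Choose $b_0<b_1<\cdots\to a$ and factorizations $\mathcal{H}om(\Phi(b_{n+1}),-)\to\phi_n\to\mathcal{H}om(\Phi(b_n),-)$ with $\phi_n$ colimit-preserving. The interleaved diagram is a single sequence, so no higher coherences are needed. Then
\begin{align*}
\Bigl(\varinjlim_i F^R(X_i)\Bigr)(a)
&\simeq\varprojlim_n\varinjlim_i\mathcal{H}om(\Phi(b_n),X_i)
\simeq\varprojlim_n\varinjlim_i\phi_n(X_i)\\
&\simeq\varprojlim_n\phi_n\Bigl(\varinjlim_i X_i\Bigr)
\simeq\varprojlim_n\mathcal{H}om\Bigl(\Phi(b_n),\varinjlim_i X_i\Bigr)
=F^R\Bigl(\varinjlim_i X_i\Bigr)(a).
\end{align*}
Here the first step is the sheafification formula, the second and fourth are interleaving of towers, and the third uses that $\phi_n$ preserves colimits. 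The composite is the canonical comparison map. So the outer limit is supplied by sheafification itself, and nothing has to be commuted past a colimit. Hence $F^R$ preserves filtered colimits, and therefore all colimits, since it is exact.

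The same formula also proves the compactness of $\mathbb{S}_{<b}\to\mathbb{S}_{<a}$ for $b<a$, which you only motivate heuristically in Step 2. A map $\mathbb{S}_{<a}\to\varinjlim_i\mathcal{G}_i$ is a point of $\varprojlim_{c<a}\varinjlim_i\mathcal{G}_i((-\infty,c))$. Projecting to $c=b$ shows that its restriction to $\mathbb{S}_{<b}$ factors through some $\mathcal{G}_i$.
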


\begin{rem}
In particular, $\mathrm{Shv}_{\geqslant 0}(\mathbb{R} ; \mathrm{Sp})$ is generated by $\{\mathbb{S}_{<a}\,|\, a\in \mathbb{Q}\}$ under small colimits. Note that $\operatorname{Fun}^{L L}(\operatorname{Shv}_{\geqslant 0}(\mathbb{R} ; \mathrm{Sp}), \mct)$ is an (essentially) small \infcat; see \cite[Lemma 1.21]{ramzi2024dualizable}.
\end{rem}

\begin{thm}[{\cite[Theorem D.1]{efimov2024k}}]\label{efigenerator2}
The category \prstdbl is generated under colimits by a single $\omega_1$-compact object, namely the \infcat $\shv_{\geq0}(\mathbb{R};\opsp)$.
\end{thm}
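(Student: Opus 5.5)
The statement is Efimov's theorem that $\prstdbl$ is generated under colimits by the single $\omega_1$-compact object $\shv_{\geq0}(\mathbb{R};\opsp)$. My plan has three steps: establish $\omega_1$-compactness, establish generation via a jointly conservative family of corepresented functors, and then invoke the standard criterion for a presentable category.

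\textbf{Step 1: compactness.} By \cref{dualprl}, the inclusion $\prstdbl\hookrightarrow\pr^L_{\op{st},\omega_1}$ preserves and reflects $\omega_1$-compact objects. So it suffices to show that $\shv_{\geq0}(\mathbb{R};\opsp)$ is dualizable and $\omega_1$-compact as a stable presentable category.

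\begin{itemize}
\item Dualizability is standard: it is the category of sheaves on a locally compact Hausdorff-like space.
\item By \cref{efigenerator1}, it is generated under colimits by the countable family $\mathbb{S}_{<a}$, $a\in\mathbb{Q}$, so it is $\omega_1$-accessible.
\end{itemize}

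Alternatively, \cref{efigenerator1} directly expresses $\mapp_{\prstdbl}(\shv_{\geq0}(\mathbb{R};\opsp),\mct)$ as the core of $\funct^{\op{sct}}(\mathbb{Q}_{\leqslant},\mct)$. This is a countable limit condition on $\mct$-data: countably many objects, the countably many colimit conditions (1), and compactness of the transition maps (2). I would check that it commutes with $\omega_1$-filtered colimits in $\prstdbl$, using the description of such colimits from \cite{ramzi2024dualizable}.

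\textbf{Step 2: generation.} Since $\prstdbl$ is presentable, it suffices to show that $\mapp(\shv_{\geq0}(\mathbb{R};\opsp),-)$ is conservative. Via \cref{efigenerator1}, this reduces to the following claim.

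\begin{quote}
Let $F\colon\mct\to\mct'$ be a strongly continuous functor between dualizable categories. If $F$ induces an equivalence on spaces of strongly continuous $\mathbb{Q}$-indexed sequences with compact transitions, then $F$ is an equivalence.
\end{quote}

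The key input is a Urysohn-type lemma. In a dualizable $\mct$, every object $x$ is a colimit of objects $\Phi(a)$ for such sequences $\Phi$. Concretely, using the compactly exhaustible structure, $x$ is the colimit of an $\hat{y}$-filtered system of compact morphisms, which can be interpolated by rationals.

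With this lemma in hand:
\begin{itemize}
\item \emph{Essential surjectivity.} Every $x'\in\mct'$ is $\varinjlim\Phi'(a)$ for some $\Phi'$, and $\Phi'$ lifts to some $\Phi$ in $\mct$. Hence $x'\simeq F(\varinjlim\Phi)$.
\item \emph{Full faithfulness.} Apply the same argument to sequences parametrized by $\mathbb{Q}\times\Delta^1$, i.e.\ maps between such sequences, together with the equivalence on mapping data.
\end{itemize}

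\textbf{Main obstacle.} I expect the hardest step to be the Urysohn interpolation: producing, from compact morphisms in a dualizable category, a dense $\mathbb{Q}$-indexed system with compact transitions whose colimit recovers a given object. My first approach would use the characterization that $\hat{y}\colon\mct\to\op{Ind}(\mct)$ is left adjoint to colim. Compact morphisms $y\to x$ then factor through compact morphisms $y\to z\to x$ (the factorization property of the relation $\ll$). Iterating this factorization along dyadic rationals, and then passing to all of $\mathbb{Q}$ by density, gives the required system.
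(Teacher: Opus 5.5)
The paper gives no proof of this statement; it is quoted from \cite[Theorem~D.1]{efimov2024k}. Your argument therefore has to stand on its own, and its decisive step, full faithfulness, is missing.

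Reducing generation to conservativity of $\mapp_{\prstdbl}(\shv_{\geq0}(\mathbb{R};\opsp),-)\simeq\funct^{\op{sct}}(\mathbb{Q}_{\leqslant},-)^{\simeq}$ is fine. But then your hypothesis is only an equivalence of \emph{groupoid cores}. It contains no ``equivalence on mapping data'' and says nothing about $\mathbb{Q}\times\Delta^1$-diagrams. This much can be repaired. Objects of $\funct^{\op{sct}}$ (sct-sequences) are closed under $\oplus$ and shifts, and $\mapp(\Psi,\Phi)$ is a natural retract of $\op{Aut}(\Phi\oplus\Psi)$ via $f\mapsto\left(\begin{smallmatrix}1&f\\0&1\end{smallmatrix}\right)$. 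Hence $F$ induces equivalences on mapping spectra of sct-sequences.

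Even then, ``the same argument'' does not give full faithfulness. For $x=\varinjlim\Psi$ and $y=\varinjlim\Phi$ one has $\mapp_{\mct}(x,y)\simeq\lim_a\varinjlim_b\mapp(\Psi(a),\Phi(b))$, whereas natural transformations $\Psi\to\Phi$ compute the end $\int_a\mapp(\Psi(a),\Phi(a))$. A morphism $x\to y$ is realized by a natural transformation only after a reindexing of $\Phi$ that depends on the morphism. So lifting representatives gives at best $\pi_0$-surjectivity; injectivity and higher homotopy need a coherent comparison you do not supply.

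Here is a route that does work. For strongly continuous $G\colon\shv_{\geq0}(\mathbb{R};\opsp)\to\mct$, the repaired hypothesis says exactly that the cofiber of the unit $G\to F^RFG$ in $\funct^L(\shv_{\geq0}(\mathbb{R};\opsp),\mct)$ is right orthogonal to every strongly continuous $G'$. That category is the category of continuous $\mathbb{Q}_{\leqslant}$-diagrams, and strongly continuous functors generate it under colimits:
\begin{itemize}
\item The diagrams that are constant $t$ on $\mathbb{Q}_{>c}$ and $0$ elsewhere detect zero objects, since each $\Theta(b)\to\Theta(a)$ with $b<a$ factors through $\lim_{a'>b}\Theta(a')$.
\item For $t=\Psi(d)$ (and these generate $\mct$ by your Urysohn lemma), such a diagram is the colimit as $\epsilon\to0$ of the sct-sequences $a\mapsto\Psi(d-\epsilon/(a-c))$, extended by $0$.
\end{itemize}
Hence $F^RF\simeq\op{id}$ on the generators $\Psi(a)$, so $F$ is fully faithful. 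Essential surjectivity then follows by lifting sct-sequences.

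There are three smaller issues.
\begin{itemize}
\item Your essential-surjectivity step asserts that every $x'\in\mct'$ is $\varinjlim_a\Phi'(a)$ for a single sct-sequence. This is false in general. Such a colimit is a sequential colimit along compact maps (it is $G(\mathbb{S}_{<\infty})$ for the corresponding strongly continuous $G$), hence always $\omega_1$-compact. Only $\omega_1$-compact objects have this form, and passing to all of $\mct'$ needs full faithfulness first.
\item In Step~1, generation by countably many $\omega_1$-compact objects makes $\shv_{\geq0}(\mathbb{R};\opsp)$ $\omega_1$-presentable, not an $\omega_1$-compact object of $\prl_{\op{st},\omega_1}$. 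For example, $\modu_{\mathbb{C}}(\opsp)$ is generated by one compact object but is not $\omega_1$-compact there, because $\mathbb{C}$ is uncountable. Before invoking \cref{dualprl} you need a countable presentation. For instance, $\shv_{\geq0}(\mathbb{R};\opsp)$ is the localization of $\funct(\mathbb{Q}_{\leqslant}^{\op{op}},\opsp)$, which is $\op{Ind}$ of a countable stable category, at the countably many maps $\varinjlim_{b<a}h_b\to h_a$ between $\omega_1$-compact objects.
\item The Urysohn interpolation you single out as the main obstacle is the routine part: factor compact maps, refine dyadically, then set $\Phi(a)=\varinjlim_{d<a}\Psi(d)$ to force condition (1). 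The substance of the theorem lies in the full-faithfulness step.
\end{itemize}
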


\begin{cor}\label{cor:omega1-coherent}
Let $\totimes\in\calg(\prlst)$. If $\mct$ is rigid over $\opsp$, then $\shv_{\geq0}(\mathbb{R};\mct)$ is an $\omega_1$-compact generator of $\lintdbl$.
\end{cor}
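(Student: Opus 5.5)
The plan is to reduce the statement to Efimov's result (\cref{efigenerator2}) by base change along the unit $\opsp\to\mct$. By \cref{charaidl}(3), since $\mct$ is rigid over $\opsp$, we have $\totimes\in\calg(\prstdbl)$ and a symmetric monoidal equivalence $\lintdbl\simeq\modu_\mct(\prstdbl)$. So it suffices to show that $\mct\otimes-\colon\prstdbl\to\modu_\mct(\prstdbl)$ sends an $\omega_1$-compact generator to an $\omega_1$-compact generator. We also need to identify $\mct\otimes\shv_{\geq0}(\mathbb{R};\opsp)$ with $\shv_{\geq0}(\mathbb{R};\mct)$. This holds because $\shv_{\geq0}(\mathbb{R};\opsp)$ is dualizable, so tensoring with it commutes with the limits defining sheaves. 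Equivalently, $\shv(\mathbb{R}_\gamma;\mct)\simeq\shv(\mathbb{R}_\gamma;\opsp)\otimes\mct$ by the standard identification for presentable coefficients.

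\emph{Generation.} The free-forgetful adjunction $\mct\otimes-\dashv U$ between $\prstdbl$ and $\modu_\mct(\prstdbl)$ has a conservative right adjoint $U$. Moreover $U$ preserves colimits: colimits in module categories over an algebra in a presentably monoidal category are computed underlying, and $\prstdbl$ is presentably symmetric monoidal by \cref{dualprl}. Take $\mcm\in\modu_\mct(\prstdbl)$. Write $U\mcm$ as a colimit of copies of $\shv_{\geq0}(\mathbb{R};\opsp)$ using \cref{efigenerator2}. The bar resolution then exhibits $\mcm$ as a geometric realization of free modules $\mct\otimes\mct^{\otimes n}\otimes U\mcm$. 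Each of these is in turn a colimit of copies of $\mct\otimes\shv_{\geq0}(\mathbb{R};\opsp)$, once we know that $\mct^{\otimes n}\otimes \shv_{\geq0}(\mathbb{R};\opsp)$ is generated by $\shv_{\geq0}(\mathbb{R};\opsp)$. That last point follows because $-\otimes\mcx$ preserves colimits in $\prstdbl$ and $\mct^{\otimes n}$ is itself such a colimit. Alternatively, generation follows directly: maps out of the free object corepresent $U$, and $U$ is conservative.

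\emph{$\omega_1$-compactness.} Using the adjunction,
\[
\mapp(\mct\otimes\shv_{\geq0}(\mathbb{R};\opsp),-)\simeq\mapp_{\prstdbl}(\shv_{\geq0}(\mathbb{R};\opsp),U(-)).
\]
Since $U$ preserves all colimits, in particular $\omega_1$-filtered ones, the left-hand side commutes with $\omega_1$-filtered colimits. Hence $\mct\otimes\shv_{\geq0}(\mathbb{R};\opsp)$ is $\omega_1$-compact.

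The main obstacle is justifying that $U\colon\modu_\mct(\prstdbl)\to\prstdbl$ preserves colimits. This needs $\prstdbl$ to be presentable with a tensor product that preserves colimits separately in each variable, which \cref{dualprl} provides. It also needs the equivalence of \cref{appenb1}, so that we work with modules internal to $\prstdbl$ rather than with dualizable objects of $\modu_\mct(\prlst)$. The identification of the tensor product with $\mct$-valued sheaves is a secondary, more routine point.
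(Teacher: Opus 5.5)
Your proposal is correct and follows essentially the same route as the paper: both pass through $\lintdbl\simeq\modu_\mct(\prstdbl)$ from \cref{charaidl}(3), obtain generation by $\mct\otimes\shv_{\geq0}(\mathbb{R};\opsp)\simeq\shv_{\geq0}(\mathbb{R};\mct)$ from conservativity of the forgetful functor, and obtain $\omega_1$-compactness because that forgetful functor preserves $\omega_1$-filtered colimits. The only difference is in that last step: the paper justifies it via a right-adjointable square over $\prl_{\op{st},\omega_1}$ together with \cref{dualprl}, while you deduce that the forgetful functor preserves all colimits because $\prstdbl$ is presentably symmetric monoidal (also from \cref{dualprl}), which is equally valid.
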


\begin{proof}
By \cref{charaidl}, we have $\lintdbl\simeq \modu_{\mct}(\prstdbl)$. Therefore the base change adjunction restricts to the following adjunction
$$ \begin{tikzcd}
\prstdbl  \arrow[r, "\mct\otimes(-)", shift right=-1ex]  & \arrow[l,"\perp"', "G", shift right=-1ex] \lintdbl \end{tikzcd}.$$
This implies that $\mct\otimes\shv_{\geq0}(\mathbb{R};\opsp)\simeq\shv_{\geq0}(\mathbb{R};\mct)$ is a generator of \lintdbl.
To prove that $\shv_{\geq0}(\mathbb{R};\mct)$ is~$\omega_1$-compact, it suffices to show that the forgetful functor $G$ preserves $\omega_1$-filtered colimits. This follows from the right adjointable diagram
$$
\begin{tikzcd}
\prstdbl \arrow[d] \arrow[r, "\mct\otimes(-)", shift left] & \lintdbl \arrow[d] \arrow[l, "G", shift left] \\
\prl_{\op{st},\omega_1} \arrow[r, "\mct\otimes(-)", shift left]              &  \arrow[l, shift left]    \modu_{\mct}(\prl_{\op{st},\omega_1})         
\end{tikzcd}
$$
and \cref{dualprl}.
\end{proof}

\begin{rem}
Let $\mct\in\calg^{\op{rig}}(\prlst)$ be a rigid $\opsp$-algebra and $\Phi\in\funct^{\op{sct}}(\mathbb{Q}_{\leqslant},\mct)$. The localizing ideal $\left<\im(\Phi)\right>$ is smashing by \cite[Lemma 2.61]{ramzi2024dualizable}.
\end{rem}

\begin{thm}
Let $\totimes\in\calg^{\op{rig}}(\prlst)$ be a rigid $\opsp$-algebra. Then $$\{\left<\im(\Phi)\right>\,|\, \Phi\in \funct^{\op{sct}}(\mathbb{Q}_{\leqslant},\mct)\}\subseteq \op{Sm}(\mct)^{\omega_1}$$ is a set of $\omega_1$-compact generators for the smashing frame $\op{Sm}(\mct)$.
\end{thm}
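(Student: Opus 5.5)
The plan is to transport the generator $\shv_{\geq0}(\mathbb{R};\mct)$ of \cref{cor:omega1-coherent} through the identification $\op{Sm}(\mct)\simeq\idl(\lintdbl)$ of \cref{dblideal}, and then translate the resulting images into sheaf-theoretic data via \cref{efigenerator1}.

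\textbf{Step 1: generating $\omega_1$-compact ideals.} By \cref{cor:omega1-coherent}, $\{\shv_{\geq0}(\mathbb{R};\mct)\}$ is a generating set of $\lintdbl$ consisting of an $\omega_1$-compact object. Applying \cref{rem:generatingset} and \cref{idlccompgene} with $\kappa=\omega_1$, we get the following.
\begin{itemize}
\item Every element of $\idl(\lintdbl)$ is a join of images $\im(F)$ of internal left adjoints $F\colon\shv_{\geq0}(\mathbb{R};\mct)\to\mct$.
\item Each such image is $\omega_1$-compact.
\end{itemize}
Since $\idl(\lintdbl)\simeq\op{Sm}(\mct)$ is already a frame, with radicalization trivial, these images form a set of $\omega_1$-compact generators of $\op{Sm}(\mct)$. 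It then suffices to identify $\im(F)$ with $\left<\im(\Phi)\right>$ for the corresponding $\Phi$. Here $\im(F)$ means the image taken in $\idl(\lintdbl)$ of the object $F\in(\lintdbl)_{/\mct}$.

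\textbf{Step 2: reduction to $\opsp$-linear functors.} By rigidity and \cref{charaidl}(3), $\mct$-linear internal left adjoints $\shv_{\geq0}(\mathbb{R};\mct)=\mct\otimes\shv_{\geq0}(\mathbb{R};\opsp)\to\mct$ correspond to $\opsp$-linear functors $\shv_{\geq0}(\mathbb{R};\opsp)\to\mct$ whose right adjoint preserves colimits. These are exactly the objects of $\op{Fun}^{LL}(\shv_{\geq0}(\mathbb{R};\opsp),\mct)$. By \cref{efigenerator1}, they correspond to $\Phi\in\funct^{\op{sct}}(\mathbb{Q}_{\leqslant},\mct)$ via $\Phi(a)=F(\mathbb{S}_{<a})$. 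The rigidity hypothesis on $\mct$ is used here to ensure that $\opsp$-internal left adjoints out of a $\mct$-base change are automatically $\mct$-internal; this is the locally rigid statement underlying \cref{charaidl}(2).

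\textbf{Step 3: computing the image (main obstacle).} We must show that $\im(F)$ in $\idl(\lintdbl)$ equals the localizing ideal $\left<\im(\Phi)\right>$. The approach:
\begin{itemize}
\item By the preceding remark, $\left<\im(\Phi)\right>\hookrightarrow\mct$ is a smashing ideal, hence by \cref{cdblmono} a monomorphism in $\lintdbl$.
\item The essential image of $F$ lies in $\left<\im(\Phi)\right>$. Indeed, $\shv_{\geq0}(\mathbb{R};\mct)$ is generated as a $\mct$-module under colimits by the $\mathbb{S}_{<a}$, and $F$ is $\mct$-linear and colimit-preserving. So $F$ factors through $\left<\im(\Phi)\right>$.
\item This factorization is an internal left adjoint, by an argument as in \cite[Lemma 2.61]{ramzi2024dualizable}: the composite with the inclusion has a colimit-preserving $\mct$-linear right adjoint, and the inclusion is fully faithful with such a right adjoint. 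Hence $\im(F)\subseteq\left<\im(\Phi)\right>$.
\item Conversely, $\im(F)$ is a smashing ideal $\mci\subseteq\mct$ through which $F$ factors. Therefore $\mci$ contains every $F(\mathbb{S}_{<a})=\Phi(a)$. Being a localizing ideal, it then contains $\left<\im(\Phi)\right>$.
\end{itemize}
The delicate point is making sure that factorization in $\idl(\lintdbl)$ (a non-full subcategory) matches containment of localizing ideals. I would handle this by using \cref{cdblmono} and \cref{charaidl}(1) to identify ideals with smashing localizing subcategories, with morphisms given by inclusions.
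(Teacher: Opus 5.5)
Your proposal is correct and follows the paper's own proof. In both, the $\omega_1$-compact generator $\shv_{\geq0}(\mathbb{R};\mct)$ is transported through \cref{dblideal}, \cref{idlccompgene} and \cref{rem:generatingset}, its maps to $\mct$ are matched with $\Phi$ via \cref{efigenerator1}, and each image is then identified with $\left<\im(\Phi)\right>$; the only difference is that the paper cites \cite[Lemma 2.61]{ramzi2024dualizable} for $\im_{\lintdbl}(l(\Phi))\simeq\left<\im(l(\Phi))\right>$ and then uses that the $\mathbb{S}_{<a}\otimes\mb{1}_\mct$ generate, whereas your two-inclusion argument in Step 3 unpacks that same step directly.
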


\begin{proof}
Let $$l:\funct^{\op{sct}}(\mathbb{Q}_{\leqslant},\mct)\xrightarrow{\sim}\operatorname{Fun}^{L L}(\operatorname{Shv}_{\geqslant 0}(\mathbb{R} ; \mathrm{Sp}), \mct)$$ denote the inverse equivalence in \cref{efigenerator1}.
Recall the identification $\op{Sm}(\mct)\simeq\idl(\lintdbl)$ from \cref{dblideal}. According to \cref{cor:omega1-coherent}, $\shv_{\geq0}(\mathbb{R};\mct)$ is an $\omega_1$-compact generator of $\lintdbl$. Hence, by \cref{idlccompgene} and \cref{rem:generatingset},
$$\{\im_{\lintdbl}(l(\Phi))\,|\, \Phi\in \funct^{\op{sct}}(\mathbb{Q}_{\leqslant},\mct)\}\subseteq \op{Sm}(\mct)^{\omega_1}$$
is a generating set of $\omega_1$-compact objects for the smashing frame $\op{Sm}(\mct)$. By \cite[Lemma 2.61]{ramzi2024dualizable} we actually have $$\im_{\lintdbl}(l(\Phi))\simeq\left<\im(l(\Phi))\right>.$$ Since the image of the map
$$\mathbb{Q}_{\le}\xrightarrow{a\mapsto(\mathbb{S}_{<a}\otimes \mb{1}_\mct)} \shv_{\geq0}(\mathbb{R};\mct)$$
generates the target under small colimits and the action of $\mct$, the following two localizing ideals of \mct are identical
$$\left<\im(l(\Phi))\right>\simeq\left<\im(\Phi)\right>\subseteq \mct.$$
Consequently,
$$\{\left<\im(\Phi)\right>\,|\, \Phi\in \funct^{\op{sct}}(\mathbb{Q}_{\leqslant},\mct)\}\subseteq \op{Sm}(\mct)^{\omega_1}$$
is a generating set of $\omega_1$-objects.
\end{proof}

\begin{nota}
Let $R\in \calg(\op{Sp})$ be an $\einf$-ring. We denote $\pr_R^{\op{dbl},\otimes}:=\modu_{\modu_R(\opsp)}(\prlst)^{\op{dbl},\otimes}$. Thus $\pr_R^{\op{dbl}}$ is the $\infty$-category whose objects are the presentable stable $R$-linear $\infty$-categories that are dualizable as $\modu_R(\opsp)$-modules, and whose morphisms are $R$-linear strongly continuous functors.
\end{nota}

\begin{cor}
Let $R\in \calg(\op{Sp})$ be an $\einf$-ring. The smashing frame $\op{Sm}(\modu_R(\opsp))$ 
can be identified with the Zariski frame of $\pr_R^{\op{dbl}}$  $$\op{Sm}(\modu_R(\opsp))\simeq\zar(\pr_R^{\op{dbl}}).$$
Moreover, $\op{Sm}(\modu_R(\opsp))$ is an $\omega_1$-coherent frame generated by the $\omega_1$-compact objects $$\{\left<\im(\Phi)\right>\,|\, \Phi\in \funct^{\op{sct}}(\mathbb{Q}_{\leqslant},\modu_R(\opsp))\}\subseteq \op{Sm}(\modu_R(\opsp))^{\omega_1}.$$
In particular, taking $R=\mathbb{S}$, the sphere spectrum, we obtain a generating set of $\omega_1$-compact objects $$\{\left<\im(\Phi)\right>\,|\, \Phi\in \funct^{\op{sct}}(\mathbb{Q}_{\leqslant},\opsp)\}\subseteq \op{Sm}(\opsp)^{\omega_1}$$
for the smashing frame $\op{Sm}(\opsp)$.
\end{cor}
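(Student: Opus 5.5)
The corollary is the special case $\mct=\modu_R(\opsp)$ of the preceding results, so the plan is to check that $\modu_R(\opsp)^\otimes$ satisfies their hypotheses and then specialize. The first identification follows from \cref{dblideal} with $\totimes=\modu_R(\opsp)^\otimes\in\calg(\prlst)$. By the notation just introduced, $\pr_R^{\op{dbl},\otimes}$ is exactly $\pr^{\op{dbl},\otimes}_{\mct}=\modu_\mct(\prlst)^{\op{dbl},\otimes}$, so
\[
\op{Sm}(\modu_R(\opsp))\simeq\zar(\pr_R^{\op{dbl}}).
\]

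For the $\omega_1$-coherence and the explicit generators, I will show that $\mct=\modu_R(\opsp)$ is rigid over $\opsp$. It is compactly generated by the unit $R$, which is compact. Its compact objects are the perfect $R$-modules, and these are exactly the dualizable objects; they are closed under $\otimes$ and contain the unit. This is the standard characterization of rigidity for compactly generated presentably symmetric monoidal stable $\infty$-categories, namely $\op{Ind}$ of a rigid small tt-$\infty$-category, here $\op{Ind}(\op{Perf}_R)$. If needed, I will justify it via \cite{ramzi2024locally} by observing that the multiplication and unit maps preserve compacts and that compacts are dualizable.

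Granting rigidity, the rigid corollary preceding \cref{efigenerator1} gives that $\op{Sm}(\modu_R(\opsp))$ is $\omega_1$-coherent. More concretely, $\modu_R(\opsp)$ is $\omega$-presentably monoidal, hence lies in $\calg(\prl_{\op{st},\omega_1})$, so \cref{prop:omega1} applies directly even without rigidity. The final theorem of the previous subsection, stated for rigid $\opsp$-algebras, then gives that
\[
\{\left<\im(\Phi)\right>\,|\,\Phi\in\funct^{\op{sct}}(\mathbb{Q}_{\leqslant},\modu_R(\opsp))\}\subseteq\op{Sm}(\modu_R(\opsp))^{\omega_1}
\]
is a generating set of $\omega_1$-compact objects. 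Setting $R=\mathbb{S}$, where $\modu_{\mathbb{S}}(\opsp)\simeq\opsp$, yields the last assertion.

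The only step that is not a direct citation is verifying rigidity of $\modu_R(\opsp)$ in the sense used in \cref{charaidl}(3), that is, that it lies in $\calg(\prstdbl)$ with strongly continuous unit and multiplication. I will cite the standard fact that $\op{Ind}$ of a rigid small tt-$\infty$-category is rigid, and $\op{Perf}_R$ is such a category.
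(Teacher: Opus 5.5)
Your proposal is correct and follows the route the paper intends. The paper gives no separate proof of this corollary; it is the specialization to $\mct=\modu_R(\opsp)\simeq\op{Ind}(\op{Perf}_R)$ of \cref{dblideal}, of the $\omega_1$-coherence corollary for rigid $\mct$, and of the generator theorem for rigid $\opsp$-algebras. Your check that $\mct$ is rigid, because $\op{Perf}_R$ is a rigid small tt-$\infty$-category, is exactly the hypothesis those results require. Your alternative appeal to \cref{prop:omega1} through $\calg(\prl_{\op{st},\omega})\subseteq\calg(\prl_{\op{st},\omega_1})$ is also valid for the coherence part.
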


\section{$\Sigma$-Triviality and $\Sigma$-Exactness}\label{sec:sigma}
In classical commutative algebra, the quotient of a ring by an ideal canonically inherits a ring structure. In contrast, within higher categorical frameworks, the analogous construction---such as forming the quotient of an $\mathbb{E}_\infty$-ring by an ideal---is more subtle and requires additional care, due in part to underlying homotopical considerations. In order to obtain a well-behaved theory of quotient by ideals in our setting, we introduce the notion of $\Sigma$-triviality and $\Sigma$-exactness in this section.

\subsection{$\Sigma$-trivial $\infty$-categories}

\begin{de}\label{de:Sigma-trivial}
Let $\mcc$ be a pointed $\infty$-category. We say that $\mcc$ is \textbf{$\Sigma$-trivial} if for every object $X\in\mcc$, the map $X\to0$ is an epimorphism, \textbf{$\Omega$-trivial} if for any object $X\in\mcc$, the map $0\to X$ is a monomorphism, \textbf{shift-trivial} if it is both $\Sigma$-trivial and $\Omega$-trivial.
\end{de}

\begin{rem}\label{sigtriveqshtriv}
Let $\mcc$ be a pointed $\infty$-category.
\enu{\item If $\mcc$ has cofibers, then $\mcc$ is $\Sigma$-trivial if and only if the suspension functor $\Sigma:\mcc\to \mcc$ is zero.
\item If $\mcc$ has fibers, then $\mcc$ is $\Omega$-trivial if and only if the loop functor $\Omega:\mcc \to\mcc$ is zero.
\item If $\mcc$ has cofibers and fibers, then $\mcc$ is $\Sigma$-trivial if and only if $\Omega$-trivial, since the adjoint of a zero functor is zero. Therefore, in this case, the conditions of being $\Sigma$-trivial, $\Omega$-trivial, and shift-trivial are all equivalent.}
\end{rem}

\begin{prop}\label{ptsubcat}
Let $\mc{C}$ be a pointed $\infty$-category. Let $\mcc'$ be a pointed (not necessarily full) subcategory of $\mcc$. 
The following hold:
\enu{
\item If $\mcc$ is $\Omega$-trivial, then so is $\mcc'$.
\item If $\mcc$ is $\Sigma$-trivial, then so is $\mcc'$.
\item If $\mcc$ is shift-trivial, then so is $\mcc'$.
}
\end{prop}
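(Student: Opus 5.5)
The plan is to check the defining (co)monomorphism conditions directly on mapping spaces, and to transport them along the (non-full) inclusion $\mcc'\hookrightarrow\mcc$. I read ``pointed subcategory'' as: $\mcc'$ is pointed and its zero object is a zero object of $\mcc$. Equivalently, we may take the same object $0$ for both.

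For (1), fix $X\in\mcc'$; we must show $0\to X$ is a monomorphism in $\mcc'$. Concretely, for every $T\in\mcc'$ the map $\mapp_{\mcc'}(T,0)\to\mapp_{\mcc'}(T,X)$ should be $(-1)$-truncated. I will use the following ingredients.

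\begin{itemize}
\item Since $\mcc'$ is a subcategory, the map $\mapp_{\mcc'}(T,X)\to\mapp_{\mcc}(T,X)$ is an inclusion of path components, and so is $(-1)$-truncated.
\item Both $\mapp_{\mcc'}(T,0)$ and $\mapp_{\mcc}(T,0)$ are contractible, because $0$ is terminal in both categories. Hence the map $\mapp_{\mcc'}(T,0)\to\mapp_{\mcc}(T,0)$ is an equivalence.
\item The relevant square commutes. Therefore the composite
\[
\mapp_{\mcc'}(T,0)\to\mapp_{\mcc'}(T,X)\to\mapp_{\mcc}(T,X)
\]
identifies with $\mapp_{\mcc}(T,0)\to\mapp_{\mcc}(T,X)$. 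This composite is $(-1)$-truncated by the $\Omega$-triviality of $\mcc$, since $T$ is in particular an object of $\mcc$.
\end{itemize}

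The conclusion then follows from the cancellation property of $(-1)$-truncated maps of spaces. If $g\circ f$ and $g$ are $(-1)$-truncated, then so is $f$. Indeed, when $g$ is a monomorphism, the fiber of $f$ over $y$ is equivalent to the fiber of $g\circ f$ over $g(y)$.

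For (2), I would apply (1) to the opposite categories. The inclusion $\mcc'^{\op{op}}\subseteq\mcc^{\op{op}}$ is again a pointed subcategory. Epimorphisms in a category are exactly monomorphisms in its opposite, so $\Sigma$-triviality of $\mcc$ is $\Omega$-triviality of $\mcc^{\op{op}}$. Then (3) is the conjunction of (1) and (2).

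The only point requiring care is the hypothesis on zero objects. We need the zero object of $\mcc'$ to remain terminal (respectively initial) in $\mcc$; this is exactly what makes the mapping-space comparison at $0$ an equivalence. If ``pointed subcategory'' were meant more loosely, I would first verify that the zero object of $\mcc'$ is the zero object of $\mcc$, which is the intended reading. Everything else is formal.
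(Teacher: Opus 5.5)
Your proposal is correct and follows the paper's proof: the paper uses the same commutative square of mapping spaces, with the subcategory inclusion on the bottom and $\Omega$-triviality of $\mcc$ on the right. It then cancels $(-1)$-truncated maps to conclude that $\mapp_{\mcc'}(X,0)\to\mapp_{\mcc'}(X,Y)$ is $(-1)$-truncated, treats (2) as the dual case, and gets (3) as the conjunction. Your reading that the zero object of $\mcc'$ is the zero object of $\mcc$ is also what the paper implicitly assumes.
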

\begin{proof}
For part (1), we consider the following diagram
$$
\begin{tikzcd}
{\mapp_{\mcc'}(X,0)} \arrow[d] \arrow[r, hook] & {\mapp_{\mcc}(X,0)} \arrow[d, hook] \\
{\mapp_{\mcc'}(X,Y)} \arrow[r, hook]           & {\mapp_{\mcc}(X,Y)}
\end{tikzcd}
$$ 
for any $X,Y\in \mcc'$. Since the top, bottom, and right arrows are all $(-1)$-truncated, so is the left arrow $\mapp_{\mcc'}(X,0)\to \mapp_{\mcc'}(X,Y)$. This indicates that for any $Y\in \mcc'$, the map $0\to Y$ is a monomorphism in $\mcc'$ and hence $\mcc'$ is $\Omega$-trivial. The proof for (2) is similar to (1). Part (3) follows from (1) and (2).
\end{proof}

\begin{ex}\label{sigmatrivexamples}
The following are all examples of shift-trivial $\infty$-categories:
\enu{
\item All pointed $1$-categories;
\item The \infcat $\Cat_{\infty}^*$ of small pointed $\infty$-categories and point-preserving functors;
\item  The \infcat $\Cat_{\infty}^{+}$ of small preadditive $\infty$-categories and finite-product-preserving functors;
\item  The \infcat $\op{Cat}_{\infty}^{\op{ad}}$ of small additive $\infty$-categories  and finite-product-preserving functors;
\item The \infcat $\op{Cat}_{\infty}^{\op{st}}$ of small stable $\infty$-categories  and exact functors;
\item The \infcat $\Cat^{\op{perf}}$ of small idempotent complete stable $\infty$-categories  and exact functors.
\item The \infcat $\prl_\mcv$ of \mcv-linear presentable categories for any presentably symmetric monoidal $\infty$-category $\mcv^\otimes\in\calg(\prl)$.
\item The \infcat $\prvdbl$ of dualizable $\mcv$-linear presentable categories, where $\mcv^\otimes\in\calg(\prl_*)$ is a pointed presentably \syminfcat.
}
\end{ex}

\begin{rem}
A stable \infcat $\mcc$ is $\Sigma$-trivial if and only if $\mc{C}=0$, since any epimorphism in $\mcc$ is an equivalence.
\end{rem}

\begin{lem}
Let $\mcc$ be a pointed $\infty$-category.
\enu{
\item If $\mcc$ is $\Sigma$-trivial then for any cofiber sequence $A\to B\to C$ in $\mcc$, the map $B\to C$ is an epimorphism.
\item If $\mcc$ is $\Omega$-trivial then for any fiber sequence $A\to B\to C$ in $\mcc$, the map $A\to B$ is a monomorphism.
}
\end{lem}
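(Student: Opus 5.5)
The plan is to unwind epimorphisms as a statement about mapping spaces and then use that a cofiber sequence is a pushout. Part (2) is formally dual to part (1): an $\Omega$-trivial $\mcc$ has $\Sigma$-trivial $\mcc^{\op{op}}$, and fiber sequences in $\mcc$ are cofiber sequences in $\mcc^{\op{op}}$. So it suffices to prove (1).

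Recall that a map $f\colon B\to C$ is an epimorphism if and only if for every $Y\in\mcc$ the precomposition map $f^*\colon\mapp(C,Y)\to\mapp(B,Y)$ is $(-1)$-truncated. Since $A\to B\to C$ is a cofiber sequence, the square with vertices $A,B,0,C$ is a pushout. Applying $\mapp(-,Y)$ therefore gives a pullback square of spaces
\[
\begin{tikzcd}
\mapp(C,Y) \arrow[r,"f^*"] \arrow[d] & \mapp(B,Y) \arrow[d] \\
\mapp(0,Y) \arrow[r] & \mapp(A,Y).
\end{tikzcd}
\]
The bottom arrow is precomposition with $A\to0$. By $\Sigma$-triviality $A\to0$ is an epimorphism, so this arrow is $(-1)$-truncated. (Here $\mapp(0,Y)\simeq *$, since $0$ is initial.) Since $(-1)$-truncated maps of spaces are stable under pullback, the top arrow $f^*$ is $(-1)$-truncated for every $Y$. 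Hence $B\to C$ is an epimorphism.

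Nothing here is a serious obstacle. The only point to check is that the cofiber is genuinely the pushout of $0\leftarrow A\to B$, so that corepresentable functors turn it into a pullback. This is the definition of a cofiber in a pointed $\infty$-category.
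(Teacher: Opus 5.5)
Your proposal is correct and follows the paper's proof. The paper simply cites that epimorphisms (resp.\ monomorphisms) are stable under pushout (resp.\ pullback), applied to the cofiber (resp.\ fiber) square. Your mapping-space argument is the standard proof of that stability fact, written out inline, and your dualization for part (2) is sound.
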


\begin{proof}
This follows from the fact that epimorphisms (resp. monomorphisms) are closed under pushout (resp. pullbacks).
\end{proof}

\begin{rem}
Usually, a triangle
in an $\infty$-category consists of the following data: a pair of morphisms
$f \colon x \to y$ and $g \colon y \to z$, together with a \textit{choice} of nullhomotopy
$g f \simeq 0$.
However, in a $\Sigma$-trivial $\infty$-category $\mcc$, the space of nullhomotopies $g f \simeq 0$ is contractible. This is because we have $\Omega\mapp_{\mcc}(x,z) \simeq \mapp_{\mcc}(\Sigma x, z) = 0$ by $\Sigma$-triviality. Hence, a triangle in $\mcc$ can be identified with a pair of composable morphisms whose composition is homotopic to zero. In particular, there is no need to keep track of the nullhomotopy, as it is unique up to a contractible choice. Note that this phenomenon never occurs in a stable $\infty$-category unless it is trivial.
\end{rem}

\begin{rem}
For the next result, we refer the reader to \cite[Definition~5.5.4.5 and Remark~5.5.4.7]{htt} for the definition of strongly saturated class and small generation.
\end{rem}

\begin{lem}\label{Ssigma}
Let $\mc{C}\in\prl_{*}$ be a pointed presentable $\infty$-category. Let $S_\Sigma$ be the smallest strongly saturated class containing $\SET{\Sigma X\to 0}{X\in\mcc}$. Then $S_\Sigma$ is of small generation.
\end{lem}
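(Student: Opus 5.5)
The plan is to show that $S_\Sigma$ coincides with the strongly saturated class generated by a \emph{set} of morphisms. Since $\mcc$ is presentable, choose a regular cardinal $\kappa$ with $\mcc\in\prl_\kappa$, and let $G\subseteq\mcc^\kappa$ be a small set of representatives of the $\kappa$-compact objects. I propose the candidate generating set
\[
S_0 \coloneqq \SET{\Sigma X\to 0}{X\in G}.
\]
Let $\bar S_0$ denote the strongly saturated class it generates; by \cite[Remark~5.5.4.7]{htt} this class is of small generation. Since $S_0\subseteq S_\Sigma$, we have $\bar S_0\subseteq S_\Sigma$. It therefore suffices to prove that $\Sigma X\to 0$ lies in $\bar S_0$ for every $X\in\mcc$.

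The key step is to write an arbitrary $X$ as a colimit $X\simeq\varinjlim_{\alpha} X_\alpha$ of a ($\kappa$-filtered) small diagram of objects $X_\alpha\in G$. The suspension functor $\Sigma$ preserves colimits, being a cofiber (pushout) construction. The constant functor at $0$ also preserves colimits indexed by weakly contractible diagrams, in particular filtered ones, since $0$ is initial. Hence the morphism $\Sigma X\to 0$ is the colimit, in $\funct(\Delta^1,\mcc)$, of the diagram of morphisms $\Sigma X_\alpha\to 0$.

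Strongly saturated classes are closed under small colimits in $\funct(\Delta^1,\mcc)$ \cite[Definition~5.5.4.5]{htt}. So $\Sigma X\to 0$ lies in $\bar S_0$, which gives $S_\Sigma=\bar S_0$ and shows it is of small generation.

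The only point requiring care is identifying the colimit of the arrows $\Sigma X_\alpha\to 0$ with the arrow $\Sigma X\to 0$. This reduces to the fact that colimits in functor categories are computed pointwise, together with the fact that a filtered colimit of the constant diagram at $0$ is $0$. I expect this to be routine. Alternatively, one can avoid filtered diagrams altogether: $0\to 0$ is an equivalence and so lies in $\bar S_0$, and strongly saturated classes are closed under arbitrary colimits of arrows, so any presentation of $X$ as a colimit of objects of $G$ would work.
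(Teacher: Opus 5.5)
Your proposal is correct and takes the same approach as the paper, which picks a set of generators $\{X_\alpha\}$ and asserts that the strongly saturated closures of $\{\Sigma X\to 0\}_{X\in\mcc}$ and $\{\Sigma X_\alpha\to 0\}$ coincide. You supply the details the paper leaves implicit: $\Sigma$ commutes with colimits, and the colimit of the constant diagram at the initial object $0$ is $0$ (over any small index category, not only filtered ones). Hence $\Sigma X\to 0$ is a colimit in $\funct(\Delta^1,\mcc)$ of the generating arrows.
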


\begin{proof}
Let $S=\{ X_\alpha\}$ be a set of generators of $\mc{C}$. Then $\overline{\SET{\Sigma X\to 0}{X\in\mcc}}=\overline{\SET{\Sigma X_{\alpha}\to 0}{X_{\alpha}\in\mcc}}$, which is of small generation.
\end{proof}

\begin{de}
Let $\mc{C}\in\prl_{*}$ be a pointed presentable $\infty$-category.
\enu{
\item We call the accessible localization $\mcc \to \mc{C}_{\Sigma} \coloneqq \mc{C}[S^{-1}_{\Sigma}]$ the \textbf{$\Sigma$-trivialization} of $\mc{C}$.
\item We define the \textbf{$\Sigma$-core} of $\mc{C}$ to be the full subcategory $\mc{C}^{\Sigma}$ spanned by the objects $X$ such that $\Sigma X\simeq0$.
\item We define the \textbf{$\Omega$-core} of $\mc{C}$ to be the full subcategory $\mc{C}^{\Omega}$ spanned by the objects $X$ such that $\Omega X\simeq 0$.
} 
\end{de}

\begin{prop}\label{sigmacore}
Let $\mcc\in\prl_{*}$ be a pointed presentable $\infty$-category. 
Then:
\enu{
\item The $\Sigma$-trivialization $\mc{C}_\Sigma$ can be identified with the $\Omega$-core $\mc{C}^\Omega$.
\item The subcategory $\mc{C}^\Omega\subseteq\mcc$ is closed under subobjects, \ie whenever $A\to X$ is a monomorphism in $\mcc$ with $X\in\mc{C}^\Omega$, we have $A\in\mc{C}^\Omega$.
} 
\end{prop}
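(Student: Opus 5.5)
For part (1), I would characterize the local objects of the localization $\mcc\to\mcc_\Sigma=\mcc[S_\Sigma^{-1}]$ directly. An object $Y\in\mcc$ is $S_\Sigma$-local precisely when it is local with respect to the generating maps $\Sigma X\to 0$ for all $X\in\mcc$. This means that the map
\[
\mapp_\mcc(0,Y)\simeq *\to\mapp_\mcc(\Sigma X,Y)
\]
is an equivalence for every $X$. By the adjunction $\Sigma\dashv\Omega$ we have
\[
\mapp_\mcc(\Sigma X,Y)\simeq\mapp_\mcc(X,\Omega Y),
\]
so the locality condition says that $\mapp_\mcc(X,\Omega Y)$ is contractible for every $X\in\mcc$. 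By Yoneda this holds if and only if $\Omega Y\simeq 0$. The local objects are therefore exactly the objects of the $\Omega$-core $\mcc^\Omega$. Since $S_\Sigma$ is of small generation by \cref{Ssigma}, \cite[Proposition~5.5.4.15]{htt} identifies $\mcc[S_\Sigma^{-1}]$ with the full subcategory of $S_\Sigma$-local objects. This gives $\mcc_\Sigma\simeq\mcc^\Omega$. The one point to watch is that locality with respect to the whole strongly saturated class $S_\Sigma$ is equivalent to locality with respect to its generators. This is standard, because the class of maps against which a fixed $Y$ is local is itself strongly saturated.

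For part (2), I would let $A\to X$ be a monomorphism with $\Omega X\simeq 0$ and show that $\Omega A\simeq 0$. Using the $(-1)$-truncated characterization of monomorphisms, for every $Z\in\mcc$ the map
\[
\mapp_\mcc(Z,A)\to\mapp_\mcc(Z,X)
\]
is an inclusion of path components. Applying $\Omega$ at the basepoint $0$, the induced map on loop spaces
\[
\Omega\mapp_\mcc(Z,A)\to\Omega\mapp_\mcc(Z,X)
\]
is an equivalence, since a $(-1)$-truncated map of spaces induces equivalences on all loop spaces at points of its source. Now $\Omega\mapp_\mcc(Z,X)\simeq\mapp_\mcc(Z,\Omega X)\simeq *$, and similarly $\Omega\mapp_\mcc(Z,A)\simeq\mapp_\mcc(Z,\Omega A)$. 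Hence $\mapp_\mcc(Z,\Omega A)$ is contractible for all $Z$, so $\Omega A\simeq 0$ by Yoneda.

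Alternatively, one can argue that $\Omega$ preserves the monomorphism, since $\Omega$ is a limit and monomorphisms are stable under limits. This gives a monomorphism $\Omega A\to\Omega X\simeq 0$, and a subobject of $0$ is $0$ in a pointed category, because $0\to\Omega A\to 0$ exhibits $\Omega A$ as a retract-subobject of $0$.

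Neither step poses a serious obstacle. The main point requiring care is the identification in part (1) of the localization with the full subcategory of local objects, which follows from \cref{Ssigma} together with the standard HTT machinery.
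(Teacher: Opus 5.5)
Your proposal is correct and follows the paper's proof: part (1) is exactly the paper's observation that an object is $S_\Sigma$-local if and only if $\Omega Y\simeq 0$, which you obtain from $\Sigma\dashv\Omega$ and Yoneda. For part (2), the paper rephrases $\Omega X\simeq 0$ as ``$0\to X$ is a monomorphism'' and then applies the cancellation property of $(-1)$-truncated maps \cite[Lemma~5.5.6.14]{htt} to the composite $0\to A\to X$; this is the same fact as your loop-spaces-of-mapping-spaces argument, just packaged differently.
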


\begin{proof}
For part (1), it suffices to observe that an object $X$ is $S_\Sigma$-local if and only if $\Omega X=0$. Part (2) follows by applying the $n=-1$ case of \cite[Lemma 5.5.6.14]{htt}.
\end{proof}

\begin{prop}
Let $\prl_{\Sigma}$ denote the full subcategory of $\prl_{*}$ spanned by the $\Sigma$-trivial presentable $\infty$-categories. Then we have the following adjunctions
$$\begin{tikzcd}
\prl_{*} \arrow[r, "(-)_{\Sigma}", shift left=2ex] \arrow[r, "(-)^{\Sigma}"', shift right=2ex]  & \arrow[l,"\perp","\perp"', hook'] \prl_{\Sigma}
\end{tikzcd}  .$$
\end{prop}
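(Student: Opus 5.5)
We need two things: the inclusion $\prl_{\Sigma}\hookrightarrow\prl_{*}$ has a left adjoint $\mcc\mapsto\mcc_\Sigma$, and it has a right adjoint $\mcc\mapsto\mcc^{\Sigma}$.

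\textbf{Left adjoint.} Let $\mcc\in\prl_*$. By \cref{Ssigma} and \cref{sigmacore}(1), $\mcc_\Sigma\simeq\mcc^\Omega$ is a presentable accessible localization of $\mcc$. It is pointed, since $\Omega 0=0$.

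First we show $\mcc_\Sigma$ is $\Sigma$-trivial. The suspension in $\mcc_\Sigma$ is $L\Sigma$, where $L$ is the localization and $\Sigma$ is computed in $\mcc$. Since $\Sigma X\to 0$ lies in $S_\Sigma$, we get $L\Sigma X\simeq 0$. Then \cref{sigtriveqshtriv}(1) gives $\Sigma$-triviality.

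Next the universal property. Let $\mcd\in\prl_\Sigma$ and let $F\colon\mcc\to\mcd$ be a left adjoint. Then $F$ preserves pushouts and the zero object, so $F(\Sigma X)\simeq\Sigma F(X)\simeq 0$. Hence $F$ inverts the generators of $S_\Sigma$, and therefore all of $S_\Sigma$, since the class of maps inverted by a colimit-preserving functor is strongly saturated. By \cite[Proposition 5.5.4.20]{htt}, $F$ factors uniquely through $L$. This gives
\[
\funct^L(\mcc_\Sigma,\mcd)\simeq\funct^L(\mcc,\mcd),
\]
which is the left adjunction.

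\textbf{Right adjoint.} Here we use the $\Sigma$-core $\mcc^\Sigma$. The $\Sigma$-core is closed under colimits in $\mcc$, because $\Sigma$ commutes with colimits. It is also the inverse image of $0$ under the colimit-preserving functor $\Sigma$, so it is presentable: the pullback $\mcc\times_{\mcc}\{0\}$ is formed in $\prl$, noting that $\{0\}\hookrightarrow\mcc$ is a left adjoint because $\mcc$ is pointed. It is pointed and closed under cofibers, and suspension in $\mcc^\Sigma$ agrees with suspension in $\mcc$. Hence $\mcc^\Sigma$ is $\Sigma$-trivial by \cref{sigtriveqshtriv}(1), and the inclusion $\mcc^\Sigma\hookrightarrow\mcc$ is a morphism of $\prl_*$.

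For the universal property, let $\mcd\in\prl_\Sigma$ and let $F\colon\mcd\to\mcc$ preserve colimits. Then $\Sigma F(Y)\simeq F(\Sigma Y)\simeq 0$, so $F$ lands in $\mcc^\Sigma$. Since $\mcc^\Sigma\subseteq\mcc$ is full, composition with the inclusion gives an equivalence
\[
\funct^L(\mcd,\mcc^\Sigma)\simeq\funct^L(\mcd,\mcc).
\]
This gives the right adjunction.

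\textbf{Main obstacle.} The only delicate step is the presentability of $\mcc^\Sigma$, i.e.\ identifying it as the fiber over $0$ of $\Sigma$ computed in $\prl$. Everything else is formal.
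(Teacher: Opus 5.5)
Your proposal is correct and is essentially the paper's argument: the paper simply says that both adjunctions follow from the definitions of the $\Sigma$-trivialization and the $\Sigma$-core. You spell out that verification in full, namely the universal property of the accessible localization at $S_\Sigma$ on one side, and on the other the presentability of $\mcc^{\Sigma}$ as the fiber of $\Sigma$ over $0$ in $\prl$, together with the fact that every colimit-preserving functor out of a $\Sigma$-trivial category lands in $\mcc^{\Sigma}$.
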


\begin{proof}
This follows from the definition of $\Sigma$-trivialization and $\Sigma$-core.
\end{proof}

\begin{prop}
Let $\mc{S}_*$ denote the \infcat of pointed spaces.
\enu{
\item Let $L_\Sigma: \mc{S}_* \rightleftarrows \mc{S}_{*,\Sigma}$ denote the $\Sigma$-trivialization of $\mc{S}_*$. Then $\mc{S}_{*,\Sigma} \subseteq \mc{S}_*$ is spanned by the pointed spaces whose connected component containing the base point is contractible. In other words, $\mc{S}_{*,\Sigma}$ is the essential image of the functor adjoining a base point $\mc{S}\to \mc{S}_*$.
\item A map of pointed spaces $f: X\to Y$ is an $L_\Sigma$-equivalence if and only if for any point $x\in X$ \textbf{not} in the connected component of the base point, the induced map $\pi_n(X,x)\to \pi_n(Y,f(x))$ is bijective for each $n\geq0$.
\item A pointed space $X$ lies in the $\Sigma$-core $\mc{S}_*^\Sigma$ of $\mc{S}_*$ if and only if $X$ is an acyclic space.
}
\end{prop}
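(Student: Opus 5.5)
For part (1), I would apply \cref{sigmacore}(1), which identifies $\mc{S}_{*,\Sigma}$ with the $\Omega$-core $\mc{S}_*^\Omega$. It remains to identify that core. For a pointed space $(X,x_0)$, write $X_0$ for the component of $x_0$. Then $\Omega X\simeq 0$ means that $\Omega_{x_0}X$ is contractible. Since $\pi_n(\Omega_{x_0}X)=\pi_{n+1}(X,x_0)$ for all $n\ge 0$, this holds exactly when all homotopy groups of $X_0$ at $x_0$ vanish, i.e. when $X_0$ is contractible. In that case $X\simeq X_0\sqcup X'$ with $X_0\simeq *$, where $X'\coloneqq X\setminus X_0$ is the union of the other components. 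Hence $X\simeq X'_+$ as pointed spaces. Conversely, every $Y_+$ has contractible base component. So the $\Omega$-core is precisely the essential image of $(-)_+\colon\mc{S}\to\mc{S}_*$.

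For part (2), I would compute $L_\Sigma$ explicitly. I claim that $L_\Sigma X\simeq X'_+$, with unit the map $X\to X'_+$ that collapses $X_0$ to the base point and is the identity on $X'$. To check the universal property, let $Y_+$ be a local object. A pointed map $X\to Y_+$ must send the connected space $X_0$ into the base component of $Y_+$, which is a point. Since $X=X_0\sqcup X'$, this gives
\[
\mapp_{\mc{S}_*}(X,Y_+)\simeq \mapp_{\mc{S}_*}(X_0,*)\times\mapp_{\mc{S}}(X',Y_+)\simeq\mapp_{\mc{S}}(X',Y_+)\simeq\mapp_{\mc{S}_*}(X'_+,Y_+),
\]
and this equivalence is induced by the collapse map.

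It follows that $f\colon X\to Y$ is an $L_\Sigma$-equivalence if and only if the induced map $X'_+\to Y'_+$ is an equivalence. Here the induced map sends a component of $X'$ that lands in $Y_0$ to the base point. Unwinding, this is an equivalence exactly when:
\begin{itemize}
\item no component of $X'$ maps into $Y_0$;
\item $f$ induces a bijection $\pi_0X'\to\pi_0Y'$;
\item $f$ restricts to an equivalence $X'\to Y'$ on each component.
\end{itemize}
The last condition is Whitehead's theorem in the form ``$\pi_n(X,x)\to\pi_n(Y,f(x))$ is bijective for all $n\ge1$ and all $x\in X'$.'' The first two conditions together amount to $f$ inducing a bijection between the non-base components of $X$ and of $Y$. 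This is how I would read the statement's ``$n=0$'' clause, namely as bijectivity on $\pi_0$ away from the base component.

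The main subtlety is exactly this $\pi_0$ bookkeeping. In particular, when $X$ is connected the condition must still force $Y$ to be connected. So I would state the $\pi_0$ part as a bijection $\pi_0X\to\pi_0Y$ of pointed sets and verify the edge cases carefully.

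For part (3), I would use that $\Sigma X$ is always connected. By van Kampen, $\pi_1\Sigma X$ is the free group on $\pi_0X\setminus\{[x_0]\}$. So if $\Sigma X\simeq0$, then $X$ is connected. Moreover $\tilde H_*(X)\cong\tilde H_{*+1}(\Sigma X)=0$, so $X$ is acyclic. Conversely, suppose $X$ is acyclic. Then $\tilde H_0X=0$, so $X$ is connected, and hence $\Sigma X$ is simply connected. Also $\tilde H_*(\Sigma X)=0$. By the Hurewicz theorem all homotopy groups of $\Sigma X$ vanish, so $\Sigma X\simeq *$ by Whitehead's theorem.
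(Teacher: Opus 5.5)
Your proposal is correct. For (1) and (2) you follow the paper's route. You identify $\mc{S}_{*,\Sigma}$ with the $\Omega$-core via \cref{sigmacore}, and you compute $L_\Sigma X\simeq X'_+$ by collapsing the base component. The paper only asserts this description of $L_\Sigma$; your mapping-space check of the universal property supplies the proof.

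Your concern about $\pi_0$ is justified, and it corrects the statement itself. Read literally, the condition in (2) is vacuous when $X$ is connected. It would then declare $*\to S^0$ an $L_\Sigma$-equivalence, yet $L_\Sigma(*)\simeq *$ while $L_\Sigma(S^0)\simeq S^0$. Your formulation is the right one: $\pi_0X\to\pi_0Y$ is bijective, and $\pi_n$ is an isomorphism for $n\ge 1$ at every point outside the base component. Whenever $X$ has a point outside its base component this agrees with the stated condition, so only the connected case needs the fix.

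For (3) your route differs from the paper's, which points to \cref{acyc}. There, $\Sigma X\simeq 0$ means $X\to 0$ is an epimorphism in $\mc{S}_*$. This can be tested in $\mc{S}$, because the forgetful functor $\mc{S}_*\to\mc{S}$ is conservative and preserves pushouts. In $\mc{S}$, the map $X\to *$ is an epimorphism exactly when its fiber $X$ is acyclic.

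That approach ties (3) to the definition of the $\Sigma$-core in terms of epimorphisms, but it outsources the topology to the cited characterization of epimorphisms of spaces. Your argument via the suspension isomorphism, Hurewicz and Whitehead is self-contained and classical; in effect it proves the special case of that characterization needed here. In the forward direction the van Kampen step is unnecessary, since $\tilde H_0(X)\cong\tilde H_1(\Sigma X)=0$ already gives connectedness.
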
 

\begin{proof}
For part (1), by \cref{sigmacore}, a pointed space $X$ lies in $\mc{S}_{*,\Sigma}$ if and only if $\Omega X=0$, \ie its base component is contractible. For part (2), it suffices to observe that the $L_\Sigma$ is the operation which contracts the base component into a point while keeping the other components unchanged. See \cref{acyc} for part (3).
\end{proof}

\begin{de}
We call an object of $\mc{S}_{*,\Sigma}$ a \textbf{unital pointed space}.
\end{de}

\begin{thm}\label{thm:mode}
For any pointed presentable \infcat $\mc{C}$, the $\Sigma$-trivialization $\mcc \to \mcc_{\Sigma}$ can be identified with the natural map $\mc{C}\simeq \mc{C}\otimes \mc{S}_*\to \mc{C}\otimes\mc{S}_{*,\Sigma}$. In particular, $\mc{S}_{*,\Sigma}$ is a mode, which we call the \textbf{$\Sigma$-trivial mode}.
\end{thm}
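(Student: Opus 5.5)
The plan is to use the general fact that a localization of a presentable $\infty$-category at a class generated by maps that are ``stable under tensoring'' with $\mcc$ can be computed by tensoring. First, $\mcc\in\prl_*$ means $\mcc\simeq\mcc\otimes\mc{S}_*$, since $\mc{S}_*$ is an idempotent algebra in $\prl$. Next, $\mc{S}_{*,\Sigma}$ is, by definition, the localization $\mc{S}_*[S_\Sigma^{-1}]$. By \cref{Ssigma} and the fact that $\mc{S}_*$ is generated by $S^0$, it is the localization at the single map $\Sigma S^0=S^1\to *$ (more precisely, the strongly saturated class generated by it). Tensoring a presentable localization with $\mcc$ gives the localization of $\mcc\simeq\mcc\otimes\mc{S}_*$ at the strongly saturated class generated by $\{X\otimes(S^1\to *)\mid X\in\mcc\}$. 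This is the standard statement \cite[Proposition~4.8.1.15]{ha}, or its variant $\mcc\otimes\mcd[W^{-1}]\simeq(\mcc\otimes\mcd)[(\mcc\otimes W)^{-1}]$, where $\mcc\otimes\mcd=\op{Fun}^R(\mcc^{\op{op}},\mcd)$ and local objects are computed pointwise.

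The key identification is that, under $\mcc\otimes\mc{S}_*\simeq\mcc$, the object $X\otimes S^1$ corresponds to $\Sigma X$, and $X\otimes *$ corresponds to $0$. This holds because the action of $\mc{S}_*$ on a pointed presentable category is the canonical tensoring, and $X\otimes S^1=\op{colim}_{S^1}X$ in the pointed sense, which is $\Sigma X$. Hence the generating class becomes $\{\Sigma X\to 0\mid X\in\mcc\}$, whose strong saturation is exactly $S_\Sigma$. Therefore $\mcc\otimes\mc{S}_{*,\Sigma}\simeq\mcc[S_\Sigma^{-1}]=\mcc_\Sigma$, compatibly with the maps from $\mcc$.

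As an alternative check, one can argue via local objects using \cref{sigmacore}. We have $\mcc\otimes\mc{S}_{*,\Sigma}\simeq\op{Fun}^R(\mcc^{\op{op}},\mc{S}_{*,\Sigma})$, which is the full subcategory of $\op{Fun}^R(\mcc^{\op{op}},\mc{S}_*)\simeq\mcc$ of functors landing in pointed spaces with contractible base component. Via Yoneda, these are the objects $Y$ with $\Omega\mapp(X,Y)\simeq\mapp(\Sigma X,Y)$ contractible for all $X$, i.e.\ those with $\Omega Y=0$. That subcategory is $\mcc^\Omega\simeq\mcc_\Sigma$, and the inclusions agree, so the left adjoints agree too.

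For the mode statement, we must show that $\mc{S}_{*,\Sigma}$ is an idempotent commutative algebra in $\prl$, i.e.\ that the multiplication $\mc{S}_{*,\Sigma}\otimes\mc{S}_{*,\Sigma}\to\mc{S}_{*,\Sigma}$ is an equivalence. Apply the first part with $\mcc=\mc{S}_{*,\Sigma}$. Since this category is already $\Sigma$-trivial, its $\Sigma$-trivialization is the identity. Hence $\mc{S}_{*,\Sigma}\to\mc{S}_{*,\Sigma}\otimes\mc{S}_{*,\Sigma}$ is an equivalence. Since $\mc{S}_*\to\mc{S}_{*,\Sigma}$ is a localization, this yields an idempotent algebra structure with unit $\mc{S}\to\mc{S}_*\to\mc{S}_{*,\Sigma}$.

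The main obstacle is the identification $X\otimes S^1\simeq\Sigma X$ together with the tensoring of localizations. I will handle both through the $\op{Fun}^R$ model, which makes everything a pointwise check on mapping spaces.
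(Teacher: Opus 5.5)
Your proposal is correct. Your ``alternative check'' is essentially the paper's proof. The paper writes $\mcc\otimes\mcs_{*,\Sigma}\simeq\funct^R((\mcs_{*,\Sigma})^{\op{op}},\mcc)$ and notes $\mcs_{*,\Sigma}=\mcs_*[(S^1\to 0)^{-1}]$. Under $\funct^R(\mcs_*^{\op{op}},\mcc)\simeq\mcc$, this cuts out the objects $Y$ with $\Omega Y\simeq 0$, i.e.\ $\mcc^\Omega$. It then invokes \cref{sigmacore} to get $\mcc^\Omega\simeq\mcc_\Sigma$. The mode statement is packaged as ``$(-)_\Sigma$ is a smashing localization of $\prl_*$''. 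You use the transposed model $\funct^R(\mcc^{\op{op}},\mcs_{*,\Sigma})$, which makes no difference.

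Your main route works on the other side of the localization. Instead of identifying local objects, you identify the generating maps, $X\otimes(S^1\to *)\simeq(\Sigma X\to 0)$. You then use the compatibility of the Lurie tensor product with accessible localizations to land directly in $\mcc[S_\Sigma^{-1}]$. This matches the definition of $\mcc_\Sigma$ without passing through the $\Omega$-core. The cost is citing the general tensor/localization compatibility and identifying the $\mcs_*$-action with the pointed tensoring. The paper's version is shorter because \cref{sigmacore} has already done the local-object computation.

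You also make the idempotence check for the mode explicit. The unit is $\mcs\to\mcs_*\to\mcs_{*,\Sigma}$, and $\mcs_{*,\Sigma}\otimes\mcs\simeq\mcs_{*,\Sigma}\otimes\mcs_*\simeq\mcs_{*,\Sigma}\otimes\mcs_{*,\Sigma}$ since $\mcs_{*,\Sigma}$ is pointed and already $\Sigma$-trivial; the paper leaves this implicit. One point you leave unstated is that $\mcc_\Sigma$ is pointed and $\Sigma$-trivial. This is immediate: its suspension is $L_\Sigma\circ\Sigma$, which vanishes because $\Sigma X\to 0$ lies in $S_\Sigma$.
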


\begin{proof}
It suffices to show that the $\Sigma$-trivialization $\prl_* \to \prl_{\Sigma}$ is a smashing localization. This follows from the observation
\[
\mc{C}\otimes\mc{S}_{*,\Sigma}=\funct^R((\mc{S}_{*,\Sigma})^{\op{op}},\mcc)=\funct^R(\mc{S}_{*}[(S^1\to 0)^{-1}]^{\op{op}},\mcc)=\mcc^\Omega
\]
and \cref{sigmacore}.
\end{proof}

\begin{ex}\label{ex:sigma-trivialization}
The following examples follow from the identification $\mcc_{\Sigma}\simeq\mcc^{\Omega}$ of 
\cref{sigmacore}:
\enu{
\item The $\Sigma$-trivialization of the category of discrete spaces is the category of pointed discrete spaces: $\mc{S}_{\leq0}\otimes\mc{S}_{*,\Sigma}=\mc{S}_{*,\leq0}$.
\item The $\Sigma$-trivialization of the category of $\einf$-spaces is the category of unital $\einf$-spaces, that is, the $\einf$-spaces whose identity component is contractible: $\cmon(\mc{S})\otimes \mc{S}_{*,\Sigma}=\cmon(\mc{S})_\Sigma$.
\item The $\Sigma$-trivialization of the category of connective spectra is the category of abelian groups: $\spgeq\otimes\mc{S}_{*,\Sigma}=\op{Sp}^\heartsuit=\op{Ab}$.
\item The $\Sigma$-trivialization of the category of spectra is trivial: $\op{Sp}\otimes\mc{S}_{*,\Sigma}=0$.
}
\end{ex}

\subsection{Quotients by ideals}\label{sec4}

\begin{nota}
Let $\mcc$ be a pointed $\infty$-category which admits finite colimits and finite limits and $N\to M$ be a morphism in $\mcc$. We write $M/N$ for $\coker(N\to M)$. Hence, we have the following pushout diagram
$$
\begin{tikzcd}
N \arrow[d] \arrow[r] & M \arrow[d] \\
0 \arrow[r]           & M/N     .   
\end{tikzcd}$$
\end{nota}

\begin{de}
Let $\cotimes\in \calg(\prl_{*})$. We say that an ideal $I\in \idlc$ is \textbf{quotientable} if $I\to 0$ is an epimorphism in $\mcc$.
\end{de}

\begin{rem}
By definition, if $\cotimes\in \calg(\prl_{*})$ is $\Sigma$-trivial, then every ideal of $\mcc$ is quotientable.
\end{rem}

\begin{lem}\label{cokerof0}
Let $\mcc$ be a pointed $\infty$-category which admits finite colimits, and let $N\xrightarrow{0} M$ be the zero morphism in  $\modu_R(\mcc)$. If $N\to 0$ is an epimorphism, then $M\to M/N$ is an equivalence.
\end{lem}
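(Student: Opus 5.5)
The plan is to compare the cofiber of the zero map with a pushout along $N\to 0$. Since the map $N\xrightarrow{0}M$ is the zero morphism, it factors as $N\to 0\to M$. So the pushout square defining $M/N$ can be computed in two steps. First, form the pushout of $0\leftarrow N\to 0$, which is $\Sigma N$ (or, without naming it, the object $0\sqcup_N 0$). Second, form the pushout of $0 \leftarrow 0 \to M$ modified by that first pushout. Concretely, by pasting of pushout squares,
\[
M/N \simeq M \sqcup_N 0 \simeq M\sqcup_0 (0\sqcup_N 0),
\]
where the middle pushout uses $N\to 0\to M$. The canonical map $M\to M/N$ is then identified with the coproduct inclusion $M\to M\sqcup (0\sqcup_N 0)$.

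The key step is to use the hypothesis that $N\to 0$ is an epimorphism. By definition of epimorphism in an $\infty$-category (the codiagonal $0\sqcup_N 0\to 0$ is an equivalence, equivalently the dual of a $(-1)$-truncated map), we get $0\sqcup_N 0\simeq 0$. Hence $M/N\simeq M\sqcup 0\simeq M$. The map $M\to M/N$ corresponds under this identification to the identity, because the coproduct inclusion $M\to M\sqcup 0$ is an equivalence in a pointed category.

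Two details need checking.

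\begin{enumerate}
\item If the pushout is taken in $\modu_R(\mcc)$ rather than in $\mcc$, the same pasting argument works there. Alternatively, the forgetful functor preserves colimits, so the computation may be done in $\mcc$; the only hypothesis needed is that $N\to 0$ is an epimorphism in the category where the pushout is formed. I would state the argument in whichever category the cofiber is taken, noting that epimorphisms in $\mcc$ detect the relevant pushout since colimits of modules are computed underlying.
\item The identification of the zero morphism with the composite $N\to 0\to M$ involves a choice of homotopy. The epimorphism hypothesis makes the space of such factorizations harmless, since $\mapp(0\sqcup_N0, M)\simeq \mapp(0,M)$ is contractible.
\end{enumerate}

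I expect the main obstacle to be only the bookkeeping in (1): making sure "epimorphism" is used in the sense that the codiagonal pushout $0\sqcup_N0\to0$ is an equivalence, which is exactly the $(-1)$-cotruncatedness dual to \cref{def:ideal}.
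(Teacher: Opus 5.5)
Your proposal is correct and is essentially the paper's proof: factor the zero map as $N\to 0\to M$, observe that $0\sqcup_N 0\simeq 0$ because $N\to 0$ is an epimorphism, and paste pushout squares to get $M/N\simeq M\sqcup_0 0\simeq M$. Your side remarks are sound bookkeeping that the paper leaves implicit: the computation can be done in $\modu_R(\mcc)$ or underlying in $\mcc$, and the space of nullhomotopies is contractible since $\mapp(0\sqcup_N 0,M)\simeq\ast$.
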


\begin{proof}
Consider the pushout square
$$
\begin{tikzcd}
N \arrow[d] \arrow[r] & 0 \arrow[d] \arrow[r] & M \arrow[d] \\
0 \arrow[r]           & 0 \arrow[r]           & M/N.
\end{tikzcd}$$
Since $N\to 0$ is an epimorphism, the left square is pushout and so is the right square. Therefore, $M/N \simeq M \coprod 0 \simeq M$.
\end{proof}

\begin{thm}\label{quotidl}
Let $\cotimes\in\calg(\prl_*)$. Let $R\in\calg(\mcc)$ and let $I\subseteq R$ be a quotientable ideal of $\modu_{R}(\mcc)$. Then there exists a unique commutative $R$-algebra structure on $R/I$. Moreover, the map $R\to R/I$ is an epimorphism in $\calg(\mc{C})$ satisfying the following universal property: For any $B\in \calg(\mc{C})$, the induced map $$\mapp_{\calg(\mc{C})}(R/I,B)\to \mapp_{\calg(\mc{C})}(R,B)$$ is a $(-1)$-truncated map whose image on $\pi_0$ consists of the maps $R\to B$ such that the composite $I\to R\to B$ is homotopic to $0$ in $\modu_R(\mcc)$.
\end{thm}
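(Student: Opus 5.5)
We reduce to a statement about idempotent algebras in the symmetric monoidal category $\mcm\coloneqq\modu_R(\mcc)^\otimes$, whose unit is $R$. The plan is to show that $R\to R/I$ is an \emph{idempotent} object of $\mcm$ under the unit. Then \cite[Proposition 4.8.2.9]{ha} supplies the rest: it gives a unique commutative algebra structure on $R/I$ compatible with the unit map, and it identifies $\calg(\mcm)_{R/I/}$ with the full subcategory of $\calg(\mcm)$ on those $A$ for which $A\to A\otimes_R R/I$ is an equivalence. Since $\calg(\mcm)\simeq\calg(\mcc)_{R/}$, the statements about $\calg(\mcc)$ follow. In particular, $\mapp(R/I,B)\to\mapp(R,B)$ is $(-1)$-truncated, so $R\to R/I$ is an epimorphism in $\calg(\mcc)$.

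\textbf{Idempotence.} Since $-\otimes_R M$ preserves colimits, $M\otimes_R R/I\simeq M/(I\otimes_R M)$. Take $M=R/I$. I claim the map $I\otimes_R R/I\to R/I$ is zero. It factors through $I\otimes_R R\to I\otimes_R R/I$, and the composite $I\otimes_R I\to I\otimes_R R\to I\otimes_R R/I$ is null, since $I\to R\to R/I$ is null by construction. Because $I\otimes_R R\simeq I$, the map $I\to I\otimes_R R/I$ is therefore the cokernel-induced map from $I\otimes_R(R/I)=(I\otimes_R R)/(I\otimes_R I)$, which is an epimorphism by the argument of \cref{cokerof0}'s lemma. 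It therefore suffices to check that $I\to I\otimes_R R\to R\otimes_R R/I=R/I$ is null, and this composite is just $I\to R\to R/I$, which is null. Granting the claim, quotientability ($I\otimes_R R/I\to 0$ an epimorphism, which follows because $I\to0$ is epi and $-\otimes_R R/I$ preserves pushouts, via the epi $I\to I\otimes_R R/I$) lets \cref{cokerof0} give $R/I\xrightarrow{\sim}R/I\otimes_R R/I$. Here the map is induced by $R\to R/I$ tensored with $R/I$, as required for idempotence. The main obstacle is making this zero-map argument precise homotopically, in particular identifying the relevant nullhomotopies. To handle it I would first show that pushouts along the epimorphism $I\to0$ collapse all choices, using that $\mapp(0,-)\to\mapp(I,-)$ is an equivalence onto its image.

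\textbf{Universal property.} By the cited result, the image of $\pi_0\mapp(R/I,B)$ consists of those $f\colon R\to B$ such that $B\to B\otimes_R R/I=B/(I\otimes_R B)$ is an equivalence. If this holds, then the composite $I\to R\to B\to B/(I\otimes_RB)$ is null, hence so is $I\to B$. Conversely, if $I\to R\to B$ is null in $\modu_R(\mcc)$, then $I\otimes_RB\to B$ is null, since it is the $B$-linear extension. By \cref{cokerof0}, with $I\otimes_R B\to0$ epi as above, $B\to B/(I\otimes_RB)$ is then an equivalence. Uniqueness of the $R$-algebra structure on $R/I$ follows from the same result, because the space of idempotent-algebra structures compatible with the unit is contractible.
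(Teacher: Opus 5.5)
Your proposal is correct. Its central computation is the same as the paper's: the paper also shows that $R/I\simeq R/I\otimes_R R\to R/I\otimes_R R/I$ is an equivalence. It does this by showing $I\otimes_R R/I\to R/I$ is null after precomposing with the epimorphism $I\otimes_R R\to I\otimes_R R/I$, and then applying \cref{cokerof0}.

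The two proofs differ in the general result invoked afterwards.

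\textbf{The paper's route.} It uses idempotence only to verify the hypothesis of its appendix result \cref{calgepi2}, applied in $\modu_R(\mcc)$: $R\to R/I$ is an epimorphism, and $R\otimes_R R\to R\to R/I$ factors through $R/I\otimes_R R/I$. That corollary then gives the unique algebra structure and the universal property in the form ``$R\to B$ factors through $R/I$ in $\calg$ iff it factors in the underlying module category.'' Since $R/I$ is a cokernel along the epimorphism $I\to 0$, this is exactly nullity of $I\to R\to B$.

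\textbf{Your route.} You use Lurie's theory of idempotent algebras (HA \S4.8.2). It gives uniqueness as contractibility of the fibre of $\calg(\mcm)^{\mathrm{idem}}\to\mcm^{\mathrm{idem}}$. It gives the universal property in the form ``$B\to B\otimes_R R/I$ is an equivalence,'' so you need an extra translation to nullity of $I\to B$. You carry out that translation correctly in both directions.

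\textbf{What each buys.} Your argument needs only standard published results, and it matches how the paper itself argues later in \cref{epiidem} and \cref{thm:ideal-epi-iden}. The paper's argument skips the translation step and stays within its own epimorphism framework, which it reuses for the parametrized correspondence.

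Two small corrections:
\begin{itemize}
\item The epimorphism $I\otimes_R R\to I\otimes_R R/I$ is not justified by the argument of \cref{cokerof0}. The reason is that it is $I\otimes_R(-)$ applied to $R\to R/I$, which is a pushout of the epimorphism $I\to 0$.
\item The nullhomotopy bookkeeping you flag as the main obstacle is not one. A pushout depends only on the homotopy class of the map, and precomposition with an epimorphism is injective on $\pi_0$ of mapping spaces.
\end{itemize}
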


\begin{proof}
By \cref{calgepi2}, it suffices to show that $R\otimes_RR \to R\to R/I$ factors through $R/I\otimes_R R/I$. We claim that $R/I \simeq R/I\otimes_RR\to R/I\otimes_R R/I$ is actually an equivalence. Indeed, by the pushout diagram
$$
\begin{tikzcd}
R/I\otimes_RI \arrow[d] \arrow[r] & R/I\otimes_RR \arrow[d] \\
0 \arrow[r]         & R/I\otimes_RR/I      
\end{tikzcd}$$
and \cref{cokerof0}, it suffices to show that the map $R/I\otimes_RI \to R/I\otimes_RR$ is zero. This follows from the observation that in the following commutative diagram $$
\begin{tikzcd}
R\otimes_RI \arrow[d] \arrow[r] & R\otimes_RR \arrow[d] \\
R/I\otimes_RI \arrow[r]         & R/I\otimes_RR        
\end{tikzcd}$$
the composite of the top-right corner is zero and the left vertical map is an epimorphism.
\end{proof}

\begin{rem}
In particular, if $\mcc$ is $\Sigma$-trivial then \cref{quotidl} holds for all ideals $I \subseteq R$ of $\modu_{R}(\mcc)$.
\end{rem}

\begin{rem}
Note that the composite $I\to R\to B$ being null in $\modu_R(\mcc)$ is not equivalent to being null in $\mcc$. This is because, unlike in the $1$-categorical case, the forgetful functor $\modu_R(\mcc)\to\mcc$ does not necessarily induce an injective map on $\pi_0$ of the mapping spaces.
\end{rem}

\subsection{Quotients of $\einf$-semirings}
We saw in \cref{ex:sigma-trivialization} that the $\Sigma$-trivialization of the universal additive \infcat  $\spgeq\simeq\op{CGrp}(\mcs)$ is the category of abelian groups. Hence, all $\Sigma$-trivial additive \infcats are in fact $1$-categories.
However, this is not the case in the semi-additive case, as we now explain. 

\begin{de}
Since $\mcs_{*,\Sigma}$ is a mode, the $\infty$-category $\cmon(\mcs)_\Sigma:= \cmon(\mcs)\otimes \mcs_{*,\Sigma}$ of unital $\einf$-spaces naturally inherits a symmetric monoidal structure. This makes the $\Sigma$-trivialization into a symmetric monoidal localization
$$L_\Sigma^{\cmon(\mcs)}\colon\cmon(\mcs)^\otimes\rightleftarrows\cmon(\mcs)_\Sigma^\otimes$$
and therefore induces an adjunction between \textbf{$\einf$-semirings} and \textbf{unital $\einf$-semirings}
$$\calg(L_\Sigma^{\cmon(\mcs)})\colon\calg(\cmon(\mcs))\rightleftarrows\calg(\cmon(\mcs)_\Sigma).$$
\end{de}

\begin{rem}
Note that a unital \ein-semiring is unital at the component containing $0$, instead of $1$. 
\end{rem}

\begin{prop}
Let $X\in \cmon(\mcs)$ be an \ein-space. Then $L_\Sigma^{\cmon(\mcs)}(X)$ can be (functorially) identified with the $\einf$-space $X/X_0 = \coker(X_0 \hookrightarrow X)$, where $X_0$ denotes the identity component of $X$.
\end{prop}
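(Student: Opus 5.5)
The plan is to use the identification $\cmon(\mcs)_\Sigma\simeq\cmon(\mcs)^\Omega$ from \cref{sigmacore}(1), together with \cref{thm:mode}. Under this identification $L_\Sigma^{\cmon(\mcs)}$ is the left adjoint to the inclusion of those $\einf$-spaces $Y$ with $\Omega Y\simeq 0$, i.e.\ those whose identity component $Y_0$ is contractible. So it suffices to show two things. First, the cokernel $X/X_0$ lies in $\cmon(\mcs)^\Omega$. Second, the map $X\to X/X_0$ is an $S_\Sigma$-equivalence, equivalently it is initial among maps from $X$ into $\Omega$-trivial objects. Functoriality is automatic, since $X\mapsto X_0$ is functorial (a map of $\einf$-spaces carries identity component to identity component) and so is the cokernel.

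For the universal property I would argue as follows. For $Y\in\cmon(\mcs)^\Omega$, any map $X_0\to Y$ factors through $Y_0\simeq *$, because $X_0$ is connected and maps of $\einf$-spaces preserve the unit. More precisely, I would write $X_0\simeq \Sigma$-type object: $X_0$ is connected, hence grouplike, hence the connective spectrum $\Sigma(\Omega X_0)$ viewed in $\cmon(\mcs)$. Then
\[
\mapp(X_0,Y)\simeq\mapp(\Sigma\Omega X_0,Y)\simeq\mapp(\Omega X_0,\Omega Y)\simeq *.
\]
Thus $X_0\to 0$ is an $S_\Sigma$-equivalence (it is a cofiber of $\Sigma$-maps), and pushing out along $X_0\to X$ shows $X\to X/X_0$ is an $S_\Sigma$-equivalence, by closure of strongly saturated classes under pushout. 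The suspension identification requires care: one must check that the suspension of $\Omega X_0$ computed in $\cmon(\mcs)$ agrees with $X_0$. This is fine because grouplike objects are closed under colimits and are equivalent to $\spgeq$, where $\Sigma\Omega$ of a connected object is itself.

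For the first point, I would compute the pushout $X\sqcup_{X_0}0$ in $\cmon(\mcs)$. Pushouts in $\cmon(\mcs)$ are not computed in spaces, and this is the main obstacle. I would use the bar construction: the cokernel is the geometric realization of $X\times X_0^{\times\bullet}$ (relative tensor $X\otimes_{X_0}0$ in the semiadditive setting), with $X_0$ acting on $X$ by addition. Since $X_0$ is grouplike, it acts on each component $X_a$ by equivalences $X_a\to X_{a+0}=X_a$; this uses that $\pi_0X_0$ is the zero element, so $X_0$ preserves components. The realization is therefore the disjoint union over $a\in\pi_0X$ of homotopy quotients $X_a/\!/X_0$. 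The map $X_0\to X_a$, given by acting on a chosen point, is then a principal fibration of components: $X_a$ is a torsor under the connected group $X_0$, and its orbit map is surjective on $\pi_0$. I still need to verify that this is free, and I would check it on homotopy groups via translation. Granting this, the action on the base component $X_0$ is the free transitive action, so $X_0/\!/X_0\simeq *$, which puts $X/X_0$ in $\cmon(\mcs)^\Omega$. Alternatively, I would deduce $\Omega(X/X_0)=0$ directly from the fiber sequence $\Omega X_0\to\Omega X\to\Omega(X/X_0)$, after checking that $\Omega$ commutes with this cokernel.
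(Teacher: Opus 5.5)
Your proof is correct in substance and splits the same way the paper's does. First, $X\to X/X_0$ is an $L_\Sigma$-equivalence. Second, $X/X_0$ has contractible identity component.

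The second half is essentially the paper's argument. The cokernel is the relative tensor product $0\otimes_{X_0}X\simeq|X\times X_0^{\times\bullet}|$, and the $X_0$-action preserves components, so $X$ splits as an $X_0$-module. The paper uses the coarser splitting $X_0\sqcup X_1$, with $X_1$ the non-identity components. This gives $X/X_0\simeq *\sqcup (X_1)_{hX_0}$ with the unit in the $*$ summand.

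The first half takes a genuinely different route. The paper proves $\mapp_{\cmon(\mcs)}(X_0,Y)\simeq *$ for unital $Y$ by hand. Via $\cmon(\mcs)\simeq\funct^{\op{seg}}(\op{Fin}_*,\mcs_*)$, it writes this space as a limit over $\op{Tw}(\op{Fin}_*)$ of the spaces $\mapp_{\mcs_*}(X_0^m,Y^n)$. Each of these is contractible because $X_0^m$ is connected and $Y^n$ has contractible base component. You instead note that $X_0$ is connected, hence grouplike, hence $X_0\simeq\Sigma\Omega X_0$ in $\cmon(\mcs)$. This is legitimate, since $\op{CGrp}(\mcs)\simeq\spgeq$ is closed under limits and colimits in $\cmon(\mcs)$: the inclusion has both a left and a right adjoint. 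So $X_0\to 0$ is itself one of the generators $\Sigma W\to 0$ of $S_\Sigma$. Your version is more conceptual and avoids the end formula for mapping spaces of Segal objects. The paper's version is more elementary and needs no input from group completion or connective spectra.

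Two things in your locality argument should be removed.

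First, the claim that each component $X_a$ is an $X_0$-torsor is false in general. That is, translation $X_0\to X_a$ by a point of $X_a$ need not be an equivalence. For the free $\mathbb{E}_\infty$-space $\coprod_n B\Sigma_n$ one has $X_0\simeq *$ but $X_2\simeq B\Sigma_2$, so your proposed check on homotopy groups would fail. The claim is also unnecessary. Only the base summand matters, and $(X_0)_{hX_0}\simeq|X_0\times X_0^{\times\bullet}|\simeq *$ holds for any monoid acting on itself, by an extra-degeneracy argument. The summands $(X_a)_{hX_0}$ for $a\neq 0$ are disjoint from it whatever they look like.

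Second, your alternative via $\Omega$ is circular. Since $\Omega X_0\to\Omega X$ is already an equivalence, ``checking that $\Omega$ commutes with this cokernel'' is exactly the statement $\Omega(X/X_0)\simeq 0$. Moreover, $\Omega$ is a right adjoint and does not commute with cokernels in general.
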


\begin{proof}
We write $L$ for $L_\Sigma^{\cmon(\mcs)}$. First we claim that $L(X_0)$ is trivial, so that $X\to X/X_0$ is an $L$-equivalence. Indeed, for any $Y\in \cmon(\mcs)_\Sigma$, we have $$\mapp_{\cmon(\mcs)_\Sigma}(L(X_0),Y)\simeq \mapp_{\cmon(\mcs)}(X_0,Y)\simeq \lim_{f\in \op{Tw}(\op{Fin}_*)} \mapp_{\mcs_*}(X_{0}^m,Y^n)$$
where the second equivalence follows from $\cmon(\mcs)\simeq\cmon(\mcs_*)\simeq \funct^{\op{seg}}(\op{Fin}_*,\mcs_*)$ and \cite[\href{https://kerodon.net/tag/03P7}{Example~03P7}]{kerodon}. Moreover, since each $\mapp_{\mcs_*}(X_{0}^m,Y^n)$ is contractible by the triviality of the identity component of $Y$, we have $\mapp_{\cmon(\mcs)_\Sigma}(L(X_0),Y)=0$ and therefore $L(X_0)=0$.

It remains to show that $X/X_0$ is $L$-local, that is, the identity component of $X/X_0$ is trivial. Considering the Cartesion monoidal structure on $\cmon(\mcs)$, we have $\cmon(\mcs)\simeq \calg(\mcs)$. Hence, the space $X/X_0$ can be identified with the base change $0\otimes_{X_0}X$. Since $X$ can be decomposed as two $X_0$-submodules $X_0\sqcup X_1$ by \cref{modumono}, we have $$0\otimes_{X_0}X\simeq 0\otimes_{X_0}(X_0\sqcup X_1)\simeq *\sqcup X_1/X_0,$$
which means $X/X_0$ is unital.
\end{proof}

\begin{rem}
Note that the $\Sigma$-trivialization of $\cmon(\mcs)$ is not compatible with that of $\mcs$. To see this, taking the $\einf$-space $X=S^1\times \mathbb{N}\simeq K(\mathbb{Z},1)\times\mathbb{N}$, we then have $L_\Sigma^{\cmon(\mcs)}(X)\simeq \mathbb{N}$
but $L_\Sigma^\mcs(X)\simeq *\sqcup (S^1\times \mathbb{N}_{\geq1})$.
\end{rem}

\begin{prop}
The following statements hold:
\begin{enumerate}[label=(\arabic*),font=\normalfont]
\item For any \ein-space $X$, we have natural equivalences
$$
\begin{tikzcd}
\idl(\modu_{X}(\mcs)) \arrow[r, "\sim"] \arrow[d, "\sim"] & \idl(\modu_{X_+}(\mcs_*)) \arrow[d, "\sim"] \arrow[r, "\sim"]              & \idl(\modu_{L^{\mcs}_\Sigma(X_+)}(\mcs_{*,\Sigma})) \arrow[d, "\sim"] \\
\idl(\modu_{\pi_0X}(\op{Sets})) \arrow[r, "\sim"] & \idl(\modu_{\pi_0X_+}(\op{Sets}_*)) \arrow[r, Rightarrow, no head] & \idl(\modu_{\pi_0X_+}(\op{Sets}_*))                           
\end{tikzcd}
$$
\item For any \ein-semiring space $R$, we have natural equivalences
$$
\begin{tikzcd}
\idl(\modu_{R}(\cmon(\mcs))) \arrow[r, "\sim"] \arrow[d, "\sim"]              & \idl(\modu_{L(R)}(\cmon(\mcs)_\Sigma)) \arrow[d, "\sim"] \\
\idl(\modu_{\pi_0R}(\cmon(\op{Sets}))) \arrow[r,  Rightarrow, no head] & \idl(\modu_{\pi_0R}(\cmon(\op{Sets})))          
\end{tikzcd}
$$

\end{enumerate}

\end{prop}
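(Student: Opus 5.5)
The plan is to reduce every identification to two elementary facts about monomorphisms. First, subobjects of a module over an $\einf$-algebra in a presentably symmetric monoidal category are controlled by the underlying category; this is the content referred to as \cref{modumono}. Second, in $\mcs$ a map is a monomorphism exactly when it is an inclusion of a union of connected components. So for an $\einf$-space $X$, an ideal of $\modu_X(\mcs)$ is a monomorphism $I\to X$ of $X$-modules. By the second fact, $I$ is a union of components of $X$. The $X$-action must preserve it, and only $\pi_0X$ acts on $\pi_0$. Hence ideals correspond bijectively to subsets $J\subseteq\pi_0X$ with $\pi_0X\cdot J\subseteq J$, i.e. ideals of the $\pi_0X$-set $\pi_0X$. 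This gives the left vertical equivalence in (1). The key point is that a union of components of an $X$-module carries a unique submodule structure when it is closed under the action, since $(-1)$-truncated maps have contractible spaces of lifts.

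For the horizontal maps in (1), I would use that adjoining a basepoint $(-)_+\colon\mcs\to\mcs_*$ is symmetric monoidal. It sends a monomorphism $I\to X$ to $I_+\to X_+$. In the other direction, a monomorphism of pointed $X_+$-modules $J\to X_+$ is again a union of components containing the basepoint, so it is of the form $I_+$. The same argument identifies $\idl(\modu_{\pi_0X}(\op{Sets}))\simeq\idl(\modu_{\pi_0X_+}(\op{Sets}_*))$, so the left square commutes by construction.

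For the passage to $\mcs_{*,\Sigma}$, I would use \cref{sigmacore}: $\mcs_{*,\Sigma}=\mcs_*^\Omega$ is closed under subobjects, and $L_\Sigma$ only contracts the base component. Here $L_\Sigma(X_+)=X_+$, because the base component of $X_+$ is already a point. So modules over it in $\mcs_{*,\Sigma}$ form a full subcategory of $X_+$-modules in $\mcs_*$ that contains $X_+$ and is closed under subobjects. Both ideal posets are therefore literally the same set of subobjects. The rightmost vertical map is the same $\pi_0$ argument, and the squares commute because every map in sight is "take $\pi_0$ of components".

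For (2), an $\einf$-semiring $R$ has underlying $\einf$-space $R$, and a submodule of $R$ in $\cmon(\mcs)$ is a monomorphism of $\einf$-spaces. Monomorphisms in $\cmon(\mcs)$ are detected in $\mcs$, because the forgetful functor preserves limits and is conservative. So again $I$ is a union of components, now closed under addition and under the multiplicative action, containing the zero component. Such $I$ correspond exactly to sub-commutative-monoids of $\pi_0R$ stable under $\pi_0R$, i.e. ideals of the semiring $\pi_0R$ in $\cmon(\op{Sets})$.

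For the $\Sigma$-trivial side, $L(R)=R/R_0$ has the same $\pi_0$ as $R$ and trivial zero component. Its ideals are unions of components containing the point $0$, so they match ideals of $\pi_0R$ as well. To show the top horizontal map $I\mapsto\im(L(I)\to L(R))$ agrees with this, I would use that $L(I)=I/I_0=I/R_0$. Here $I_0=R_0$ because $I$ contains the zero component. Then $L(I)\to R/R_0$ is still a union-of-components inclusion with the same $\pi_0$.

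The main obstacle is this last compatibility. It requires checking that the cokernel $I/R_0$ in $\cmon(\mcs)$ is computed componentwise, i.e. that $I\to R$ stays a monomorphism after $L$. I would handle it by the decomposition argument of the preceding proposition: write $R=R_0\sqcup R_1$ as $R_0$-modules and $I=R_0\sqcup I_1$ with $I_1\subseteq R_1$. Then $0\otimes_{R_0}(-)$ yields $*\sqcup I_1/R_0\to *\sqcup R_1/R_0$, which is a union of components.
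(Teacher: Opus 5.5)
Your proposal is correct and is essentially the paper's argument. The paper's entire proof is the observation that monomorphisms in $\mcs$ are inclusions of unions of components, so every ideal in sight is determined by an ideal of $\pi_0$. You spell this out using \cref{modumono}, the closure of $\mcs_{*,\Sigma}$ and $\cmon(\mcs)_\Sigma$ under subobjects from \cref{sigmacore}, the identification $L(R)\simeq R/R_0$, and an explicit check (via $I_0=R_0$ and $R\simeq R_0\sqcup R_1$) that $L(I)\to L(R)$ remains a component inclusion, a compatibility the paper leaves implicit.
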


\begin{proof}

Since monomorphisms in \mcs are the component inclusions, all ideals are in one-one corresponding to ideals of its $\pi_0$. 
\end{proof}

\begin{lem}
Let $X$ be an \ein-space. The map $p:X\to 0$ is an epimorphism in $\cmon(\mcs)$ if and only if $X$ has trivial $\mathbb{E}_1$-group completion.
\end{lem}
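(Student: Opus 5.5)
The plan is to reduce epimorphicity of $p\colon X\to 0$ to the vanishing of a suspension, and then identify that suspension with the classifying space of $X$. Recall that a map $f\colon A\to B$ is an epimorphism precisely when the codiagonal $B\sqcup_A B\to B$ is an equivalence. For $p\colon X\to0$ this says that the pushout $0\sqcup_X 0$ in $\cmon(\mcs)$ is contractible. This pushout is the cofiber of $X\to 0$, i.e.\ the suspension $\Sigma X$ computed in $\cmon(\mcs)$. So $p$ is an epimorphism if and only if $\Sigma X\simeq 0$ in $\cmon(\mcs)$; this is the argument of \cref{sigtriveqshtriv}(1) applied to the single object $X$.

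Next I compute $\Sigma X$ explicitly. As in the preceding proof, I use the identification $\cmon(\mcs)\simeq\calg(\mcs^\times)$. Pushouts of commutative algebras are relative tensor products, so
\[
0\sqcup_X 0\simeq 0\otimes_X 0 .
\]
This relative tensor product is the geometric realization of the two-sided bar construction $\op{Bar}_\bullet(0,X,0)$, whose $n$-simplices are $X^{n}$. The forgetful functor $\cmon(\mcs)\to\mcs$ preserves sifted colimits, in particular geometric realizations. Hence the underlying space of $\Sigma X$ is $|\op{Bar}_\bullet(*,X,*)|$, which is the classifying space $BX$ of the underlying $\mathbb{E}_1$-monoid of $X$. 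Since the forgetful functor is also conservative, $\Sigma X\simeq 0$ if and only if $BX\simeq *$.

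Finally, I relate $BX$ to group completion. The space $BX$ is connected, so $BX\simeq *$ if and only if $\Omega BX\simeq *$. By the group completion theorem (McDuff--Segal, or in $\infty$-categorical form the fact that $\Omega B$ is left adjoint to the inclusion of grouplike $\mathbb{E}_1$-monoids), $\Omega BX$ is the $\mathbb{E}_1$-group completion $X^{\op{gp}}$. Thus $p$ is an epimorphism if and only if $X^{\op{gp}}\simeq 0$, as claimed.

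I expect the main obstacle to be the middle step: justifying that the pushout $0\sqcup_X0$ in $\cmon(\mcs)$ is computed by the bar construction, and that its underlying space is $BX$ rather than something involving the $\mathbb{E}_\infty$-structure in a more complicated way. I would handle this via Lurie's description of pushouts in $\calg$ of a presentably symmetric monoidal category as relative tensor products, together with the standard fact that forgetful functors from algebras preserve sifted colimits.
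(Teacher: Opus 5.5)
Your proposal is correct and follows essentially the same route as the paper: $p$ is an epimorphism iff $0/X\simeq BX$ is contractible, and since $BX$ is connected this holds iff $\Omega BX$, the $\mathbb{E}_1$-group completion, is contractible. The only difference is that you justify the identification $0\sqcup_X 0\simeq BX$ in $\cmon(\mcs)$ via the bar construction, a step the paper simply asserts.
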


\begin{proof}
By definition, $p$ is epimorphic if and only if $0/X\simeq BX$ is contractible. 	According to the proof of \cite[Theorem 1]{nikolaus2017group}, we see that  $X\to \Omega BX$ exhibits $\Omega BX$ as the $\mathbb{E}_1$-group completion of $X$. However, $BX$ is connected and hence $BX$ is contractible if and only if $\Omega BX$ is.
\end{proof}

\begin{thm}[Quotient $\einf$-semiring]\label{thm:quotient-einf-semiring}
Let $R\in\calg(\cmon(\mcs))$ be an $\einf$-semiring and let $I\subseteq R$ be an element of $ \idl(\modu_{R}(\cmon(\mcs)))\simeq \op{SIdl}(\pi_0R)$, where we denote $\op{SIdl}(\pi_0R):=\idl(\modu_{\pi_0R}(\cmon(\op{Sets})))$. If $I$ as an \ein-space has trivial $\mathbb{E}_1$-group completion, then there exists an $\einf$-semiring $R/I$ and an $\einf$-semiring map $R\to R/I$ satisfying the following universal property: For any $\einf$-semiring $S$, the induced map $$\mapp_{\calg(\cmon(\mcs))}(R/I,S)\to \mapp_{\calg(\cmon(\mcs))}(R,S)$$ is $(-1)$-truncated and its image on $\pi_0$ consists those maps $R\to S$ such that the composition $I\to R\to S$ is null in $\modu_R(\cmon(\mcs))$.
\end{thm}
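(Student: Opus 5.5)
This is a direct application of \cref{quotidl} with $\mcc^\otimes=\cmon(\mcs)^\otimes$, which lies in $\calg(\prl_*)$ because $\cmon(\mcs)$ is presentable, pointed (indeed semiadditive), and presentably symmetric monoidal. The $\einf$-semirings are by definition $\calg(\cmon(\mcs))$, so the universal property in the statement is exactly the one produced by \cref{quotidl}. The only hypothesis to check is that $I$ is a \emph{quotientable} ideal of $\modu_R(\cmon(\mcs))$, i.e.\ that $I\to 0$ is an epimorphism there.

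I will proceed in three steps.

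First, I identify the ideal. Via the equivalence $\idl(\modu_R(\cmon(\mcs)))\simeq\op{SIdl}(\pi_0R)$ from the preceding proposition, $I\subseteq R$ is a categorical ideal of $\modu_R(\cmon(\mcs))$, that is, a monomorphism $I\to R$ of $R$-modules. Concretely, $I$ is a union of path components of $R$ corresponding to a semiring ideal of $\pi_0R$.

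Second, I verify quotientability, which is the main obstacle. The preceding lemma says that $I\to 0$ is an epimorphism in $\cmon(\mcs)$ exactly when $I$ has trivial $\mathbb{E}_1$-group completion, and this is our hypothesis. But \cref{quotidl} needs the epimorphism in $\modu_R(\cmon(\mcs))$, so I must transfer it along the forgetful functor $U\colon\modu_R(\cmon(\mcs))\to\cmon(\mcs)$. The argument uses the pushout criterion. A map $X\to 0$ is an epimorphism iff the fold map $0\sqcup_X 0\to 0$ is an equivalence, i.e.\ iff the cofiber $0/X=\Sigma X$ vanishes. Now $U$ preserves colimits, since it has both adjoints, namely $R\otimes-$ and the coinduction $\uHom(R,-)$. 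It is also conservative. Hence $\Sigma_{\modu_R}I$ has underlying object $\Sigma_{\cmon(\mcs)}I\simeq BI\simeq 0$, so $\Sigma_{\modu_R}I\simeq 0$. Therefore $I\to 0$ is an epimorphism in $\modu_R(\cmon(\mcs))$ as well.

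Third, I apply \cref{quotidl} to $R$ and the quotientable ideal $I$. This produces the commutative $R$-algebra $R/I=\coker(I\to R)$, computed in $\modu_R(\cmon(\mcs))$, together with the map $R\to R/I$ in $\calg(\cmon(\mcs))$. It also gives the required property: $\mapp(R/I,S)\to\mapp(R,S)$ is $(-1)$-truncated, and its image on $\pi_0$ consists of the maps whose composite $I\to R\to S$ is null in $\modu_R(\cmon(\mcs))$. This is exactly the statement. As a side remark, the underlying $\einf$-space of $R/I$ is $\coker(I\to R)$ computed in $\cmon(\mcs)$, again because $U$ preserves colimits.
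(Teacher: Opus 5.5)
Your proposal is correct and follows the paper's proof: the paper also uses the preceding lemma to see that $I\to 0$ is an epimorphism in $\cmon(\mcs)$, and then applies \cref{quotidl}. Your extra step is a correct detail the paper leaves implicit. It transfers this epimorphism to $\modu_R(\cmon(\mcs))$ via the conservative, colimit-preserving forgetful functor, which is needed because \cref{quotidl} asks for $I$ to be quotientable in $\modu_R(\cmon(\mcs))$ itself.
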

\begin{proof}
Since the $\mathbb{E}_1$-group completion of $I$ is trivial, the previous lemma shows that the map $I\to 0$ is an epimorphism in $\cmon(\mcs)$. We can then apply \cref{quotidl}.
\end{proof}

\begin{thm}[Quotient unital $\einf$-semiring]\label{thm:quotient-unital-einf-semiring}
Let $R\in\calg(\cmon(\mcs)_\Sigma)$ be a unital $\einf$-semiring and $I\subseteq R$ be an element in $ \idl(\modu_{R}(\cmon(\mcs)_\Sigma))\simeq \op{SIdl}(\pi_0R)$. Then there exists a unital $\einf$-semiring $R/I$ and an $\einf$-semiring map  $R\to R/I$ satisfying the following universal property: For any unital $\einf$-semiring $S$, the induced map $$\mapp_{\calg(\cmon(\mcs))}(R/I,S)\to \mapp_{\calg(\cmon(\mcs))}(R,S)$$ is a $(-1)$-truncated map whose image on $\pi_0$ consists those maps $R\to S$ such that the composition $I\to R\to S$ is null in $\modu_R(\cmon(\mcs)_\Sigma)$. Note that in this case the composition $I\to R\to S$ is null in $\modu_R(\cmon(\mcs)_\Sigma)$ if and only if the composition $\pi_0I\to \pi_0R\to \pi_0S$ is zero of $\pi_0R$-semimodules.
\end{thm}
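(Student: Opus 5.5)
The plan is to reduce the statement to \cref{quotidl} applied in $\mcc=\cmon(\mcs)_\Sigma$. This category is a pointed presentably symmetric monoidal $\infty$-category: the $\Sigma$-trivial mode $\mcs_{*,\Sigma}$ makes the $\Sigma$-trivialization a symmetric monoidal localization, so $\cmon(\mcs)_\Sigma^\otimes\in\calg(\prl_*)$. By construction (\cref{thm:mode}, \cref{sigmacore}) it is $\Sigma$-trivial. Hence every ideal $I\subseteq R$ of $\modu_R(\cmon(\mcs)_\Sigma)$ is quotientable, i.e.\ $I\to 0$ is an epimorphism in $\cmon(\mcs)_\Sigma$.

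\cref{quotidl} then produces the commutative $R$-algebra $R/I$ and the epimorphism $R\to R/I$ in $\calg(\cmon(\mcs)_\Sigma)$. It also gives the universal property, with mapping spaces in $\calg(\cmon(\mcs)_\Sigma)$ and with nullity measured in $\modu_R(\cmon(\mcs)_\Sigma)$. The stated universal property uses $\mapp_{\calg(\cmon(\mcs))}$ for unital $S$. We transfer it using the full faithfulness of $\calg(\cmon(\mcs)_\Sigma)\subseteq\calg(\cmon(\mcs))$, which holds because the localization is symmetric monoidal and reflective. For $R,R/I,S$ all unital, the two mapping spaces therefore agree. The identification $\idl(\modu_R(\cmon(\mcs)_\Sigma))\simeq\op{SIdl}(\pi_0R)$ is the preceding proposition, part (2).

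The remaining task, which I expect to be the main obstacle, is the final sentence: for unital $S$, the composite $I\to R\to S$ is null in $\modu_R(\cmon(\mcs)_\Sigma)$ if and only if $\pi_0I\to\pi_0R\to\pi_0S$ is zero. I would first show that $\mapp_{\modu_R}(I,S)$ is controlled by $\pi_0$ in the following sense: a map $f\colon I\to S$ is null exactly when it lands in the base component of $S$. Since $S$ is unital, this base component is contractible, so $S$ sits over the base point as a subobject in which the zero map is unique.

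Concretely, consider the pullback $S_0\times_S I$, where $S_0\simeq *$ is the base component of $S$. Because $I\to R\to S$ is $R$-linear and $S_0\hookrightarrow S$ is an $R$-submodule inclusion (component inclusions are monomorphisms, as in the proof of the preceding proposition), this pullback is an $R$-submodule of $I$. Moreover $f$ factors through $0$ iff the pullback is all of $I$, since any factorization through $0$ lands in $S_0$. That is equivalent to every component of $I$ mapping to the base component of $S$, i.e.\ $\pi_0 f=0$. To make "factors through $S_0\simeq 0$" rigorous I would use that $S_0\to S$ is $(-1)$-truncated: a map into $S$ landing in $S_0$ on $\pi_0$ factors uniquely through $S_0\simeq0$ in $\modu_R$, by the $n=-1$ case of \cite[Lemma 5.5.6.14]{htt} applied in $\modu_R$. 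The converse direction, that null implies $\pi_0$-zero, is immediate.
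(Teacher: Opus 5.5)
Your proposal is correct and follows the paper's proof, which simply invokes \cref{quotidl} for the $\Sigma$-trivial category $\cmon(\mcs)_\Sigma$ and attributes the final $\pi_0$ criterion to the contractibility of the base component of $S$. You also supply two details the paper leaves implicit: the fullness of $\calg(\cmon(\mcs)_\Sigma)\subseteq\calg(\cmon(\mcs))$, needed to match the mapping spaces in the statement, and the pullback along the monomorphism $0\simeq S_0\hookrightarrow S$, computed on underlying spaces, which makes the $\pi_0$ criterion precise (it is this pullback argument, not \cite[Lemma 5.5.6.14]{htt}, that actually yields the factorization).
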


\begin{proof}
The first statement is a direct consequence of \cref{quotidl} since $\cmon(\mcs)_\Sigma$ is by definition $\Sigma$-trivial. The last statement follows from the assumption that the component of $S$ containing 0 is contractible.
\end{proof}

\subsection{$\Sigma$-exact $\infty$-categories}
In commutative algebra, there is a one-to-one correspondence between the ideals of a commutative ring $R$ and the epimorphisms out of $R$. Our next goal is to establish such a correspondence in our setting.

\begin{de}Let $\mc{C}$ be a $\Sigma$-trivial  $\infty$-category which admits finite colimits and finite limits.
\enu{
\item Let $f: X\to Y$ be a morphism in $\mcc$. We say that $f$ is a \textbf{normal monomorphism} if it is the kernel of some morphism, a \textbf{normal epimorphism} if it is the cokernel of some morphism.
\item We say that $\mcc$ is \textbf{left $\Sigma$-exact} if every monomorphism in $\mcc$ is normal, \textbf{right $\Sigma$-exact} if every epimorphism in $\mcc$ is normal, and \textbf{$\Sigma$-exact} if it is both left and right $\Sigma$-exact.
}
\end{de}

\begin{prop}\label{kerofcoker}
Let $\mc{C}$ be a $\Sigma$-trivial $\infty$-category which admits finite colimits and finite limits. Let $f: X\to Y$ be a morphism in $\mcc$. Then
\begin{enumerate}
\item $f$ is a normal monomorphism if and only if $X$ is the kernel of the map $Y\to \coker(f)$;
\item $f$ is a normal epimorphism if and only if $Y$ is the cokernel of the map $\ker(f)\to X$.
\end{enumerate}
\end{prop}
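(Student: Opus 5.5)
Both parts are formal. The key input is the remark that in a $\Sigma$-trivial category a ``triangle'' carries no extra data: for any composable $g\circ h$, the space of nullhomotopies $g h\simeq 0$ is either empty or contractible, since $\Omega\mapp_{\mcc}(-,-)\simeq\mapp_{\mcc}(\Sigma(-),-)\simeq *$. So being a kernel or cokernel is a property of the underlying pair of maps, and I will use this throughout.

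For (1), the ``if'' direction holds by definition: if $X\simeq\ker(Y\to\coker(f))$ via $f$, then $f$ is a kernel. For ``only if'', suppose $f\simeq \ker(g)$ for some $g\colon Y\to Z$, and let $q\colon Y\to\coker(f)$ be the canonical map. Since $g f\simeq 0$, the universal property of the cofiber gives a factorization $g\simeq \bar g\circ q$ with $\bar g\colon \coker(f)\to Z$. Since $q f\simeq 0$ (with the canonical nullhomotopy), $f$ factors through $\ker(q)$, and we get comparison maps
\[
X\to \ker(q)\to \ker(g)\simeq X,
\]
where the second map is induced by $\bar g$: any $T\to Y$ killed by $q$ is killed by $\bar g q\simeq g$. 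I then check both composites are the identity via universal properties. The composite $X\to X$ is a map over $Y$, and $\ker(g)\to Y$ is a monomorphism (a kernel, hence a pullback of $0\to Z$, which is mono by $\Omega$-triviality of $\mcc$). Here I need $\Omega$-triviality, which follows from \cref{sigtriveqshtriv}(3) because $\mcc$ has fibers and cofibers. Hence the composite is the identity. For the other composite $\ker(q)\to X\to\ker(q)$, the same argument applies, since $\ker(q)\to Y$ is also mono. Thus $X\simeq\ker(q)$ compatibly with $f$.

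Part (2) is formally dual: replace kernels by cokernels and monomorphisms by epimorphisms, and use $\Sigma$-triviality directly, so that $Z\to\coker(g)$ is an epimorphism as the pushout of $Z\to 0$.

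The main obstacle I expect is the coherence bookkeeping: the factorization $g\simeq\bar g q$ and the induced kernel maps in principle depend on chosen nullhomotopies. The contractibility of nullhomotopy spaces (the triangle remark), together with the mono/epi property of the structure maps, is exactly what makes all these choices irrelevant. So the write-up should invoke this remark explicitly at each factorization step.
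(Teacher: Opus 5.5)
Your proposal is correct and follows essentially the same route as the paper. The paper also builds comparison maps $X\to\ker(\coker f)\to X$ over $Y$ from the factorization $g\simeq\bar g\circ q$, concludes because both objects are $(-1)$-truncated over $Y$, and handles (2) dually; your explicit use of $\Omega$-triviality (\cref{sigtriveqshtriv}(3)) to see that $\ker(q)\to Y$ is a monomorphism spells out a step the paper leaves implicit.
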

\begin{proof}
We will only prove (1) since the proof of (2) is similar. The ``if'' part is clear. For the ``only if'' part, suppose that $f: X\to Y$ be a normal monomorphism such that $X=\ker(g)$ for some morphism $g:Y\to Z$. We need to show $X\simeq\ker(\coker(f))$.
The evident commutative diagram
$$
\begin{tikzcd}
X \arrow[rd] \arrow[d, dashed]              &                                            &                             \\
\ker(\coker(f)) \arrow[r] \arrow[d, dashed] & Y \arrow[r] \arrow[d,  Rightarrow, no head] & \coker(f) \arrow[d, dashed] \\
X \arrow[r]                                 & Y \arrow[r]                                & Z                          
\end{tikzcd}$$
asserts that $X$ is a retract of $\ker(\coker(f))$ over $Y$. This implies $X\simeq\ker(\coker(f))$, since both are $(-1)$-truncated over $Y$.
\end{proof}
\begin{cor}
Let $\mc{C}^\otimes\in\calg(\prl_{\Sigma})$ be a presentably symmetric monoidal $\Sigma$-trivial $\infty$-category. If $\mcc$ is left (resp. right) $\Sigma$-exact, then so is $\modu_{R}(\mcc)$ for any $R\in\calg(\mcc)$.
\end{cor}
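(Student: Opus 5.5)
The plan is to use the forgetful functor $U\colon \modu_R(\mcc)\to\mcc$. It is conservative and preserves all small limits and colimits, since $\modu_R(\mcc)$ is computed as modules over a commutative algebra in a presentably symmetric monoidal category. Before anything else, $\modu_R(\mcc)$ must itself be $\Sigma$-trivial, so that the notions of normality and $\Sigma$-exactness make sense there. This is immediate: $U$ commutes with suspension, so $U(\Sigma M)\simeq\Sigma U(M)\simeq 0$, and conservativity gives $\Sigma M\simeq 0$. Kernels and cokernels in $\modu_R(\mcc)$ are likewise computed underlying in $\mcc$.

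The key step is a transfer lemma: a morphism $f$ in $\modu_R(\mcc)$ is a monomorphism (resp. epimorphism) if and only if $U(f)$ is one in $\mcc$. For monomorphisms, $f\colon M\to N$ is mono exactly when the diagonal $M\to M\times_N M$ is an equivalence. Since $U$ preserves pullbacks and is conservative, this condition holds in $\modu_R(\mcc)$ iff it holds in $\mcc$. For epimorphisms, use the dual criterion with the codiagonal $N\amalg_M N\to N$; $U$ preserves pushouts because it preserves all colimits. So epis are also detected and preserved by $U$.

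Now assume $\mcc$ is left $\Sigma$-exact, and let $f\colon M\to N$ be a monomorphism in $\modu_R(\mcc)$. Then $U(f)$ is a monomorphism in $\mcc$, hence normal. By \cref{kerofcoker}(1), this means $U(M)\simeq\ker(U(N)\to\coker U(f))$. Because $U$ preserves cokernels and kernels, this equivalence says the canonical comparison map $M\to\ker(N\to\coker f)$ in $\modu_R(\mcc)$ becomes an equivalence after applying $U$. Conservativity then makes it an equivalence, so $f$ is normal by \cref{kerofcoker}(1) applied in $\modu_R(\mcc)$. The right $\Sigma$-exact case is identical, using the epimorphism transfer and \cref{kerofcoker}(2).

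The main point requiring care is that the comparison map really is the image under $U$ of the canonical map formed in $\modu_R(\mcc)$. This holds because $U$ preserves the relevant finite limits and colimits, so the canonical comparison is compatible with $U$. The rest of the argument is formal.
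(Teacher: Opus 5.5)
Your proposal is correct and is essentially the paper's argument. The paper's proof is the single observation that the forgetful functor $\modu_R(\mcc)\to\mcc$ creates limits and colimits, and you have spelled out what this gives: $\Sigma$-triviality of $\modu_R(\mcc)$, detection of monomorphisms and epimorphisms via (co)diagonals, and transfer of the canonical kernel--cokernel comparison maps of \cref{kerofcoker} by conservativity.
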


\begin{proof}
It suffices to observe that the forgetful functor $\modu_{R}(\mcc)\to\mcc$ creates colimits and limits.
\end{proof}
\begin{lem}\label{lem:satura}
Let $\mc{C}$ be a pointed $\infty$-category which admits finite colimits and finite limits. Then
\enu{
\item The cokernel of any epimorphism is zero.
\item The kernel of any monomorphism is zero.
\item If $X\to Y$ is an epimorphism in $\mcc$, then for any map $Y\to Z$, we have
\[
\coker(X\to Z)\simeq \coker(Y\to Z).
\]
\item If $X\to Y$ is a monomorphism in $\mcc$, then for any map $A\to X$, we have
\[
\ker(A\to X)\simeq \ker(A\to Y).
\]
}
\end{lem}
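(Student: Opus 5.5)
The plan is to reduce all four statements to the definitions of epimorphism and monomorphism in an $\infty$-category, together with the pasting law for pushouts and pullbacks. Parts (2) and (4) are formally dual to (1) and (3): pass to $\mcc^{\op{op}}$, which is again pointed with finite limits and colimits. There, kernels become cokernels and monomorphisms become epimorphisms. So I would prove only (1) and (3) and deduce the others by duality.

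For (1), recall that $f\colon X\to Y$ is an epimorphism if and only if the square with $f$ on two sides and $\id_Y$ on the other two is a pushout; equivalently, the codiagonal $Y\sqcup_X Y\to Y$ is an equivalence. To compute $\coker(f)=Y\sqcup_X 0$, I would factor the map $X\to 0$ as $X\xrightarrow{f}Y\to 0$. Then paste the square
\[
\begin{tikzcd}
X \arrow[r,"f"] \arrow[d,"f"'] & Y \arrow[d,"\id"] \\
Y \arrow[r,"\id"] & Y
\end{tikzcd}
\]
(a pushout, since $f$ is epi) with the pushout of $Y\to 0$ along $\id_Y$, which is trivially $0$. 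By pasting, $Y\sqcup_X 0\simeq 0\sqcup_Y Y$, and this is $0$ because pushing out along an identity does nothing. Concretely, $\coker(f)=Y\sqcup_X 0\simeq Y\sqcup_Y 0\simeq 0$, where the first equivalence uses that $f$ is an epimorphism. This is the statement that precomposition with $f$ induces an equivalence of the relevant diagram categories.

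For (3), write $\coker(X\to Z)=Z\sqcup_X 0$ and factor the map $X\to 0$ as $X\to Y\to 0$. By pasting pushouts,
\[
Z\sqcup_X 0\simeq Z\sqcup_Y (Y\sqcup_X 0)\simeq Z\sqcup_Y\coker(X\to Y)\simeq Z\sqcup_Y 0,
\]
using (1) at the last step. This is compatible with the canonical map $\coker(X\to Z)\to\coker(Y\to Z)$, so that map is an equivalence.

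The only point requiring care is the rewriting $Z\sqcup_X 0\simeq Z\sqcup_Y(Y\sqcup_X 0)$. It is just the pasting lemma applied to the composite $X\to Y\to Z$, with the corner $0$ receiving $X$ via $X\to Y\to 0$ and $Y\sqcup_X 0\simeq 0$. The maps involved are all essentially unique (maps into $0$ form a contractible space), so no coherence issues arise. I therefore expect no real obstacle; the hardest step is verifying (1) via the pushout characterization of epimorphisms, namely that the square above is cocartesian, and then invoking pasting.
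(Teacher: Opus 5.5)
Your proof is correct and follows essentially the same route as the paper. Part (1) stacks the cocartesian square witnessing that $f$ is an epimorphism on top of the trivial pushout of $Y\to 0$ along $\id_Y$; part (3) pastes that square with the cokernel square of $Y\to Z$; parts (2) and (4) follow by duality. Your aside that precomposition with $f$ ``induces an equivalence of the relevant diagram categories'' is imprecise, since an epimorphism only makes precomposition $(-1)$-truncated on mapping spaces, but the pasting argument does not use it.
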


\begin{proof}
For part (1), let $X \to Y$ be an epimorphism so we have the following two pushout squares
$$
\begin{tikzcd}
X \arrow[d] \arrow[r] & Y \arrow[d, equals] \\
Y \arrow[r, equals] \arrow[d] & Y \arrow[d] \\
0 \arrow[r]           & 0  .        
\end{tikzcd}
$$
Therefore, the outer square is also a pushout, that is, $\coker(X\to Y) = 0$. For part (3), it suffices to show that the left square in the diagram
$$
\begin{tikzcd}
X \arrow[d] \arrow[r] & Y \arrow[d] \arrow[r] & Z \arrow[d] \\
0 \arrow[r]           & 0 \arrow[r]           & Z/Y        
\end{tikzcd}
$$
is a pushout. This follows from the diagram in (1). Parts (2) and (4) follow from (1) and (3) by symmetry.
\end{proof}

\begin{prop}
Let $\mcc$ be a $\Sigma$-exact $\infty$-category. Then a morphism $f$ in $\mcc$ is an equivalence if and only if it is both a monomorphism and an epimorphism.
\end{prop}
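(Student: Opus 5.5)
The forward direction is immediate, since an equivalence is both a monomorphism and an epimorphism. For the converse, let $f\colon X\to Y$ be both a monomorphism and an epimorphism in the $\Sigma$-exact $\infty$-category $\mcc$. The plan is to use left $\Sigma$-exactness to write $f$ as a kernel, and then use \cref{lem:satura} to show that the relevant cokernel vanishes.

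Since $\mcc$ is left $\Sigma$-exact, $f$ is a normal monomorphism. By \cref{kerofcoker}(1), this means
\[
X\simeq \ker\bigl(Y\to \coker(f)\bigr),
\]
with $f$ identified with the canonical map. Since $f$ is an epimorphism, \cref{lem:satura}(1) gives $\coker(f)\simeq 0$. Therefore
\[
X\simeq \ker(Y\to 0)\simeq Y,
\]
and under this identification $f$ becomes the canonical map $\ker(Y\to 0)\to Y$. This map is an equivalence because the pullback of $0\to 0$ along $Y\to 0$ is $Y$. Hence $f$ is an equivalence.

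The one step needing care is that \cref{kerofcoker}(1) identifies not only the objects but also the map $f$ with the structure map of the kernel. Its proof does give this, since it exhibits $X$ as equivalent to $\ker(\coker(f))$ over $Y$. In fact only left $\Sigma$-exactness is used here. Alternatively, one could argue dually: right $\Sigma$-exactness makes $f$ a normal epimorphism, and \cref{kerofcoker}(2) together with \cref{lem:satura}(2) gives $Y\simeq\coker(0\to X)\simeq X$.
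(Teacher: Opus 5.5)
Your proof is correct and is essentially the paper's argument, which simply invokes $\Sigma$-exactness together with \cref{lem:satura}: $f$ is the kernel of its cokernel, and that cokernel vanishes because $f$ is an epimorphism. Your observation that left $\Sigma$-exactness alone suffices (or, dually, right $\Sigma$-exactness alone) is also correct.
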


\begin{proof}
It follows directly from the definition of $\Sigma$-exact and \cref{lem:satura}.
\end{proof}

\begin{de}\label{def:c-s-mono-epi}
Let $\mc{C}$ be a pointed $\infty$-category which admits finite colimits and finite limits. We denote by $\mcc^{\Delta^1\times\Delta^1}_{\op{cof-fib}}$ the full subcategory of $\mcc^{\Delta^1\times\Delta^1}$ spanned by all fiber-cofiber diagrams
$$
\begin{tikzcd}
X_{00} \arrow[d] \arrow[r] & X_{10} \arrow[d] \\
X_{01}=0 \arrow[r]           & X_{11}        
\end{tikzcd}
$$in $\mcc$. We define:
\begin{enumerate}
\item $\mcc^{\op{mono}}\coloneqq$ the full subcategory of $\mcc^{\Delta^1}$ spanned by monomorphisms;
\item $\mcc^{\op{n-mono}}\coloneqq$ the full subcategory of $\mcc^{\Delta^1}$ spanned by normal monomorphisms;
\item $\mcc^{\op{epi}}\coloneqq$ the full subcategory of $\mcc^{\Delta^1}$ spanned by epimorphisms;
\item $\mcc^{\op{n-epi}}\coloneqq$ the full subcategory of $\mcc^{\Delta^1}$ spanned by normal epimorphisms.
\end{enumerate}
Taking kernels and cokernels induces the following two functors over $\mcc$:
$$%
\begin{tikzcd}
\mcc^{\op{mono}} \arrow[rr, "\coker"] \arrow[rd, "ev_1"] &      & \mcc^{\Delta^1\times\Delta^1}_{\op{cof-fib}} \arrow[ld, "ev_{10}"'] \\
& \mcc &                                                                    
\end{tikzcd}
\text{  and  }
\begin{tikzcd}
\mcc^{\op{epi}} \arrow[rr, "\ker"] \arrow[rd, "ev_0"] &      & \mcc^{\Delta^1\times\Delta^1}_{\op{cof-fib}} \arrow[ld, "ev_{10}"'] \\
& \mcc &                                                                    
\end{tikzcd}.$$
\end{de}

\begin{prop}\label{prop:c-s-mono-epi}
Let $\mc{C}$ be a $\Sigma$-trivial  $\infty$-category which admits finite colimits and finite limits. The projections  
$$
\begin{tikzcd}
\mcc^{\op{mono}} \arrow[rd, "ev_1"'] & \mcc^{\Delta^1\times\Delta^1}_{\op{cof-fib}} \arrow[l, "ev_{00\to10}"'] \arrow[r, "ev_{10\to11}"] \arrow[d, "ev_{10}"] & \mcc^{\op{epi}} \arrow[ld, "ev_0"] \\
& \mcc                                                                                                                   &                                   
\end{tikzcd}
$$
restrict to the equivalences
$$
\mcc^{\op{n-mono}}\xleftarrow{\sim}\mcc^{\Delta^1\times\Delta^1}_{\op{cof-fib}}\xrightarrow{\sim}\mcc^{\op{n-epi}}
$$
whose inverses are given by $\coker$ and $\ker$, respectively.
\end{prop}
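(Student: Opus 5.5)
We will show that the two evaluation functors restrict to inverse equivalences, using \cref{kerofcoker} and the uniqueness of nullhomotopies in a $\Sigma$-trivial category. The first step is to check that both functors land in the normal part. Take a fiber-cofiber square with $X_{01}=0$. It is a pullback, so $X_{00}\to X_{10}$ is the kernel of $X_{10}\to X_{11}$ and is therefore a normal monomorphism. It is also a pushout, so $X_{10}\to X_{11}$ is the cokernel of $X_{00}\to X_{10}$ and is therefore a normal epimorphism. Thus $ev_{00\to10}$ lands in $\mcc^{\op{n-mono}}$ and $ev_{10\to11}$ lands in $\mcc^{\op{n-epi}}$.

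Next we identify the data in a fiber-cofiber square. Since $X_{01}=0$, such a square amounts to a composable pair $X_{00}\to X_{10}\to X_{11}$ together with a nullhomotopy of the composite. By the remark on triangles in $\Sigma$-trivial categories, the space of these nullhomotopies is contractible, because $\Omega\mapp(X_{00},X_{11})\simeq\mapp(\Sigma X_{00},X_{11})\simeq *$. We then argue by left Kan extension in the standard way (\cite[Proposition 4.3.2.15]{htt}). The functor $\coker\colon\mcc^{\op{mono}}\to\mcc^{\Delta^1\times\Delta^1}$, which adjoins $0$ and forms the pushout, is an equivalence onto the full subcategory of squares that are pushouts with $X_{01}=0$. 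Its inverse is $ev_{00\to10}$. Dually, $\ker$ identifies $\mcc^{\op{epi}}$ with the pullback squares with $X_{01}=0$.

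It remains to compare these subcategories with fiber-cofiber squares. Let $f$ be a normal monomorphism. By \cref{kerofcoker}(1), $X\simeq\ker(Y\to\coker f)$, so the cokernel square of $f$ is also a pullback, i.e.\ it lies in $\mcc^{\Delta^1\times\Delta^1}_{\op{cof-fib}}$. Hence $\coker$ restricted to $\mcc^{\op{n-mono}}$ lands in $\mcc^{\Delta^1\times\Delta^1}_{\op{cof-fib}}$. Conversely, every fiber-cofiber square is the cokernel square of its top edge, which by the first step is a normal monomorphism. The two functors are therefore mutually inverse equivalences between $\mcc^{\op{n-mono}}$ and $\mcc^{\Delta^1\times\Delta^1}_{\op{cof-fib}}$. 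The statement for normal epimorphisms follows symmetrically from \cref{kerofcoker}(2), using $\ker$.

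The main obstacle is making the Kan extension and full faithfulness argument precise. One must make sure the comparison of the square with its underlying composable pair (plus a contractible choice) holds at the level of $\infty$-categories, not just objectwise. Two points need care. First, the map $X\to\ker(\coker f)$ from \cref{kerofcoker} must be the canonical one. This holds because both objects are subobjects of $Y$, so any map between them over $Y$ is unique. Second, the compatibility of $ev_1$ and $ev_{10}$ over $\mcc$ must be checked. This is immediate from the construction.
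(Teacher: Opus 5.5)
Your proposal is correct and is essentially the paper's argument: the paper just says the statement is immediate from the $\coker$/$\ker$ functors of \cref{def:c-s-mono-epi} together with \cref{kerofcoker}. You spell out exactly those two ingredients, namely the Kan-extension identification of squares with $X_{01}=0$ with arrows, and the fact that a normal monomorphism's cokernel square is also a pullback (and dually), so the extra remark about contractible nullhomotopies is harmless but not needed.
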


\begin{proof}
This is immediate by \cref{def:c-s-mono-epi} and \cref{kerofcoker}.
\end{proof}

\begin{rem}
By \cref{prop:c-s-mono-epi}, $\mcc$ is left $\Sigma$-exact if and only if the left projection
\begin{equation}\label{eq:c-mono}
\mcc^{\op{mono}}\xleftarrow{ev_{00\to10}}\mcc^{\Delta^1\times\Delta^1}_{\op{cof-fib}}    
\end{equation}
is an equivalence. Similarly, $\mcc$ is right $\Sigma$-exact if and only if the right projection
\begin{equation}\label{eq:c-epi}
\mcc^{\Delta^1\times\Delta^1}_{\op{cof-fib}}\xrightarrow{ev_{10\to11}}\mcc^{\op{epi}}
\end{equation}
is an equivalence.
\end{rem}

\begin{cor}\label{monoepiequi}
Let $\mcc$ be a $\Sigma$-trivial $\infty$-category which admits finite colimits and finite limits. Then for any object $X\in\mcc$ the equivalences \eqref{eq:c-mono} and \eqref{eq:c-epi} restrict to the equivalences on fibers over $X$:
$$\mcc_{/X}^{\op{n-mono}}\xleftarrow{\sim}\mcc^{\Delta^1\times\Delta^1}_{\op{cof-fib}, X}\xrightarrow{\sim}\mcc_{X/}^{\op{n-epi}} $$
where  $\mcc_{/X}^{\op{n-mono}}\subseteq \mcc_{/X}$ denotes the full subcategory of normal monomorphisms to $X$ and $\mcc_{X/}^{\op{epi}}\subseteq \mcc_{X/}$ denotes the full subcategory of normal epimorphisms from $X$.
\end{cor}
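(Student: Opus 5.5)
The plan is to obtain the statement from \cref{prop:c-s-mono-epi} by passing to fibers over $X$. Both equivalences there are compatible with the projections to $\mcc$. On the left, $ev_{00\to10}$ followed by $ev_1$ equals $ev_{10}$. On the right, $ev_{10\to11}$ followed by $ev_0$ also equals $ev_{10}$. So all three categories $\mcc^{\op{n-mono}}$, $\mcc^{\Delta^1\times\Delta^1}_{\op{cof-fib}}$ and $\mcc^{\op{n-epi}}$ are categories over $\mcc$, and the two maps are equivalences over $\mcc$. An equivalence of categories over $\mcc$ induces an equivalence on fibers over any object $X$, since taking fibers is a pullback along $\{X\}\to\mcc$. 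Doing this gives equivalences
\[
(\mcc^{\op{n-mono}})_X\xleftarrow{\sim}\mcc^{\Delta^1\times\Delta^1}_{\op{cof-fib},X}\xrightarrow{\sim}(\mcc^{\op{n-epi}})_X .
\]

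It then remains to identify the fibers of the evaluation functors with the stated slice categories. The fiber of $ev_1\colon\mcc^{\Delta^1}\to\mcc$ over $X$ is equivalent to $\mcc_{/X}$. Likewise, the fiber of $ev_0$ over $X$ is equivalent to $\mcc_{X/}$. Both are standard facts: $ev_1$ is a cartesian fibration whose fibers model the slice, by \cite[\S2.1 and Proposition 4.2.1.5]{htt} or the corresponding statement in \cite{kerodon}. Normality of a morphism is a property of the object of $\mcc^{\Delta^1}$, so the fiber of the full subcategory $\mcc^{\op{n-mono}}$ is the full subcategory $\mcc^{\op{n-mono}}_{/X}\subseteq\mcc_{/X}$. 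Similarly, the fiber of $\mcc^{\op{n-epi}}$ is $\mcc^{\op{n-epi}}_{X/}$.

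The only point needing care is this fiber identification. Taken strictly, the fiber of $ev_1$ is the category of arrows into $X$ with morphisms whose target component is the identity, and this agrees with $\mcc_{/X}$ only up to the usual equivalence between the fat and thin slice models. I would invoke that equivalence, or alternatively take homotopy fibers (pullback along $\{X\}\to\mcc$ in $\Cat_\infty$), where the equivalence is immediate. Finally, I would record that the inverses on fibers are still $\coker$ and $\ker$, since these commute with the projections to $\mcc$.
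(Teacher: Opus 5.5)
Your proposal is correct and is essentially the intended argument: the paper gives this corollary no separate proof, treating it as the restriction of the equivalences of \cref{prop:c-s-mono-epi}, which are compatible with the projections $ev_1$, $ev_{10}$, $ev_0$ to $\mcc$, to the fibers over $X$, with those fibers identified with $\mcc_{/X}$ and $\mcc_{X/}$. Your remark about working with homotopy fibers, or equivalently using that the evaluation maps are categorical fibrations, covers the only technical point; that $ev_1$ is a cartesian fibration (true here because $\mcc$ has pullbacks) is not actually needed.
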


\begin{lem}\label{epiidem}
Let $\mcc^\otimes\in\calg(\prl_{\Sigma})$ be a presentably symmetric monoidal $\Sigma$-trivial $\infty$-category. If $f:\mb{1}\to S$ is a normal epimorphism in $\mcc$, then $\mb{1}\to S$ is idempotent.
\end{lem}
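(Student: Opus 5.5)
The plan is to use the cofiber presentation of $S$ coming from normality, tensor it with $S$, and show that the resulting cofiber sequence has a zero first map. \cref{cokerof0} then identifies $S\simeq S\otimes\mb{1}\to S\otimes S$ as an equivalence, which is what it means for $\mb{1}\to S$ to be idempotent.

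First, set $I\coloneqq\ker(f)$. Since $f$ is a normal epimorphism, \cref{kerofcoker}(2) gives a cofiber sequence $I\to\mb{1}\xrightarrow{f}S$, that is, $S\simeq\mb{1}/I$.

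Second, tensor this sequence with $S$ and with $I$. Since $\otimes$ preserves colimits separately in each variable, we get two pushout squares:
\[
S\otimes I\to S\to S\otimes S
\qquad\text{and}\qquad
I\otimes I\to I\to I\otimes S .
\]
In the second square, $I\to I\otimes S$ is the cobase change of $I\otimes I\to 0$. Because $\mcc$ is $\Sigma$-trivial, $I\otimes I\to 0$ is an epimorphism, and epimorphisms are stable under pushout. Hence $I\simeq I\otimes\mb{1}\to I\otimes S$ is an epimorphism.

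Third, show that $g\colon I\otimes S\to \mb{1}\otimes S\simeq S$ is zero. Precomposing $g$ with the epimorphism $I\otimes\mb{1}\to I\otimes S$ gives the map $I\otimes\mb{1}\to\mb{1}\otimes\mb{1}\to\mb{1}\otimes S$ by functoriality of $\otimes$. This is the composite $I\to\mb{1}\to S$, which is null because it is a cofiber sequence. An epimorphism $e$ makes $e^*\colon\mapp(I\otimes S,S)\to\mapp(I,S)$ $(-1)$-truncated, so $g$ and $0$ become homotopic after precomposition and are therefore homotopic. Via the symmetry $S\otimes I\simeq I\otimes S$, the map $S\otimes I\to S$ in the first sequence is zero.

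Finally, $S\otimes I\to 0$ is an epimorphism by $\Sigma$-triviality. So \cref{cokerof0}, applied to the zero map $S\otimes I\to S$, shows that $S\to S/(S\otimes I)\simeq S\otimes S$ is an equivalence. This map is $S\otimes f$, so $f$ is idempotent.

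The step that needs the most care is the third: identifying the composite with $I\to\mb{1}\to S$ up to the unit and symmetry isomorphisms. The homotopy-coherence worry is harmless here, because $\Sigma$-triviality makes nullhomotopies unique and epimorphisms detect null maps.
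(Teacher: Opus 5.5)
Your proof is correct and is essentially the paper's argument. The paper deduces the lemma from the proof of \cref{quotidl} with $R=\mb{1}$, which likewise shows that $S\otimes I\to S$ is null by precomposing with the epimorphism $I\to S\otimes I$, and then applies \cref{cokerof0} to conclude that $S\simeq S\otimes\mb{1}\to S\otimes S$ is an equivalence; the only cosmetic difference is that the paper gets the epimorphism $I\to S\otimes I$ by tensoring the epimorphism $\mb{1}\to S$ with $I$, whereas you obtain it as a cobase change of $I\otimes I\to 0$.
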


\begin{proof}
Since $\mcc$ is $\Sigma$-trivial, $\ker(f)$ is quotientable. Moreover, $S\simeq \mb{1}/\ker(f)$ since $f$ is normal. The result then follows from the proof of \cref{quotidl}.
\end{proof}

\begin{thm}[Ideal-epimorphism correspondence]\label{thm:ideal-epi-iden}
Let $\mcc^\otimes\in\calg(\prl_{\Sigma})$ be a presentably symmetric monoidal $\Sigma$-trivial $\infty$-category. Then there exist natural equivalences $$\idlc^n\xrightarrow{\sim}\mcc_{\mb{1}/}^{\op{n-epi}}\xleftarrow{\sim}\calg(\mcc)^n$$ where $\idlc^n\subseteq \idlc$ denotes the full subcategory of normal ideals and $\calg(\mcc)^n\subseteq\calg(\mcc)$ denotes the full subcategory spanned by the $\einf$-algebras $S$ such that $\mb{1}\to S$ is a normal epimorphism in $\mcc$.
\end{thm}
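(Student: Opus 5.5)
The first equivalence is obtained by specializing \cref{monoepiequi} to $X=\mb{1}$. A normal ideal is exactly an object of $\mcc_{/\mb{1}}^{\op{n-mono}}$, since normal monomorphisms are in particular monomorphisms. \cref{monoepiequi} then gives $\idlc^n\simeq\mcc^{\Delta^1\times\Delta^1}_{\op{cof-fib},\mb{1}}\simeq\mcc_{\mb{1}/}^{\op{n-epi}}$, with the composite sending $I\mapsto(\mb{1}\to\mb{1}/I)$ and the inverse sending $f\mapsto\ker(f)$. Naturality is inherited from the functoriality of $\ker$ and $\coker$ in \cref{def:c-s-mono-epi}.

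For the second equivalence, I will use the forgetful functor $U\colon\calg(\mcc)\simeq\calg(\mcc)_{\mb{1}/}\to\mcc_{\mb{1}/}$, which sends $S$ to its unit map. By definition, $U$ restricts to a functor $\calg(\mcc)^n\to\mcc_{\mb{1}/}^{\op{n-epi}}$, and the task is to show this restriction is an equivalence. The key input is \cref{epiidem}: if $\mb{1}\to S$ is a normal epimorphism, then it is an idempotent object of $\mcc_{\mb{1}/}$, meaning $S\simeq S\otimes\mb{1}\to S\otimes S$ is an equivalence. I then invoke \cite[Proposition~4.8.2.9]{ha}, which says that the forgetful functor from commutative algebras $S$ whose unit $\mb{1}\to S$ is idempotent to idempotent objects of $\mcc_{\mb{1}/}$ is an equivalence onto a full subcategory. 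This is the same mechanism behind \cref{calgepi2} as used in \cref{quotidl}. Essential surjectivity onto $\mcc_{\mb{1}/}^{\op{n-epi}}$ follows from \cref{epiidem}, since every normal epimorphism out of $\mb{1}$ is idempotent and hence lifts uniquely. Full faithfulness follows because idempotent algebras form a full subcategory on which $U$ is fully faithful: the mapping spaces $\mapp_{\calg}(S,S')$ are either contractible or empty, matching those in $\mcc_{\mb{1}/}$, since maps out of an idempotent object under $\mb{1}$ are $(-1)$-truncated. Alternatively, \cref{quotidl} gives this directly.

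I expect the main obstacle to be verifying full faithfulness of $U$ on $\calg(\mcc)^n$. One needs that for normal epimorphisms $\mb{1}\to S$ and $\mb{1}\to S'$, the space $\mapp_{\mcc_{\mb{1}/}}(S,S')$ is $(-1)$-truncated and agrees with $\mapp_{\calg(\mcc)}(S,S')$. The plan is to use the epimorphism property: since $\mb{1}\to S$ is an epimorphism, precomposition $\mapp_\mcc(S,S')\to\mapp_\mcc(\mb{1},S')$ is $(-1)$-truncated, so the fiber over the unit of $S'$ is empty or contractible. On the algebra side, \cref{quotidl}, applied with $R=\mb{1}$ and $I=\ker(\mb{1}\to S)$ (legitimate since $S\simeq\mb{1}/I$ by normality, and $I$ is quotientable by $\Sigma$-triviality), shows that $\mapp_{\calg}(S,S')\to\mapp_{\calg}(\mb{1},S')\simeq *$ is $(-1)$-truncated. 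Its image is nonempty exactly when $I\to\mb{1}\to S'$ is null, which is exactly when $\mb{1}\to S'$ factors through $\coker(I\to\mb{1})=S$ in $\mcc$. Thus the two sides are both either empty or contractible, with the same condition for nonemptiness, so the map between them is an equivalence.

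Finally, composing the two equivalences gives $\idlc^n\simeq\calg(\mcc)^n$, sending $I\mapsto\mb{1}/I$ with the algebra structure supplied by \cref{quotidl}, and sending $S\mapsto\ker(\mb{1}\to S)$ in the other direction.
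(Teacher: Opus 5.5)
Your proposal is correct and follows the paper's proof. The first equivalence is \cref{monoepiequi} at $X=\mb{1}$, and the second is the restriction of $\calg(\mcc)^{\op{idem}}\simeq\mcc_{\mb{1}/}^{\op{idem}}$ from \cite[Proposition~4.8.2.9]{ha}, using \cref{epiidem} to see that normal epimorphisms out of $\mb{1}$ are idempotent; your extra full-faithfulness check via \cref{quotidl} is consistent but redundant, since that proposition already gives it.
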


\begin{proof}
The first equivalence follows from \cref{monoepiequi}. By \cite[Proposition 4.8.2.9]{ha}, the forgetful functor $\calg(\mcc)^{\op{idem}}\xrightarrow{\sim} \mcc_{\mb{1}/}^{\op{idem}}$ is an equivalence. This restricts to the second equivalence, since normal epimorphisms from the unit $\mb{1}$ are automatically idempotent by \cref{epiidem}.
\end{proof}

\begin{cor}\label{cor:ideal-epi-iden}
Let $\mcc^\otimes\in\calg(\prl_{\Sigma})$ be a presentably symmetric monoidal $\Sigma$-trivial $\infty$-category.
\begin{enumerate}
\item If $\mcc$ is left $\Sigma$-exact then we have natural equivalences $$\idlc\xrightarrow{\sim}\mcc_{\mb{1}/}^{\op{n-epi}}\xleftarrow{\sim}\calg(\mcc)^n.$$ 
\item If $\mcc$ is $\Sigma$-exact then we have natural equivalences $$\idlc\xrightarrow{\sim}\mcc_{\mb{1}/}^{\op{epi}}\xleftarrow{\sim}\calg(\mcc)^n.$$ 
\end{enumerate}
\end{cor}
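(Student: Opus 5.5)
Both parts will be deduced from \cref{thm:ideal-epi-iden}: we only need to identify the normal subcategories with the full ones under the exactness hypotheses. Recall that $\idlc$ is the full subcategory of $\mcc_{/\mb{1}}$ spanned by monomorphisms $I\to\mb{1}$, and $\idlc^n$ is the further full subcategory of normal monomorphisms. Likewise $\mcc^{\op{n-epi}}_{\mb{1}/}\subseteq\mcc^{\op{epi}}_{\mb{1}/}$ is full. The chain of equivalences $\idlc^n\xrightarrow{\sim}\mcc_{\mb{1}/}^{\op{n-epi}}\xleftarrow{\sim}\calg(\mcc)^n$ from \cref{thm:ideal-epi-iden} holds for any $\Sigma$-trivial $\mcc^\otimes\in\calg(\prl_\Sigma)$. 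Such a $\mcc$ is presentable, so it has the finite limits and colimits required in the definition of left and right $\Sigma$-exactness.

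For part (1), left $\Sigma$-exactness means every monomorphism in $\mcc$ is normal. In particular every categorical ideal $I\hookrightarrow\mb{1}$ is a normal monomorphism, so the inclusion $\idlc^n\subseteq\idlc$ is an equality of full subcategories of $\mcc_{/\mb{1}}$. Substituting $\idlc$ for $\idlc^n$ in \cref{thm:ideal-epi-iden} gives the stated equivalences. Naturality is inherited, since we have only enlarged the domain to a subcategory that was already the whole thing.

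For part (2), $\Sigma$-exactness includes left $\Sigma$-exactness, so part (1) already applies. Right $\Sigma$-exactness says every epimorphism is normal, so $\mcc^{\op{n-epi}}_{\mb{1}/}=\mcc^{\op{epi}}_{\mb{1}/}$ as full subcategories of $\mcc_{\mb{1}/}$. Replacing the middle term in part (1) then yields $\idlc\xrightarrow{\sim}\mcc_{\mb{1}/}^{\op{epi}}\xleftarrow{\sim}\calg(\mcc)^n$. The right-hand side $\calg(\mcc)^n$ is now the same as the full subcategory of algebras $S$ for which $\mb{1}\to S$ is an epimorphism, though the statement keeps the notation $\calg(\mcc)^n$.

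There is no real obstacle here. The only point requiring care is that the identifications are equalities of full subcategories, not merely essentially surjective inclusions. Consequently the equivalences of \cref{thm:ideal-epi-iden} transport verbatim, with no new coherence issues.
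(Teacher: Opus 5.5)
Your proposal is correct and follows essentially the paper's argument. Part (1) is immediate because left $\Sigma$-exactness gives $\idlc^n=\idlc$. For part (2), the paper rephrases the right-hand equivalence as the fully faithful functor of \cref{calgepi2} becoming essentially surjective: every epimorphism $\mb{1}\to X$ is normal, hence idempotent by \cref{epiidem}. This amounts to your substitution $\mcc^{\op{n-epi}}_{\mb{1}/}=\mcc^{\op{epi}}_{\mb{1}/}$ into \cref{thm:ideal-epi-iden}, whose other inclusion, that normal epimorphisms are epimorphisms, holds since cokernels in a $\Sigma$-trivial category are epimorphisms.
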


\begin{proof}
Part (1) is immediate from the definition. For part (2), note that the fully faithful functor that appears in \cref{calgepi2} is an equivalence, since any epimorphism $\mb{1}\to X$ in $\mcc$ is idempotent by \cref{epiidem}.
\end{proof}

\begin{rem}
Our next goal is to introduce a parametrized version of \cref{thm:ideal-epi-iden}.
\end{rem}

\begin{de}
Let $\mcc^\otimes\in\calg(\prl)$. We define $\calg(\mcc^{\op{mono}})$  to be the \infcat of pairs $(R,I)$ such that $R\in \calg(\mcc)$ and $I$ is an ideal of $\modu_R(\mcc)^\otimes$. This can be viewed as a cocartesian fibration $$\calg(\mcc^{\op{mono}}) \to \calg(\mcc)$$ classifying the assignment $R\mapsto \idl(R)$. We define $\funct(\Delta^1,\calg(\mcc))^n$ to be the $\einf$-maps $A\to B$ such that the underlying map is a normal epimorphism in \mcc. 
\end{de}

\begin{rem}
In fact, there exists a symmetric monoidal structure on $\mcc^{\Delta^1}$ which makes any commutative monoid object of $\mcc^{\Delta^1}$ into a Smith ideal; see \cite[Section~3.1]{mao2021revisiting} for more detail. Then an ideal pair $(R,I)$ can be identified with a Smith ideal such that the underlying map is a monomorphism. This explains our choice of notation $\calg(\mcc^{\op{mono}})$.
\end{rem}

\begin{prop}
Let $\mcc^\otimes\in\calg(\prl_\Sigma)$ be a presentably symmetric monoidal $\Sigma$-trivial $\infty$-category. Then  the assignment $$(R,I)\mapsto (R\to R/I)$$ induces a functor $$\calg(\mcc^{\op{mono}})\xrightarrow{}\funct(\Delta^1,\calg(\mcc))^n.$$
\end{prop}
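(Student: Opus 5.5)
We need a functor $\calg(\mcc^{\op{mono}})\to\funct(\Delta^1,\calg(\mcc))^n$ sending $(R,I)\mapsto (R\to R/I)$. The plan is to build it in three stages: first at the level of underlying arrows in $\mcc$, then lifting through the equivalence of \cref{thm:ideal-epi-iden} fiberwise over $\calg(\mcc)$, and finally checking functoriality in $R$.

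\emph{Step 1: underlying arrows.} Send a pair $(R,I)$ to the arrow $I\to R$ of $\mcc$; this is a functor to $\mcc^{\op{mono}}$. Composing with the cokernel functor $\coker\colon\mcc^{\op{mono}}\to\mcc^{\Delta^1\times\Delta^1}_{\op{cof-fib}}$ of \cref{def:c-s-mono-epi} and then with $ev_{10\to11}$ gives the arrow $R\to R/I$ in $\mcc$. Cokernels computed in $\modu_R(\mcc)$ agree with those in $\mcc$, since the forgetful functor creates colimits, and by \cref{kerofcoker} the result is a normal epimorphism.

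\emph{Step 2: fiberwise lift.} Fix $R$. Since $\mcc$ is $\Sigma$-trivial, so is $\modu_R(\mcc)$ by \cref{ptsubcat}, or because $\Sigma$ is computed underlying. Thus every ideal $I$ of $\modu_R(\mcc)$ is quotientable. By \cref{quotidl}, or more functorially by \cref{epiidem} applied in $\modu_R(\mcc)$, the map $R\to R/I$ is an idempotent algebra under the unit $R$. By \cite[Proposition 4.8.2.9]{ha}, idempotent objects under the unit lift essentially uniquely to $\calg(\modu_R(\mcc))\simeq\calg(\mcc)_{R/}$. This yields, for each $R$, a functor
\[
\idl(\modu_R(\mcc))\to \calg(\mcc)_{R/}^{\op{idem}}\subseteq\calg(\mcc)_{R/},
\]
namely the composite of the maps of \cref{thm:ideal-epi-iden} restricted to all ideals. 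The forgetful image is a normal epimorphism in $\mcc$, so the result lands in $\funct(\Delta^1,\calg(\mcc))^n$ over $R$.

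\emph{Step 3: assembly over $\calg(\mcc)$.} This is the main obstacle: the fiberwise functors must be made natural in $R$. A morphism in $\calg(\mcc^{\op{mono}})$ over $f\colon R\to R'$ is a map $(R,I)\to(R',I')$, equivalently a cocartesian pushforward $\op{Im}(R'\otimes_R I)\subseteq I'$. The approach is to use that $\funct(\Delta^1,\calg(\mcc))\to\calg(\mcc)$, $ev_0$, is also a cocartesian fibration, with pushforward given by $R'\otimes_R(-)$. Then one checks that base change is compatible with the construction:
\[
R'\otimes_R(R/I)\simeq R'/(R'\otimes_R I)\simeq R'/\op{Im}(R'\otimes_R I),
\]
where the last equivalence uses \cref{lem:satura}(3). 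This holds because $R'\otimes_R I\to\op{Im}$ is an epimorphism, being an effective-epi-type image map, but more simply the proof of \cref{quotidl} shows that the cokernel depends only on maps killing $I$. For a containment $J\subseteq I'$ one gets the induced map $R'/J\to R'/I'$ in $\calg(\mcc)_{R'/}$, which is unique since $\mapp$ out of $R'/J$ is $(-1)$-truncated over $R'$. Because all relevant mapping spaces are $(-1)$-truncated by the universal property in \cref{quotidl}, the coherence data are contractible.

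This reduces the construction to a functor between posets fibered over $\calg(\mcc)$, which can be produced by straightening: the fiberwise functors above commute with the cocartesian pushforwards. The expected subtle point is precisely this identification of base change with quotienting by the image ideal, together with the contractibility of the higher coherences.
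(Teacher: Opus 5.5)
\textbf{Gap in Step 3.} Your route differs from the paper's, and the step you flag as the subtle one is justified incorrectly. You apply \cref{lem:satura}(3) on the grounds that $R'\otimes_R I\to\op{Im}(R'\otimes_R I)$ is an epimorphism. Image maps are not epimorphisms in the $\infty$-categorical sense used here. Effective-epimorphism-type maps such as $S^1\to *$ in $\mcs$ are not epimorphisms (\cref{acyc}), and the same failure occurs in the $\Sigma$-trivial setting. In $\mcs_{*,\Sigma}$, subobjects are component inclusions, so $S^1_+\to S^0$ has image all of $S^0$. Yet $S^0\sqcup_{S^1_+}S^0\simeq S^2_+\not\simeq S^0$, so this map is not an epimorphism (compare \cref{ex:Sigma-exact}(4)). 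Your fallback, that the cokernel ``depends only on maps killing $I$'', does not close the gap on its own. You would still have to show that a map out of $R'$ that kills $R'\otimes_R I$ also kills $\op{Im}(R'\otimes_R I)$.

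\textbf{The missing idea is $\Omega$-triviality.} $\modu_{R'}(\mcc)$ is $\Sigma$-trivial and has fibers and cofibers, so it is $\Omega$-trivial (\cref{sigtriveqshtriv}). Hence the kernel $K$ of $R'\to Q\coloneqq R'/(R'\otimes_R I)$ is a monomorphism, i.e.\ an ideal of $R'$. Since $R'\otimes_R I\to R'$ factors through $K$, minimality of the image gives $J\coloneqq\op{Im}(R'\otimes_R I)\subseteq K$, so $J\to R'\to Q$ is null. This yields maps $Q\to R'/J$ and $R'/J\to Q$ under $R'$. They are mutually inverse because $R'\to Q$ and $R'\to R'/J$ are cofiber maps in a $\Sigma$-trivial category, hence epimorphisms. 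By \cref{calgepi2} the equivalence is one of $R'$-algebras. More generally, in a $\Sigma$-trivial category one has $\coker(g)\simeq\coker(\op{Im}(g)\to Y)$. Also note the first equivalence in your chain: it shows $R'\to R'\otimes_R B$ is again a cokernel, which you need for $ev_0$ to be a cocartesian fibration on $\funct(\Delta^1,\calg(\mcc))^n$. With these points, your straightening argument goes through. Both fibrations have fibers equivalent to posets, so naturality is a property checked on objects, as you say.

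\textbf{Comparison with the paper.} The paper never needs base change. It views $\calg(\mcc^{\op{mono}})$ as a full subcategory of $\calg(\mcc)\times_{\mcc}\mcc^{\op{mono}}$ (\cref{modumono}). It notes that $\funct(\Delta^1,\calg(\mcc))^n\to\calg(\mcc)\times_{\mcc}\mcc^{\op{n-epi}}$ is fully faithful (\cref{calgepi2}). The cokernel functor over $\mcc$ then induces a functor between the fiber products, and \cref{quotidl} shows it lands in the essential image, so it factors. This is shorter and avoids all coherence questions. What your repaired route buys is a functor that preserves cocartesian edges. That is exactly what \cref{thm:parametrized-ideal-epi} tacitly uses when it reduces to checking fiberwise.
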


\begin{proof}
First, by \cref{modumono} the category $\calg(\mcc^{\op{mono}})$ can be identified with a full subcategory of $\calg(\mcc)\times_{\mcc}\mcc^{\op{mono}}$. Then, note that the functor over $\calg(\mcc)$ $$\funct(\Delta^1,\calg(\mcc))^n\xrightarrow{(ev_0, \funct(\Delta^1,p))} \calg(\mcc)\times_{\mcc} \mcc^{\op{n-epi}}$$ is fully faithful since it is fiberwise fully faithful by \cref{calgepi2}, where $p$ denotes the forgetful functor $p:\calg(\mcc)\to\mcc$. Since taking cokernels induces the following functor over \mcc,
$$
\begin{tikzcd}
\mcc^{\op{mono}} \arrow[rd, "ev_1"'] \arrow[r, "\coker"] & \mcc^{\Delta^1\times\Delta^1}_{\op{cof-fib}} \arrow[r, "ev"] \arrow[d, "ev_{10}"] & \mcc^{\op{n-epi}} \arrow[ld, "ev_0"] \\
& \mcc                                                                              &                                   
\end{tikzcd}
$$
it produces a functor $\calg(\mcc)\times_{\mcc}\mcc^{\op{mono}}\to \calg(\mcc)\times_{\mcc} \mcc^{\op{n-epi}}$, which restricts to a functor
$$\calg(\mcc^{\op{mono}})\xrightarrow{}\funct(\Delta^1,\calg(\mcc))^n$$  by \cref{quotidl}.
\end{proof}

\begin{thm}\label{thm:parametrized-ideal-epi}
Let $\mcc^\otimes\in\calg(\prl_{\Sigma})$ be a presentably symmetric monoidal $\Sigma$-trivial $\infty$-category. Then the assignment $(R,I)\mapsto (R\to R/I)$ restricts to an equivalence $$\calg(\mcc^{\op{n-mono}})\xrightarrow{\sim}\funct(\Delta^1,\calg(\mcc))^n$$ where $\calg(\mcc^{\op{n-mono}})\subseteq \calg(\mcc^{\op{mono}})$ denotes the full subcategory spanned by the normal ideals.
\end{thm}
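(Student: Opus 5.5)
The plan is to reduce the parametrized statement to the fiberwise one, \cref{thm:ideal-epi-iden}, applied to $\modu_R(\mcc)^\otimes$ for each $R\in\calg(\mcc)$. Both sides sit over $\calg(\mcc)$: the source via $(R,I)\mapsto R$, the target via $ev_0$. The functor of the preceding proposition commutes with these projections, since $R\to R/I$ has source $R$.

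First I will identify the fibers. The fiber of $\calg(\mcc^{\op{n-mono}})$ over $R$ is $\idl(\modu_R(\mcc))^n$; normality in $\modu_R(\mcc)$ agrees with normality in $\mcc$ because the forgetful functor creates kernels and cokernels. The fiber of $\funct(\Delta^1,\calg(\mcc))^n$ over $R$ is $\calg(\mcc)_{R/}^n\simeq\calg(\modu_R(\mcc))^n$, meaning $R$-algebras $S$ with $R\to S$ a normal epimorphism. Note that $\modu_R(\mcc)$ is again presentably symmetric monoidal and $\Sigma$-trivial, since $\Sigma$ commutes with the forgetful functor, which is conservative. So \cref{thm:ideal-epi-iden} gives fiberwise equivalences $\idl(\modu_R(\mcc))^n\simeq\calg(\modu_R(\mcc))^n$. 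I then need to check that the restriction of our functor to the fibers is this equivalence, which holds because both are given by $I\mapsto R/I$ with the unique algebra structure from \cref{quotidl}.

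To conclude, I will show the functor is a map of cocartesian fibrations over $\calg(\mcc)$ and apply the fiberwise criterion for equivalences (\cite[Corollary~2.4.4.4]{htt}).

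\begin{itemize}
\item \textbf{Source.} The source is cocartesian by definition: a morphism $f\colon R\to R'$ pushes forward $I$ to $\im(R'\otimes_R I\to R')$.
\item \textbf{Target.} For the target, cocartesian lifts of $f$ should be given by pushouts $R'\otimes_R S$ in $\calg$. I need normal epimorphisms from the unit to be stable under base change, which holds because $R'\otimes_R-$ preserves cokernels.
\item \textbf{Comparison.} The functor preserves cocartesian edges because $R'\otimes_R(R/I)\simeq R'/(R'\otimes_R I)\simeq R'/\im(R'\otimes_R I)$. The last step uses \cref{lem:satura}(3): $R'\otimes_R I\to \im$ is an epimorphism, since the epi/mono factorization has an epimorphic left part.
\end{itemize}

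The main obstacle is this cocartesian compatibility, and in particular the subtlety that pushing forward a normal ideal along $f$ need not produce a normal ideal. If this fails, I will instead avoid the fibration argument. The fallback is to prove full faithfulness directly, using that both sides embed fully faithfully into $\calg(\mcc)\times_\mcc\mcc^{\Delta^1\times\Delta^1}_{\op{cof-fib}}$, via \cref{modumono} and \cref{calgepi2} respectively, compatibly with \cref{prop:c-s-mono-epi}. The problem then reduces to essential surjectivity. Given a normal epimorphism of algebras $R\to S$, its kernel $I$ is a normal monomorphism into $R$ in $\mcc$. It is an $R$-submodule, since $\ker$ is computed in $\modu_R(\mcc)$, and $S\simeq R/I$ by \cref{kerofcoker}.
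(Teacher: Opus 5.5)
Your primary route is the paper's own: the paper's proof is the one sentence that both sides are cocartesian over $\calg(\mcc)$, so the statement follows fiberwise from \cref{thm:ideal-epi-iden}. You are right that the real content is the compatibility with cocartesian edges, but two points in your treatment need correcting.

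First, the obstacle you flag does occur, so your description of the source's cocartesian lifts is wrong for $\calg(\mcc^{\op{n-mono}})$. Take $\cmon(\op{Sets})$ with its usual tensor product; it is a pointed $1$-category, hence $\Sigma$-trivial. There $2\mathbb{N}\subseteq\mathbb{N}$ is normal, being the kernel of $\mathbb{N}\to\mathbb{Z}/2$. Push it forward along $\mathbb{N}\to R'=\mathbb{N}\cup\{\infty\}$: you get $\im(R'\otimes_{\mathbb{N}}2\mathbb{N}\to R')=2R'$, which is not normal. Indeed $1+\infty=\infty\in 2R'$ while $1\notin 2R'$, and the normal ideals here are exactly the subtractive ones.

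The normal subcategory is still a cocartesian fibration, but with lift $J=\ker(R'\to R'\otimes_R R/I)$.
\begin{itemize}
\item $J$ is the smallest normal ideal through which $R'\otimes_R I\to R'$ factors.
\item For $h\colon R'\to R''$ and a normal ideal $I''$ of $R''$, the preimage $R'\times_{R''}I''=\ker(R'\to R''\to R''/I'')$ is again normal. This gives the cocartesian property.
\item The functor sends this lift to $R'\to R'/J\simeq R'\otimes_R R/I$ by \cref{kerofcoker}(2), which is the cocartesian lift on the target side.
\end{itemize}

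Second, your claim that $R'\otimes_R I\to\im(R'\otimes_R I\to R')$ is an epimorphism is false in this generality. $\im$ is only the reflection onto subobjects, and its unit need not be an $\infty$-categorical epimorphism. For example, in the $\Sigma$-trivial category $\mcs_{*,\Sigma}$, the map $S^1_+\to S^0$ has image $S^0$, but the pushout $S^0\sqcup_{S^1_+}S^0$ is $S^2_+$, so the map is not an epimorphism. What is true, and is all that step needs, is $\coker(A\to\im A)=0$:
\begin{itemize}
\item If $g\colon\im A\to Z$ kills $A$, then $\ker g\to\im A$ is a monomorphism, by $\Omega$-triviality, and $A$ factors through it.
\item Minimality of the image then forces $\ker g=\im A$, so $g\simeq 0$.
\item Nullhomotopies are essentially unique by $\Sigma$-triviality, so the cokernel vanishes.
\end{itemize}
With the description of $J$ above you do not need this at all.

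Your fallback, on the other hand, is a complete proof and genuinely different from the paper's. Both sides are full subcategories of $\calg(\mcc)\times_\mcc\mcc^{\Delta^1\times\Delta^1}_{\op{cof-fib}}$: the source via \cref{modumono} and the target via \cref{calgepi2}, each combined with \cref{prop:c-s-mono-epi}. The functor is compatible with these embeddings, so full faithfulness is automatic. Essential surjectivity is then your kernel argument. The one thing to add is that $\ker(R\to S)\to R$ is a monomorphism because $0\to S$ is, again by $\Omega$-triviality.

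This route never has to identify cocartesian edges on either side and does not even invoke \cref{thm:ideal-epi-iden}. The paper's route is shorter on the page, but it silently relies on exactly the cocartesian compatibility you isolated. That compatibility requires the normal-closure description of the lifts, not the naive pushforward.
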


\begin{proof}
Since both sides are cocartesian fibrations over $\calg(\mcc)$, it suffices to establish the equivalence fiberwise, but this follows from \cref{thm:ideal-epi-iden}.
\end{proof}

\begin{cor}\label{cor:parametrized-corr}
Let $\mcc^\otimes\in\calg(\prl_{\Sigma})$ be a presentably symmetric monoidal $\Sigma$-trivial $\infty$-category. If $\mcc$ is left $\Sigma$-exact, then the assignment $(R,I)\mapsto (R\to R/I)$ induces an equivalence $$\calg(\mcc^{\op{mono}})\xrightarrow{\sim}\funct(\Delta^1,\calg(\mcc))^n.$$
\end{cor}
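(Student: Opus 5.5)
The plan is to deduce the corollary from \cref{thm:parametrized-ideal-epi}. That theorem already gives an equivalence
\[
\calg(\mcc^{\op{n-mono}})\xrightarrow{\sim}\funct(\Delta^1,\calg(\mcc))^n,
\]
induced by the same assignment $(R,I)\mapsto (R\to R/I)$. So it suffices to show that the full subcategory inclusion $\calg(\mcc^{\op{n-mono}})\subseteq\calg(\mcc^{\op{mono}})$ is an equivalence. Equivalently, for every $R\in\calg(\mcc)$, every ideal $I\subseteq R$ of $\modu_R(\mcc)^\otimes$ should be normal. Since both sides are full subcategories of the same cocartesian fibration over $\calg(\mcc)$, this reduces to a fiberwise statement about $\idl(\modu_R(\mcc))$.

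The key step is the passage from left $\Sigma$-exactness of $\mcc$ to that of $\modu_R(\mcc)$. We use the corollary following \cref{kerofcoker}: if $\mcc\in\calg(\prl_\Sigma)$ is left $\Sigma$-exact, then so is $\modu_R(\mcc)$ for every $R\in\calg(\mcc)$. Its proof rests on the fact that the forgetful functor creates limits and colimits, hence kernels and cokernels.

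We also need $\modu_R(\mcc)$ to be $\Sigma$-trivial, so that the normality notion is the one used in \cref{thm:parametrized-ideal-epi}. This holds because cofibers in $\modu_R(\mcc)$ are computed in $\mcc$, so the suspension of a module is its underlying suspension, which is zero by \cref{sigtriveqshtriv}.

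Consequently every monomorphism $I\to R$ in $\modu_R(\mcc)$ is normal. By \cref{kerofcoker}, it is the kernel of $R\to R/I$. Hence every object of $\calg(\mcc^{\op{mono}})$ lies in $\calg(\mcc^{\op{n-mono}})$, the inclusion is an equivalence, and composing with \cref{thm:parametrized-ideal-epi} yields the claim.

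The main point to check is that normality in \cref{thm:parametrized-ideal-epi} is measured in $\modu_R(\mcc)$ rather than in $\mcc$. The two notions agree, since kernels and cokernels agree under the forgetful functor: a map of $R$-modules is a kernel in $\modu_R(\mcc)$ if and only if its underlying map is the kernel of the underlying cokernel map in $\mcc$. Apart from this, the argument is formal.
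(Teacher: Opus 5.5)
Your proposal is correct and is essentially the argument the paper intends: the corollary is stated without proof as an immediate consequence of \cref{thm:parametrized-ideal-epi}, because left $\Sigma$-exactness, which passes to each $\modu_R(\mcc)$ by the corollary following \cref{kerofcoker}, makes every ideal normal, so $\calg(\mcc^{\op{n-mono}})=\calg(\mcc^{\op{mono}})$. Your extra checks, that $\modu_R(\mcc)$ is $\Sigma$-trivial and that normality can be tested on underlying objects, are also correct; they supply details the paper leaves implicit.
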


\begin{ex}
For $\mcc=\catper$, \cref{cor:parametrized-corr} gives a equivalence  \[
\calg\big((\catper)^{\op{mono}}\big)=\left\{
\substack{
\text{thick ideal pairs} \\
(\mck^\otimes,\mci)
}
\right\}
\simeq 
\left\{
\substack{
\text{symmetric monoidal Karoubi} \\
\text{projections }\mck^\otimes\to \votimes
}
\right\}=\funct(\Delta^1,\calg(\catper))^n.
\]
See \cite[Definition~A.3.6]{hermitionkii} for the definition of Karoubi projection.
\end{ex}

\subsection{Examples}
We now discuss examples of $\Sigma$-exact $\infty$-categories.

\begin{ex}\label{ex:Sigma-exact}
\hspace{1em}
\enu{
\item Any abelian category is $\Sigma$-exact.

\item The \infcat $\op{Cat}^{\op{perf}}$ is left $\Sigma$-exact since any monomorphism can be identified with the kernel of its cokernel. It is, however, not right $\Sigma$-exact, since not every homological epimorphism is a Karoubi projection. Recall from \cref{monoepicatperf} that the epimorphisms in $\catper$ are exactly the homological epimorphisms, but the normal epimorphisms are the Karoubi projections.

\item The \infcat $\op{Cat}_{\infty}^{\op{st}}$ of (small) stable \infcats is neither left nor right $\Sigma$-exact, since for any momomorphism $f:\mcc\to\mcd$, the kernel of $\mcd\to\coker(f)$ is the retraction closure of $\mcc$ in $\mcd$, instead of $\mcc$ itself.

\item The \infcat $\mcs_{*,\Sigma}$ of unital pointed spaces is left $\Sigma$-exact, since any monomorphism $f$ in it is a component inclusion and hence can be identified with the kernel of $\coker(f)$. Note that the cokernel in $\mcs_{*,\Sigma}$ is different from the cokernel in $\mcs_{*}$. On the other hand, $\mcs_{*,\Sigma}$ is not right $\Sigma$-exact: Let $X$ be a non-trivial acyclic space (e.g. a punctured Poincaré homology 3-sphere). Then $X_+\to *_+=S^0$ is an epimorphism in $\mcs_{*,\Sigma}$, but it has a trivial kernel and hence is not normal.
\item The 1-category $\cmon(\op{Sets})$ of commutative monoids is Barr-exact; see \cite[Remark 4.3]{abuhlail2014exact}. However, $\cmon(\op{Sets})$ is neither left nor right $\Sigma$-exact: The map $i:\mathbb{N}\to\mathbb{Z}$ is both a monomorphism and an epimorphism in $\cmon(\op{Sets})$, but we have $\coker(i)=0=\ker(i)$ and hence $i$ is not normal regarded as either a monomorphism or an epimorphism.
\item 
The \infcat $\cmon(\mcs)_\Sigma$ of unital $\einf$-spaces is neither left nor right $\Sigma$-exact. This follows from the observation that the map
$$\pi_0 \colon \cmon(\mcs)_\Sigma \to \cmon(\op{Sets})$$
preserves small colimits, fibers and monomorphisms.
}
\end{ex}

\begin{rem}
A more striking example of a $\Sigma$-exact $\infty$-category is the $\infty$-category $\lintdbl$ of dualizable $\mct$-modules for any $\totimes\in \calg(\prlst)$. To prove this, we need the following generalization of \cite[Lemma~2.62]{ramzi2024dualizable}.
\end{rem}

\begin{lem}\label{dualker}
Let $\mcv\in\calg(\prl)$ and let $f: \mathcal{M} \rightarrow \mathcal{N}$ and $i:\mcn_0\hookrightarrow\mcn$ be maps in $\linvdbl$ such that $i$ is fully faithful. The dualizable pullback $(\mcm\times_\mcn\mcn_0)^{\mcv\text{-}\mathrm{dbl}}$ in $\linvdbl$ can be identified with the largest dualizable $\mcv$-submodule of $\mcm$ contained in the pullback $\mcm\times_\mcn\mcn_0$ in $\prl_{\mcv}$ and for which the inclusion into $\mcm$ is an internal left adjoint.
\end{lem}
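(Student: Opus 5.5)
We verify the universal property of a pullback in $\linvdbl$ directly. Write $P\coloneqq\mcm\times_\mcn\mcn_0$ for the pullback in $\prl_\mcv$. Since $i$ is fully faithful, the projection $j\colon P\to\mcm$ is fully faithful, with essential image the objects $m$ such that $f(m)$ lies in $\mcn_0$. Let $\mcm_0\subseteq\mcm$ be the largest full $\mcv$-submodule of $P$ that is dualizable, closed under colimits, and whose inclusion $k\colon\mcm_0\hookrightarrow\mcm$ is an internal left adjoint. To make ``largest'' meaningful, we take the union (the localizing ideal generated) of all such submodules $\mcm_\alpha$, and check that this union again has the three properties. Dualizability and the internal left adjoint property are stable under colimits in $\linvdbl$: the union is the image of $\bigoplus_\alpha\mcm_\alpha\to\mcm$, or more precisely a colimit in $\linvdbl$ computed in $\prl_\mcv$, because the forgetful functor $\linvdbl\to\prl_\mcv$ preserves colimits. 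A fully faithful image of such a colimit remains an internal left adjoint; here we argue as in \cite[Lemma~2.61]{ramzi2024dualizable}, using that $k k^R$ is a colimit of the idempotent functors $k_\alpha k_\alpha^R$. This is the step I expect to be delicate.

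Next we check that $\mcm_0$, together with $k$ and the induced map $g\colon\mcm_0\to\mcn_0$, is a cone in $\linvdbl$. Concretely, $g=i^R f k$ restricted, and $fk\simeq ig$. We must show that $g$ is an internal left adjoint. Its right adjoint is $k^R f^R i$. Here $k^R$ and $f^R$ are $\mcv$-linear and colimit-preserving by assumption. The functor $i\colon\mcn_0\to\mcn$ is a left adjoint and $\mcv$-linear, so the composite $k^Rf^Ri$ is $\mcv$-linear and colimit-preserving. Hence $g$ is an internal left adjoint.

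For the universal property, let $\mca\in\linvdbl$ and let $a\colon\mca\to\mcm$ and $b\colon\mca\to\mcn_0$ be internal left adjoints with $fa\simeq ib$. Then $a$ lands in $P$, so the essential image of $a$ generates a localizing submodule $\mcm_a\subseteq P$. We then want to show that $\mcm_a$ is dualizable with $\mcm_a\hookrightarrow\mcm$ an internal left adjoint. Rather than show this directly, we observe that the factorization $a=j\circ a'$ exists uniquely in $\prl_\mcv$, since $j$ is fully faithful. The real claim is that $a'$ factors through $\mcm_0$ via an internal left adjoint. We handle this by applying the ``largest'' construction to the image of $a$: the image of an internal left adjoint out of a dualizable module, in the sense of the localizing ideal it generates, is dualizable and its inclusion is an internal left adjoint, again by the argument of \cite[Lemma~2.61]{ramzi2024dualizable} (as in the proof of \cref{efi187}, following \cite[Proposition~1.87]{efimov2024k}). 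Therefore the image lies in $\mcm_0$. Because $k$ is fully faithful, the factorization $a=k\circ a''$ is unique. Moreover $a''\simeq k^Ra$ has right adjoint $a^Rk$, which is $\mcv$-linear and colimit-preserving, so $a''$ is an internal left adjoint.

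Finally, all the mapping spaces involved are subspaces: the spaces of maps into $\mcm_0$ are identified with components of the spaces of maps into $\mcm$, by full faithfulness. The homotopy $fa\simeq ib$ is determined by $a$, since $i$ is a monomorphism. So the comparison map
\[
\mapp_{\linvdbl}(\mca,\mcm_0)\to\mapp(\mca,\mcm)\times_{\mapp(\mca,\mcn)}\mapp(\mca,\mcn_0)
\]
is an equivalence, which identifies $\mcm_0$ with $(\mcm\times_\mcn\mcn_0)^{\mcv\text{-}\mathrm{dbl}}$. The main obstacle is showing that the image of an internal left adjoint out of a dualizable module is again dualizable with an internal-left-adjoint inclusion. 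In the stable case this is Efimov's argument; in the general $\mcv$-linear case we would try realizing the image as the retract $kk^R$ of $\mcm$ in $\prl_\mcv$, since retracts of dualizable objects are dualizable.
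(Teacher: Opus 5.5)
Your route is the paper's. In both, the dualizable pullback is the largest member of the collection $P$ of dualizable $\mcv$-submodules of $\mcm$ contained in $\mcm\times_\mcn\mcn_0$ with internal-left-adjoint inclusion, and that largest member is obtained by applying \cite[Lemma~2.61]{ramzi2024dualizable} to $\bigoplus_\alpha\mcm_\alpha\to\mcm$. Your explicit check of the universal property fills in what the paper compresses into ``it suffices to show that $P$ has a maximal element'' and its remark that inclusions between members of $P$ are internal left adjoints. That check has two parts: the cone map $g=i^Rfk$ with right adjoint $k^Rf^Ri$, and the factorization $a''\simeq k^Ra$ with right adjoint $a^Rk$.

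\textbf{Gap: smallness of $P$.} You never check that $P$ is small. A priori the collection of all such submodules is large. In that case $\bigoplus_\alpha\mcm_\alpha$ is not an object of $\prl_\mcv$, and Lemma~2.61 cannot be invoked. The paper handles this as follows:
\begin{itemize}
\item Fix an uncountable regular $\kappa$ with $\votimes\in\calg(\prl_\kappa)$.
\item By \cref{dualprl}, every member of $P$ is $\kappa$-compactly generated.
\item Its inclusion preserves $\kappa$-compact objects, because its right adjoint preserves colimits.
\item Hence each member is generated under colimits by a subset of the essentially small $\mcm^\kappa$, so $P$ is small.
\end{itemize}
Alternatively, you could sidestep the size question. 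Form the pullback $Q$ in the presentable category $\linvdbl$. Its image in $\mcm$ lies in $P$ by Lemma~2.61. Every member of $P$ forms a cone, so it factors through $Q$ and lies in that image.

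\textbf{Two smaller remarks.} First, describing $kk^R$ as a colimit of the $k_\alpha k_\alpha^R$ only makes sense for a directed family. It is unnecessary once Lemma~2.61 is applied to the direct sum. Second, the fallback in your last sentence is circular. Exhibiting the image as a retract of $\mcm$ in $\prl_\mcv$ via $k$ and $k^R$ presupposes that $k^R$ is colimit-preserving and $\mcv$-linear. That is exactly the content of Lemma~2.61; dualizability then follows from the retract.
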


\begin{proof}
Let $\kappa$ be an uncountable regular cardinal such that $\votimes\in\calg(\prl_{\kappa})$. We denote by $P$ the poset of all dualizable $\mct$-submodules of $\mcm$ contained in $\mcm\times_\mcn\mcn_0$ and for which the inclusion functor into $\mcm$ is an internal
left adjoint. Then all submodules in $P$ are $\kappa$-compactly generated by $\kappa$-compacts of \mcm and hence $P$ is small. Note that, for any $\mck_0\leq \mck_1$ in $P$, the correspondening inclusion $i_0:\mck_0\hookrightarrow \mck_1$ is an internal left adjoint, since its right adjoint $i_0^R$ can be identified with $j_0^R\circ j_1$ by the commutative diagram
$$
\begin{tikzcd}
\mck_0 \arrow[rd, "j_0", hook] \arrow[r, "i_0", hook] & \mck_1 \arrow[d, "j_1", hook] \\
& \mcm   .                      
\end{tikzcd}
$$
Hence, it suffices to show that $P$ has a maximal element. Now let $\mck\subseteq\mcm$ denote the full subcategory of $\mathcal{M}$ generated under colimits and $\mct$-tensors by the images of all $\mck_\alpha\in P$. Applying \cite[Lemma 2.61]{ramzi2024dualizable} to the map $\bigoplus_{\alpha}\mck_\alpha \to \mcm$, we see that $\mck$ is a dualizable $\mcv$-module and that the inclusion $\mck \to \mcm$ is an internal left adjoint. That is, $\mck \in P$. Therefore, $\mck$ is a maximal element of $P$.
\end{proof}

\begin{rem}\label{rem:prdbl-non-sigma-exact}
Let $\votimes\in\calg(\prl_*)$ and $\mci\rightarrow\mcv\in \linvdbl$ be a smashing ideal (hence a closed ideal). Then, $\mcv\to\mcv/\mci$ is a smashing localization. Conversely, for a smashing localization $\mcv \to \mcw$, its dualizable kernel $\mck^d \to \mcv$ is a smashing ideal; see \cref{dualker}. However, the dualizable kernel of the smashing localization $\mcv \to \mcv / \mci$ need not coincide with $\mci$. This implies that the smashing ideals and the smashing localizations are not in one-to-one correspondence. In other words, $\linvdbl$ is in general not $\Sigma$-exact. 
\end{rem}

\begin{ex}
For $\mcv=\spgeq$, we have $\linvdbl \simeq \praddbl$, the $\infty$-category of dualizable additive \infcats. It is not right $\Sigma$-exact: The dualizable additive kernel of the localization $$p:\mcd(\mathbb{Z})_{\geq0}\to \mcd(\mathbb{Z}[p^{-1}])_{\geq0}$$ in $\praddbl$ is $0$, instead of $\mcd(\mathbb{Z})_{\geq0}^{p\text{-nil}}$; see \cite{levy-liang-sosnilo:dualizable} for more detail.
\end{ex}

\begin{rem}
Nonetheless, in the stable case, we do have a one-to-one correspondence between the smashing localizations and the smashing ideals, as shown below.
\end{rem}

\begin{thm}\label{sigmaexact}
Let $\totimes\in \calg(\prlst)$. The $\infty$-category $\lintdbl$ is $\Sigma$-exact.
\end{thm}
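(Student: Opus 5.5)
Three things have to be shown for $\lintdbl$: it is $\Sigma$-trivial, every monomorphism is normal, and every epimorphism is normal. $\Sigma$-triviality is \cref{sigmatrivexamples}(8), since $\mct$ is pointed. By \cref{cdblmono}, monomorphisms in $\lintdbl$ are exactly the fully faithful internal left adjoints, and epimorphisms are exactly the localizations. Colimits in $\lintdbl$ are computed in $\prlst$, because the forgetful functor commutes with colimits, as used in the proof of \cref{cdblmono}. Kernels in $\lintdbl$ are the dualizable kernels described by \cref{dualker}, taking $\mcn_0=0$.

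\emph{Left $\Sigma$-exactness.} Let $i:\mci\hookrightarrow\mcn$ be a fully faithful internal left adjoint. Its cokernel is the Verdier quotient $q:\mcn\to\mcn/\mci$ in $\prlst$. Since $i^R$ preserves colimits and is $\mct$-linear, $q$ is a localization whose right adjoint is again colimit-preserving and $\mct$-linear. So $q$ is a morphism of $\lintdbl$, and $\mcn/\mci$ is dualizable as a retract of $\mcn$ in $\lint$. The kernel of $q$ in $\prlst$ is exactly $\mci$, because $\mci$ is a localizing subcategory with colimit-preserving right adjoint: the usual recollement $\mci\to\mcn\to\mcn/\mci$. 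By \cref{dualker}, the kernel in $\lintdbl$ is the largest dualizable submodule of this $\prlst$-kernel whose inclusion is an internal left adjoint. Since $\mci$ itself qualifies, the kernel in $\lintdbl$ is $\mci$. Then \cref{kerofcoker}(1) shows that $i$ is normal.

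\emph{Right $\Sigma$-exactness.} Let $p:\mcm\to\mcn$ be an internal left adjoint that is a localization, and let $\mck=\ker(p)\subseteq\mcm$ in $\prlst$. Because $p^R$ is fully faithful and colimit-preserving, the inclusion $\mck\hookrightarrow\mcm$ has a colimit-preserving right adjoint, namely the fiber of the unit $\mathrm{id}\to p^Rp$. This right adjoint is $\mct$-linear because $p^R$ is $\mct$-linear and fibers in stable categories commute with the action. Hence $\mck$ is a retract of $\mcm$ in $\lint$, so it is dualizable and the inclusion is an internal left adjoint. By \cref{dualker}, the dualizable kernel therefore equals $\mck$. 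Finally, in the stable setting the cofiber of $\mck\hookrightarrow\mcm$ in $\prlst$ is $\mcm/\mck$, and this is identified with $\mcn$ through $p$, since a stable localization with colimit-preserving right adjoint is the Verdier quotient by its kernel. By \cref{kerofcoker}(2), $p$ is normal.

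The main obstacle is the stable input in the right-exactness step. One has to know that the kernel of a strongly continuous localization is automatically dualizable with internal-left-adjoint inclusion, so that the dualizable kernel does not shrink; this is exactly what fails for $\praddbl$, as \cref{rem:prdbl-non-sigma-exact} shows. The argument above obtains it from the fiber sequence $\mathrm{fib}(\mathrm{id}\to p^Rp)\to\mathrm{id}\to p^Rp$, which requires stability. One also has to check that the cokernel computed in $\prlst$ coincides with the one in $\lintdbl$.
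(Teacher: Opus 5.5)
Your proposal is correct and follows the paper's proof. You get $\Sigma$-triviality from \cref{sigmatrivexamples}(8), and you reduce normality of monomorphisms and epimorphisms, via \cref{kerofcoker}, to checking that kernels and cokernels in $\lintdbl$ agree with those computed in $\lint$, using \cref{dualker}. The only difference is in the right-exactness step. The paper cites Ramzi (Example 1.18 and Theorem 1.49) to see that the Verdier kernel of a strongly continuous localization is dualizable with internal-left-adjoint inclusion. You derive this directly from the $\mct$-linear colimit-preserving right adjoint $\mathrm{fib}(\mathrm{id}\to p^Rp)$ and the closure of dualizable objects under retracts, which is a valid self-contained substitute.
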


\begin{proof}
By \cref{sigmatrivexamples}(8), $\lintdbl$ is $\Sigma$-trivial. It remains to show that any monomorphism and epimorphism in $\lintdbl$ is normal. 

Let $\mck\to\mca$ be a monomorphism in $\lintdbl$, \ie a fully faithful internal left adjoint between dualizable \mct-modules. By \cite[Proposition 1.62]{ramzi2024dualizable}, $\mca/\mck$ can be calculated in $\lint$. It then follows from \cref{dualker} that \mck is exactly the dualizable kernel of $\mca\to\mca/\mck$ in $\lintdbl$. Hence, $\mck\to\mca$ is normal by \cref{kerofcoker}.

Let $p:\mca\to \mcb$ be an epimorphism in $\lintdbl$. By \cref{kerofcoker}, it suffices to show that its dualizable kernel $\mck^d$ in $\lintdbl$ coincides with its usual Verdier kernel $i:\mck \to \mca$ in $\lint$. Indeed, by \cite[Example~1.18]{ramzi2024dualizable}, $i$ is an internal left adjoint. It then follows from \cite[Theorem~1.49]{ramzi2024dualizable} that $\mck$ is a dualizable $\mct$-module. This completes the proof.
\end{proof}

\begin{cor}\label{idlfullyfai}
Let $\totimes\in\calg(\prlst)$. Then
the Morita embedding $\calg(\mct)\hookrightarrow \calg(\lint)$ constructed in \cite[Corollary 4.8.5.21]{ha} factors through $\calg(\mct)\hookrightarrow \calg(\lintdbl)$
and induces an equivalence $$
\calg(\mct)^{\op{idem}}\xrightarrow{\sim}\calg(\lintdbl)^n$$
where $\calg(\lintdbl)^n\subseteq\calg(\lintdbl)$ denotes the full subcategory spanned by the commutative dualizable $\mct$-algebras $\totimes \to \mcs^\otimes$  such that the underlying functor is a normal epimorphism\footnote{Note that, in the stable case, all epimorphisms in $\lintdbl$ are normal and identical with localizations by \cref{cdblmono,sigmaexact}.} in $\lintdbl$.
\end{cor}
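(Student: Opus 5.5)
We first show that the Morita embedding lands in $\calg(\lintdbl)$. Lurie's construction sends $A\in\calg(\mct)$ to $\modu_A(\mct)$, with the free-module functor $A\otimes(-)\colon\mct\to\modu_A(\mct)$ as unit map. The target is dualizable as a $\mct$-module: it is self-dual via the bar construction, with evaluation $\modu_A\otimes_\mct\modu_A\simeq\modu_{A\otimes A}\to\modu_A\xrightarrow{\Gamma}\mct$ and coevaluation $\mct\to\modu_{A\otimes A}$ given by $A$. The free functor is an internal left adjoint, because its right adjoint (the forgetful functor) is $\mct$-linear and colimit-preserving. Multiplication $\modu_A\otimes_\mct\modu_A\simeq\modu_{A\otimes A}\to\modu_A$ is base change along $A\otimes A\to A$. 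This is again an internal left adjoint, since its right adjoint is restriction of scalars, which is linear and colimit-preserving. So $\modu_A(\mct)^\otimes$ is a commutative algebra in $\lintdbl$. Because $\lintdbl\to\lint$ is faithful, and internal left adjoints compose, the factorization $\calg(\mct)\to\calg(\lintdbl)$ is still fully faithful on the relevant subcategory.

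Next we identify the essential image on idempotents. By \cref{cdblmono}(2) and \cref{sigmaexact}, an algebra $\mct\to\mcs$ in $\calg(\lintdbl)^n$ is the same as a symmetric monoidal functor whose underlying map is a localization and an internal left adjoint. That is exactly a smashing localization in the sense of the previous section. By \cref{rigidbarr}, any such $\mcs$ is symmetric monoidally equivalent to $\modu_{G(\mb1)}(\mct)$, where $A=G(\mb1)$ is the image of the unit under the right adjoint. The counit equivalence $FG\simeq\id$ gives $A\otimes A\simeq A$, so $A$ is idempotent. Conversely, if $A$ is idempotent, then $A\otimes(-)\colon\mct\to\modu_A(\mct)$ is a localization, since the forgetful functor is fully faithful precisely when $A\otimes A\to A$ is an equivalence. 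Hence the Morita embedding restricts to a functor $\calg(\mct)^{\op{idem}}\to\calg(\lintdbl)^n$, and it is essentially surjective.

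It remains to check full faithfulness, which I expect to be the main subtle point. Both sides are posets:
\begin{itemize}
\item on the left, $\calg(\mct)^{\op{idem}}\simeq\mct_{\mb1/}^{\op{idem}}$ by \cite[Proposition 4.8.2.9]{ha};
\item on the right, maps out of $\mct$ under the unit that are epimorphisms form a $(-1)$-truncated mapping space, by \cref{calgepi2}.
\end{itemize}
So it suffices to show that $A\to B$ exists if and only if $\modu_A(\mct)\to\modu_B(\mct)$ exists under $\mct$ in $\calg(\lintdbl)$. The forward direction is base change. For the converse, a map under $\mct$ sends $A=F_A(\mb1)$ to $F_B(\mb1)=B$, giving $A\to B$ as unit maps. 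Equivalently, the Verdier kernels satisfy $\ker F_A\subseteq\ker F_B$, which for idempotent algebras is equivalent to $A\to B$. I would run this argument through \cref{thm:ideal-epi-iden}, comparing with normal ideals, that is, smashing ideals via \cref{dblideal}. This avoids checking mapping-space contractibility by hand.
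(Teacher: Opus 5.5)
Your argument is correct and follows the paper's route. You factor the Morita embedding through $\calg(\lintdbl)$; where you check dualizability of $\modu_A(\mct)$ and the internal-left-adjoint conditions by hand, the paper instead cites rigidity of $\modu_A(\mct)$ over $\mct$ from Ramzi. You then restrict to idempotents, using that base change along an idempotent algebra is a localization and hence a normal epimorphism by \cref{cdblmono} and \cref{sigmaexact}, and get essential surjectivity from \cref{rigidbarr}.

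Your third paragraph is unnecessary. Once $\calg(\mct)\hookrightarrow\calg(\lintdbl)$ is fully faithful, its restriction to the full subcategories $\calg(\mct)^{\op{idem}}$ and $\calg(\lintdbl)^n$ is automatically fully faithful, so no separate poset comparison is needed and the sketchy ``sends $A$ to $B$'' step need not be repaired.
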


\begin{proof}

The module category $\modu_R(\mct)^\otimes$ over any \ein-algebra $R\in \calg(\mct)$ is rigid over \mct (\cite[Example 4.38]{ramzi2024locally}), hence $\modu_R(\mct)^\otimes$ lies in $\calg(\lintdbl)$ and the Morita embedding factors through $\calg(\mct)\hookrightarrow \calg(\lintdbl)$ by \cite[Observation 4.51]{ramzi2024locally}.

For an idempotent $\einf$-algebra $R$, the base change functor $\mcc\to \modu_R(\mcc)$ is always a localization, so the Morita embedding restricts to an embedding $\calg(\mcc)^{\op{idem}}\hookrightarrow\calg(\lintdbl)^n$. It then suffices to show that this embedding is essentially surjective. This follows from \cref{rigidbarr}.

\end{proof}

\begin{rem}
Let $\totimes\in\calg(\prlst)$. The equivalence in \cref{idlfullyfai} is compatible with the one built in \cref{smidsmcoloc}, as shown by the following commutative diagram
$$
\begin{tikzcd}
\calg(\mct)^{\op{idem}} \arrow[d, "\sim"] \arrow[rr, "\sim"] &                                    & \calg(\lintdbl)^n \arrow[d, "\sim"] \\
\cidemt \arrow[r, "\left<- \right>"]                         & \cidem(\lintdbl) \arrow[r, "\sim"] & \idl(\lintdbl)                     
\end{tikzcd}$$
where the right vertical equivalence comes from \cref{cor:ideal-epi-iden}.
\end{rem}

\appendix
\numberwithin{thm}{section}
\section{Stone duality}\label{app:stone-duality}
For this appendix, we refer the reader to \cite{Johnstone82} or \cite[Appendix A.1]{sag}.

\begin{de}
A (small) poset is said to be a \textbf{suplattice} if it admits all joins. By the adjoint functor theorem, a suplattice also admits all meets and is thus a complete lattice. A morphism of suplattices is a map preserving joins.
\end{de}

\begin{de}
A \textbf{frame}\footnote{A frame is also equivalent to a $0$-topos, see \cite[\textsection6.4.2]{htt}.} is a suplattice in which finite meets distribute over arbitrary joins:
\[
x \wedge (\bigvee_{i\in I}y_i) = \bigvee_{i\in I}(x\wedge y_i).
\]
A morphism of frames is a map preserving arbitrary joins and finite meets. We write $\frm$ for the category of frames.
\end{de}

\begin{rem}
Given a topological space $X$, the poset $\mathcal{O}(X)$ of open subsets of $X$ is a frame. This provides a functor from topological spaces to frames:
\[
\mathcal{O}\colon \Top \to \frm.
\]
\end{rem}

\begin{de}\label{de:pt}
A \textbf{point} of a frame $F$ is a frame morphism $f\colon F \to \{0,1\} = \mathcal{O}(\ast)$. The set $\op{pt}(F)$ of points of $F$ admits a topology with open subsets given by $U_y \coloneqq \SET{f\in \op{pt}(F)}{f(y)=1}$ ranging over the elements $y$ of $F$. This induces a functor
\[
\op{pt}\colon \frm \to \Top
\]
which is left adjoint to $\mathcal{O}$.
\end{de}

\begin{rem}
The points of a frame $F$ are in one-to-one correspondence with the prime elements of $F$, 
by the assignment $f\mapsto \bigvee f^{-1}(0) $. See \cref{thm:specQ} for a similar statement in the more general setting of preframes. Note that the prime elements of a frame are also called the indecomposable elements in \cite{sag}.
\end{rem}

\begin{de}
Let $\eta$ and $\varepsilon$ denote the unit and counit for the adjunction
\[
\begin{tikzcd}
\frm  \arrow[r, "\op{pt}", shift right=-1ex]  & \arrow[l,"\perp"', "\mathcal{O}", shift right=-1ex] \Top \end{tikzcd}.
\]
We say that a frame $F$ is \textbf{spatial} if $\eta_F\colon F\to\mathcal{O}(\op{pt}(F))$ is an isormophism. We say that a topological space $X$ is \textbf{sober} if $\varepsilon_X\colon X\to\op{pt}(\mathcal{O}(X))$ is an isomorphism. Therefore, the adjunction restricts to an equivalence of spatial frames and sober spaces:
\[
\frmsp \simeq \topsob.
\]
\end{de}

\begin{rem}
A frame is spatial if and only if it has enough points, meaning that the collection of all points $\{f:F\to\{0,1\}\}$ is jointly conservative. The ``only if'' direction is obvious. For the ``if'' direction, see \cite[Example 1.5.4.5]{sag} for example. 
\end{rem}

\begin{de}\label{def:frame-finite-elt}
Let $F$ be a frame. An element $x \in F$ is called \textbf{finite} if whenever $x \le \bigvee S$ for some subset $S \subseteq F$, there exists a finite subset $K\subseteq S$ such that $x \le \bigvee K$. We denote by $F^{\omega}$ the set of finite elements of $F$.
\end{de}

\begin{rem}
When regarded as categories, $F^{\omega}$ is exactly the subcategory of compact objects of $F$.
\end{rem}

\begin{rem}\label{rem:stone-duality}
Let $F$ be a spatial frame. Since $F\cong\mathcal O(\op{pt}(F))$, the elements of $F$  correspond bijectively with the open subsets of $\op{pt}(F)$. Note that the finite elements of $F$ correspond exactly with the quasi-compact open subsets of $\op{pt}(F)$.
\end{rem}

\begin{de}
We say that a frame $F$ is \textbf{coherent} if $F^{\omega}$ is closed under finite joins and finite meets and $F^{\omega}$ generates $F$ under joins. A morphism of coherent frames is a morphism of frames which preserves finite elements. We denote by $\frmcoh$ the category of coherent frames.
\end{de}

\begin{rem}
Every coherent frame is spatial. Thus, $\frmcoh$ is a nonfull subcategory of $\frmsp \simeq \topsob$.
\end{rem}

\begin{de}
The topological spaces corresponding to the coherent frames under the equivalence $\frmsp \simeq \topsob$ are called \textbf{coherent spaces} or \textbf{spectral spaces}. We write $\op{Top}_{\op{coh}}$ for the category of coherent spaces.
\end{de}

\begin{rem}
The category $\frmcoh$ is also equivalent to the category $\dlat$ of distributive lattices. Indeed, taking finite elements induces an equivalence
\[
(-)^{\omega}\colon \frmcoh \simeq \dlat
\]
with the inverse sending a distributive lattice $L$ to its ind-category $\op{Ind}(L)$. See \cite[Proposition~II.3.2]{Johnstone82}.
\end{rem}

\begin{rem}
Any lattice $L$ has a dual lattice $L^{\vee}$ obtained by simply reversing the order. If $L$ is distributive then so is $L^{\vee}$.
\end{rem}

\begin{de}
Let $F$ be a coherent frame. The coherent frame $F^{\vee}\coloneqq\op{Ind}((F^{\omega})^\vee)$ is called the \textbf{Hochster dual} of $F$. For any coherent space $X$, we call $X^\vee\coloneqq \op{pt}(\mathcal O(X)^{\vee})$ the \textbf{Hochster dual} of $X$.
\end{de}

\begin{rem}
One readily checks that $F^{\vee\vee} \cong F$ and $X^{\vee\vee} \cong X$.
\end{rem}

\section{Epimorphisms and monomorphisms}
\begin{de}
Let $\mc{C}$ be an $\infty$-category and $f:X\to Y$ be a morphism in it. We say $f$ is an \textbf{epimorphism} if the following diagram is a pushout:
$$\begin{tikzcd}
X \arrow[d] \arrow[r] & {Y} \arrow[d, double, no head] \\ 
{Y} \arrow[r, double, no head] & {Y} 
\end{tikzcd}.$$
We say $f$ is a \textbf{monomorphism} if $f$ is an epimorphism in $\mcc^{\op{op}}$.
\end{de}
\begin{rem}
Monomorphisms can be identified with the $(-1)$-truncated morphisms; see \cite[\textsection5.5.6]{htt} for more detail.
\end{rem}
\begin{rem}
Note that, $\infty$-categorically, a retract $X\to Y \to X$ does not produce a monomorphism or epimorphism. For example $*\to S^1 \to *$ is a retract in the $\infty$-category of spaces, but $*\to S^1$ is not $(-1)$-truncated.
\end{rem}
\begin{prop}
Let $\mc{C}$ be an $\infty$-category. Consider a diagram $$\begin{tikzcd}
& A \arrow[rd, "f"] \arrow[ld, "h"'] &   \\
B \arrow[rr, "g"] &                                    & C
\end{tikzcd}$$
in $\mc{C}$ where $f$ is an epimorphism. Then $g$ is an epimorphism if and only if $h$ is an epimorphism.
\end{prop}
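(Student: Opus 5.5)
The plan is to translate everything into statements about mapping spaces, where epimorphisms become $(-1)$-truncated maps and the claim becomes a cancellation property for monomorphisms of spaces. By the definition via the pushout square, $u\colon X\to Y$ is an epimorphism if and only if $Y\simeq Y\sqcup_X Y$ via the fold map. Mapping into an arbitrary $Z$ turns this into the condition that the diagonal
\[
\mapp(Y,Z)\to \mapp(Y,Z)\times_{\mapp(X,Z)}\mapp(Y,Z)
\]
is an equivalence. By \cite[\textsection5.5.6]{htt} this says exactly that $u^*\colon\mapp(Y,Z)\to\mapp(X,Z)$ is $(-1)$-truncated for every $Z$. So the first step is to record the criterion
\[
u \text{ is an epimorphism} \iff u^*\colon \mapp(Y,Z)\to\mapp(X,Z) \text{ is a monomorphism of spaces for all } Z\in\mcc.
\]

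Commutativity of the triangle gives $f=g\circ h$, and hence $f^*=h^*\circ g^*$ as maps $\mapp(C,Z)\to\mapp(B,Z)\to\mapp(A,Z)$, naturally in $Z$. By hypothesis $f^*$ is $(-1)$-truncated for every $Z$. For the direction ``$h$ epi $\Rightarrow$ $g$ epi'' I would use the standard cancellation for truncated maps \cite[Lemma~5.5.6.15]{htt}. It says that if a composite $q\circ p$ of spaces is $(-1)$-truncated, then $p$ is $(-1)$-truncated, because the fibres of $p$ embed into loop-space data of the fibres of $q\circ p$. Taking $p=g^*$ and $q=h^*$, we get that $g^*$ is $(-1)$-truncated for all $Z$, so $g$ is an epimorphism. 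This step does not even use that $h$ is epi: it is the $\infty$-categorical version of ``if $g\circ h$ is epi then $g$ is epi.''

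For the direction ``$g$ epi $\Rightarrow$ $h$ epi'' I would try the pushout formulation directly. I would form $B\sqcup_A B$ and compare it with $C\sqcup_A C$ and $C\sqcup_B C$ via pasting of pushout squares: the square $A\to B\to C$ glued to itself gives a map $B\sqcup_A B\to C\sqcup_A C\simeq C$. The aim is to show that the fold map $B\sqcup_A B\to B$ is an equivalence. On mapping spaces this amounts to showing that $h^*$ is $(-1)$-truncated on the image of $g^*$ and also on its complement.

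This is the step I expect to be the main obstacle. The fibres of $h^*$ over points of $\mapp(A,Z)$ that do not lift along $f^*$ are not controlled by the hypotheses, and the decomposition of $\mapp(B,Z)$ into components coming from $C$ and the rest does not follow from $g^*$ being mono alone. Before committing to the computation I would therefore test the implication in $\op{Sets}$. There, with $A=C=*$, $B=\{0,1\}$, $h$ picking out $0$ and $g$ the collapse map, $f=\id$ and $g$ are epimorphisms but $h$ is not. So this direction apparently needs an extra hypothesis for the final write-up, for instance that $g$ is a monomorphism, in which case $h^*=f^*\circ(g^*)^{-1}$ on the relevant components.
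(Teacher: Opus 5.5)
Your counterexample in sets is correct, so the proposition as printed is false, and the paper's proof does not establish it either. That proof is only a pointer to \cite[Lemma~5.5.6.14]{htt}, the cancellation lemma for truncated morphisms: if the outer map of a composite is $k$-truncated, then the inner map is $k$-truncated if and only if the composite is. Dualize with $k=-1$ and apply it to $f=g\circ h$. The hypothesis then falls on the first map $h$, not on the composite $f$: if $h$ is an epimorphism, then $g$ is an epimorphism if and only if $f$ is. This is what the citation actually supports, and it is the right repair.

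Your suggested repair, assuming $g$ is a monomorphism, does not work in general. In commutative rings take $\mathbb{Z}[y]\xrightarrow{h}\mathbb{Z}[x]\xrightarrow{g}\mathbb{Q}[x]$ with $h(y)=2x$ and $g$ the inclusion. Then $g$ is both mono and epi. The composite $f=g\circ h$ is epi, since $\mathbb{Q}[x]\otimes_{\mathbb{Z}[y]}\mathbb{Q}[x]\cong\mathbb{Q}[x_1,x_2]/(x_1-x_2)$. But $h$ is not epi, since $x_1-x_2\neq 0$ in $\mathbb{Z}[x]\otimes_{\mathbb{Z}[y]}\mathbb{Z}[x]\cong\mathbb{Z}[x_1,x_2]/(2x_1-2x_2)$.

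Your argument for the remaining direction has a genuine error, even though its conclusion is true. Your mapping-space criterion for epimorphisms is fine. The flaw is the cancellation principle you invoke, that $p$ is $(-1)$-truncated whenever $q\circ p$ is. This fails for spaces: $*\to S^1\to *$ composes to the identity, but $*\to S^1$ has fibre $\mathbb{Z}$. Accordingly, the implication from $g\circ h$ epi to $g$ epi is false in $\infty$-categories. In $\mathcal{S}$ the maps $*\xrightarrow{h}S^1\xrightarrow{g}*$ compose to $\mathrm{id}_*$, yet $g$ is not an epimorphism, because $*\sqcup_{S^1}*\simeq S^2$. Compare the paper's remark on retracts in the appendix.

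Concretely, the fibre of $p=g^*$ over a point is the fibre of the induced map from a fibre of $q\circ p=f^*$ to a fibre of $q=h^*$. When the former is contractible, this is a path space in the fibre of $h^*$, so it is either empty or a loop space of a fibre of $q$, not of $q\circ p$. You therefore need $h^*$ to be truncated ($0$-truncated already suffices), and this is exactly where the hypothesis that $h$ is an epimorphism must enter. Your sentence saying this step does not use that $h$ is epi is wrong.

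With that hypothesis actually used, your mapping-space argument is the dual of the cited lemma and proves the corrected statement. The other direction of the corrected statement is just closure of epimorphisms under composition: $f^*=h^*\circ g^*$ is a composite of monomorphisms of spaces.
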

\begin{proof}
See \cite[Lemma 5.5.6.14]{htt}.
\end{proof}

\begin{prop}\label{calgepi}
Let $\mc{C}^\otimes$ be a symmetric monoidal $\infty$-category such that the underlying $\infty$-category $\mcc$ admits countable colimits, and that tensoring with any fixed object preserves such colimits. Let $f\colon A\to B \in \calg(\mcc)$ be a map of $\einf$-algebras. If $f$ is an epimorphism as a map in $\mcc$, then $f$ is also an epimorphism in $\calg(\mcc)$.
\end{prop}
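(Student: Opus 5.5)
We want to show: if $f\colon A\to B$ in $\calg(\mcc)$ is an epimorphism in $\mcc$, then the codiagonal-type square $A\to B$, $A\to B$, with both sides to $B$, is a pushout in $\calg(\mcc)$. The plan is to use that pushouts in $\calg(\mcc)$ are relative tensor products, i.e. $B\sqcup_A B\simeq B\otimes_A B$, computed as the geometric realization of the bar construction $\op{Bar}_A(B,B)_\bullet = B\otimes A^{\otimes n}\otimes B$. (\cite[Proposition 3.2.4.7 and \S4.4.2]{ha}.) This requires geometric realizations, which are countable colimits, and tensoring compatible with them, exactly the hypothesis. So it suffices to show that the multiplication map $B\otimes_A B\to B$ is an equivalence in $\mcc$.

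The key step is a translation of "epimorphism in $\mcc$" into "codiagonal equivalence in $\modu_A(\mcc)$". First I would note that $A\to B$ being an epimorphism means $B\sqcup_A B\to B$ is an equivalence in $\mcc$; I then need the analogous statement with $\otimes_A$. The route I'd take: consider the cofiber-type comparison in $\modu_A(\mcc)$. Since $A\to B$ is an epimorphism in $\mcc$ and the forgetful functor $\modu_A(\mcc)\to\mcc$ is conservative and preserves colimits (as $\otimes$ preserves countable colimits), $A\to B$ is an epimorphism in $\modu_A(\mcc)$ as well: the pushout $B\sqcup_A B$ in $\modu_A$ maps to $B$, and after forgetting this is the map $B\sqcup_AB\to B$ in $\mcc$, an equivalence. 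Now apply the colimit-preserving functor $B\otimes_A(-)\colon\modu_A(\mcc)\to\modu_B(\mcc)$; colimit-preserving functors preserve epimorphisms (pushout squares go to pushout squares), so $B\simeq B\otimes_A A\to B\otimes_A B$ is an epimorphism. But this map has a retraction, the multiplication $\mu\colon B\otimes_A B\to B$. An epimorphism $s$ with $\mu s\simeq\id$ is an equivalence: from the pushout property, the two maps $s\mu$ and $\id$ from $B\otimes_A B$ agree after precomposing with $s$, hence are homotopic since $s$ is epi (precomposition with $s$ is an equivalence on mapping spaces, in particular injective on $\pi_0$). Thus $\mu$ is an equivalence, in particular $s$ is one.

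Finally conclude: in $\calg(\mcc)$, the pushout $B\sqcup_A B\simeq B\otimes_A B$ with fold map given by $\mu$, which is an equivalence, so the defining square for epimorphisms is a pushout in $\calg(\mcc)$, i.e. $f$ is an epimorphism there. The main obstacle I expect is the bookkeeping in identifying the coproduct in $\calg_{A/}$ with the relative tensor product and its fold map with $\mu$, under only countable-colimit hypotheses; I would cite \cite[Theorem 4.5.2.1 and Proposition 3.2.4.7]{ha} (applied to $\calg(\modu_A(\mcc))\simeq\calg(\mcc)_{A/}$), which needs exactly that $\mcc$ admits geometric realizations compatible with $\otimes$.
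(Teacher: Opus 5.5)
Your argument is correct, but it takes a different route from the paper's proof.

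\textbf{The paper's route.} The paper never identifies the pushout $B\sqcup_A B$ in $\calg(\mcc)$. It uses monadicity of $U\colon\calg(\mcc)\to\mcc$ and the functorial bar resolution $FT^{\bullet}U(-)$. These express $\mapp_{\calg(\mcc)}(B,C)\to\mapp_{\calg(\mcc)}(A,C)$ as $\lim_{\Delta}$ of the maps $\mapp_{\mcc}(T^nUB,UC)\to\mapp_{\mcc}(T^nUA,UC)$. The paper then observes that $T=\op{Sym}^*$ preserves epimorphisms, since it is built from tensor powers, $\Sigma_k$-coinvariants and countable coproducts. So each levelwise map is a monomorphism of spaces, and hence so is the limit.

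\textbf{What each approach buys.} The paper's argument is purely monadic. It needs no symmetric monoidal structure on $\modu_A(\mcc)$ and no relative tensor products, and it works verbatim for any monadic forgetful functor whose monad preserves epimorphisms.

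Your argument is specific to the commutative case. It rests on coproducts in $\calg(\modu_A(\mcc))\simeq\calg(\mcc)_{A/}$ being relative tensor products (\cite[Corollary~3.4.1.7 and Proposition~3.2.4.7]{ha}), and it uses the bar-construction machinery of \cite[\S4.4--4.5]{ha}. In exchange it gives a sharper conclusion: the multiplication $\mu\colon B\otimes_A B\to B$ is an equivalence, i.e.\ $B$ is an idempotent $A$-algebra. This is exactly the kind of statement the paper exploits later, in the proof of \cref{quotidl}, in \cref{epiidem} and via \cref{calgepi2}.

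\textbf{Two small streamlinings.} First, you do not need the detour through ``$f$ is an epimorphism in $\modu_A(\mcc)$''. The map $B\otimes_A f$ is the geometric realization of the levelwise maps $(B\otimes A^{\otimes n})\otimes f$. Each of these is an epimorphism because tensoring with a fixed object preserves pushouts, and a colimit of a pointwise epimorphism is an epimorphism. Second, the retraction argument can be run directly in $\mcc$, since $\mu\circ(B\otimes_A f)\simeq\id_B$ there by the unit axiom.
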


\begin{proof}
Let $U:\mathrm{CAlg}(\mathcal C)\to \mathcal C$ denote the forgetful functor, and let $F=\mathrm{Sym}^*$ be its left adjoint, the free commutative algebra functor. Write $T=UF$ for the associated monad on $\mathcal C$. By \cite[Barr-Beck-Lurie Theorem 4.7.3.5]{ha}, the forgetful functor
\[
U:\mathrm{CAlg}(\mathcal C)\to \mathcal C
\]
is monadic. Hence, for every $C\in \mathrm{CAlg}(\mathcal C)$, the map between mapping spaces
\[
\operatorname{Map}_{\mathrm{CAlg}(\mathcal C)}(B,C)
\longrightarrow
\operatorname{Map}_{\mathrm{CAlg}(\mathcal C)}(A,C)
\]
can be identified, via the standard monadic resolution, with
\[
\lim_{[n]\in \Delta}
\mapp_{\calg(\mcc)}
\bigl(F(T^nUB),C\bigr)
\longrightarrow
\lim_{[n]\in \Delta}
\mapp_{\calg(\mcc)}
\bigl(F(T^nUA),C\bigr).
\]
Using the adjunction \(F\dashv U\), each term in this map is naturally identified with
\[
\mapp_{\mcc}(T^nUB,UC)
\longrightarrow
\mapp_{\mcc}(T^nUA,UC).
\]
By assumption, $UA\to UB$ is an epimorphism in $\mcc$. Since $T=UF=\mathrm{Sym}^*$ is the free commutative algebra monad, it is built from tensor powers, symmetric coinvariants, and countable coproducts. By the compatibility assumption on $\mcc^\otimes$, these operations preserve epimorphisms. Therefore
\[
T^nUA\longrightarrow T^nUB
\]
is an epimorphism in $\mcc$ for every $n\geq 0$. It follows that, for each $n\geq 0$, the induced map
\[
\mapp_{\mcc}(T^nUB,UC)
\longrightarrow
\mapp_{\mcc}(T^nUA,UC)
\]
is a monomorphism of spaces. Therefore the induced map on limits is also a monomorphism. Hence
\[
\mapp_{\calg(\mcc)}(B,C)
\longrightarrow
\mapp_{\calg(\mcc)}(A,C)
\]
is a monomorphism for every $C\in \calg(\mcc)$. Thus $f$ is an epimorphism in $\calg(\mcc)$.
\end{proof}

\begin{ex}\label{acyc}
A map $f \colon X\to Y$ in the \infcat of spaces is an epimorphism if and only if for each $y\in Y$ the fiber $f_y$ is an acyclic space, that is, the reduced homology groups $\widetilde{H}_*(X)$ vanish. See \cite{hoyois2019quillen,Raptis2019} for more discussion on acyclic spaces.
\end{ex}

\begin{ex}\label{monoepicatperf}
Let $F:\mcc\to \mcd$ be a morphism in $\catper$.
\enu{
\item $F$ is a monomorphism if and only if it is fully faithful.
\item  $F$ is an epimorphism if and only if it is a homological epimorphism, meaning that the induced functor $\op{Ind}(F) \colon \op{Ind}(\mcc) \to \op{Ind}(\mcd)$ is a localization.}
\end{ex}
\begin{proof}
(1): The ``if'' part is obvious. For the ``only if'' part, note that the map of groupoids
\[
\mathcal{C}^{\simeq} \simeq \operatorname{Fun}^{\op{ex}}(\mathrm{Sp}^\omega, \mathcal{C})^{\simeq} \rightarrow \operatorname{Fun}^{\op{ex}}(\mathrm{Sp}^\omega, \mathcal{D})^{\simeq} \simeq \mathcal{D}^{\simeq}
\]
is a monomorphism. By the usual stable-category argument used in \cite[Proposition A.1(1)]{efimov2024k} (\cf \cite[Lemma~7.5]{hattt}), a monomorphism on maximal subgroupoids induced by an exact functor between stable $\infty$-categories forces fully faithfulness. Therefore, $F$ is fully faithful.

(2): This follows from \cite[Proposition A.1(2)]{efimov2024k} and the fact that the functor $$\op{Ind}\colon\catper\rightarrow\prlst$$ is conservative and preserves small colimits, hence detects the codiagonal criterion for epimorphisms.
\end{proof}

\begin{lem}\label{cartmono}
Let $p:\mce\to\mcb$ be a categorical fibration of $\infty$-categories. If $f:x\to y$ is a monomorphism in $\mcb$, then any $p$-Cartesian lifting of $f$ is a monomorphism in $\mce$.
\end{lem}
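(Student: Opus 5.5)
We want to show that if $f\colon x\to y$ is a monomorphism in $\mcb$ and $\tilde f\colon \tilde x\to \tilde y$ is a $p$-Cartesian lift of $f$, then $\tilde f$ is a monomorphism in $\mce$. We use the mapping-space characterization: $\tilde f$ is a monomorphism exactly when, for every $e\in\mce$, the induced map
\[
\mapp_{\mce}(e,\tilde x)\to\mapp_{\mce}(e,\tilde y)
\]
is $(-1)$-truncated, i.e. a monomorphism of spaces.

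The first step is to use the defining property of $p$-Cartesian morphisms (\cite[\S2.4]{htt}). For every $e\in\mce$ the square
\[
\begin{tikzcd}
\mapp_{\mce}(e,\tilde x) \arrow[r,"\tilde f\circ-"] \arrow[d] & \mapp_{\mce}(e,\tilde y) \arrow[d] \\
\mapp_{\mcb}(p(e),x) \arrow[r,"f\circ-"] & \mapp_{\mcb}(p(e),y)
\end{tikzcd}
\]
is a homotopy pullback of spaces. Because $p$ is a categorical fibration, these mapping spaces and the comparison maps are well defined up to equivalence, so no further model-dependent input is needed here.

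The second step uses the hypothesis on $f$. Since $f$ is a monomorphism in $\mcb$, the bottom horizontal map is $(-1)$-truncated (\cite[\S5.5.6]{htt}). The class of $(-1)$-truncated maps of spaces is stable under pullback, so the top horizontal map is also $(-1)$-truncated. This holds for every $e$, so $\tilde f$ is $(-1)$-truncated in $\mce$, that is, a monomorphism.

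There is essentially no obstacle. The only point that needs care is to cite the pullback-square characterization of $p$-Cartesian edges, which is an equivalence between the Cartesian condition and the square above being a pullback, rather than merely the lifting property for horns.
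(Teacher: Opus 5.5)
Your proof is correct and follows the same route as the paper's: both use the pullback square of mapping spaces that characterizes a $p$-Cartesian lift, and then conclude because $(-1)$-truncated maps of spaces are stable under pullback.
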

\begin{proof}
Let $u\in\mce$ and let $\bar{f}:\bar{x}\to\bar{y}$ be a $p$-Cartesian lifting of $f$. Then we have a pullback diagram of spaces 
\[
\begin{tikzcd}
{\mapp_{\mc{E}}(u,\bar{x})} \arrow[d] \arrow[r] & {\mapp_{\mc{E}}(u,\bar{y})} \arrow[d] \\
{\mapp_{\mc{B}}(p(u),x)} \arrow[r]              & {\mapp_{\mc{B}}(p(u),y)}.           
\end{tikzcd}
\]
Since monomorphisms are stable under pullback, it follows that $\bar{f}$ is a monomorphism in $\mce$.
\end{proof}

\begin{prop}[{\cite[Corollary A.9 and Proposition A.11]{ramzi2023elementaryproofnaturalityyoneda}}]\label{monolem}\,
\begin{enumerate}[label=(\arabic*),font=\normalfont]

\item Let $\mcc$ be an $\infty$-category, and let $\mcc_{\op{mono}} \subseteq \mcc^{\Delta^1}$ be the full subcategory of the arrow category spanned by monomorphisms. Let $\mci$ be an arbitrary $\infty$-category with an essential surjection $p: \mci_0 \rightarrow \mci$. Then the space of diagonal fillers in any commutative diagram is empty or contractible:
\[
\begin{tikzcd}
\mci_0 \arrow[d, "p"] \arrow[r, "m"]        & \mcc_{\op{mono}} \arrow[d, "ev_1"] \\
\mci \arrow[r, "f"] \arrow[ru, dashed] & \mcc.                              
\end{tikzcd}
\]
Here, the map $\mcc_{\op{mono}} \rightarrow \mcc$ is evaluation at $1$, which picks out the target of an arrow. This space of diagonal fillers is nonempty if and only if for every arrow $p(i) \rightarrow p(j)$ in $\mci$, the corresponding map $f(p(i)) \rightarrow f(p(j))$ is compatible with the monomorphisms $m(i)=\left(h(i) \rightarrow f(p(i))\right)$ and $m(j)=\left(h(j) \rightarrow f(p(j))\right)$ in the sense that the composite $h(i) \rightarrow f(p(i)) \rightarrow f(p(j))$ factors through $h(j)$.
\item  Let $f, g, h: \mci \rightarrow \mcc$ be functors between $\infty$-categories and let $\eta: f \rightarrow g$ be a transformation which is pointwise a monomorphism. In this case, $\eta$ is a monomorphism, and a transformation $\alpha: h \rightarrow g$ factors through $\eta$ if and only if $\alpha_i: h(i) \rightarrow g(i)$ factors through $\eta_i: f(i) \rightarrow g(i)$ for all $i \in \mci$.
\end{enumerate}
\end{prop}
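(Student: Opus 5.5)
The statement is \cref{monolem}, cited from \cite{ramzi2023elementaryproofnaturalityyoneda}; I would still sketch a self-contained argument resting on one fact: a monomorphism $m\colon h\to c$ is a $(-1)$-truncated map. Concretely, for every $x$ the map $\mapp(x,h)\to\mapp(x,c)$ is an inclusion of connected components.

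For part (2), the plan is to compute mapping spaces in $\funct(\mci,\mcc)$ as ends,
\[
\mapp(h,f)\simeq\lim_{\op{Tw}(\mci)}\mapp(h(i),f(j)).
\]
Composition with $\eta$ gives a natural transformation of these diagrams, and it is pointwise $(-1)$-truncated because each $\eta_j$ is a monomorphism. Limits of $(-1)$-truncated maps are $(-1)$-truncated, so $\mapp(h,f)\to\mapp(h,g)$ is a monomorphism. This shows $\eta$ is a monomorphism.

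For the factorization criterion in (2), take $\alpha\in\lim\mapp(h(i),g(j))$. It lifts to the subdiagram $\mapp(h(i),f(j))$ exactly when each component of the limit lands in the chosen components. The key observation is that a point of the limit lies in a union of components of a pointwise sub-diagram precisely when its image at every object does. Moreover, the value at a twisted arrow $i\to j$ is determined, up to the component condition, by $\alpha_j$ precomposed with $h(i)\to h(j)$. It therefore suffices to check the diagonal objects $i=j$, which is exactly the condition that each $\alpha_i$ factors through $\eta_i$.

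For part (1), I would use that $ev_1\colon\mcc_{\op{mono}}\to\mcc$ is a cartesian fibration whose fibers are posets, namely $\idl$-like subobject posets, with cartesian lifts given by pullback. It is therefore a $(-1)$-truncated-fiber fibration, i.e. a ``$0$-categorical'' fibration. Lifts of $f$ are then sections of the pulled-back fibration $f^*\mcc_{\op{mono}}\to\mci$, which is a cartesian fibration with poset fibers. The space of sections extending a given section over $\mci_0$ is the space of natural transformations in the unstraightened poset-valued functor $\mci^{\op{op}}\to\op{Pos}$. Any such space of sections is empty or contractible once values on objects are prescribed, because mapping spaces in posets are $(-1)$-truncated. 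Since $p$ is essentially surjective, the objects are prescribed, so the filler space is $(-1)$-truncated.

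Nonemptiness reduces to checking that the prescribed elements $m(i)$ assemble into a section, i.e. satisfy $m(i)\le f(\phi)^*m(j)$ for every arrow $\phi\colon p(i)\to p(j)$. By the universal property of pullback this is exactly the stated factorization condition.

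The main obstacle is the essential surjectivity step. The data $m$ is only defined on $\mci_0$, so one must check that the poset element at $p(i)$ is independent of the choice of preimage; this follows by applying the condition to equivalences in $\mci$. Making the section-space claim precise needs the straightening for cartesian fibrations with $(-1)$-truncated fibers, for which I would invoke \cref{cartmono} together with \cite[\textsection5.5.6]{htt}.
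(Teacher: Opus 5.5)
Your part (2) is fine. The end formula, together with the fact that the fiber of a limit of pointwise $(-1)$-truncated maps is the limit of the pointwise fibers (each empty or contractible), gives both the monomorphism claim and the factorization criterion. The reduction to the diagonal of $\op{Tw}(\mci)$ is also correct.

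\textbf{The gap is in part (1).} The statement is for an arbitrary $\infty$-category $\mcc$, but your argument rests on $ev_1\colon\mcc_{\op{mono}}\to\mcc$ being a cartesian fibration whose cartesian lifts are given by pullback. That needs pullbacks of monomorphisms along arbitrary maps. Without them it already fails for a poset containing $a\le y\ge y'$ with no meet of $a$ and $y'$. In that situation the expression $f(\phi)^*m(j)$ in your nonemptiness criterion is undefined, whereas the statement's criterion, phrased as a factorization, makes sense with no limits at all.

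This generality is actually used. The paper applies the statement through \cref{monocor} with $\mcc$ replaced by $\mce=\mcc^\otimes\times_{\op{Fin}_*}\mc{O}^\otimes$, and dually for epimorphisms (feeding into \cref{calgepi2}). There $\mce$ is not assumed to have pullbacks, resp.\ pushouts.

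The repair uses only your own key fact. Since $b\to y$ is a monomorphism, the map
\[
\mapp_{\mcc_{\op{mono}}}\bigl((a\to x),(b\to y)\bigr)\simeq\mapp(x,y)\times_{\mapp(a,y)}\mapp(a,b)\longrightarrow\mapp(x,y)
\]
is $(-1)$-truncated. So $ev_1$ is faithful, and so is its base change $q\colon\mci\times_{\mcc}\mcc_{\op{mono}}\to\mci$, whose sections are exactly the candidate fillers.

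For faithful $q$, the same end computation as in part (2) shows that sections form an $\infty$-category with $(-1)$-truncated mapping spaces. In it, a map $s\to t$ exists iff a map $s(i)\to t(i)$ over $\op{id}_i$ exists for every $i$. Hence its core is discrete, and restriction along the essentially surjective $p$ is injective on it. This is the empty-or-contractible claim.

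For existence, take the replete full subcategory of $\mci\times_{\mcc}\mcc_{\op{mono}}$ spanned by the objects $m(i_0)$; this is consistent by your remark about equivalences in $\mci$. Faithfulness, plus the hypothesis that every $\phi\colon p(i)\to p(j)$ lifts to a map $m(i)\to m(j)$ over $f(\phi)$, makes this subcategory map fully faithfully and essentially surjectively onto $\mci$. That hypothesis is literally the statement's factorization condition. An inverse equivalence then gives the section.

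Alternatively, embed $\mcc$ into presheaves via Yoneda, which preserves and reflects monomorphisms, and run your argument there. The lift lands in $\mcc_{\op{mono}}$ because $p$ is essentially surjective.

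Finally, even when pullbacks exist, sections of a cartesian fibration correspond to \emph{lax} transformations, not natural transformations. Natural transformations are the cartesian sections, and would force $m(i)=f(\phi)^*m(j)$. Your inequality $m(i)\le f(\phi)^*m(j)$ is the right condition, so only the wording there needs correcting.
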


\begin{cor}[Relative version]\label{monocor}
Let $e:\mc{E}\to\mcb$ be a categorical fibration between $\infty$-categories and $p:\mci_0\to\mci$ an essentially surjective functor between $\infty$-categories over $\mcb$. Consider a Cartesian lifting problem
\[
\begin{tikzcd}
\Delta^0 \arrow[d, "\{1\}"'] \arrow[r, "f"]           & {\funct_{/\mcb}(\mci,\mc{E})} \arrow[d, "q"] \\
\Delta^1 \arrow[r, "\theta"'] \arrow[ru, dashed, "\theta'"] & {\funct_{/\mcb}(\mci_0,\mce)}
\end{tikzcd}
\]
where $\theta: h\to f\circ p$ is a pointwise monomorphism in $\mce$. The following hold:
\begin{enumerate}
\item If for every arrow $p(i) \rightarrow p(j)$ in $\mci$ the composition $h(i) \rightarrow f(p(i)) \rightarrow f(p(j))$ factors through $h(j)$, then $\theta$ admits a Cartesian lifting $\theta':\bar{h}\to f$ with respect to  $q:\funct_{/\mcb}(\mci,\mc{E})\xrightarrow{} \funct_{/\mcb}(\mci_0,\mce)$.
\item Moreover, $\theta'$ is a monomorphism in $\funct_{/\mcb}(\mci,\mc{E})$, and a transformation $\alpha: g \rightarrow f \in \funct_{/\mcb}(\mci,\mc{E})$ over $\mcb$ factors through $\bar{h}$ if and only if $\alpha_i\colon g(p(i)) \rightarrow f(p(i))$ factors through $h(i) \rightarrow f(p(i))$ for all $i \in \mci_0$.  
\end{enumerate}
\end{cor}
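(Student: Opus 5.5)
The plan is to reduce everything to \cref{monolem} by an unstraightening argument. I would view $\funct_{/\mcb}(\mci,\mce)$ and $\funct_{/\mcb}(\mci_0,\mce)$ as fibers of $\funct(\mci,\mce)\to\funct(\mci,\mcb)$ and $\funct(\mci_0,\mce)\to\funct(\mci_0,\mcb)$ over the structure maps. Since $e$ is a categorical fibration, so is the restriction $q$, and so are these fiber maps. I would first treat the absolute problem in $\funct(\mci,\mce)$ and afterwards check that the lift I produce lies over the given maps to $\mcb$.

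\textbf{Step (1).} The datum $\theta\colon h\to f\circ p$ is a functor $\mci_0\to\mce_{\op{mono}}$ whose composite with $ev_1$ is $f\circ p$. This is exactly the top and right part of the square in \cref{monolem}(1), with $\mcc=\mce$. The compatibility hypothesis is precisely the criterion in \cref{monolem}(1), so that result gives a filler $\mci\to\mce_{\op{mono}}$. I would call this filler $\theta'\colon\bar h\to f$; it restricts along $p$ to $\theta$ and has target $f$.

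To see that the filler lies over $\mcb$, note that $e\circ\bar h$ and the given structure map agree after restriction along $p$ together with the monomorphism data. Because $e$ is a categorical fibration, I can adjust $\theta'$ by a homotopy so that it lies strictly in the fiber $\funct_{/\mcb}(\mci,\mce)$. Contractibility of the space of fillers in \cref{monolem}(1) guarantees that this adjustment is essentially unique.

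\textbf{Step (2).} The transformation $\theta'$ is pointwise a monomorphism. This holds because essential surjectivity of $p$ lets me identify each component with a component of $\theta$, up to equivalence. By \cref{monolem}(2), $\theta'$ is then a monomorphism in $\funct(\mci,\mce)$, and factorizations through $\bar h$ are detected pointwise. Again by essential surjectivity, pointwise detection reduces to the objects $p(i)$ for $i\in\mci_0$, which gives the stated factorization criterion.

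Passing to the fiber over $\mcb$ preserves both properties. Monomorphisms in the fiber of a categorical fibration are detected by the pullback of mapping spaces, as in the proof of \cref{cartmono}.

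\textbf{Cartesianness.} Finally, $\theta'$ is $q$-Cartesian. For any $g\in\funct_{/\mcb}(\mci,\mce)$, the map $\mapp(g,\bar h)\to\mapp(g,f)\times_{\mapp(gp,fp)}\mapp(gp,h)$ is an equivalence. Both sides are $(-1)$-truncated over $\mapp(g,f)$, using the monomorphism statements for $\theta'$ and $\theta$. So it suffices to compare their images, and by Step (2) these images coincide.

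I expect the main obstacle to be the bookkeeping of working relative to $\mcb$, namely making sure the filler from \cref{monolem}(1) can be chosen inside the fiber. The first thing I would try is to apply \cref{monolem} to $\mce_{\op{mono}}\times_{\mcb^{\Delta^1}}\mcb$, restricted to morphisms lying over identities. This is not itself $\mce_{\op{mono}}$, so it would need to be identified with the full subcategory of monomorphisms in an appropriate fibered sense.
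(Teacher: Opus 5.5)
Your outline is the paper's. Your absolute filler from \cref{monolem}(1) is the paper's $q_2$-Cartesian lift, where the paper factors $q$ through $\funct(\mci,\mce)\times_{\funct(\mci_0,\mce)}\funct_{/\mcb}(\mci_0,\mce)$. Moving that filler into the fiber over $\mcb$ is the paper's $q_1$-step. Your explicit mapping-space check of Cartesianness is fine, and so is your pointwise proof of the monomorphism property; the paper instead gets the latter from \cref{cartmono}.

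The one step you leave open is the relative one you flag, and your justification there is not what a categorical fibration needs. You say $e\circ\bar h$ and the structure map $s\colon\mci\to\mcb$ agree after restriction along $p$. But an isofibration only transports along equivalences, so you must check that $e\circ\theta'\colon e\bar h\to s$ is an equivalence in $\funct(\mci,\mcb)$. This holds because:
\begin{enumerate}
\item its component at $p(i)$ is $e(\theta_i)=\mathrm{id}$, since $\theta$ lies over $\mcb$;
\item every object of $\mci$ is equivalent to some $p(i)$, by essential surjectivity, exactly as in your Step (2).
\end{enumerate}
This is the content of the paper's remark that the image of $\theta_2'$ in $\funct(\mci\sqcup_{\mci_0}\mci,\mcb)$ is an identity.

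With this observation the adjustment is concrete. Since $e_*\colon\funct(\mci,\mce)\to\funct(\mci,\mcb)$ is a categorical fibration, lift $e\circ\theta'$ to an equivalence $u\colon\bar h\to\bar h''$ with $e\bar h''=s$. Then fill the horn $\Lambda^2_0\to\funct(\mci,\mce)$ given by $(u,\theta')$ over the degenerate $2$-simplex $s_1(e\circ\theta')$; Joyal lifting applies because $u$ is an equivalence. The resulting edge $\theta''\colon\bar h''\to f$ lies over $\mathrm{id}_s$, so it lives in $\funct_{/\mcb}(\mci,\mce)$. Over $\mci_0$ everything lies over the identity of $s\circ p$, so $q(\theta'')\simeq\theta$ as objects over $f\circ p$ in $\funct_{/\mcb}(\mci_0,\mce)$.

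Your Step (2) passage to the fiber also needs the morphism to lie over $\mathrm{id}_s$, and this only holds after the adjustment. With $\theta''$ in place of $\theta'$, your Step (2) and your Cartesianness argument go through unchanged. The remark about contractibility of fillers and the detour through $\mce_{\op{mono}}\times_{\mcb^{\Delta^1}}\mcb$ are then unnecessary.
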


\begin{proof}
First we note that $q$ can be decomposed as $$\funct_{/\mcb}(\mci,\mc{E})\xrightarrow{q_1}\funct(\mci,\mc{E})\times_{\funct(\mci_0,\mc{E})}\funct_{/\mcb}(\mci_0,\mc{E})\xrightarrow{q_2} \funct_{/\mcb}(\mci_0,\mc{E}).$$
A $q_2$-Cartesian lifting $\theta_2'$ can be found by \cref{monolem}. It suffices to show that $\theta_2'$ admits a Cartesian lifting. Next we observe that $q_1$ can be identified with
\[
\funct(\mci,\mc{E})\times_{\funct(\mci,\mc{B})}\{*\}\xrightarrow{q_1}\funct(\mci,\mc{E})\times_{\funct(\mci_0,\mc{B})}\{*\}
\]
which is a pullback from
\[
\funct(\mci,\mc{B})\xrightarrow{\delta}\funct(\mci,\mc{B})\times_{\funct(\mci_0,\mc{B})}\funct(\mci,\mc{B})=\funct(\mci\sqcup_{\mci_0}\mci,\mc{B}).
\]
However, by construction the image of $\theta_2'$ in $\funct(\mci\sqcup_{\mci_0}\mci,\mc{B})$ is an identity map, so $\theta_2'$ automatically admits a $q_1$-Cartesian lifting $\theta'$ as desired. The monomorphism property of $\theta'$ also follows from \cref{monolem,cartmono}.
\end{proof}

\begin{lem}\label{monoperad}
Let $\mcc^\otimes$ be a symmetric monoidal $\infty$-category. Any monomorphism
\[
(X_1,\dots,X_n)\to (Y_1,\dots,Y_n)
\]
in $\mcc^n\simeq\mcc^{\otimes}_{\left<n \right>}$ is also a monomorphism in $\mcc^\otimes$. Moreover, if $\mcc^\otimes$ is compatible with epimorphisms, meaning that the tensor product preserves epimorphisms separately in each variable, then any epimorphism in $\mcc^n\simeq\mcc^{\otimes}_{\left<n \right>}$ is also an epimorphism in $\mcc^\otimes$.
\end{lem}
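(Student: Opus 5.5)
The strategy is to reduce both claims to the mapping spaces of the $\infty$-operad $\mcc^\otimes$. For objects $\bar Z=(Z_1,\dots,Z_m)$ over $\langle m\rangle$ and $\bar X=(X_1,\dots,X_n)$ over $\langle n\rangle$, \cite[Definition 2.1.1.10]{ha} gives a decomposition
\[
\mapp_{\mcc^\otimes}(\bar Z,\bar X)\simeq\coprod_{\alpha\colon\langle m\rangle\to\langle n\rangle}\ \prod_{j=1}^n\mapp_{\mcc}\Big(\bigotimes_{\alpha(i)=j}Z_i,\,X_j\Big).
\]
This decomposition is natural in $\bar X$ for morphisms in the fiber $\mcc^\otimes_{\langle n\rangle}\simeq\mcc^n$, which lie over $\id_{\langle n\rangle}$. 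It is also natural in $\bar Z$ for fiber morphisms.

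\textbf{Monomorphisms.} Let $\bar f=(f_j)\colon\bar X\to\bar Y$ be a monomorphism in $\mcc^n$, so each $f_j$ is $(-1)$-truncated in $\mcc$. I must show that for every $\bar Z\in\mcc^\otimes$ the map $\mapp_{\mcc^\otimes}(\bar Z,\bar X)\to\mapp_{\mcc^\otimes}(\bar Z,\bar Y)$ is $(-1)$-truncated. Using the decomposition, this map is a coproduct over $\alpha$ of products of the maps $\mapp_{\mcc}(\bigotimes_{\alpha(i)=j}Z_i,X_j)\to\mapp_{\mcc}(\bigotimes_{\alpha(i)=j}Z_i,Y_j)$. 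Each of these is $(-1)$-truncated because $f_j$ is a monomorphism. Products of $(-1)$-truncated maps are $(-1)$-truncated. A coproduct of $(-1)$-truncated maps indexed by the same set is $(-1)$-truncated, since the summands map into disjoint components.

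A cleaner alternative: a morphism of the fiber is $p$-Cartesian over $\id$ for the coCartesian fibration $p\colon\mcc^\otimes\to\op{Fin}_*$. Then \cref{cartmono} applies directly, since $\id_{\langle n\rangle}$ is a monomorphism in $\op{Fin}_*$. This only needs $p$ to be a categorical fibration, and it is one. I would actually cite this route and keep the computation above as justification.

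\textbf{Epimorphisms.} Here we need the maps $\mapp_{\mcc^\otimes}(\bar Y,\bar W)\to\mapp_{\mcc^\otimes}(\bar X,\bar W)$ to be $(-1)$-truncated for all $\bar W=(W_1,\dots,W_k)$. The decomposition, now contravariant in the source, reduces this to the maps
\[
\mapp_{\mcc}\Big(\bigotimes_{\alpha(i)=l}Y_i,W_l\Big)\to\mapp_{\mcc}\Big(\bigotimes_{\alpha(i)=l}X_i,W_l\Big),
\]
indexed by $\alpha\colon\langle n\rangle\to\langle k\rangle$. It then suffices that $\bigotimes_{i\in S}X_i\to\bigotimes_{i\in S}Y_i$ is an epimorphism in $\mcc$ for each finite set $S$. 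For $S$ nonempty, factor this map as a composite of maps that change one tensor factor at a time. Each such map is an epimorphism by the compatibility hypothesis, and composites of epimorphisms are epimorphisms. For $S=\emptyset$ the map is $\id_{\mb1}$. Products and coproducts of these $(-1)$-truncated maps are then handled as before.

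The main obstacle is making sure the displayed decomposition of operadic mapping spaces is functorial in the source variable along fiberwise morphisms, which requires the coCartesian structure. I would justify it by writing $\bar X\simeq\bigoplus_i X_i$, the operadic coproduct in $\mcc^\otimes$, and then using coCartesian transport along active maps to identify $\mapp(\bar X,-)$ over $\alpha$ with $\mapp(\alpha_!\bar X,-)$ in the fiber. Since $\alpha_!$ is computed by tensor products, it is functorial in fiber morphisms.
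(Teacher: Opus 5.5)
Your main argument is the paper's proof. You decompose $\mapp_{\mcc^\otimes}$ as a coproduct over $\alpha$ of products of mapping spaces in $\mcc$, and then use that monomorphisms of spaces are closed under products and coproducts. In the epimorphism case you also spell out what the paper compresses into ``by our assumption'': changing one tensor factor at a time, the case $S=\emptyset$, and naturality in the source via $\alpha_!$.

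However, the ``cleaner alternative'' you say you would actually cite is wrong, and you should drop it. A morphism $\bar f\colon\bar X\to\bar Y$ in the fiber $\mcc^\otimes_{\langle n\rangle}$ is never $p$-Cartesian over $\id_{\langle n\rangle}$ unless it is an equivalence. Being $p$-Cartesian means that, for every $\bar Z$, a certain square is a pullback. Its top row is $\mapp_{\mcc^\otimes}(\bar Z,\bar X)\to\mapp_{\mcc^\otimes}(\bar Z,\bar Y)$ and its bottom row is the identity of $\mapp_{\op{Fin}_*}(\langle m\rangle,\langle n\rangle)$. A pullback of an identity is an equivalence, so the top row would be an equivalence for every $\bar Z$, and by Yoneda $\bar f$ would be an equivalence. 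Hence \cref{cartmono} never applies to a non-invertible monomorphism here. The computation does not justify the Cartesian-lift route; it replaces it.

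The correct fibrational statement is exactly your computation. The map $\mapp_{\mcc^\otimes}(\bar Z,\bar X)\to\mapp_{\mcc^\otimes}(\bar Z,\bar Y)$ lies over the discrete space $\mapp_{\op{Fin}_*}(\langle m\rangle,\langle n\rangle)$. Over each $\alpha$ it is the map $\mapp_{\mcc^n}(\alpha_!\bar Z,\bar X)\to\mapp_{\mcc^n}(\alpha_!\bar Z,\bar Y)$, which is a monomorphism because $\bar f$ is one in $\mcc^n$. A map over a common discrete base that is a monomorphism on each fiber is a monomorphism.
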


\begin{proof}
For a pair of objects $C=\left(\langle m\rangle,\left(C_1, \ldots, C_m\right)\right)$ to $C^{\prime}=\left(\langle n\rangle,\left(C_1^{\prime}, \ldots, C_n^{\prime}\right)\right)$ in $\mathcal{C}^{\otimes}$, the mapping space $\operatorname{Map}_{\mathcal{C}^{\otimes}}(C, C^{\prime})$ can be written as  
$$
\coprod_{\alpha:\langle m\rangle \rightarrow\langle n\rangle} \prod_{1 \leq j \leq n} \operatorname{Map}_{\mathcal{C}}\left(\bigotimes_{\alpha(i)=j}C_i, C_j^{\prime}\right).
$$

Now let
$$
f=(f_1,\ldots,f_n):(X_1,\ldots,X_n)\to (Y_1,\ldots,Y_n)
$$
be a monomorphism in $\mcc^n\simeq \mcc^\otimes_{\langle n\rangle}$. Thus each $f_j:X_j\to Y_j$ is a monomorphism in $\mcc$. We need to show that $f$ is a monomorphism in $\mcc^\otimes$. By definition, it is enough to show that for every object
$$
C=\left(\langle m\rangle,\left(C_1,\ldots,C_m\right)\right)
$$
of $\mcc^\otimes$, the induced map
$$
\mapp_{\mcc^\otimes}(C,X)\to \mapp_{\mcc^\otimes}(C,Y)
$$
is a monomorphism of spaces. Using the above description of mapping spaces, this map is the coproduct over $\alpha:\langle m\rangle\to\langle n\rangle$ of products of the maps
$$
\mapp_{\mcc}\left(\bigotimes_{\alpha(i)=j}C_i,X_j\right)
\to
\mapp_{\mcc}\left(\bigotimes_{\alpha(i)=j}C_i,Y_j\right).
$$
For each fixed $\alpha$ and $j$, this map is induced by $f_j:X_j\to Y_j$, and hence is a monomorphism of spaces. Since monomorphisms between spaces are closed under products and coproducts, the map
$$
\mapp_{\mcc^\otimes}(C,X)\to \mapp_{\mcc^\otimes}(C,Y)
$$
is a monomorphism. Therefore $f$ is a monomorphism in $\mcc^\otimes$.

The proof for epimorphisms is similar. Let
$$
f=(f_1,\ldots,f_n):(X_1,\ldots,X_n)\to (Y_1,\ldots,Y_n)
$$
be an epimorphism in $\mcc^n\simeq \mcc^\otimes_{\langle n\rangle}$. Thus each $f_i:X_i\to Y_i$ is an epimorphism in $\mcc$. To show that $f$ is an epimorphism in $\mcc^\otimes$, it suffices to show that for every
$$
D=\left(\langle m\rangle,\left(D_1,\ldots,D_m\right)\right)
$$
the induced map
$$
\mapp_{\mcc^\otimes}(Y,D)\to \mapp_{\mcc^\otimes}(X,D)
$$
is a monomorphism of spaces. By the same description of mapping spaces, this map is the coproduct over $\alpha:\langle n\rangle\to\langle m\rangle$ of products of the maps
$$
\mapp_{\mcc}\left(\bigotimes_{\alpha(i)=k}Y_i,D_k\right)
\to
\mapp_{\mcc}\left(\bigotimes_{\alpha(i)=k}X_i,D_k\right).
$$
For fixed $\alpha$ and $k$, this map is induced by
$$
\bigotimes_{\alpha(i)=k}X_i
\to
\bigotimes_{\alpha(i)=k}Y_i .
$$
This morphism is an epimorphism by our assumption that the tensor product preserves epimorphisms separately in each variable. Hence, the induced map on mapping spaces is a monomorphism. Since monomorphisms of spaces are closed under products and coproducts, $f$ is an epimorphism in $\mcc^\otimes$.
\end{proof}

\begin{prop}
Let $\mcc^\otimes$ be a symmetric monoidal $\infty$-category such that $\mcc$ admits pullbacks and let $\mc{O}^\otimes$ be an $\infty$-operad. Given an $\mc{O}$-algebra $A\in\alg_{\mc{O}}(\mcc)$, the forgetful functor $\alg_{\mc{O}}(\mcc)\to \funct(\mc{O},\mc{C})$ preserves and reflects monomorphisms and the induced functor
$$(\alg_{\mc{O}}(\mcc)_{/A})_{\leq-1}\to (\funct(\mc{O},\mc{C})_{/A_{\left<1\right>}})_{\leq-1}$$ is fully faithful. The essential image consists of the pointwise monomorphisms $B\to A_{\left<1\right>}$ such that for any $\alpha: T=\left(\langle m\rangle,\left(T_1, \ldots, T_m\right)\right)\to T^{\prime}=\left(\langle 1\rangle,(T')\right)$ in $ \mc{O}^\otimes$ the map $\bigotimes_{i}B_{T_i}\to \bigotimes_{i}A_{T_i}\to A_{T'}$ factors through $B_{T'}$.
\end{prop}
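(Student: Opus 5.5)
We reduce everything to the previous two results on monomorphisms: \cref{monoperad} for the operadic total category $\mcc^\otimes$, and \cref{monolem} / \cref{monocor} for functor categories. Write $\alg_{\mc{O}}(\mcc)$ as the full subcategory of $\funct_{/\mathrm{Fin}_*}(\mc{O}^\otimes,\mcc^\otimes)$ (functors over $\mathrm{Fin}_*$, or over $\mc{O}^\otimes\to\mathrm{Fin}_*$) spanned by maps of $\infty$-operads. The forgetful functor to $\funct(\mc{O},\mcc)$ is restriction along the inclusion $\mc{O}\hookrightarrow\mc{O}^\otimes$ of the underlying category, followed by projection to $\mcc=\mcc^\otimes_{\langle1\rangle}$. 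Since an $\mc{O}$-algebra is determined up to contractible choice by inert Kan extension from its values on $\mc{O}$, its value on any object $T=(\langle m\rangle,(T_i))$ is $(A_{T_1},\dots,A_{T_m})\in\mcc^m\simeq\mcc^\otimes_{\langle m\rangle}$.

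\emph{Step 1: the forgetful functor preserves and reflects monomorphisms.} Pullbacks in $\mcc$ give pullbacks in $\mcc^\otimes$ fiberwise over $\mathrm{Fin}_*$. Hence a map of algebras $B\to A$ whose components $B_T\to A_T$ are monomorphisms in $\mcc$ is, at every object of $\mc{O}^\otimes$, a monomorphism in $\mcc^m$. By \cref{monoperad} it is then a monomorphism in $\mcc^\otimes$. Applying \cref{monolem}(2) in the functor category (or its relative form \cref{monocor} over $\mathrm{Fin}_*$), the map is a monomorphism in $\funct_{/\mathrm{Fin}_*}(\mc{O}^\otimes,\mcc^\otimes)$, and so in the full subcategory $\alg_{\mc{O}}(\mcc)$. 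This gives reflection. For preservation, the forgetful functor preserves pullbacks, since limits of algebras are computed pointwise when $\mcc$ has pullbacks (\cite[3.2.2]{ha}), so it preserves $(-1)$-truncated maps.

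\emph{Step 2: full faithfulness and essential image.} Both sides are posets, so full faithfulness means that for subalgebras $B,B'\subseteq A$, we have $B\le B'$ exactly when $B_{\langle1\rangle}\le B'_{\langle1\rangle}$. This follows from \cref{monocor}(2): a transformation into $A$ factors through the monomorphism $B'\to A$ if and only if it does so pointwise, and pointwise factorization on $\mc{O}^\otimes$ reduces to factorization on colors by the inert decomposition.

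For the essential image, start with a pointwise monomorphism $B\to A_{\langle1\rangle}$ and extend it to all of $\mc{O}^\otimes$ by taking products over the inert decomposition. This gives a pointwise monomorphism $h\to A$ over $\mathrm{Fin}_*$ on the objects of $\mc{O}^\otimes$; here take $\mci_0$ to be the maximal subgroupoid, so that the inclusion is essentially surjective. The compatibility hypothesis in \cref{monocor}(1) asks that for each morphism $T\to T'$ of $\mc{O}^\otimes$, the composite $h(T)\to A(T)\to A(T')$ factors through $h(T')$. A morphism into $\langle n\rangle$ decomposes into $n$ active morphisms into single colors, plus inerts. For inerts the factorization is automatic, since they are projections of products. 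For active morphisms it is exactly the stated condition on $\bigotimes_i B_{T_i}\to A_{T'}$. \Cref{monocor} then produces a Cartesian lift $\bar h\to A$, which is a monomorphism in the functor category. Conversely, any subalgebra satisfies the condition, because its structure maps provide the factorizations.

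\emph{The main obstacle.} We still have to check that $\bar h$ is an $\mc{O}$-algebra, i.e.\ that it sends inert morphisms to inert (coCartesian) morphisms of $\mcc^\otimes$. By construction $\bar h(T)=(B_{T_i})_i$, and inerts act as projections, so this should hold. The real care is needed in lining up the fibration $\mcc^\otimes\to\mathrm{Fin}_*$ with the hypotheses of \cref{monocor}.
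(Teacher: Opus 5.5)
Your proposal is correct and is essentially the paper's argument: regard $\mc{O}$-algebras as inert-preserving sections of $\mcc^\otimes\times_{\op{Fin}_*}\mc{O}^\otimes\to\mc{O}^\otimes$, detect monomorphisms via \cref{monoperad}, and obtain full faithfulness and the essential image from the Cartesian lifts of \cref{monocor}. The only difference is that the paper takes $\mci_0=\mc{O}^\otimes\times_{\op{Fin}_*}\op{Triv}^\otimes$ rather than the core of $\mc{O}^\otimes$: this subcategory contains every inert morphism, and restriction to it is exactly the forgetful functor to $\funct(\mc{O},\mcc)$, so the lift $\bar{h}$ agrees on $\mci_0$ with the given inert-preserving functor and your ``main obstacle'' becomes automatic (with the core it still holds, by uniqueness of factorizations through the monomorphism $\bar{h}(T')\to A(T')$, but you must argue it).
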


\begin{proof}
Take
\[
\mcb=\mc{O}^\otimes,\qquad 
(\mci_0\to\mci)=
(\mc{O}^\otimes\times_{\op{Fin}_*}\op{Triv}^\otimes\to \mc{O}^\otimes),
\qquad
\mce=\mcc^\otimes\times_{\op{Fin}_*}\mc{O}^\otimes,
\]
where $\op{Triv}^\otimes$ denotes the trivial $\infty$-operad; see \cite[Example 2.1.1.20]{ha}. Under the usual identification of $\mc{O}$-algebras in $\mcc$ with sections of $\mce\to\mc{O}^\otimes$ carrying inert morphisms to Cartesian morphisms, restriction along $\mci_0\to\mci$ is precisely the forgetful functor
\[
\alg_{\mc{O}}(\mcc)\to \funct(\mc{O},\mc{C}).
\]
By \cref{monoperad}, monomorphisms of such sections are detected after
restriction to $\mci_0$, hence the forgetful functor preserves and reflects monomorphisms. Applying \cref{monocor} to the above choice of $\mcb,\mci_0,\mci,\mce$ gives the fully faithfulness of
\[
(\alg_{\mc{O}}(\mcc)_{/A})_{\leq-1}\to
(\funct(\mc{O},\mc{C})_{/A_{\left<1\right>}})_{\leq-1}.
\]
Moreover, the factorization criterion in \cref{monocor} says exactly that, for every morphism
\[
\alpha:T=\left(\langle m\rangle,\left(T_1,\ldots,T_m\right)\right)
\to
T'=\left(\langle 1\rangle,(T')\right)
\]
in $\mc{O}^\otimes$, the composite
\[
\bigotimes_i B_{T_i}\to \bigotimes_i A_{T_i}\to A_{T'}
\]
factors through $B_{T'}$. This is precisely the stated description of the essential image.
\end{proof}

\begin{prop}
Let $\mcc^\otimes$ be a symmetric monoidal $\infty$-category which is compatible with epimorphisms and let $\mc{O}^\otimes$ be an $\infty$-operad. Given an $\mc{O}$-algebra $A\in\alg_{\mc{O}}(\mcc)$, let $\alg_{\mc{O}}(\mcc)_{A/}^{\prime}\subseteq \alg_{\mc{O}}(\mcc)_{A/}$ denote the full subcategory spanned by the morphisms $A\to D$ such that $A_T\to D_T$ is epimorphic in $\mcc$ for every $T\in \mc{O}^\otimes_{\left<1\right>}$.
Then we have $$\alg_{\mc{O}}(\mcc)_{A/}^{\prime}\subseteq \alg_{\mc{O}}(\mcc)_{A/}^{\op{epi}}$$ and the forgetful functor $$\alg_{\mc{O}}(\mcc)_{A/}^{\prime}\to \funct(\mc{O},\mc{C})_{A_{\left<1\right>}/}^{\op{epi}}$$ is fully faithful. The essential image consists of the maps $ A_{\left<1\right>}\to B$ such that for any
\[
\alpha: T=\left(\langle m\rangle,\left(T_1, \ldots, T_m\right)\right)\to T^{\prime}=\left(\langle 1\rangle,(T')\right)
\]
in $\mc{O}^\otimes$, the map $\bigotimes_{i}A_{T_i}\to  A_{T'} \to B_{T'}$ factors through $\bigotimes_{i}A_{T_i}\to\bigotimes_{i}B_{T_i}$.
\end{prop}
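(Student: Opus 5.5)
The plan is to dualize the preceding proposition on monomorphisms, reducing everything to mapping-space computations over the operadic fibration $\mce=\mcc^\otimes\times_{\op{Fin}_*}\mc{O}^\otimes\to\mc{O}^\otimes$. We identify $\alg_{\mc{O}}(\mcc)$ with sections carrying inert morphisms to cocartesian ones, and restrict along $\mci_0=\mc{O}^\otimes\times_{\op{Fin}_*}\op{Triv}^\otimes\to\mci=\mc{O}^\otimes$.

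The first claim is that $\alg_{\mc{O}}(\mcc)'_{A/}$ consists of epimorphisms. I would argue as in \cref{calgepi}, without monadicity. For an algebra map $f\colon A\to D$ and any $E\in\alg_{\mc{O}}(\mcc)$, write $\mapp(D,E)\to\mapp(A,E)$ as an end (a limit over the twisted arrow category of $\mc{O}^\otimes$) of the spaces $\mapp_{\mce}(D(x),E(y))$ over arrows $x\to y$. By the mapping-space formula in the proof of \cref{monoperad}, each such space is a coproduct over active components of products of $\mapp_\mcc(\bigotimes_i D_{T_i},E_{T'})$. Since $\mcc^\otimes$ is compatible with epimorphisms and each $A_{T}\to D_{T}$ is an epimorphism, the maps $\bigotimes_i A_{T_i}\to\bigotimes_i D_{T_i}$ are epimorphisms. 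Hence every map $\mapp(\bigotimes D,E)\to\mapp(\bigotimes A,E)$ is $(-1)$-truncated. Limits of monomorphisms of spaces are monomorphisms, which gives the first claim; equivalently, $f$ is an epimorphism in $\funct(\mci,\mce)$ by the epimorphism half of \cref{monoperad} applied pointwise, together with the dual of \cref{monolem}(2).

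For full faithfulness and the essential image, I would apply the dual of \cref{monocor}: pass to opposite categories, using $\mce^{\op{op}}\to\mcb^{\op{op}}$ and $\funct_{/\mcb}(\mci,\mce)^{\op{op}}\simeq\funct_{/\mcb^{\op{op}}}(\mci^{\op{op}},\mce^{\op{op}})$. Epimorphisms then become monomorphisms and cocartesian lifts become cartesian ones. Given a pointwise epimorphism $\theta\colon A|_{\mci_0}\to B$ in $\funct_{/\mcb}(\mci_0,\mce)$, \cref{monocor}(1) says that a (co)cartesian lift $A\to\bar B$ over $\mci$ exists precisely when, for every arrow $p(i)\to p(j)$ in $\mci$, the composite $A(p(i))\to A(p(j))\to B(j)$ factors through $A(p(i))\to B(i)$. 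For active arrows $\alpha\colon T\to T'$ this is exactly the stated condition that $\bigotimes A_{T_i}\to A_{T'}\to B_{T'}$ factors through $\bigotimes A_{T_i}\to\bigotimes B_{T_i}$; inert arrows impose no condition. By \cref{monocor}(2), such a factorization is unique up to contractible choice and detected pointwise, which gives full faithfulness of the forgetful functor on this full subcategory.

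The step I expect to be the main obstacle is checking that the lifted section $\bar B$ is again an $\mc{O}$-algebra, meaning that it carries inert morphisms to cocartesian edges, so that $A\to\bar B$ lands in $\alg_{\mc{O}}(\mcc)'_{A/}$ rather than merely in $\funct_{/\mcb}(\mci,\mce)$. I would handle this by observing that $\bar B$ restricted to $\mci_0$ is $B$. Over $\langle n\rangle$ its value is $(B_{T_1},\dots,B_{T_n})$, because the epimorphism condition over $\langle n\rangle$ is componentwise by \cref{monoperad}. Inert maps then act as projections, which are cocartesian. A secondary point is that the dual of \cref{monocor} needs compatibility of $\mcc^\otimes$ with epimorphisms to ensure that epimorphisms in $\mcc^n$ remain epimorphisms in $\mcc^\otimes$; this is exactly the second half of \cref{monoperad}.
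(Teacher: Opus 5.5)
Your proposal is correct and takes the same route as the paper. The paper's proof is just ``apply the duals of \cref{monocor} and \cref{monoperad} with $\mcb=\mc{O}^\otimes$, $\mci_0=\mc{O}^\otimes\times_{\op{Fin}_*}\op{Triv}^\otimes\to\mci=\mc{O}^\otimes$, $\mce=\mcc^\otimes\times_{\op{Fin}_*}\mc{O}^\otimes$''; you make explicit the steps it leaves implicit, namely the epimorphism claim, the reduction of the lifting condition to the stated factorization, and the algebra check. One point of phrasing: $\bar B$ has value $(B_{T_1},\dots,B_{T_n})$ and sends inert maps to cocartesian edges because $\bar B|_{\mci_0}\simeq B$ and every inert map lies in $\mci_0$, whereas the componentwise statement of \cref{monoperad} is what makes $\theta$ a pointwise epimorphism in $\mce$, so that the dual of \cref{monocor} applies.
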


\begin{proof}
This follows from the dual version of \cref{monocor,monoperad} by taking
\[
\mcb=\mc{O}^\otimes,\qquad 
(\mci_0\to\mci)=
(\mc{O}^\otimes\times_{\op{Fin}_*}\op{Triv}^\otimes\to \mc{O}^\otimes),
\qquad
\mce=\mcc^\otimes\times_{\op{Fin}_*}\mc{O}^\otimes
\]
as in the proof of previous proposition.
\end{proof}

\begin{rem}
Unlike in the monomorphic case, the forgetful functor $\alg_{\mc{O}}(\mcc)\to \funct(\mc{O},\mc{C})$ does not necessarily preserve epimorphisms. For instance, taking $\mc{O}^\otimes=\op{Fin}_*$ and $\mcc^\otimes=\op{Ab}^\otimes$, we see that $\mathbb{Z}\to \mathbb{Z}[p^{-1}]$ is an epimorphism in $\calg(\op{Ab})$ but not an epimorphism in $\op{Ab}$.
\end{rem}

As a corollary of the previous proposition, we obtain a stronger version of \cref{calgepi} as follows:
\begin{cor}\label{calgepi2}
Let $\mc{C}^\otimes$ be a symmetric monoidal $\infty$-category which is compatible with epimorphisms. Let $A \in \calg(\mc{C})$. If $A\xrightarrow{} B$ is an epimorphism in $\mc{C}$ such that $A\otimes A\xrightarrow{} A\to B$ factors through $A\otimes A\to B\otimes B$, then there exists a unique commutative $A$-algebra structure on $B$ such that $A\to B$ is an epimorphism in $\calg(\mc{C})$ satisfying the following universal property: a commutative $A$-algebra $A\to C$ admits a factorization $A\to B\to C$ in $\calg(\mcc)$ if and only if it admits such a factorization in $\mcc$. Moreover, the forgetful functor $$\calg(\mcc)_{A/}^{\prime}\to \mc{C}_{A/}^{\op{epi}}$$ is fully faithful with essential image consisting of the epimorphisms $ A\to B$ in $\mcc$ such that the map $A\otimes A \to A\to B$ factors through $A\otimes A\to B\otimes B$.
\end{cor}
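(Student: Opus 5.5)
The plan is to specialize the previous proposition (the epimorphism version for $\mc{O}$-algebras) to the commutative operad $\mc{O}^\otimes=\op{Fin}_*$. Then $\alg_{\mc{O}}(\mcc)=\calg(\mcc)$, $\funct(\mc{O},\mcc)=\mcc$, and $\mc{O}^\otimes_{\langle1\rangle}$ is a single object. Hence $\calg(\mcc)'_{A/}$ is exactly the full subcategory of $\calg(\mcc)_{A/}$ of algebra maps $A\to D$ whose underlying map is an epimorphism in $\mcc$. The proposition then gives two things. First, every object of $\calg(\mcc)'_{A/}$ is an epimorphism in $\calg(\mcc)$. Second, the forgetful functor $\calg(\mcc)'_{A/}\to\mcc^{\op{epi}}_{A/}$ is fully faithful, and its essential image is cut out by the factorization condition for every active map $\alpha\colon\langle m\rangle\to\langle 1\rangle$ in $\op{Fin}_*$.

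The first substantive step is to reduce these conditions to the single condition for $m=2$. For $m=0$ the condition asks that $\mb1\to A\to B$ factor through $\mb1\to\mb1$, which is automatic. For $m=1$ it is trivial. For $m\ge 3$, I will write the multiplication $A^{\otimes m}\to A$ as an iterated binary multiplication and induct. Suppose $A\otimes A\to A\to B$ factors as $A\otimes A\to B\otimes B\xrightarrow{\mu_B}B$. Then $A^{\otimes m}\to A\to B$ is the composite
\[
A^{\otimes m}\to A^{\otimes m-1}\otimes A\to B\otimes B\xrightarrow{\mu_B} B,
\]
and by induction it factors through $A^{\otimes m}\to B^{\otimes m}$, using the iterated $\mu_B$.

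The main obstacle is that this induction is happening in an $\infty$-category, so the factorizations a priori carry homotopy data. I would handle this by observing that, under the compatibility hypothesis, $A^{\otimes m}\to B^{\otimes m}$ is an epimorphism (as in \cref{monoperad}). A factorization through an epimorphism is then unique up to contractible choice, since precomposition with it is $(-1)$-truncated on mapping spaces. Consequently the factorization condition is a property rather than structure. Moreover, the factorization condition in the previous proposition is itself only a property, by the dual of \cref{monocor}. So no coherence issue arises, and the $m=2$ condition suffices.

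With this in hand the statement follows. Given $A\to B$ satisfying the hypothesis, it lies in the essential image, so it lifts to an object of $\calg(\mcc)'_{A/}$, i.e. a commutative $A$-algebra structure on $B$. This structure is unique because the functor is fully faithful and $B$ is fixed in $\mcc^{\op{epi}}_{A/}$. The resulting map $A\to B$ is an epimorphism in $\calg(\mcc)$ by the first conclusion of the proposition.

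For the universal property, take any $A\to C$ in $\calg(\mcc)$. A factorization through $B$ in $\calg(\mcc)$ clearly gives one in $\mcc$. Conversely, suppose $A\to C$ factors as $A\to B\to C$ in $\mcc$. Epimorphicity of $A\to B$ makes the map $B\to C$ unique. The remaining task is to show that this $B\to C$ is an algebra map, and I would use the dual form of \cref{monocor} for it. That result says a map out of the restricted diagram extends along the Cartesian-type lift precisely when it does so pointwise, and the essential-surjectivity input $\mci_0\to\mci$ reduces this to the underlying object. Equivalently, $\mapp_{\calg}(B,C)\to\mapp_{\calg}(A,C)$ is $(-1)$-truncated with image detected in $\mcc$.

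The final claim, full faithfulness with the stated essential image, is then exactly the specialization of the previous proposition combined with the $m=2$ reduction above.
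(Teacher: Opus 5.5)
Your proposal is correct and follows the paper's route. The paper obtains the corollary directly by specializing the preceding proposition to $\mc{O}^\otimes=\op{Fin}_*$, with the universal property coming from the dual of \cref{monocor}. Your reduction of the factorization conditions for all active maps $\langle m\rangle\to\langle 1\rangle$ to the single binary condition is a step the paper leaves implicit, and your argument for it is sound: $A^{\otimes m}\to B^{\otimes m}$ is an epimorphism, so factorizations are unique.
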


\begin{prop}\label{modumono}
Let $\mcc^\otimes$ be a symmetric monoidal $\infty$-category. Let $R\in\calg(\mcc)$ and $M\in\modu_R(\mcc)$. 
\enu{
\item If $\mcc$ admits pullbacks then the forgetful functor $(\modu_R(\mcc)_{/M})_{\leq-1}\to (\mc{C}_{/M})_{\leq-1}$ is fully faithful with essential image spanned by the monomorphisms $N\to M$ in $\mcc$ such that $R\otimes N \to R\otimes M \to M$ factors through $N$.
\item If $\mcc^\otimes$ is compatible with epimorphisms then the forgetful functor $\modu_R(\mcc)_{M/}^{\op{epi}}\to \mc{C}_{M/}^{\op{epi}}$ is fully faithful with essential image spanned by the epimorphisms $M\to N$ in $\mcc$ such that $R\otimes M \to  M \to N$ factors through $R\otimes M\to R\otimes N$.} 
\end{prop}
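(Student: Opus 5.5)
The plan is to reduce both parts to the two general statements proved just above for algebras over an $\infty$-operad. We view left $R$-modules in $\mcc$ as algebras over the operad $\mathcal{LM}^\otimes$, with the $\mathfrak a$-color sent to $R$. Equivalently, $\modu_R(\mcc)$ is the fiber over $R$ of the forgetful functor $\alg_{\mathcal{LM}}(\mcc)\to\alg(\mcc)$. The preceding propositions describe subobjects and epi-quotients of an $\mathcal{LM}$-algebra $(R,M)$ in terms of the underlying pair of objects $(R,M)\in\funct(\mathcal{LM},\mcc)$.

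For part (1), we first apply the monomorphism proposition with $\mc{O}=\mathcal{LM}$ and $A=(R,M)$. This identifies $(\alg_{\mathcal{LM}}(\mcc)_{/(R,M)})_{\le-1}$ with pairs of monomorphisms $(B_{\mathfrak a}\to R,\ B_{\mathfrak m}\to M)$ subject to the factorization conditions. Since we want $R$ to stay fixed, we restrict to those subobjects with $B_{\mathfrak a}=R$, where $R\to R$ is the identity. We then argue that this subcategory agrees with $(\modu_R(\mcc)_{/M})_{\le-1}$. A monomorphism in the fiber $\modu_R(\mcc)$ is still a monomorphism in $\alg_{\mathcal{LM}}(\mcc)$, because $\modu_R(\mcc)\to\alg_{\mathcal{LM}}(\mcc)$ is a fiber inclusion and maps in the fiber are maps in the total category lying over $\id_R$. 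Conversely, a subobject over $(R,M)$ with first coordinate $\id_R$ lies in the fiber.

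Next we unwind the factorization conditions. The operations of $\mathcal{LM}$ are the multiplications on $\mathfrak a$, which hold automatically when $B_{\mathfrak a}=R$, and the actions $(\mathfrak a,\ldots,\mathfrak a,\mathfrak m)\to\mathfrak m$. Since the $k$-fold actions factor through the unary action together with the multiplication on $R$, the only remaining condition is that $R\otimes N\to R\otimes M\to M$ factors through $N$. Full faithfulness follows because monomorphisms make the mapping spaces in $(-)_{\le-1}$ either empty or contractible. It also uses that the forgetful functor detects factorizations, which is exactly the content of \cref{monocor}.

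Part (2) is the dual argument. We use the epimorphism proposition with $A=(R,M)$ and the epimorphism $(\id_R, M\to N)$, which is epimorphic in each color, and we require compatibility with epimorphisms so that tensor powers of epimorphisms remain epimorphisms. The factorization condition then reads: $R\otimes M\to M\to N$ factors through $R\otimes M\to R\otimes N$. The multiplicativity condition on $R$ is again trivial.

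The main obstacle is the passage from the total category $\alg_{\mathcal{LM}}(\mcc)$ to the fiber $\modu_R(\mcc)$. We must check that restricting to objects over $\id_R$ neither loses nor adds monomorphisms (respectively epimorphisms), and that the reduction from $k$-ary actions to the unary one is legitimate. We would handle this by noting that the fiber over $R$ is a pullback along $\{R\}\to\alg(\mcc)$, so mapping spaces in the fiber are fibers of mapping spaces in the total category over $\id_R$, and $(-1)$-truncatedness is stable under this. If that route becomes technical, the fallback is to apply \cref{monocor} directly to the cartesian fibration $\modu_R(\mcc)\to\mcc$ presenting modules as sections over $\mathcal{LM}^\otimes$ with the $\mathfrak a$-part fixed.
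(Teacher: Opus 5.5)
Your route is essentially the paper's. Its proof reruns \cref{monocor,monoperad} with $\mcb=\op{LM}^\otimes$, $\mci_0\to\mci$ the map $\op{LM}^\otimes\times_{\op{Fin}_*}\op{Triv}^\otimes\to\op{LM}^\otimes$, and $\mce=\mcc^\otimes\times_{\op{Fin}_*}\op{LM}^\otimes$. This is exactly the two operad propositions for $\mc{O}=\op{LM}$, and the paper leaves the passage to the fiber over $R$ implicit. Your unwinding of the factorization conditions is correct: the $\mathfrak{a}$-coloured conditions are vacuous once $B_{\mathfrak a}=R$, and the $k$-ary actions reduce to the unary one by naturality. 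Your fiber-of-mapping-spaces argument for full faithfulness is also correct, and both make explicit what the paper omits.

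One step, however, is not justified, and it is the step that makes the functor $(\modu_R(\mcc)_{/M})_{\le-1}\to(\mcc_{/M})_{\le-1}$ well defined.
\begin{itemize}
\item You claim that a monomorphism in the fiber $\modu_R(\mcc)$ is a monomorphism in $\alg_{\op{LM}}(\mcc)$ ``because maps in the fiber lie over $\id_R$''. This does not follow. Monomorphy in the total category must be tested against all objects $(S,N')$, not only those with $S=R$.
\item The map $\mapp((S,N'),(R,N))\to\mapp((S,N'),(R,M))$ lives over $\mapp_{\alg(\mcc)}(S,R)$. Its fiber over $f\colon S\to R$ is computed in $\modu_S(\mcc)$ after restriction of scalars $f^*$. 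For a general Cartesian fibration, fiber inclusions need not preserve monomorphisms.
\item Your fallback, taking fibers of mapping spaces over $\id_R$, proves only the converse implication (mono in the total category implies mono in the fiber).
\end{itemize}
A fix: since $\mcc$ has pullbacks, the forgetful functor $\modu_R(\mcc)\to\mcc$ is conservative and preserves pullbacks \cite[\textsection4.2.3]{ha}. By the diagonal criterion ($N\to M$ is a monomorphism iff $N\to N\times_M N$ is an equivalence), it therefore preserves monomorphisms. The operad proposition's ``reflects monomorphisms'' clause then gives monomorphy in $\alg_{\op{LM}}(\mcc)$.

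Part (2) has the analogous gap. Your argument shows that module maps whose underlying map is an epimorphism in $\mcc$ are epimorphisms in $\modu_R(\mcc)$. It does not show that every epimorphism of $\modu_R(\mcc)$ is an epimorphism in $\mcc$, which is needed for the functor to land in $\mcc^{\op{epi}}_{M/}$. This holds, for instance, when the forgetful functor preserves pushouts (e.g.\ if $R\otimes-$ preserves pushouts, as in the presentable setting where the result is used), or when it admits a right adjoint. Without such an input, your argument establishes (2) only for the analogue of the primed subcategory $\alg_{\mc{O}}(\mcc)'_{A/}$. That is also all the paper's one-line proof makes explicit.
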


\begin{proof}
This follows from \cref{monocor,monoperad} by taking
\[
\mcb=\op{LM}^\otimes,\qquad 
(\mci_0\to\mci)=
(\op{LM}^\otimes\times_{\op{Fin}_*}\op{Triv}^\otimes\to \op{LM}^\otimes),
\qquad
\mce=\mcc^\otimes\times_{\op{Fin}_*}\op{LM}^\otimes
\]
where $\op{LM}^\otimes$ denotes the $\infty$-operad of left module action (see \cite[\textsection 4.2.1]{ha}).
\end{proof}	

\clearpage
\printbibliography[heading=bibintoc]

\bigskip

\textsc{Jiacheng Liang, Mathematics Department, Johns Hopkins University, Baltimore, MD 21218, USA}

\textit{Email address:} \href{mailto:jliang66@jhu.edu}{\texttt{jliang66@jhu.edu}}

\bigskip

\textsc{Changhan Zou, Mathematics Department, University of California, Santa Cruz, CA 95064, USA}

\textit{Email address:} \href{mailto:czou3@ucsc.edu}{\texttt{czou3@ucsc.edu}}
\end{document}